\documentclass[11pt]{article}
\usepackage[utf8]{inputenc}
\usepackage[T1]{fontenc}
\usepackage{microtype}
\usepackage{amsmath}
\usepackage{amsfonts}
\usepackage{amsthm}
\usepackage{dsfont}
\usepackage{amssymb}
\usepackage{txfonts}
\usepackage{version}
\usepackage
[
        a4paper,% other options: a3paper, a5paper, etc
        left=2.9cm,
        right=2.9cm,
        top=3cm,
        bottom=3cm,
        % use vmargin=2cm to make vertical margins equal to 2cm.
        % us  hmargin=3cm to make horizontal margins equal to 3cm.
        % use margin=3cm to make all margins  equal to 3cm.
]
{geometry}

%------------------------ Additional packages 

\usepackage{dsfont}
\usepackage{tikz}
\usetikzlibrary{positioning,shadows,arrows}
\usetikzlibrary{intersections,shapes.arrows}
% \usepackage{pgfplots}
% \pgfplotsset{compat=1.17}
\usepackage{url}
\usepackage{hyperref}
\usepackage{authblk}
\usepackage{comment}
\usepackage{cleveref}
\usepackage{enumerate}
\usepackage{enumitem}
\usepackage{csquotes}

%------------------- Dark mode for PDF
% \pagecolor[rgb]{0,0,0} %black
% \color[rgb]{0.7,0.7,0.7} %grey

%--------------------------Thm and similar environments
\theoremstyle{plain}
\newtheorem{prop}{Proposition}[section]
\newtheorem{lem}[prop]{Lemma}
\newtheorem*{lem*}{Lemma}
\newtheorem{thm}[prop]{Theorem}
\newtheorem{corollary}[prop]{Corollary}
\theoremstyle{remark}
\newtheorem{rmk}[prop]{Remark}

\theoremstyle{definition}

\newtheorem{ex}[prop]{Example}

\makeatletter
\def\mysequence#1{\expandafter\@mysequence\csname c@#1\endcsname}
\def\@mysequence#1{%
  \ifcase#1\or (Reg) \or (Ent)\or (KP)\or (KP$_\sigma$) \or (FL)\else\@ctrerr\fi}
\makeatother

\makeatletter
\newcommand{\mylabel}[2]{#2\def\@currentlabel{#2}\label{#1}}
\makeatother

\newenvironment{ackno}%
    {%\begin{center}%
    \paragraph{Acknowledgements:} %\end{center}
    }

%--------------------------Own command definitions 

\newcommand{\reals}{\mathbb{R}}

\newcommand{\diff}{\mathrm{d}}
\newcommand{\dif}{\mathrm{d}}

\newcommand{\dto}{\rightsquigarrow}

\newcommand{\LC}{\mathcal{L}}

\newcommand{\MC}{\mathcal{M}}
\newcommand{\C}{C}
\newcommand{\X}{\mathcal{X}}
\newcommand{\UC}{\mathcal{U}}

\newcommand{\Y}{\mathcal{Y}}

\newcommand{\AThree}{(A3)}
\newcommand{\AThreeStar}{(A3$^*$)}
\newcommand{\BOne}{(B1)}
\newcommand{\BTwo}{(B2)}
\newcommand{\BTwoStar}{(B2$^*$)}
\newcommand{\probQ}{\nu}
\newcommand{\Rd}{\reals^d}
\newcommand{\model}{\mathcal{M}}

\renewcommand{\AThree}{\textbf{(JW)}}
\renewcommand{\AThreeStar}{\textbf{(JW)$^*$}}

\renewcommand{\BOne}{\textbf{(OP)}}
\renewcommand{\BTwo}{\textbf{(Sup)}}
\renewcommand{\BTwoStar}{\textbf{(Sup)$^*$}}

\newcommand{\norm}[1]{\left\|#1\right\|}
\usepackage{mathtools}

%------------------------ For the bibliography 
\usepackage[backend=bibtex,style=authoryear,natbib=true, url=false,isbn=false, doi=false,maxnames=3,maxbibnames=5, eprint=false]{biblatex} % Use the bibtex backend with the authoryear citation style (which resembles APA)
\addbibresource{Ref.bib}
\renewbibmacro{in:}{}

%--------------------------- For comments---------

\usepackage{color}
\definecolor{darkraspberry}{rgb}{0.53, 0.15, 0.34}
\definecolor{britishracinggreen}{rgb}{0.0, 0.26, 0.15}
\definecolor{greenalt}{rgb}{0.24, 0.5,0} 
\definecolor{burntumber}{rgb}{0.54, 0.2, 0.14}
\definecolor{royalblue}{RGB}{0,78,156}

 %\renewcommand{\sh}[1]{}

%\renewcommand{\gm}[1]{}
%\renewcommand{\sh}[1]{}
%\renewcommand{\cw}[1]{}

%--------------------------- Own definitions ---------

\providecommand{\reals}{\mathbb{R}}

\providecommand{\NN}{\mathbb{N}}
\providecommand{\RR}{\mathbb{R}}

\providecommand{\oh}{\mathrm{o}}

\providecommand{\eqd}{\stackrel{d}{=}}
\providecommand{\dto}{\rightsquigarrow}
\providecommand{\pto}{\overset{\textup{P}}{\to}}
\providecommand{\ptoStar}{\xrightarrow{\textup{P}^*}}
\providecommand{\ptoOut}{\ptoStar}
\providecommand{\pOut}{{\prob^{*}}}
\providecommand{\as}{\xrightarrow{\textup{a.s.}}}

\providecommand{\eps}{\varepsilon} 
 
\providecommand{\diff}{\mathrm{d}}
\providecommand{\prob}{\mathrm{P}}
\providecommand{\Cov}[1]{\textup{Cov}[#1]}
\providecommand{\Prob}{\mathbb{P}}

\providecommand{\EE}{\mathbb{E}}
\providecommand{\Gproc}{\mathbb{G}}
\providecommand{\argmin}{\operatornamewithlimits{\arg\min}}

\providecommand{\supp}{\operatorname{supp}}

\providecommand{\Law}{\mathcal{L}}
\providecommand{\AC}{\mathcal{A}}
\providecommand{\NC}{\mathcal{N}}

\providecommand{\BL}{\textup{BL}}

\providecommand{\FC}{\mathcal{F}}
\providecommand{\HC}{\mathcal{H}}

\providecommand{\GC}{\mathcal{G}}
\providecommand{\PC}{\mathcal{P}}

\providecommand{\XC}{\mathcal{X}}

\providecommand{\YC}{\mathcal{Y}}

\providecommand{\SO}{\textup{SO}}

\providecommand{\closure}{\textup{Cl}}

\providecommand{\Id}{\textup{Id}}
\providecommand{\diam}{\textup{diam}}
\providecommand{\dwasser}{{{W}_2}}

\providecommand{\Range}{\textup{Range}}
\providecommand{\interior}[1]{\textup{int}(#1)}
\DeclareMathOperator*{\argmax}{arg\,max}

\providecommand{\linfty}{\ell^\infty}
% Bootstrap
\providecommand{\Xib}{X_i^{b}}
\newcommand{\Xboot}[1]{X_{#1}^{b}}
\providecommand{\Yib}{Y_i^{b}}
\providecommand{\muboot}{\mu_{n,k}^{b}}
\providecommand{\nuboot}{\nu_{n,k}^{b}}
\providecommand{\cboot}{c^{b}_{n,k}}

\providecommand{\ctildeboot}{\tilde{c}^{b}_{n,k}}

%--------------------------- Hacking template ---------
\makeatletter
\newcommand{\subjclass}[1]{%
  \let\@oldtitle\@title%
  \gdef\@title{\@oldtitle\footnotetext{\emph{MSC 2020 subject classification.} #1}}%
}
\newcommand{\keywords}[1]{%
  \let\@@oldtitle\@title%
  \gdef\@title{\@@oldtitle\footnotetext{\emph{Key words and phrases.} #1.}}%
}
\newcommand\footnoteref[1]{\protected@xdef\@thefnmark{\ref{#1}}\@footnotemark}
\makeatother
\numberwithin{equation}{section}
\setcounter{tocdepth}{2}
\DeclareUnicodeCharacter{001C}{AAAAAAA}

%--------------------------- Begin Document ---------
\date{\today\vspace{-0.29cm}}

\begin{document}
% \pagenumbering{Roman}
% \tableofcontents
% \newpage
\pagenumbering{arabic}
\title{Empirical Optimal Transport under Estimated Costs:\\ Distributional Limits and Statistical Applications}
\author[$\dagger$]{Shayan Hundrieser}
\author[$\dagger$]{Gilles Mordant}
\author[$\dagger$]{Christoph A. Weitkamp}
\author[$\dagger$ $\ddagger$ $\ast$]{Axel Munk\vspace{-0.25cm}}
\affil[$\dagger$]{\footnotesize Institute for Mathematical Stochastics, University G\"ottingen, Goldschmidtstra\ss e 7, 37077 G\"ottingen}%, Germany}
\affil[$\ddagger$]{Max Planck Institute for Multidisciplinary Sciences,  Am Fa\ss berg 11, 37077 G\"ottingen}
\affil[$\ast$]{Cluster of Excellence ”Multiscale Bioimaging: from Molecular Machines to Networks of Excitable Cells”(MBExC), University Medical Center, Robert-Koch-Straße 40, 37075 Göttingen\vspace{-0.20cm}}

\keywords{Optimal Transport, central limit theorem, stability analysis, curse of dimensionality, empirical process, bootstrap}
\subjclass{Primary: 60B12, 60F05, 60G15, 62E20, 62F40; Secondary: 90C08, 90C31.}
\maketitle
\begin{abstract}
  \noindent Optimal transport (OT) based data analysis is often faced with the issue that the underlying cost function is (partially) unknown. This paper is concerned with the derivation of distributional limits for the empirical OT value when the cost function and the measures are estimated from data. For statistical inference purposes, but also from the viewpoint of a stability analysis, understanding the fluctuation of such quantities is paramount. Our results find direct application in the problem of goodness-of-fit testing for group families, in machine learning applications where invariant transport costs arise, in the problem of estimating the distance between mixtures of distributions, and for the analysis of empirical sliced OT quantities. 
  
  \noindent The established distributional limits assume either weak convergence of the cost process in uniform norm or that the cost is determined by an optimization problem of the OT value over a fixed parameter space. For the first setting we rely on careful lower and upper bounds for the OT value in terms of the measures and the cost in conjunction with a Skorokhod representation. The second setting is based on a functional delta method for the OT value process over the parameter space. The proof techniques might be of independent interest.
\end{abstract}
%--------------------------- Begin Introduction ---------

%----------------------------------------------------------------------------------------------
%----------------------------------------------------------------------------------------------

%                                         INTRODUCTION 

%----------------------------------------------------------------------------------------------
%----------------------------------------------------------------------------------------------
\section{Introduction}
\label{sec: Intro}

Statistically sound methods for data analysis relying on the optimal transport (OT) theory (see e.g., \citet{rachev1998mass,villani2008optimal,santambrogio2015optimal}) have won acclaim in recent years. Exemplarily, we mention fitting of generative adversarial networks \citep{arjovsky2017wasserstein}, novel notions of multivariate quantiles  \citep{chernozhukov2017monge,hallin2021distribution} and dependence \citep{nies2021transport,mordant2022measuring,wiesel2022measuring} or tools for causal inference \citep{torous2021optimal}. %

Recall that for Polish spaces $\XC$ and $\YC$ and a  continuous cost function $c\colon\X\times\Y \to \reals$, the OT value between two (Borel) probability measures $\mu \in \PC(\XC)$ and $\nu \in \PC(\YC)$ is defined as 
\begin{align}
\label{eq: OT}
OT(\mu, \nu,c) \coloneqq  \inf_{\pi \in \Pi(\mu, \nu )} \int_{\X\times \Y} c(x,y) \,\diff \pi(x,y), 
\end{align}
where $ \Pi(\mu, \nu )$ denotes the set of couplings of $\mu$ and $\nu$. 
Under mild assumptions \eqref{eq: OT} also admits a dual formulation (see, e.g., \citealt{santambrogio2015optimal}), 
\begin{align}
  \label{eq: OTdual}
  OT(\mu, \nu,c) = \sup_{f \in C(\XC)}\int_\XC f^{cc}(x)\, \dif \mu(x) + \int_\YC f^{c}(y)\,\dif\nu(y),%
  \end{align} 
  where $C(\XC)$ stands for the set of real-valued, continuous functions on $\XC$. Further, $f^c(y)\coloneqq \inf_{x\in \XC} c(x,y) - f(x)$ and $f^{cc}(x) \coloneqq \inf_{y\in \YC}c(x,y) - f^c(y)$ denote cost-transformations of $f$ and $f^{c}$ under $c$, respectively; also often referred to as $c$-transformations.

If $\XC = \YC$ and the cost function $c=d_\XC^p$ is the $p$-th power ($p\geq 1$) of a metric $d_\XC$ on $\XC$ the OT value gives rise to the $p$-Wasserstein distance 
$$W_p(\mu, \nu) \coloneqq (OT(\mu, \nu,d^p_\XC))^{1/p},$$
which defines a metric on the space of probability measures with $p$-th moments \citep[Chapter 6]{villani2008optimal}. This metric is particularly useful for many data analysis tasks due to its potential awareness of the ``inner geometry'' of $\XC$. For instance, interpreting (normalized) images, or more precisely the corresponding pixel locations and intensities, as probability measures, it has been argued that the distance induced by OT corresponds to the natural expectations of what appears close or far away for the human eye \citep{rubner2000earth}. Meanwhile, there is a plenitude of real world showcases where OT based distances (and their associated transport plans) prove useful for applications e.g., in cell biology \citep{tameling2021colocalization}, genetics \citep{evans2012phylogenetic,schiebinger2019optimal}, protein structure analysis \citep{gellert2019substrate, weitkamp2022distribution} or fingerprint analysis \citep{sommerfeld2018inference}, to mention but a few. In these works, the cost function is a given known quantity which is determined by the concrete application, e.g., a tree distance on the space of phylogenetic trees as in \citet{evans2012phylogenetic}. 

However, despite the various successful applications hinted at above, there are situations in which the underlying cost naturally depends on the measures. In certain problems, e.g., Wasserstein based goodness-of-fit testing under group families \citep{hallin2021multivariate} or Wasserstein Procrustes analysis \citep{grave2019unsupervised}, it is central that the underlying OT problem is invariant with respect to certain transformations. This can only be realized by measure-dependent costs. 
Moreover, for sliced OT \citep{bonneel2015sliced}, the Wasserstein distance between multiple one-dimensional projections of measures is computed. Taking the maximum over all directions gives rise to the max-sliced Wasserstein distance \citep{deshpande2019max} which can be viewed in the framework of OT with measure-dependent costs since maximizing directions are determined by the underlying measures. 
Motivated by these considerations, we provide in this work a general framework for the statistical analysis of empirical OT problems under costs that are dependent on the underlying measures. 

Adopting this statistical point of view, we assume that we do not have access to the measures $\mu$ and $\nu$ but only to independent samples $\{X_i\}_{i=1}^n\sim\mu^{\otimes n}$ and  $\{Y_i\}_{i=1}^{m}\sim \nu^{\otimes m}$ with $n, m\in \NN$. Upon defining the empirical measures $\mu_n \coloneqq \frac{1}{n} \sum_{i = 1}^{n} \delta_{X_i}$ and $\nu_m \coloneqq \frac{1}{m} \sum_{i = 1}^{m} \delta_{Y_i}$ and given a random cost function\footnote{Here, $c_{n,m}$ is either a direct estimator for $c$ or chosen via an OT-related optimization problem over a parameter class.} $c_{n,m}$ such that $OT(\mu_n, \nu_m,c_{n,m})$ estimates the quantity $OT(\mu, \nu,c) $, our main focus is on characterizing for $n, m\rightarrow \infty$ with $m/(n+m)\rightarrow \lambda\in (0,1)$ the limit distribution of 
\begin{align}
  \label{eq:LimitLawOT}\sqrt{\frac{nm}{n+m}}\Big( OT(\mu_n, \nu_m, c_{n,m}) - OT(\mu, \nu, c) \Big).
\end{align}
This is of particular interest for asymptotic tests about the relation between $\mu$ and $\nu$ for unknown~$c$ based on the OT value. Further, this enables the derivation of confidence intervals. As it is practically more relevant, we mainly focus on the scenario where both measures $\mu$ and $\nu$ are unknown. However, we stress that our theory also provides distributional limits for the one-sample case, i.e., when only $\mu$ is estimated from data while $\nu$ is assumed to be known (see \Cref{rmk:CommentsOTWeaklyConvergingCosts} and \Cref{rmk:CommentsOnOTProcessResults}). Moreover, although we mostly focus on empirical measures to estimate the underlying measures, our theory also enables the derivation of distributional limits for alternative measure estimators, provided that the corresponding distributional limits can be determined. 

For a fixed cost function, i.e.,  for $c_{n,m}\equiv c$ for some $c\in C(\XC\times \YC)$, already various works derived limit distribution results for the empirical OT quantity in \eqref{eq:LimitLawOT}. 
A specific situation arises for probability measures on $\RR$ with $c_p(x,y) = |x-y|^p$ for $p\geq 1$ \citep{munk1998nonparametric,del1999central,del2005asymptotics, mason2016weighted, del2019central2} where the OT plan can be represented via a quantile coupling. For this setting, quantile process theory \citep{csorgo1993weighted} in combination with integrability conditions on the underlying densities have been exploited to derive distributional limits. 

Moreover, on general Euclidean spaces $\RR^d$ with $d \geq 1$ and $p$-th power costs $c_p(x,y) =\norm{x-y}^p$ with $p>1$ it has been shown by \citet{del2019central, delBarrio2021GeneralCosts} for probability measures 
$\mu, \nu$ with connected support and finite $2p$-th moments for $n, m\rightarrow \infty$ with $m/(n+m)\rightarrow \lambda\in (0,1)$ that 
\begin{align}\label{eq:CLT_delbarrio}
  \sqrt{\frac{nm}{n+m}}\left(OT(\mu_n, \nu_m,c_p) - \EE\left[OT(\mu_n, \nu_m,c_p)\right] \right)\dto \NC(0, \sigma_{\mu, \nu}^2),
\end{align}
where $\sigma_{\mu, \nu}^2>0$ if and only if $\mu \neq \nu$. Here and throughout, ``$\rightsquigarrow$'' denotes weak convergence in the sense of Hoffman-J{\o}rgensen (see \citealt[Chapter 1.3]{van1996weak}). Their proof is based on an $L^2$-linearization technique of the OT value and relies on the Efron-Stein inequality. In general, the centering quantity $\EE[OT(\mu_n, \nu_m,c_p)]$ in \eqref{eq:CLT_delbarrio} cannot be replaced by its population quantity $OT(\mu, \nu,c_p)$ which hinders further statistical inference purposes. Indeed, for identical absolutely continuous probability measures $\mu= \nu$  on $\RR^d$ with sufficiently many moments it follows for $d>2p$ by \citet{fournier2015rate,weed2019sharp} that
\begin{align*}
  \EE\left[OT(\mu_n, \nu_m, c_p)\right]\asymp \min(n, m)^{-p/d}.
\end{align*}
 Moreover, for different measures $\mu\neq \nu$ on $\RR^d$ which are absolutely continuous and sub-Weibull it has been shown for $d \geq  5$ by \citet{Manole21} that 
 \begin{align*}
  \EE\left[OT(\mu_n, \nu_m,c_p)\right] - OT(\mu, \nu,c_p)\asymp  \min(n, m)^{-\min(p,2)/d}.
 \end{align*}
These rates are also minimax optimal (up to logarithmic factors) over appropriate collections of identical measures $\mu = \nu$ \citep{singh2018minimax} as well as different measures $\mu \neq \nu$ \citep{Manole21}. In particular, this demonstrates that estimation of the OT value suffers from the curse of dimensionality and showcases that it is in general for $d\geq 5$, due to the dominance of the bias, not possible to replace  $\EE[OT(\mu_n, \nu_m,c_p)]$ with $OT(\mu, \nu,c_p)$ in \eqref{eq:CLT_delbarrio}. 

Nevertheless, according to the recently discovered \emph{lower complexity adaptation principle} for empirical OT \citep{hundrieser2022empirical}, fast convergence rates are still achieved if one of the population measures, $\mu$ or $\nu$, is supported on a sufficiently low dimensional domain. Based on this observation, \citet{Hundrieser2022Limits} proved for compactly supported  $\mu, \nu$ on $\RR^d$, with $\mu$ supported on a finite set or a smooth submanifold of dimension $\tilde d< 2 \min(p,2)$ using the functional delta method \citep{Roemisch04}, 
\begin{align}\label{eq:LimitLawOT_LCA}
  \sqrt{\frac{nm}{n+m}}\Big( OT(\mu_n, \nu_m,c_p) - OT(\mu, \nu,c_p) \Big)\dto \sup_{f\in S_{\!c_p}(\mu, \nu)} \sqrt{\lambda}\Gproc^\mu(f^{c_pc_p}) + \sqrt{1-\lambda}\Gproc^\nu(f^{c_p}),
  \end{align}
where $S_{\!c_p}(\mu, \nu)$ is the set of optimizers of \eqref{eq: OTdual} and $\Gproc^\mu, \Gproc^\nu$ denote $\mu$-, $\nu$-Brownian bridges, i.e., centered Gaussian processes with covariance structure characterized by 
\begin{align}\label{eq:covarianceStructureEmpProc}
  \Cov{\Gproc^\mu(f^{cc}), \Gproc^\mu(g^{cc})} = \int f^{cc} g^{cc} \dif \mu - \int f^{cc}  \dif \mu\int g^{cc} \dif \mu \quad \text{ for } f,g\in C(\XC)
\end{align}
and likewise for $\Gproc^\nu$. The asymptotic theory laid out in \eqref{eq:LimitLawOT_LCA} also provides a unified framework for distributional limits of the empirical OT value under discrete population measures \citep{sommerfeld2018inference,tameling18} and the semi-discrete setting \citep{del2022central}.

The central contribution of this work is to extend such distributional limits from \eqref{eq:LimitLawOT_LCA} to settings where the cost function is not fixed and additionally may depend on the underlying measures.  We focus on the following two special instances.
\begin{enumerate}
		\item[(A)] The cost estimator $c_{n,m}$, centered by its population counterpart $c$ and suitably rescaled, weakly converges in $C(\XC\times \YC)$ to a tight limit, i.e., $\sqrt{nm/(n+m)}(c_{n,m} -c)\dto \Gproc^c$ in $C(\XC\times \YC)$.
    \item[(B)] There exists a collection $\{c_\theta\}_{\theta\in \Theta}$ of costs such that for any $\mu\in \PC(\XC), \nu \in \PC(\YC)$ the corresponding cost function $c_{\mu, \nu}\coloneqq c_\theta$ is selected according to an optimization problem of the OT value over $\Theta$, i.e., either $\theta \in \arg\max_{\theta \in \Theta} OT(\mu, \nu, c_\theta)$ or $\theta \in \arg\min_{\theta \in \Theta} OT(\mu, \nu, c_\theta)$.  
\end{enumerate}

These two settings are natural and treat a wide spectrum of problems. Furthermore, they are strongly related. It is noteworthy that setting (B) could be treated in the framework of (A) by estimating the optimal $\theta$. However, this approach requires the existence of a unique population cost function and weak convergence of the cost process as a random element in $C(\XC\times \YC)$. Since we are only interested in the empirical infimal or supremal OT value it is instead more natural to rely on an alternative approach which does not require uniqueness of the population cost function or weak convergence of the cost process. 
 
For setting (A) we allow the cost function to be estimated from the given data and thus capture the asymptotic dependency between the cost estimator and the empirical measures. In particular, this enables an analysis of the empirical OT cost when the cost estimator is parametrized by a plug-in estimator, e.g., a maximum likelihood procedure. 
Notably, setting (A) also allows the cost function to be estimated from independent data. Overall, this setting covers many scenarios with ``extrinsically estimated costs''. We refer to Sections \ref{sec: OSGoF} and \ref{sec: SketchWas} for examples. For setting (B) the motivation slightly differs. Here, the selected cost function depends on the OT problem itself and often brings invariance of the OT problem with respect to a class of transformation parametrized by $\Theta$. One could describe this as OT with ``intrinsically estimated costs''. Examples of this setting are provided in Sections \ref{sec: OTwInv} and \ref{sec: Sliced}. 

Under suitable assumptions we show in \Cref{thm:AbstractMainResult} for setting (A) that
\begin{align*}
  \sqrt{\frac{nm}{n+m}}\Big(OT( \mu_n, \nu_m, c_{n,m})-OT( \mu, \nu, c)\Big) \!\dto\!\! &\inf_{\pi \in \Pi_{c}^\star(\mu,\nu)}\pi(\Gproc^c) + \!\!\sup_{f \in S\!_c(\mu,\nu)} \!\!\!\sqrt{\lambda}\Gproc^\mu(f^{cc})\! + \!\!\sqrt{1\!-\!\lambda}\Gproc^\nu(f^c),
\end{align*}
where $\Pi^\star_c(\mu, \nu)$ represents the set of optimizers for \eqref{eq: OT} for $\mu,\nu$ with costs $c$ and $\pi(\Gproc^c)\coloneqq \int \Gproc^c \diff \pi$. 
For setting (B) we only state below the distributional limit for supremal costs; a similar distributional limit also occurs for infimal costs (\Cref{thm:OTProcessInf}). 
Upon defining the set $S_{\!+}(\Theta, \mu, \nu) = \argmax_{\theta\in \Theta}OT(\mu, \nu,c_\theta)$ of maximizers we show in \Cref{thm:OTProcessSup} that
\begin{align*}
\sqrt{\frac{nm}{n+m}}\left(\sup_{\theta\in \Theta} OT( \mu_n, \nu_m,c_\theta)- \sup_{\theta\in \Theta} OT(\mu, \nu,c_\theta)\right)\!\dto\!\! \sup_{\theta\in S_{\!+}(\Theta, \mu, \nu)} \sup_{f_\theta\in S_{\!c_\theta}(\mu, \nu)}\!\!  \sqrt{\lambda}\Gproc^\mu(f_\theta^{c_\theta c_\theta})\!+\!\!  \sqrt{1\!-\!\lambda}\Gproc^\nu(f_\theta^{c_\theta}).
\end{align*}

In addition to these distributional limits we show for both settings (A) and (B) consistency of a bootstrap principle. This is of practical importance since quantiles of the respective distributional limits are difficult to express explicitly due to their dependency on the collection of primal and dual optimizers for population measures and cost.

Our proof technique for the distributional limit under setting (A) differs from previous approaches and might be of interest in its own right. More precisely, due to the estimation of the cost function, we cannot rely on any of the techniques from the references mentioned above. Instead, we derive certain lower and upper bounds on the OT value which fulfill appropriate (semi-)continuity properties. In~conjunction with a Skorokhod representation for the empirical process jointly with the cost process, this enables us to prove that the law of the empirical OT value with estimated costs is asymptotically stochastically dominated from above and below by the asserted limit distribution. %

For the analysis of setting (B) we show under suitable assumptions on the cost family $\{c_\theta\}_{\theta\in \Theta}$ and the underlying probability measures, that the empirical OT process $\sqrt{n}(OT(\mu_n, \nu_m,c_\theta)- OT(\mu, \nu,c_\theta))_{\theta \in \Theta}$ weakly converges in $C(\Theta)$ to a tight random variable. We prove this result by invoking the functional delta method in conjunction with a general result on Hadamard directional differentiability for extremal-type functionals uniformly over a compact parameter space (see \Cref{subsec:HadDiff}). The latter can be viewed as an extension of \citet[Lemma S.4.9]{fang2019inference} to processes over $\Theta$ and relies on Dini's theorem (\citealt[Corollary 1]{toma1997strong}). Central for this differentiability result is a certain continuity condition among the sets of maximizing elements for varying parameter. For the OT process it is fulfilled, e.g., if for every $\theta\in \Theta$ the set of dual optimizer $S_{\!c_\theta}(\mu, \nu)$ is unique (up to constant shift). A similar assumption has been imposed by \citet{xi2022distributional} for weak convergence of the empirical sliced OT process, which can be viewed as a special instance of our results for general OT processes, see Section~\ref{sec: Sliced}. The distributional limits for the empirical infimal and supremal OT value over $\theta\in \Theta$ then follow by another application of the functional delta method. 

\paragraph*{Outline}
We begin our exposition by deriving in \Cref{subsec:preliminaries} an appropriate dual formulation of the OT value which proves useful for our subsequent considerations. We then proceed with our main contributions, distributional limits for the empirical OT value under weakly converging costs in \Cref{subsec:DistributionalLimits} as well as for the empirical OT value under extremal-type costs in \Cref{subsec:OTProcessResults}. These asymptotic results are complemented with consistency results of bootstrap resampling schemes in \Cref{subsec:BootstrapPrinciple}. We discuss our assumptions for the distributional limits and the bootstrap principles in \Cref{subsec:Assumptions} and provide sufficient conditions for their validity. Statistical applications of our theory are provided in \Cref{sec:Applications}, where we also derive a deterministic (first-order) stability result for the OT cost under joint perturbations of measures and cost function. 
In \Cref{subsec:RegEl} we explicitly construct functionals which enables us to ``elevate'' the regularity of cost estimators to that of their population counterparts. We employ them in the proofs of our main results which are stated in Section~\ref{sec: ProofMain}. All remaining proofs as well as auxiliary results and lemmata are relegated to the Appendices. 

\paragraph*{Notation and probability spaces}

Given a set $T$ denote by $\ell^\infty(T)$ the Banach space of bounded functionals on $T$ equipped with uniform norm $\| \varphi \| \coloneqq \sup_{t\in T}|\varphi(t)|$. Moreover, if $T$ is equipped with a topology $\tau$ denote by $C(T)$ the Banach space of real valued, bounded, continuous functions on $T$ equipped with uniform norm. If $d_T$ denotes a metric on $T$, then we define by $C_u(T,d_T)$, or $C_u(T)$ when the metric $d_T$ is clear from context, the space of real-valued, bounded, uniformly continuous functions on $(T,d)$. Endowed with the uniform norm, it is a Banach space as well. A real-valued function class $\FC$ on $\XC$ is always be equipped with uniform norm. This specifies the Banach space $C_u(\FC)$ which is a closed  subset of the Banach space $\ell^\infty(\FC)$. Moreover, for $\eps>0$ the covering number $\NC(\eps, T, d)$ denotes the minimal number of sets with diameter $2\eps$ to cover $T$, and we write $x\lesssim y$ when there exists a constant $C>0$ with $x\le C y$. 

For a topological space $\XC$ the set $\PC(\XC)$ denotes the collection of Borel probability measures on $\XC$. Integration $\int f \diff \mu$ of a real-valued Borel measurable function $f\colon \XC\rightarrow \RR$ with respect to $\mu \in \PC(\XC)$ is abbreviated by $\mu(f)$ or $\mu f$. Further, we denote by $f_\# \mu$ the pushforward of $\mu$ under~$f$. We define all random variables on the same probability space $(\Omega, \AC, \prob)$. We further assume a product structure of that space to define samples and the random weights of the bootstrap, i.e., $\Omega= \Omega_0 \times \Omega_1 \times \ldots $ and $\prob = \prob_0 \otimes \prob_1 \times \ldots$ so that the samples only depend on $(\Omega_0,\prob_0)$, the weights of the first bootstrap replicate on $(\Omega_1,\prob_1)$ and so on. The law of a random variable $X$ is denoted by $\LC(X).$ We finally assume that there exist infinite sequences of measurable maps $X_1, X_2, \ldots$ from $(\Omega_0,\prob_0)$ to $\X$, respectively, and that samples of cardinality $n$ are obtained from the infinite sequence by projection of the first $n$ coordinates. Outer probability measures are denoted by $\pOut$ (see \cite[Chapter 1.2]{van1996weak}).
Denoting by $\mathrm{BL}_1$ the set of real-valued functions on a metric space $(T, d_T)$ which are bounded by one in uniform norm and such that  $\lvert f(x)- f(y) \rvert \le d_T(x,y)$ for any $x,y\in  T$, we define the bounded Lipschitz metric between two probability measures $\mu, \nu$ as $d_{BL}(\mu,\nu ):= \sup_{f \in \mathrm{BL}_1}\left \lvert \mu(f) - \nu(f)\right\rvert$. 
For a set $A$ and a function $f$, we write $f(A)\coloneqq\{f(a) \ \vert\ a \in A\}$. For two subsets $A, B$ of a vector space,  $A+B := \{a + b\ \vert \ a \in A , b\in B  \}$.
%

%----------------------------------------------------------------------------------------------
%----------------------------------------------------------------------------------------------

%                                         MAIN RESULTS

%----------------------------------------------------------------------------------------------
%----------------------------------------------------------------------------------------------

\section{Main Results}\label{sec:MainResults}
\subsection{Preliminaries}\label{subsec:preliminaries}

For our theory on distributional limits for the empirical OT value under estimated cost functions we consider throughout compact Polish spaces $\XC$ and $\YC$. %
 Given a continuous cost function $c\in C(\XC\times \YC)$ and  probability measures $\mu \in \PC(\XC), \nu\in \PC(\YC)$ there always exist optimizers to both primal and dual problem \citep[Theorems 4.1 and 5.10]{villani2008optimal}. 

According to  \citet[Remark 1.13]{villani2021topics}, dual optimizers can always be selected from the function class 
\begin{align}\label{eq:C-concaveFunctionsDef}
  \HC_c := \left\{h : \X\to \reals \ \Big\vert \ \exists g \colon \Y \to [-\norm{c}_\infty, \norm{c}_\infty], h(\cdot)=\inf_{y \in \Y} c(\cdot,y) -g(y)\right\},
\end{align}
which yields for any $\mu\in \PC(\XC), \nu \in \PC(\YC)$ the alternative dual representation of the OT value, 
\begin{align}\label{eq:DualFormulationSpecificC}
	OT(\mu, \nu,c) =  \sup_{h\in \HC_c} \mu(h^{cc}) + \nu(h^c).%
\end{align}
The function class $\HC_c$ is uniformly bounded and each element exhibits the same modulus of continuity as $c$, hence it is compact in $C(\XC)$ by the Theorem of Arzel\`a-Ascoli. 
Formula \eqref{eq:DualFormulationSpecificC} was exploited by \citet{Hundrieser2022Limits} for distributional limits of the empirical OT value under a fixed cost function. 

 For our purposes, we require a dual formulation over a fixed function class which holds for more than a single cost function and to circumvent potential measurability issues we seek a function class which is compact in $C(\XC)$ (cf. \Cref{lem:measurability}). To this end, let $B>0$ and consider a concave modulus of continuity $w\colon \RR_+ \rightarrow \RR_+$. Then, for a continuous metric $d_\XC$ on $\XC$ we define the compact function class $\FC(B,w)\subseteq C(\XC)$, 
\begin{align}\label{eq:defF}
 \FC(B,w)\coloneqq \left\{ f\colon \XC\rightarrow \RR \;\Big|\; \norm{f}_\infty \leq 2B, \;|f(x) - f(x')| \leq w(d_\XC(x,x')) \;\text{ for all } x,x'\in \XC \right\},
\end{align}
which will be utilized for a dual representation of the OT value under suitable costs. 

\begin{lem}[Dual formulation]
  \label{lem:RelFCandConcave}
  Let $c\in C(\XC\times \YC)$ with $\norm{c}_\infty \leq B$ and $|c(x,y)-c(x',y)| \leq w\left(d_\XC(x,x')\right)$ for all $x,x'\in \XC, y\in \YC$. Then, for $\FC\coloneqq \FC(B, w)$ the following inclusions hold
\begin{align*}%
  \HC_c \subseteq \FC^{cc}\subseteq  \HC_c  + [-2B,2B]\quad \text{ and }\quad \HC_c^{c} \subseteq \FC^{c} \subseteq  \HC_c^{c}  + [-2B,2B].
\end{align*}
  Further,  for arbitrary probability measures $\mu \in \PC(\XC)$ and $\nu \in \PC(\YC)$ it follows that 
  \begin{align}\label{eq:OTDualNice}
  		OT(\mu, \nu,c) = \sup_{f\in \FC} \mu(f^{cc}) + \nu(f^c)
  \end{align}
 and the set of dual optimizers  $S\!_c(\mu,\nu)$ of \eqref{eq:OTDualNice}, referred to as \emph{Kantorovich potentials}, is non-empty.
  \end{lem}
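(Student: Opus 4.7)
The plan is to establish the four set-theoretic inclusions, then derive the dual identity \eqref{eq:OTDualNice}, and finally use compactness to obtain existence of the maximizer.

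For the inclusion $\HC_c \subseteq \FC^{cc}$, I would first verify that $\HC_c \subseteq \FC$: every $h(x) = \inf_y c(x,y) - g(y) \in \HC_c$ satisfies $\norm{h}_\infty \leq \norm{c}_\infty + \norm{g}_\infty \leq 2B$, while $|h(x) - h(x')| \leq \sup_y |c(x,y) - c(x',y)| \leq w(d_\XC(x,x'))$ since the infimum of a family sharing a common modulus of continuity inherits that modulus. Next I would establish the classical $c$-concavity identity $h^{cc} = h$ for $h \in \HC_c$. The bound $h^{cc} \geq h$ is immediate; for the reverse, given $x$ and $\eps > 0$, pick $y_0 \in \YC$ with $c(x, y_0) - g(y_0) \leq h(x) + \eps$. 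Then $h(x') \leq c(x', y_0) - g(y_0)$ for every $x'$ yields $h^c(y_0) \geq g(y_0) \geq c(x, y_0) - h(x) - \eps$, hence $h^{cc}(x) \leq c(x, y_0) - h^c(y_0) \leq h(x) + \eps$. Letting $\eps \to 0$ gives $h = h^{cc} \in \FC^{cc}$.

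For the reverse inclusion $\FC^{cc} \subseteq \HC_c + [-2B, 2B]$, the key is to shift $f^c$ by the constant $\alpha \coloneqq \sup_x f(x)$, which automatically satisfies $|\alpha| \leq \norm{f}_\infty \leq 2B$. Bounding $f^c$ directly gives $\sup_y f^c(y) \leq \norm{c}_\infty - \alpha$ and $\inf_y f^c(y) \geq -\norm{c}_\infty - \alpha$, so $\tilde g(y) \coloneqq f^c(y) + \alpha$ maps into $[-\norm{c}_\infty, \norm{c}_\infty]$. Consequently $h(x) \coloneqq \inf_y c(x,y) - \tilde g(y) \in \HC_c$ and equals $f^{cc}(x) - \alpha$, yielding $f^{cc} = h + \alpha \in \HC_c + [-2B, 2B]$. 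The inclusions on the $\FC^c$ side follow by applying one further $c$-transform and using $\FC^{ccc} = \FC^c$ together with $(h + \alpha)^c = h^c - \alpha$ (so the symmetric interval $[-2B,2B]$ is preserved).

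Finally, for \eqref{eq:OTDualNice} and existence of optimizers, weak duality ($f^{cc}(x) + f^c(y) \leq c(x,y)$ pointwise) gives $\sup_{f \in \FC} \mu(f^{cc}) + \nu(f^c) \leq OT(\mu, \nu, c)$, while the reverse inequality follows from $\HC_c \subseteq \FC$ together with $h^{cc} = h$ and \eqref{eq:DualFormulationSpecificC}:
\begin{equation*}
  \sup_{f \in \FC} \mu(f^{cc}) + \nu(f^c) \geq \sup_{h \in \HC_c} \mu(h^{cc}) + \nu(h^c) = \sup_{h \in \HC_c} \mu(h) + \nu(h^c) = OT(\mu, \nu, c).
\end{equation*}
For existence, $\FC$ is compact in $C(\XC)$ by Arzel\`a--Ascoli (uniformly bounded and equicontinuous with common modulus $w$ on the compact metric space $\XC$), and $f \mapsto \mu(f^{cc}) + \nu(f^c)$ is continuous in uniform norm because the standard estimate $|\inf \phi - \inf \psi| \leq \norm{\phi - \psi}_\infty$ yields $\norm{f^c - g^c}_\infty, \norm{f^{cc} - g^{cc}}_\infty \leq \norm{f - g}_\infty$. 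The main subtlety I expect is the asymmetric choice of shift $\alpha = \sup_x f(x)$ in the second inclusion: a naive symmetric centering $\alpha = \tfrac12(\sup f^c + \inf f^c)$ only yields the cruder bound $[-3B, 3B]$, and one must exploit the non-symmetric behavior of the $c$-transform under additive shifts to match the range prescribed for $g$ in the definition of $\HC_c$ with the optimal constant $2B$.
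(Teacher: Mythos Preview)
Your proposal is correct and follows essentially the same approach as the paper. The only cosmetic differences are that you prove the $c$-concavity identity $h^{cc}=h$ for $h\in\HC_c$ by hand (the paper cites \cite[Proposition~1.34]{santambrogio2015optimal}) and that you obtain the upper bound in \eqref{eq:OTDualNice} via weak duality rather than via the inclusion $\FC\subseteq C(\XC)$ and the general dual formulation \eqref{eq: OTdual}; the crucial shift by $\alpha=\sup_x f(x)$, the $\FC^{ccc}=\FC^c$ reduction, and the Arzel\`a--Ascoli compactness argument are identical.
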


The proof of \Cref{lem:RelFCandConcave} is deferred to \Cref{subsubsec:RelFCandConcaveProof}. Overall, \Cref{lem:RelFCandConcave} justifies the use of the function class $\FC=\FC(B,w)$ for a dual OT formulation and enables us to state conditions of distributional limits in terms of $\FC$ instead of potentially varying collections of functions. 

\subsection{Distributional Limits under Weakly Converging Costs}\label{subsec:DistributionalLimits}

For the distributional limits in all the statements below, we consider independent and identically distributed random variables $\{X_i\}_{i=1}^n\sim\mu^{\otimes n}$ and independent $\{Y_i\}_{i=1}^{m}\sim \nu^{\otimes m}$ defined on the probability space put forward in the introduction. Based on these samples, we define empirical measures $ \mu_n\coloneqq \frac{1}{n}\sum_{i=1}^{n} \delta_{X_i}$ and $ \nu_m\coloneqq \frac{1}{m}\sum_{i=1}^{m} \delta_{Y_i}$. All the subsequent asymptotic results are to be understood for $n,m\to \infty$ with  $m/(n+m) \to \lambda \in (0,1)$, which we do not recall each time for space considerations. 

Our main result on the limit law for the empirical OT value under weakly converging costs is given as follows for the two-sample case. The one-sample case is discussed in Remark \ref{rmk:CommentsOTWeaklyConvergingCosts}\ref{rem:WeaklyConvergingCosts_OneSample}.  

\label{sec: OTestCosts}
\begin{thm}[OT under weakly converging costs]\label{thm:AbstractMainResult}
Let $c\in C(\XC\times \YC)$ and consider an estimator $ c_{n,m} \in C(\XC\times \YC)$ for $c$ such that $c_{n,m}(x,y)$ is measurable for each $(x,y) \in \XC\times \YC$. Let $w\colon \RR_+\rightarrow \RR_+$ be a concave modulus of continuity for $c$ with $w(\delta)>0$ for $\delta>0$ such that $|c(x,y) - c(x',y)|\leq w(d_\XC(x,x'))$ for all $x,x'\in \XC, y\in \YC$. Assume for $\mu\in \PC(\XC), \nu \in \PC(\YC)$ the following. 
\begin{enumerate}[label={\AThree}] 
   \item \label{ass:AThree} For the function class $\FC= \FC(2\norm{c}_\infty+1, 2w)$ from \eqref{eq:defF} joint weak convergence occurs,  \begin{align*}
   \sqrt{\frac{nm}{n+m}} \begin{pmatrix}
\mu_n - \mu\\
\nu_m - \nu\\
 c_{n,m} - c
\end{pmatrix} 
\dto
\begin{pmatrix}
\sqrt{\lambda}\;\Gproc^\mu\\
\sqrt{1-\lambda}\;\Gproc^\nu\\
\Gproc^c
\end{pmatrix}\quad \text{ in }\ell^\infty(\FC^{cc})\times \ell^\infty(\FC^c)\times C(\X\times \Y),
\end{align*}
where $(\Gproc^\mu,\Gproc^\nu,\Gproc^c)$ is a tight random variable and $\Gproc^\mu, \Gproc^\nu$ have covariance structure as in~\eqref{eq:covarianceStructureEmpProc}. 
\end{enumerate}
Further, suppose either one of the following two assumptions. 
\begin{enumerate}[label={\BOne}] 
    \item \label{ass:BOne} There exists a unique OT plan $\pi\in \Pi_{c}^\star(\mu,\nu)$ between $\mu$ and $\nu$ for the cost function $c$. 
\end{enumerate}
\begin{enumerate}[label={\BTwo}] 
    \item \label{ass:BTwo}  The empirical processes $\Gproc_n^{\mu}\coloneqq \sqrt{n}(\mu_n - \mu)$ and $\Gproc_m^{\nu}\coloneqq \sqrt{m}(\nu_m - \nu)$ fulfill the convergence $\sup_{f\in \FC}\Gproc_n^{\mu}(f^{ c_{n,m}c_{n,m}} - f^{cc})\ptoOut 0$ and $\sup_{f\in \FC}\Gproc_m^{\nu}(f^{ c_{n,m}} - f^{c})\ptoOut 0.$
\end{enumerate}
Then, it follows that 
\begin{align*}
     \sqrt{\frac{nm}{n+m}}\Big(OT( \mu_n, \nu_m, c_{n,m})-OT( \mu, \nu,c)\Big) \dto &\inf_{\pi \in \Pi_{c}^\star(\mu,\nu)}\pi(\Gproc^c) + \sup_{f \in S\!_c(\mu,\nu)} \!\!\!\sqrt{\lambda}\;\Gproc^\mu(f^{cc})\! + \!\sqrt{1-\lambda}\;\Gproc^\nu(f^c).
    \end{align*}
\end{thm}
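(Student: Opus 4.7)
The strategy is to sandwich the target between matching asymptotic lower and upper bounds via a Skorokhod representation of the joint weak convergence in \ref{ass:AThree}, exploiting throughout the dual formulation over the fixed compact class $\FC$ supplied by Lemma \ref{lem:RelFCandConcave}.

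First, since the limit $(\sqrt\lambda\,\Gproc^\mu,\sqrt{1-\lambda}\,\Gproc^\nu,\Gproc^c)$ is tight in the product space, the Dudley--Wichura version of the Skorokhod representation theorem transfers the convergence in \ref{ass:AThree} to almost-sure convergence on an auxiliary probability space, and it suffices to prove the asserted distributional identity there. On this space I decompose
\begin{equation*}
OT(\mu_n,\nu_m,c_{n,m})-OT(\mu,\nu,c)=A_{n,m}+B_{n,m},
\end{equation*}
where $A_{n,m}:=OT(\mu_n,\nu_m,c_{n,m})-OT(\mu_n,\nu_m,c)$ isolates the cost perturbation at the empirical marginals and $B_{n,m}:=OT(\mu_n,\nu_m,c)-OT(\mu,\nu,c)$ is the fixed-cost empirical OT fluctuation. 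The dual representation from Lemma \ref{lem:RelFCandConcave} renders $(\mu,\nu)\mapsto OT(\mu,\nu,c)$ Hadamard directionally differentiable tangentially to $\ell^\infty(\FC^{cc})\times\ell^\infty(\FC^c)$ (from prior work on empirical OT under fixed cost), and the functional delta method yields, almost surely,
\begin{equation*}
\sqrt{\tfrac{nm}{n+m}}\,B_{n,m}\to \sup_{f\in S_c(\mu,\nu)}\sqrt{\lambda}\,\Gproc^\mu(f^{cc})+\sqrt{1-\lambda}\,\Gproc^\nu(f^c).
\end{equation*}

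For $A_{n,m}$, concavity of $c\mapsto OT(\mu_n,\nu_m,c)$ (as an infimum of linear functionals in $c$) combined with the identification of its subgradient with $\Pi_c^\star(\mu_n,\nu_m)$ produces the two-sided sandwich
\begin{equation*}
\sup_{\pi\in\Pi_{c_{n,m}}^\star(\mu_n,\nu_m)}\int(c_{n,m}-c)\,d\pi\;\leq\;A_{n,m}\;\leq\;\inf_{\pi\in\Pi_c^\star(\mu_n,\nu_m)}\int(c_{n,m}-c)\,d\pi.
\end{equation*}
Rescaling, the uniform convergence $\sqrt{nm/(n+m)}(c_{n,m}-c)\to\Gproc^c$ in $C(\XC\times\YC)$ paired with weak stability of optimal transport plans under $(\mu_n,\nu_m,c)\to(\mu,\nu,c)$ and $(\mu_n,\nu_m,c_{n,m})\to(\mu,\nu,c)$ forces every a.s.\ cluster value of both envelopes to be of the form $\pi(\Gproc^c)$ for some $\pi\in\Pi_c^\star(\mu,\nu)$. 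Under \ref{ass:BOne} the target set is a singleton and both envelopes collapse to $\inf_{\pi\in\Pi_c^\star(\mu,\nu)}\pi(\Gproc^c)$. Under \ref{ass:BTwo} I supplement the primal sandwich with the dual inequality $OT(\mu_n,\nu_m,c_{n,m})\geq \mu_n f^{c_{n,m}c_{n,m}}+\nu_m f^{c_{n,m}}$ valid for any $f\in S_c(\mu,\nu)\subseteq\FC$: \ref{ass:BTwo} replaces the empirical process indices $f^{c_{n,m}c_{n,m}},f^{c_{n,m}}$ by $f^{cc},f^c$ with vanishing remainder in outer probability, while a Danskin-type envelope analysis of the population term $\mu(f^{c_{n,m}c_{n,m}}-f^{cc})+\nu(f^{c_{n,m}}-f^c)$ together with a gluing construction of plans $\pi_{n,m}\in\Pi(\mu_n,\nu_m)$ converging weakly to any prescribed $\pi^\star\in\Pi_c^\star(\mu,\nu)$ delivers the matching upper envelope via $OT(\mu_n,\nu_m,c_{n,m})\leq\int c_{n,m}\,d\pi_{n,m}$.

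Adding the a.s.\ limits of $\sqrt{nm/(n+m)}A_{n,m}$ and $\sqrt{nm/(n+m)}B_{n,m}$ on the Skorokhod space yields the claimed limit, and transferring back to the original space via the Skorokhod identification concludes the proof. The principal obstacle lies in case \ref{ass:BTwo}: without plan uniqueness, the primal envelopes in the sandwich for $A_{n,m}$ do not obviously collapse to the same population infimum, and closing the gap requires exploiting \ref{ass:BTwo} on the dual side to decouple the cost perturbation from the marginal empirical fluctuation. The interplay between this empirical-process control and the primal-dual structure of OT is what substitutes for plan uniqueness and ultimately produces the infimum over $\Pi_c^\star(\mu,\nu)$ in the asserted limit.
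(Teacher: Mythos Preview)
Your approach under \ref{ass:BOne} is sound and essentially coincides with the paper's: a Skorokhod representation together with the primal--dual sandwich and stability of optimal plans (all plans in $\Pi_c^\star(\mu_n,\nu_m)$ and $\Pi_{c_{n,m}}^\star(\mu_n,\nu_m)$ weakly converge to the unique $\pi^\star$) indeed closes the envelopes for $A_{n,m}$, and the fixed-cost term $B_{n,m}$ is handled by known Hadamard directional differentiability. Two technical points you gloss over: the Skorokhod representation must be taken for the \emph{augmented} vector that also carries $(\mu_n,\nu_m)\dto(\mu,\nu)$ in $\PC(\XC)\times\PC(\YC)$ (the paper does this in Lemma~\ref{lem:JointWeakConvergence}), since $\ell^\infty(\FC^{cc})$-convergence alone need not imply weak convergence of $\mu_n$; and a regularity-elevation step (Corollary~\ref{thm:RegElModulus}) is needed so that $c_{n,m}$ deterministically belongs to the class $C(2\|c\|_\infty+1,2w)$ and the $\FC$-dual formulation for $c_{n,m}$ is available.

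Under \ref{ass:BTwo} there is a genuine gap. Your decomposition $A_{n,m}+B_{n,m}$ places the cost perturbation at the \emph{empirical} marginals, so the upper envelope for $A_{n,m}$ is $\inf_{\pi\in\Pi_c^\star(\mu_n,\nu_m)}\pi(c_{n,m}-c)$; without plan uniqueness you only get $\limsup\le\pi(\Gproc^c)$ for \emph{some} cluster point $\pi\in\Pi_c^\star(\mu,\nu)$, not the infimum. Your proposed fixes do not close this: the gluing construction produces $\pi_{n,m}\in\Pi(\mu_n,\nu_m)$ with $\pi_{n,m}\rightharpoonup\pi^\star$, but then $OT(\mu_n,\nu_m,c_{n,m})-OT(\mu,\nu,c)\le\pi_{n,m}(c_{n,m}-c)+(\pi_{n,m}-\pi^\star)(c)$ and the second term is only $o(1)$ by weak convergence, not $o((nm/(n+m))^{-1/2})$; and the Danskin analysis of $\mu(f^{c_{n,m}c_{n,m}}-f^{cc})+\nu(f^{c_{n,m}}-f^c)$ gives at best an \emph{upper} bound $OT(\mu,\nu,c_{n,m})-OT(\mu,\nu,c)$, which goes the wrong way inside your lower envelope. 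The paper resolves this by using the \emph{opposite} decomposition for the upper bound:
\[
OT(\mu_n,\nu_m,c_{n,m})-OT(\mu,\nu,c)=\big[OT(\mu_n,\nu_m,c_{n,m})-OT(\mu,\nu,c_{n,m})\big]+\big[OT(\mu,\nu,c_{n,m})-OT(\mu,\nu,c)\big].
\]
The second bracket is $\le\inf_{\pi\in\Pi_c^\star(\mu,\nu)}\pi(c_{n,m}-c)$ directly, and the first bracket is bounded by $\sup_{f\in S_{c_{n,m}}(\mu_n,\nu_m)}(\mu_n-\mu)f^{c_{n,m}c_{n,m}}+(\nu_m-\nu)f^{c_{n,m}}$ via duality for $c_{n,m}$ (here the regularity elevation is essential so that $S_{c_{n,m}}(\mu_n,\nu_m)\subseteq\FC$), after which \ref{ass:BTwo} replaces $f^{c_{n,m}c_{n,m}},f^{c_{n,m}}$ by $f^{cc},f^c$ and upper semicontinuity of the optimizer set (Lemma~\ref{lem:ContinuityResults}) yields $\limsup\le\sup_{f\in S_c(\mu,\nu)}\sqrt\lambda\,\Gproc^\mu(f^{cc})+\sqrt{1-\lambda}\,\Gproc^\nu(f^c)$. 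The point is that perturbing the cost at the \emph{population} marginals, not the empirical ones, is what produces the correct infimum over $\Pi_c^\star(\mu,\nu)$.
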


A key insight of \Cref{thm:AbstractMainResult} is that the limit distribution for the estimated OT value can be decomposed into two terms: the fluctuation of the cost estimators evaluated at the collection of OT plans and the Kantorovich potentials evaluated at the limit of the empirical process. Under uniqueness of primal and dual optimizers for the population OT problem we obtain the following. 

\begin{corollary}[OT under weakly converging costs and uniqueness]\label{cor:AbstractMainResult_Uniqueness}
  In the setting of \Cref{thm:AbstractMainResult} assume \ref{ass:AThree}and \ref{ass:BOne}, and %
    suppose that the set of Kantorovich potentials $S\!_c(\mu,\nu)$ for $\mu, \nu$ with cost function $c$ is unique (up to a constant shift)\footnote{By this we mean, for any $f, g\in S\!_c(\mu,\nu)$ the difference $f - g$ is constant on $\supp(\mu)$.}.
   Then, for $\pi\in \Pi_c^\star(\mu, \nu)$ and $f\in S\!_c(\mu, \nu)$, it follows that 
  \begin{align}\label{eq:LimitUnderUniqueness}
   \sqrt{\frac{nm}{n+m}}\Big(OT( \mu_n, \nu_m,c_{n,m})-OT( \mu, \nu,c)\Big) \dto \pi(\Gproc^c) + \sqrt{\lambda}\Gproc^\mu(f^{cc}) +  \sqrt{1-\lambda}\Gproc^\nu(f^c).
   \end{align}
   In particular, if $(\Gproc^\mu, \Gproc^\nu, \Gproc^c)$ is a jointly centered Gaussian process in $\ell^\infty(\FC^{cc})\times \ell^\infty(\FC^c)\times C(\X\times \Y)$, the weak limit in \eqref{eq:LimitUnderUniqueness} is centered normal.
   \end{corollary}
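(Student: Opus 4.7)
The plan is to invoke Theorem~\ref{thm:AbstractMainResult} and then argue that, under the additional uniqueness hypotheses, both the infimum over $\Pi_c^\star(\mu,\nu)$ and the supremum over $S\!_c(\mu,\nu)$ appearing in the limit law collapse to deterministic evaluations. The infimum part is immediate from \ref{ass:BOne}: since $\Pi_c^\star(\mu,\nu)=\{\pi\}$, one has $\inf_{\pi'\in\Pi_c^\star(\mu,\nu)}\pi'(\Gproc^c)=\pi(\Gproc^c)$. The main task reduces to showing that $\sqrt{\lambda}\Gproc^\mu(f^{cc})+\sqrt{1-\lambda}\Gproc^\nu(f^c)$ is almost surely the same random variable for every $f\in S\!_c(\mu,\nu)$.

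Fix $f,g\in S\!_c(\mu,\nu)$. I first reduce to $c$-concave representatives: Lemma~\ref{lem:RelFCandConcave} gives $\FC^{cc}\subseteq\HC_c+[-2B,2B]$, so one can write $f^{cc}=\tilde f+\kappa_f$ and $g^{cc}=\tilde g+\kappa_g$ with $\tilde f,\tilde g\in\HC_c\subseteq\FC$ and $\kappa_f,\kappa_g\in[-2B,2B]$. Because $c$-transforms satisfy $(\tilde f+\kappa)^c=\tilde f^c-\kappa$ and $\tilde f^{cc}=\tilde f$, a direct computation shows $\mu(\tilde f^{cc})+\nu(\tilde f^c)=\mu(f^{cc})+\nu(f^c)=OT(\mu,\nu,c)$, i.e., $\tilde f,\tilde g\in S\!_c(\mu,\nu)$. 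The uniqueness hypothesis applied to $\tilde f,\tilde g$ yields $\tilde f-\tilde g\equiv\gamma$ on $\supp(\mu)$, hence $f^{cc}-g^{cc}\equiv\beta:=\gamma+\kappa_f-\kappa_g$ on $\supp(\mu)$, i.e.\ $\mu$-a.s. Since $\Gproc^\mu$ is a centered Gaussian process with covariance \eqref{eq:covarianceStructureEmpProc}, the difference $\Gproc^\mu(f^{cc})-\Gproc^\mu(g^{cc})$ is centered Gaussian with variance $\mathrm{Var}_\mu(f^{cc}-g^{cc})=0$, so it vanishes a.s. To transfer this constancy to the $\nu$-side, I invoke $c$-cyclical monotonicity: the non-negative continuous functions $c-f^{cc}\oplus f^c$ and $c-g^{cc}\oplus g^c$ are $\pi$-null, hence identically zero on the closed set $\supp(\pi)$. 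Subtracting yields $(f^{cc}-g^{cc})(x)+(f^c-g^c)(y)=0$ on $\supp(\pi)$. Since $\XC,\YC$ are compact Polish, $\proj_\XC(\supp(\pi))$ is closed, of full $\mu$-measure, and contained in $\supp(\mu)$, giving $\proj_\XC(\supp(\pi))=\supp(\mu)$, and similarly $\proj_\YC(\supp(\pi))=\supp(\nu)$. Combined with $f^{cc}-g^{cc}\equiv\beta$ on $\supp(\mu)$, this forces $f^c-g^c\equiv-\beta$ on $\supp(\nu)$, so that $\Gproc^\nu(f^c)=\Gproc^\nu(g^c)$ a.s.\ by the same variance computation.

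The two collapses reduce the limit in Theorem~\ref{thm:AbstractMainResult} to $\pi(\Gproc^c)+\sqrt{\lambda}\Gproc^\mu(f^{cc})+\sqrt{1-\lambda}\Gproc^\nu(f^c)$ for any choice of $(\pi,f)\in\Pi_c^\star(\mu,\nu)\times S\!_c(\mu,\nu)$. The Gaussian assertion then follows at once: the map $(U,V,W)\mapsto\pi(W)+\sqrt{\lambda}U(f^{cc})+\sqrt{1-\lambda}V(f^c)$ is a bounded linear functional on $\ell^\infty(\FC^{cc})\times\ell^\infty(\FC^c)\times C(\XC\times\YC)$, and continuous linear images of a centered Gaussian element are centered Gaussian. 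The main obstacle is the $\nu$-side collapse, because the uniqueness is posited only on $\supp(\mu)$; bridging it to $\supp(\nu)$ requires both cyclical monotonicity on $\supp(\pi)$ and the identification of the projections of $\supp(\pi)$ with the supports of the marginals.
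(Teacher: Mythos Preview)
Your proof is correct. The paper does not spell out a separate proof of this corollary; it treats the result as an immediate specialization of Theorem~\ref{thm:AbstractMainResult}, so your argument in fact supplies details the paper leaves implicit. Your route---invoking Theorem~\ref{thm:AbstractMainResult}, collapsing the infimum via \ref{ass:BOne}, and collapsing the supremum by showing that $\Gproc^\mu(f^{cc})$ and $\Gproc^\nu(f^c)$ are almost surely independent of the choice of $f\in S_{\!c}(\mu,\nu)$---is exactly the intended one.

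Two remarks on your execution versus what the paper would likely do. First, for the constancy-collapse $\Gproc^\mu(f^{cc})=\Gproc^\mu(g^{cc})$ the paper's internal machinery is Lemma~\ref{lem:SupportLimitingProcess}: the limit $\Gproc^\mu$ lives almost surely in the contingent cone $T_\mu\PC(\XC)$, whose elements are invariant under constant shifts of the argument. However that lemma is phrased for functions differing by a constant on all of $\XC$, so one still needs either the WLOG reduction $\XC=\supp(\mu)$ (as in the proof of Theorem~\ref{thm:OTProcessCty}) or your direct variance computation using the covariance~\eqref{eq:covarianceStructureEmpProc}. Your variance argument is cleaner here because it only uses $\mu$-a.s.\ constancy. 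Second, your complementary-slackness step bridging the hypothesis (stated only on $\supp(\mu)$) to constancy of $f^c-g^c$ on $\supp(\nu)$ is a genuine piece of the argument that the footnote leaves to the reader; it is correct as written and is the standard way to make this passage rigorous.
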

The proof of \Cref{thm:AbstractMainResult} is deferred to \Cref{subsubsec:AbstractMainResultProof} and relies on careful lower and upper bounds for the empirical OT value due to the primal \eqref{eq: OT} and dual formulation \eqref{eq:OTDualNice}, as well as arguments from empirical process theory. In the course of this, a key argument is the application of \Cref{lem:RelFCandConcave} for $c_{n,m}$ and $c$. Notably, we do not demand that the cost estimator $c_{n,m}$ is suitably bounded or exhibits a similar modulus of continuity as $c$ itself. Instead, we 
construct by \Cref{thm:RegElModulus} an alternative cost estimator $\overline c_{n,m}$ such that the conditions, $\|\overline c_{n,m}\|_\infty\leq 2\norm{c}_\infty+1$ as well as $|\overline c_{n,m}(x,y) - \overline c_{n,m}(x',y)|\leq 2w(d_\XC(x,x'))$ for all $x,x'\in \XC, y\in \YC$, are fulfilled deterministically and $\sqrt{nm/(n+m)}\|\overline c_{n,m} - c_{n,m}\|_\infty\pto 0$. The latter implies by \Cref{lem:LowerUpperBound} that
$$\sqrt{\frac{nm}{n+m}}\Big(OT( \mu_n, \nu_m,\overline c_{n,m}) - OT( \mu_n, \nu_m,c_{n,m})\Big) \leq \sqrt{\frac{nm}{n+m}}\|\overline c_{n,m} - c_{n,m}\|_\infty\pto 0.$$
It thus suffices to show the assertion for $\overline c_{n,m}$ where the dual formulation from \Cref{lem:RelFCandConcave} involving the function class $\FC(2\norm{c}_\infty+1, 2w)$ is available. We call $\overline c_{n,m}$ a \emph{regularity elevation} of~$c_{n,m}$; details on different kinds of regularity elevations are given in \Cref{subsec:RegEl}. The notion of regularity elevations also proves to be useful for showing the validity of condition \ref{ass:BTwo} as outlined in \Cref{subsec:AssumSup}.

\begin{rmk}
\label{rmk:CommentsOTWeaklyConvergingCosts}
We like to comment on a few aspects of the derived distributional limits. 
\begin{enumerate}[label=$(\roman*)$]
  \item \label{rem:WeaklyConvergingCosts_AssumptionDiscussion} The assumptions of \Cref{thm:AbstractMainResult} and sufficient conditions for their validity are discussed in Sections \ref{subsec:AssumJW} -- \ref{subsec:AssumSup}. 
  Effectively, \ref{ass:AThree} delimits the theory to settings of low dimensionality. In~such settings \ref{ass:BTwo} is often also valid as long as the population cost is sufficiently regular. 
\item \label{rem:WeaklyConvergingCosts_OneSample} Our proof technique for Theorem \ref{thm:AbstractMainResult} and \Cref{cor:AbstractMainResult_Uniqueness} also asserts distributional limits for the one-sample setting, i.e., when  $\mu$ is estimated by $\mu_n$ and $\nu$ is assumed to be known. For this setting,  \ref{ass:AThree} reduces to the condition \begin{align*}
  \sqrt{n} \begin{pmatrix}
\mu_n - \mu\\
c_{n} - c
\end{pmatrix} 
\dto
\begin{pmatrix}
\Gproc^\mu\\
\Gproc^c
\end{pmatrix}\quad \text{ in }\ell^\infty(\FC^{cc})\times C(\X\times \Y).
\end{align*}
Moreover, in \ref{ass:BTwo} we only require that $\sup_{f\in \FC}\Gproc_n^{\mu}(f^{ c_{n} c_{n}} - f^{cc})\ptoOut 0$. Then,  \begin{align*}
 \sqrt{n}\Big(OT( \mu_n, \nu,c_{n})-OT( \mu, \nu,c)\Big) \dto \inf_{\pi \in \Pi_{c}^\star(\mu,\nu)}\pi(\Gproc^c) + \sup_{f \in S\!_c(\mu,\nu)}  \Gproc^\mu(f^{cc}).
\end{align*}
  \item \label{rem:WeaklyConvergingCosts_FixedCost} In case of a \emph{fixed} cost function, i.e., when selecting $ c_n = c$, the conditions of \Cref{thm:AbstractMainResult} reduce to $\FC^{cc}$ being $\mu$-Donsker and $\FC^{c}$ being $\nu$-Donsker. Further, by \Cref{lem:RelFCandConcave} this is equivalent to $\HC_c$ and $\HC_c^c$ being Donsker for $\mu$ and $\nu$ \citep[Theorem 2.10.1 and Example 2.10.7]{van1996weak}, respectively, matching conditions (C) and (S2) of Theorem 2.1 in  \citet{Hundrieser2022Limits} which imply that $$\sqrt{\frac{nm}{n+m}}\Big(OT( \mu_n,  \nu_m,c) - OT(\mu, \nu,c)\Big)\dto \sup_{f \in S\!_c(\mu,\nu)}  \sqrt{\lambda}\Gproc^\mu(f^{cc}) +  \sqrt{1-\lambda}\Gproc^\nu(f^c).$$
  \item \label{rem:WeaklyConvergingCosts_GeneralMeasureEstimator}Our proof technique also yields distributional limits for the estimated OT value when instead of empirical measures $\mu_n$ and $\nu_m$ one considers measurable estimators $\tilde \mu_n\in \PC(\XC)$, $\tilde \nu_m\in \PC(\YC)$, respectively, that fulfill $\tilde \mu_n \dto \mu$ and $\tilde \nu_m \dto \nu$ in probability. This would mean to replace the empirical measures $\mu_n$ and $\nu_m$ in Assumptions \ref{ass:AThree} and \ref{ass:BTwo} by $\tilde \mu_n$ and $\tilde \nu_m$, respectively. In addition, instead of the scaling rate $\sqrt{nm/(n+m)}$ our proof technique theory also permits a different scaling rate $a_{n,m}$ which diverges to infinity for $n,m\rightarrow \infty$.
  \item\label{rem:WeaklyConvergingCosts_StabilityDiffInterpretation} In \Cref{prop:gateaux} we prove that the OT value is Gateaux differentiable in $(\mu, \nu,c)$ for admissible directions in $(\Delta^\mu, \Delta^\nu, \Delta^c) \in (\PC(\XC) - \mu)\times (\PC(\YC) - \nu)\times C(\XC\times \YC)$ with derivative, $$(\Delta^\mu, \Delta^\nu, \Delta^c) \mapsto \inf_{\pi\in \Pi^\star_c(\mu, \nu)}\pi(\Delta^c)+\sup_{f \in S\!_c(\mu,\nu)} \Delta^\mu(f^{cc}) +\Delta^\nu(f^c).$$
  Hence, the asymptotic distribution described in \Cref{thm:AbstractMainResult} may also be interpreted as a derivative of the OT value with respect to the triple $(\mu, \nu,c)$ evaluated at the limit process. 
  Proving \Cref{thm:AbstractMainResult} via an application of the functional delta method would amount to showing Hadamard directional differentiability of the OT value \citep{Roemisch04}. However, this turns out be a challenging issue without imposing additional assumptions on the measure and cost estimators, see \Cref{rmk:OnHadamardDifferentiability}.
  \item \label{rem:WeaklyConvergingCosts_VarianceNormalLimit}In case of a centered normal limit in \eqref{eq:LimitUnderUniqueness} the limit variance is given by \begin{align*}
    &\operatorname{Var}\big(\pi(\Gproc^c)\big)+ \lambda\operatorname{Var}_{X\sim \mu}\big(f^{cc}(X)\big)+ (1-\lambda)\operatorname{Var}_{Y\sim \mu}\big(f^{c}(Y)\big) \\
   & + 2 \sqrt{\lambda}\operatorname{Cov}\big(\pi (\Gproc^c), \Gproc^\mu(f^{cc})\big) + 2\sqrt{1-\lambda}\operatorname{Cov}\big(\pi(\Gproc^c), \Gproc^\nu(f^{c})\big),%
\end{align*}
	where we used that the random variables $X_1, \dots, X_n$ and $Y_1, \dots, Y_n$ are independent. 
In particular, the limit law degenerates if both Kantorovich potentials $(f^{cc}, f^c)$ are $(\mu, \nu)$-almost surely constant and $c_{n,m}$ converges to $c$ with a faster rate than $(nm/(n+m))^{-1/2}$, uniformly on the support of the OT plan $\pi$. For a sharp characterization of the occurrence of almost surely constant Kantorovich potentials we refer to Section 4 of \citet{Hundrieser2022Limits} where the authors showcase that for most cost functions of practical interest a.s.\ constancy typically does not occur if the underlying measures are different.
\end{enumerate}
\end{rmk}

\subsection{Distributional Limits under Extremal-Type Costs}\label{subsec:OTProcessResults}

As noted in the introduction, could the empirical infimal or supremal OT value over a fixed collection of cost functions also be analyzed using the previously described framework. However, as part of this approach, we would require the existence of a single underlying population cost function as well as weak convergence of the cost estimator.
To broaden the scope of our theory, we follow in this subsection a different route to derive limiting distributions where such conditions are not required. More precisely, we first prove a uniform distributional limit for the empirical OT process indexed over the collection of cost functions before relying on a delta method to characterize the distributional limits for the respective infimal and supremal statistics. 

For the subsequent assertions we again adhere to the sampling convention provided at the beginning of \Cref{subsec:DistributionalLimits}. The one-sample case is discussed in \Cref{rmk:CommentsOnOTProcessResults}\ref{rem:OTProcessResults_OneSample}.

\begin{thm}[OT process uniformly over compact $\Theta$]\label{thm:OTProcessCty}
Let $\Theta$ be a compact Polish space and consider a continuous map $c\colon \Theta \rightarrow C(\XC\times \YC), \theta\mapsto c_\theta$. 
Let $w\colon \RR_+\rightarrow \RR_+$ be a modulus of continuity such that $\sup_{\theta\in \Theta}|c_\theta(x,y) - c_\theta(x',y)|\leq w(d_\XC(x,x'))$ for all $x,x'\in \XC, y\in \YC$. 
Assume for $\mu \in \PC(\XC), \nu \in \PC(\YC)$ the following. 
\begin{enumerate}[label=\textbf{(Don)}]
	\item \label{ass:Don} For the function class $\FC= \FC(\sup_{\theta \in \Theta}\norm{c_\theta}_\infty, w)$ from \eqref{eq:defF} the collection $\bigcup_{\theta\in \Theta} \FC^{c_\theta c_\theta}$ is $\mu$-Donsker and $\bigcup_{\theta\in \Theta} \FC^{c_\theta}$ is $\nu$-Donsker. 
\end{enumerate}
\begin{enumerate}[label=\textbf{(KP)}]
		\item \label{ass:KPU} For any $\theta\in \Theta$, the set of Kantorovich potentials $S_{\!c_\theta}(\mu, \nu)\subseteq \FC$ for the OT problem between $\mu$ and $\nu$ and cost $c_\theta$ is unique (up to a constant shift). 
\end{enumerate}
Then, upon selecting $f_\theta\in S_{\!c_\theta}(\mu, \nu)$ for any $\theta\in\Theta$, it follows that 
	\begin{align*}
		\sqrt{\frac{nm}{n+m}}\Big( OT( \mu_n, \nu_m,c_\theta) - OT(\mu, \nu, c_\theta)\Big)_{\theta\in \Theta} \dto \left(\sqrt{\lambda}\Gproc^\mu(f_\theta^{c_\theta c_\theta})+  \sqrt{1-\lambda}\Gproc^\nu(f_\theta^{c_\theta})\right)_{\theta\in \Theta}\quad \text{ in } C(\Theta).
	\end{align*}	
	\vspace{-0.4cm}
\end{thm}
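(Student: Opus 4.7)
The strategy is to recast the OT problem via its dual formulation over a fixed function class, lift it to a sup functional valued in $C(\Theta)$, and then apply the functional delta method for Hadamard directionally differentiable maps.

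Setting $B := \sup_{\theta\in\Theta}\|c_\theta\|_\infty$ (finite by compactness of $\Theta$ and continuity of $\theta\mapsto c_\theta$) and $\FC := \FC(B,w)$ as in \eqref{eq:defF}, \Cref{lem:RelFCandConcave} applies simultaneously to every $c_\theta$ and yields $OT(\alpha,\beta,c_\theta) = \sup_{f\in\FC}\alpha(f^{c_\theta c_\theta})+\beta(f^{c_\theta})$ with $S_{\!c_\theta}(\alpha,\beta)\subseteq\FC$ non-empty. I would then define
\[
\Phi\colon \ell^\infty\!\bigl({\textstyle\bigcup_{\theta}}\FC^{c_\theta c_\theta}\bigr) \times \ell^\infty\!\bigl({\textstyle\bigcup_{\theta}}\FC^{c_\theta}\bigr) \to \ell^\infty(\Theta), \quad \Phi(\alpha,\beta)(\theta) := \sup_{f\in\FC}\alpha(f^{c_\theta c_\theta})+\beta(f^{c_\theta}),
\]
noting that $\Phi(\mu,\nu)\in C(\Theta)$ thanks to the elementary stability $|OT(\mu,\nu,c)-OT(\mu,\nu,c')|\le\|c-c'\|_\infty$ combined with continuity of $\theta\mapsto c_\theta$. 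The Donsker hypothesis \ref{ass:Don} together with independence of the samples delivers
\begin{align*}
\sqrt{\tfrac{nm}{n+m}}(\mu_n-\mu,\nu_m-\nu) \dto (\sqrt{\lambda}\Gproc^\mu,\sqrt{1-\lambda}\Gproc^\nu),
\end{align*}
tight in the continuity subspace $C_u(\bigcup_\theta\FC^{c_\theta c_\theta})\times C_u(\bigcup_\theta\FC^{c_\theta})$.

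The central analytic step, and the main obstacle, is to establish that $\Phi$ is Hadamard directionally differentiable at $(\mu,\nu)$ tangentially to this continuity subspace, with values in $C(\Theta)$, and derivative
\begin{align*}
\Phi'_{(\mu,\nu)}(\Delta^\mu,\Delta^\nu)(\theta) \;=\; \sup_{f\in S_{\!c_\theta}(\mu,\nu)} \Delta^\mu(f^{c_\theta c_\theta}) + \Delta^\nu(f^{c_\theta}).
\end{align*}
Under \ref{ass:KPU} the Kantorovich potentials coincide up to additive constants on $\supp(\mu)$, and such constants are annihilated by the zero-mass signed tangent directions, so the supremum collapses to the linear expression $\Delta^\mu(f_\theta^{c_\theta c_\theta}) + \Delta^\nu(f_\theta^{c_\theta})$ for any measurable selection $f_\theta\in S_{\!c_\theta}(\mu,\nu)$. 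Pointwise in $\theta$, the directional formula is the standard envelope identity for sup functionals from \citet[Lemma S.4.9]{fang2019inference}; the upgrade from pointwise to uniform-in-$\theta$ differentiability — the crux of the argument — I would obtain via the extension announced in \Cref{subsec:HadDiff}, whose proof rests on Dini's theorem \citep[Corollary 1]{toma1997strong} and on upper hemicontinuity of the maximizer correspondence $\theta\mapsto S_{\!c_\theta}(\mu,\nu)$. Both the hemicontinuity and the continuity of the derivative sample paths in $\theta$ follow from \ref{ass:KPU} combined with compactness of $\Theta$ and compactness of $\FC$ in $C(\XC)$.

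With uniform Hadamard directional differentiability established, the functional delta method for directionally differentiable maps (e.g.\ \citealt[Theorem 3.9.4]{van1996weak}) applied to $\Phi$ at $(\mu,\nu)$ transports the weak convergence of the joint empirical processes through $\Phi$ and delivers the asserted distributional limit
\[
\sqrt{\tfrac{nm}{n+m}}\bigl(OT(\mu_n,\nu_m,c_\theta)-OT(\mu,\nu,c_\theta)\bigr)_{\theta\in\Theta} \dto \bigl(\sqrt{\lambda}\Gproc^\mu(f_\theta^{c_\theta c_\theta}) + \sqrt{1-\lambda}\Gproc^\nu(f_\theta^{c_\theta})\bigr)_{\theta\in\Theta}
\]
in $C(\Theta)$, completing the proof.
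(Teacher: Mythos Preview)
Your proposal is correct and follows essentially the same route as the paper: recast OT via the dual formulation over the fixed class $\FC$, obtain joint weak convergence of the empirical processes from \ref{ass:Don}, establish uniform-in-$\theta$ Hadamard directional differentiability of the resulting sup functional via the machinery of \Cref{subsec:HadDiff} (with Dini's theorem and the optimizer hemicontinuity), use \ref{ass:KPU} together with the constant-annihilation property of tangent directions to linearize the derivative, and conclude by the functional delta method. The only point the paper makes more explicit is the reduction $\XC=\supp(\mu)$, $\YC=\supp(\nu)$, so that uniqueness up to constants on the support becomes uniqueness on the full domain, and the identification of the tangential set as the contingent cone $T_\mu\PC(\XC)\times T_\nu\PC(\YC)$ via \Cref{lem:SupportLimitingProcess}; you handle this implicitly through your remark on zero-mass tangent directions.
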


The proof of \Cref{thm:OTProcessCty} is based on Hadamard directional differentiability of the OT cost process, which follows from a general sensitivity analysis for extremal-type functions uniformly over a compact parameter space (\Cref{subsec:HadDiff}). The assertion for the empirical OT process then follows by invoking the functional delta method \citep{Roemisch04}; the proof is deferred to Section~\ref{subsubsec:OTProcessCtyProof}. 
        
From the above result, given any functional $\Phi\colon C(\Theta)\rightarrow \RR$ that is Hadamard directionally differentiable at the function $OT(\mu, \nu,c(\cdot)) \in C(\Theta)$, \Cref{thm:OTProcessCty}  yields by another application of the functional delta method, the distributional limit 
\begin{multline*}
  \sqrt{\frac{nm}{n+m}}\Big( \Phi\big(( OT( \mu_n, \nu_m,c_\theta))_{\theta\in \Theta}\big) - \Phi\big(( OT( \mu, \nu,c_\theta))_{\theta\in \Theta}\big)\Big) \\
  \dto D^{H}_{OT(\mu, \nu,c(\cdot))}\Phi\Big(\big(\sqrt{\lambda}\Gproc^\mu(f_\theta^{c_\theta c_\theta})+  \sqrt{1-\lambda}\Gproc^\nu(f_\theta^{c_\theta})\big)_{\theta\in \Theta}\Big).
\end{multline*}
Here, $D^{H}_{OT(\mu, \nu,c(\cdot))}\Phi$ denotes the directional Hadamard derivative of $\Phi$. This enables the derivation of the limit distribution for the infimal mapping  using \citet[Lemma S.4.9]{fang2019inference} (see also \citealt[Corollary 2.3]{carcamo2020directional}).  
  
 \begin{thm}[OT infimum over compact $\Theta$]\label{thm:OTProcessInf}
 	Consider the setting of \Cref{thm:OTProcessCty}. Then, %
	upon selecting $f_\theta\in S_{\!c_\theta}(\mu, \nu)$ for any $\theta\in\Theta$, it follows that 
 	$$\sqrt{\frac{nm}{n+m}}\left(\inf_{\theta\in \Theta} OT( \mu_n, \nu_m,c_\theta)- \inf_{\theta\in \Theta} OT( \mu, \nu,c_\theta)\right)\dto \inf_{\theta\in S_{\!-}(\Theta, \mu, \nu)} \sqrt{\lambda}\Gproc^\mu(f_\theta^{c_\theta c_\theta})+ \sqrt{1-\lambda}\Gproc^\nu(f_\theta^{c_\theta}),$$
 	where $S_{\!-}(\Theta, \mu, \nu) = \argmin_{\theta\in \Theta}OT(\mu, \nu,c_\theta)$ denotes the set of minimizers of $OT(\mu, \nu,c_\theta)$ over $\Theta$. 
 \end{thm}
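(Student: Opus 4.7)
The plan is to deduce this result directly from Theorem \ref{thm:OTProcessCty} by applying the functional delta method with the infimum map $\Phi\colon C(\Theta)\to\RR$, $\Phi(g)\coloneqq \inf_{\theta\in\Theta}g(\theta)$. Since $\Theta$ is compact Polish, $\Phi$ is well-defined and continuous on $C(\Theta)$, and the composition $\Phi\circ OT(\mu,\nu,c_{(\cdot)})$ returns precisely $\inf_{\theta\in\Theta}OT(\mu,\nu,c_\theta)$ (and analogously for the empirical quantity). The remark made just before the theorem already anticipates this route via \citet[Lemma S.4.9]{fang2019inference}, which gives exactly the differentiability we need.

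The only non-trivial step is to verify that $\Phi$ is Hadamard directionally differentiable at the continuous function $g\coloneqq OT(\mu,\nu,c_{(\cdot)})\in C(\Theta)$, tangentially to $C(\Theta)$, with derivative
\begin{align*}
D^{H}_{g}\Phi(h) \;=\; \inf_{\theta\in\argmin_{\theta'\in\Theta} g(\theta')} h(\theta), \qquad h\in C(\Theta).
\end{align*}
Continuity of $g$ on the compact space $\Theta$ guarantees that $\argmin g = S_{\!-}(\Theta,\mu,\nu)$ is a nonempty compact subset of $\Theta$, so the right-hand side is well-defined and continuous in $h\in C(\Theta)$. This identity is the infimal analogue of the sensitivity result invoked in the proof of Theorem \ref{thm:OTProcessCty} and is the one established in \citet[Lemma S.4.9]{fang2019inference} and \citet[Corollary 2.3]{carcamo2020directional}.

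With differentiability in hand, one combines Theorem \ref{thm:OTProcessCty} with the functional delta method for Hadamard directionally differentiable maps \citep{Roemisch04}. Denoting by $(G_\theta)_{\theta\in\Theta}\coloneqq \bigl(\sqrt{\lambda}\Gproc^\mu(f_\theta^{c_\theta c_\theta})+\sqrt{1-\lambda}\Gproc^\nu(f_\theta^{c_\theta})\bigr)_{\theta\in\Theta}$ the tight Gaussian limit process in $C(\Theta)$, this yields
\begin{align*}
\sqrt{\tfrac{nm}{n+m}}\!\left(\inf_{\theta\in\Theta}OT(\mu_n,\nu_m,c_\theta)-\inf_{\theta\in\Theta}OT(\mu,\nu,c_\theta)\right)\dto D^{H}_{g}\Phi\bigl((G_\theta)_{\theta\in\Theta}\bigr)=\inf_{\theta\in S_{\!-}(\Theta,\mu,\nu)} G_\theta,
\end{align*}
which is exactly the claimed distributional limit.

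The main technical difficulty has already been absorbed into Theorem \ref{thm:OTProcessCty}, so the remaining work here is essentially a bookkeeping exercise: one must make sure that the tangential set in the delta method is large enough to contain the limit (automatic, since the limit process has sample paths in $C(\Theta)$) and that $g$ itself lies in $C(\Theta)$ (which follows from continuity of $\theta\mapsto c_\theta$ together with the stability inequality $|OT(\mu,\nu,c_\theta)-OT(\mu,\nu,c_{\theta'})|\le \|c_\theta-c_{\theta'}\|_\infty$ used in the proof of Theorem \ref{thm:OTProcessCty}). No further argument specific to the infimum operator is needed, and the analogous statement for the supremum is obtained by replacing $\Phi$ with $-\Phi(-\cdot)$.
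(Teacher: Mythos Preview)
Your proposal is correct and follows essentially the same approach as the paper: invoke the Hadamard directional differentiability of the infimum map on $C(\Theta)$ via \citet[Lemma S.4.9]{fang2019inference} (or \citet[Corollary 2.3]{carcamo2020directional}) and then apply the functional delta method \citep{Roemisch04} to the process limit of Theorem~\ref{thm:OTProcessCty}. Your closing remark that the supremum case follows by the substitution $\Phi\mapsto -\Phi(-\cdot)$ is slightly misleading, however, since Theorem~\ref{thm:OTProcessSup} dispenses with Assumption~\ref{ass:KPU} and is proved by a different, direct route.
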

  
In case only \ref{ass:Don} holds, one can still infer the limit law for the empirical supremal OT value. 

\begin{thm}[OT supremum over compact $\Theta$]\label{thm:OTProcessSup}
Consider the setting of \Cref{thm:OTProcessCty} and only  assume \ref{ass:Don}. Then, it follows that %
$$\sqrt{\frac{nm}{n+m}}\left(\sup_{\theta\in \Theta} OT( \mu_n, \nu_m,c_\theta)- \sup_{\theta\in \Theta} OT(\mu, \nu, c_\theta)\right)\dto \sup_{\substack{\theta\in S_{\!+}(\Theta, \mu, \nu)\\ f_\theta\in S_{\!c_\theta}(\mu, \nu)}}  \sqrt{\lambda}\Gproc^\mu(f_\theta^{c_\theta c_\theta})+  \sqrt{1-\lambda}\Gproc^\nu(f_\theta^{c_\theta}),$$
where $S_{\!+}(\Theta, \mu, \nu) = \argmax_{\theta\in \Theta}OT(\mu, \nu,c_\theta)$ denotes the set of maximizers of $OT(\mu, \nu,c_\theta)$ over $\Theta$.
\end{thm}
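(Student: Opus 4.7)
The plan is to recast the supremal OT value as the supremum of a linear-in-measures functional over the product of the compact parameter space $\Theta$ and the compact function class $\FC$, and then apply the functional delta method with a joint Danskin-type Hadamard directional derivative supplied by the generalization of \citet[Lemma S.4.9]{fang2019inference} established in \Cref{subsec:HadDiff}. Concretely, since $\sup_\theta \|c_\theta\|_\infty <\infty$ (by continuity of $\theta \mapsto c_\theta$ on compact $\Theta$) and $w$ is a common modulus for $\{c_\theta\}_{\theta\in\Theta}$, \Cref{lem:RelFCandConcave} gives for the class $\FC = \FC(\sup_\theta \|c_\theta\|_\infty, w)$ the identity
\begin{align*}
\sup_{\theta\in\Theta} OT(\mu',\nu',c_\theta) \;=\; \Psi(\mu',\nu') \;\coloneqq\; \sup_{(\theta,f)\in\Theta\times\FC}\!\big[\mu'(f^{c_\theta c_\theta}) + \nu'(f^{c_\theta})\big]
\end{align*}
for any $\mu'\in\PC(\XC), \nu'\in\PC(\YC)$, where $\FC$ is compact in $C(\XC)$ by Arzelà–Ascoli, and the objective $(\theta,f)\mapsto (f^{c_\theta c_\theta}, f^{c_\theta})$ is continuous from $\Theta\times\FC$ to $C(\XC)\times C(\YC)$ by continuity of $\theta\mapsto c_\theta$ and standard joint continuity properties of $c$-transformations.

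Next, I would view $\Psi$ as a functional on $\ell^\infty(\GC_1)\times \ell^\infty(\GC_2)$ with $\GC_1 = \bigcup_{\theta\in\Theta}\FC^{c_\theta c_\theta}$ and $\GC_2 = \bigcup_{\theta\in\Theta}\FC^{c_\theta}$. Assumption \ref{ass:Don} combined with the independence of the two samples yields the joint weak convergence
\begin{align*}
\sqrt{\tfrac{nm}{n+m}}(\mu_n-\mu,\,\nu_m-\nu)\;\dto\;(\sqrt{\lambda}\,\Gproc^\mu,\sqrt{1-\lambda}\,\Gproc^\nu)\quad\text{in } \ell^\infty(\GC_1)\times\ell^\infty(\GC_2),
\end{align*}
with limit supported on pairs of uniformly $L^2$-continuous maps. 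The key step is then to show that $\Psi$ is Hadamard directionally differentiable at $(\mu,\nu)$ tangentially to this subspace, with derivative
\begin{align*}
D\Psi_{\mu,\nu}(h_1,h_2) \;=\; \sup_{\theta\in S_{\!+}(\Theta,\mu,\nu)}\;\sup_{f\in S_{\!c_\theta}(\mu,\nu)} \big[h_1(f^{c_\theta c_\theta})+h_2(f^{c_\theta})\big].
\end{align*}
This is precisely a supremum-sensitivity statement over the compact product $\Theta\times\FC$: one checks that the argmax of $(\theta,f)\mapsto \mu(f^{c_\theta c_\theta})+\nu(f^{c_\theta})$ splits as $\{(\theta,f):\theta\in S_{\!+}(\Theta,\mu,\nu),\; f\in S_{\!c_\theta}(\mu,\nu)\}$, the objective is continuous, and the perturbation $h_n(\theta,f)\coloneqq h_1^n(f^{c_\theta c_\theta})+h_2^n(f^{c_\theta})$ converges uniformly on $\Theta\times\FC$ to $h(\theta,f)\coloneqq h_1(f^{c_\theta c_\theta})+h_2(f^{c_\theta})$ whenever $(h_1^n,h_2^n)\to (h_1,h_2)$ in $\ell^\infty(\GC_1)\times\ell^\infty(\GC_2)$. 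Hence the sensitivity result from \Cref{subsec:HadDiff} applies verbatim to the compact parameter set $\Theta\times\FC$.

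Combining the two ingredients, the functional delta method (see \citealt{Roemisch04}) yields
\begin{align*}
\sqrt{\tfrac{nm}{n+m}}\big(\Psi(\mu_n,\nu_m)-\Psi(\mu,\nu)\big)\;\dto\; D\Psi_{\mu,\nu}\!\big(\sqrt{\lambda}\,\Gproc^\mu,\sqrt{1-\lambda}\,\Gproc^\nu\big),
\end{align*}
which is exactly the asserted limit. A mild point is that the delta method requires the limiting direction to almost surely lie in the tangent space of $D\Psi_{\mu,\nu}$; this holds because the Donsker hypothesis \ref{ass:Don} guarantees that $\Gproc^\mu$ and $\Gproc^\nu$ have (uniformly) continuous sample paths with respect to the intrinsic $L^2$ pseudo-metric on $\GC_1,\GC_2$, which in turn makes $(\theta,f)\mapsto \Gproc^\mu(f^{c_\theta c_\theta})+\Gproc^\nu(f^{c_\theta})$ continuous on $\Theta\times\FC$ and its supremum over the compact argmax well-defined.

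The principal obstacle is Step 3: establishing joint Hadamard directional differentiability of $\Psi$ over the product compact set $\Theta\times\FC$ without the uniqueness assumption \ref{ass:KPU}. Without \ref{ass:KPU} the inner $f$-argmax (at fixed $\theta\in S_{\!+}$) is a genuine set, so one cannot rely on the $C(\Theta)$-valued OT process result \Cref{thm:OTProcessCty} and a single sup functional on $C(\Theta)$; instead the sensitivity analysis must be carried out on the full joint compact index set. The machinery developed in \Cref{subsec:HadDiff} for extremal-type functionals over an arbitrary compact parameter space is tailored exactly for this situation, making the decisive technical step a direct application once the dual-reformulation trick above has exhibited the joint compact index set.
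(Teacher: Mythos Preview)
Your proposal is correct and follows essentially the same route as the paper: both recast the supremal OT value via the dual formulation as a supremum of the linear objective $\mu'(f^{c_\theta c_\theta})+\nu'(f^{c_\theta})$ over the compact product $\Theta\times\FC$, verify conditions \textbf{(EC)}, \textbf{(Lip)}, \textbf{(Lin)} of \Cref{thm:DiffPsi} (with \textbf{(DC)} trivially satisfied since the outer parameter space collapses to a singleton), and then invoke the functional delta method together with the Donsker property \ref{ass:Don} and independence of the samples. The only minor refinement is that the paper phrases the tangent-space requirement via \Cref{lem:SupportLimitingProcess} (the limit lies in the contingent cone $T_\mu\PC(\XC)\times T_\nu\PC(\YC)$) rather than via $L^2$-continuity of sample paths, but this is the same verification in substance.
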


The proofs of Theorems \ref{thm:OTProcessInf} and \ref{thm:OTProcessSup} are documented in Sections~\ref{subsubsec:OTProcessInfProof} and \ref{subsubsec:OTProcessSupProof}, respectively. Moreover, in some contexts the compactness assumption on $\Theta$ might be too restrictive. The following result provides an extension to non-compact spaces $\Theta$ and focuses on the infimal statistic; an analogue statement also holds for the supremal statistic. Its proof is deferred to \Cref{subsubsec:ProofOfCorollaryNonCompactStuff}. %

\begin{prop}[OT infimum over general $\Theta$]\label{cor:OTInfimumNonCompactTheta}
  Let $\Theta$ be a Polish space and consider a continuous map $c\colon \Theta\rightarrow C(\XC\times \YC)$. Let $\mu\in \PC(\XC), \nu \in \PC(\YC)$ and suppose there is a compact set $K\subseteq \Theta$ such that $S_{\!-}(\Theta, \mu, \nu) \subseteq K$,  there is a sequence of minimizers $\theta_{n,m}\in S_{\!-}(\Theta, \mu_n, \nu_m)$ with $\lim_{n,m\rightarrow \infty}\pOut (\theta_{n,m} \not\in K) = 0$, and that the assumptions of \Cref{thm:OTProcessInf} hold with $\Theta$ replaced by~$K$. 
  Then, the assertion of \Cref{thm:OTProcessInf} on the empirical infimal OT value over $\Theta$ remains valid. 
\end{prop}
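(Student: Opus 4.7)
The plan is to reduce to the compact case, namely Theorem~\ref{thm:OTProcessInf} applied on $K$, exploiting that on an event of asymptotic outer probability one the empirical infimum of $OT(\mu_n, \nu_m, c_\theta)$ over $\Theta$ agrees with its restriction to $K$, and that the same equality holds deterministically on the population side.

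First I would verify that $\inf_{\theta \in \Theta} OT(\mu, \nu, c_\theta) = \inf_{\theta \in K} OT(\mu, \nu, c_\theta)$ and $S_{\!-}(K, \mu, \nu) = S_{\!-}(\Theta, \mu, \nu)$. The map $\theta \mapsto OT(\mu, \nu, c_\theta)$ is continuous by continuity of $\theta \mapsto c_\theta$ together with the elementary Lipschitz estimate $|OT(\mu,\nu,c)-OT(\mu,\nu,c')| \le \|c-c'\|_\infty$, hence it attains its minimum on the compact set $K$. Because $\emptyset \neq S_{\!-}(\Theta, \mu, \nu) \subseteq K$, this minimum coincides with $\inf_{\theta \in \Theta} OT(\mu, \nu, c_\theta)$, from which both identities follow.

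On the empirical side, on the event $E_{n,m} \coloneqq \{\theta_{n,m} \in K\}$ the chain
$$\inf_{\theta \in \Theta} OT(\mu_n, \nu_m, c_\theta) = OT(\mu_n, \nu_m, c_{\theta_{n,m}}) \geq \inf_{\theta \in K} OT(\mu_n, \nu_m, c_\theta) \geq \inf_{\theta \in \Theta} OT(\mu_n, \nu_m, c_\theta)$$
forces equality throughout. Setting
\begin{align*}
A_{n,m} &\coloneqq \sqrt{\tfrac{nm}{n+m}}\Big(\inf_{\theta \in \Theta} OT(\mu_n, \nu_m, c_\theta) - \inf_{\theta \in \Theta} OT(\mu, \nu, c_\theta)\Big), \\
B_{n,m} &\coloneqq \sqrt{\tfrac{nm}{n+m}}\Big(\inf_{\theta \in K} OT(\mu_n, \nu_m, c_\theta) - \inf_{\theta \in K} OT(\mu, \nu, c_\theta)\Big),
\end{align*}
the hypothesis $\pOut(E_{n,m}^c) \to 0$ yields $\pOut(A_{n,m} \neq B_{n,m}) \to 0$. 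By Theorem~\ref{thm:OTProcessInf} applied with $\Theta$ replaced by $K$, $B_{n,m}$ converges weakly to $\inf_{\theta \in S_{\!-}(K, \mu, \nu)} \sqrt{\lambda}\,\Gproc^\mu(f_\theta^{c_\theta c_\theta}) + \sqrt{1-\lambda}\,\Gproc^\nu(f_\theta^{c_\theta})$, which equals the target limit by the identification of minimizer sets. A Slutsky-type argument in the Hoffmann-J\o rgensen framework (see, e.g., Theorem~1.3.6 in \citet{van1996weak}) then transfers this weak limit to $A_{n,m}$.

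The main obstacle is a mild measurability nuisance: $\theta_{n,m}$ is only assumed to exist rather than to arise as a measurable selection, so $(A_{n,m}, B_{n,m})$ need not be a well-behaved measurable pair. This is circumvented by working directly with the outer-probability statement $\pOut(E_{n,m}^c) \to 0$ supplied by the hypothesis: we never manipulate $\theta_{n,m}$ itself, only the event that it lies in $K$.
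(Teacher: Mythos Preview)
Your proposal is correct and follows essentially the same route as the paper: both arguments show that the difference between the infimum over $\Theta$ and over $K$ vanishes in outer probability on the event $\{\theta_{n,m}\in K\}$, then invoke Slutzky together with Theorem~\ref{thm:OTProcessInf} on $K$ and the identification $S_{\!-}(K,\mu,\nu)=S_{\!-}(\Theta,\mu,\nu)$. Your write-up is in fact slightly more explicit than the paper's, as you spell out why the population infima and minimizer sets coincide, which the paper asserts without argument; the only quibble is the reference for the Slutzky step, where the paper points to \citet[Example~1.4.7]{van1996weak}.
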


\begin{rmk}
\label{rmk:CommentsOnOTProcessResults}
A few comments are in order concerning the weak limits for the empirical OT cost process as well as the respective infimal and supremal statistic. 
\begin{enumerate}[label=$(\roman*)$]
  \item\label{rem:OTProcessResults_EquicontinutiyCondition} 
  In the setting of \Cref{thm:OTProcessCty} the parameter space $\Theta$ is compact and $c\colon \Theta \rightarrow C(\XC\times \YC)$ is continuous, therefore the range $c(\Theta)$ is also compact in $C(\XC\times \YC)$. In particular, by the Theorem of Arzel\`a-Ascoli, we conclude that $\sup_{\theta \in \Theta}\norm{c_\theta}_\infty<\infty$ and there exists a suitable modulus of continuity for all cost functions uniformly on $\Theta$. %, as required in \Cref{thm:OTProcessCty}. 
  \item \label{rem:OTProcessResults_AssumtionDiscussion} Both assumptions of \Cref{thm:OTProcessCty} and sufficient conditions are discussed in Sections \ref{subsec:AssumDon} and \ref{subsec:AssumKP}. Assumption \ref{ass:Don} appears natural in order to control the empirical OT process uniformly over $\Theta$, whereas \ref{ass:KPU} is to ensure that the limit process is supported in $C(\Theta)$ and stays tight. Our proof technique suggests that \ref{ass:KPU} can be slightly lifted, but not much. For instance, one could demand that Kantorovich potentials $S_{\!c_\theta}(\mu, \nu)$ which attain the supremum in the derivative can be approximated by Kantorovich potentials $S_{\!c_{\theta'}}(\mu, \nu)$  for $\theta'$ in the immediate vicinity of $\theta$, as required in \Cref{lem:LSC_con}(i). In particular, if $\Theta \coloneqq \{\theta_1, \dots, \theta_K\}$ is a finite set equipped with discrete topology, then \ref{ass:KPU} can be omitted.
  \item \label{rem:OTProcessResults_OneSample} The results also extend to the one-sample setting, i.e., when  $\mu$ is estimated by $\mu_n$ and $\nu$ is assumed to be known. 
  For the one-sample version of \Cref{thm:OTProcessCty} it suffices to assume in \ref{ass:Don} that the function class $\cup_{\theta\in \Theta} \FC^{c_\theta c_\theta}$ is $\mu$-Donsker in conjunction with \ref{ass:KPU}. Upon selecting $f_\theta\in S_{\!c_\theta}(\mu, \nu)$ for any $\theta\in \Theta$, the  limit distribution is then given for $n \rightarrow \infty$ by 
  \begin{align*}
    \sqrt{n}\Big(OT(\mu_n, \nu,c_\theta)- OT(\mu, \nu,c_\theta)\Big)_{\theta\in \Theta} \dto \left( \Gproc^\mu(f^{c_\theta c_\theta}_\theta) \right)_{\theta\in \Theta} \quad \text{ in } C(\Theta).
  \end{align*}
    Under identical assumptions, the one-sample analogue of \Cref{thm:OTProcessInf} is available. For the validity of the one-sample result in \Cref{thm:OTProcessSup} it suffices that $\cup_{\theta\in \Theta} \FC^{c_\theta c_\theta}$ is $\mu$-Donsker.
  \item \label{rem:NormalityDegeneracy}\label{rem:OTProcessResults_NormalityDegeneracy} The obtained weak limits highlight an intimate dependency of limit distributions to the collection of Kantorovich potentials. 
  In \Cref{thm:OTProcessCty} the limit process is centered Gaussian due to Assumption \ref{ass:KPU}. %
	For fixed $\theta\in \Theta$ the limiting random variables degenerates to a Dirac measure at zero if the respective Kantorovich potentials are $(\mu, \nu)$-almost surely constant. Moreover, the limit distribution in \Cref{thm:OTProcessInf} is also centered normal if Kantorovich potentials $(f^{c_\theta c_\theta}, f^{c_\theta})$ for $\mu, \nu$ and $c_\theta$ coincide (up to a constant shift) on $\supp(\mu)\times \supp(\nu)$ for any $\theta\in S_{\!-}(\Theta, \mu,\nu)$. Under analogous assumptions for $\theta\in S_{\!+}(\Theta, \mu,\nu)$ the limit distribution in \Cref{thm:OTProcessSup} is centered normal. In particular, assuming \ref{ass:KPU}, this condition is fulfilled if $S_{\!-}(\Theta, \mu, \nu)$ or $S_{\!+}(\Theta, \mu,\nu)$ consist of a singleton. The resulting limit distributions degenerate if Kantorovich potentials are $(\mu, \nu)$-almost surely constant. A sharp characterization of almost surely constant potentials is detailed in Section 4 of \citet{Hundrieser2022Limits}. 
\end{enumerate}
\end{rmk}

\subsection{Bootstrap Principle for Optimal Transport Costs}\label{subsec:BootstrapPrinciple}

Since the limit distributions in Theorems \ref{thm:AbstractMainResult}, \ref{thm:OTProcessInf} and \ref{thm:OTProcessSup} involve the set of Kantorovich potentials (and OT plans), under non-unique optimizers there is little hope for an explicit, closed-form description of the quantiles for these distributions,  which is required for further practical purposes. To circumvent this issue we suggest the use of a $k$-out-of-$n$ bootstrap procedure with $k = \oh(n)$ whose consistency is shown in this subsection. 

For simplicity, we state the subsequent results for equal sample sizes, i.e., $n=m$ as well as bootstrap samples of equal size $k=\oh(n)$. Under differing sample sizes $n\neq m$ one would select  bootstrap samples of size $k=\oh(n),l=\oh(m)$ such that $l/(l+k)\approx m/(n+m)$. 
Below, we always consider the same bootstrap approach that we now introduce. For the two sequences of i.i.d.\ random variables $\{X_i\}_{i=1}^n\sim\mu^{\otimes n}$,  
$\{Y_i\}_{i=1}^n\sim\nu^{\otimes n}$, with respective empirical measures $ \mu_n,  \nu_n$, consider another sequence of  i.i.d.\ bootstrap random variables $\{\Xib\}_{i=1}^k\sim\mu_n^{\otimes k}$, 
$\{\Yib\}_{i=1}^k\sim\nu_n^{\otimes k}$ and define the bootstrap empirical measures $\muboot \coloneqq \frac{1}{k}\sum_{i = 1}^{k} \delta_{\Xib}$ and $\nuboot \coloneqq \frac{1}{k}\sum_{i = 1}^{k} \delta_{\Yib}$. Moreover, we write in the subsequent statement $c_{n}$ for the cost estimator and  $\cboot$ for the bootstrap cost estimator. 

\begin{prop}[Bootstrap for OT under weakly converging costs]
\label{thm: BootConst}
In the setting of \Cref{thm:AbstractMainResult}, assume $\textit{\ref{ass:AThree}}$ and either $\textit{\ref{ass:BOne}}$ or $\textit{\ref{ass:BTwo}}$. Let $\cboot\in C(\XC\times \YC)$ be the bootstrap cost estimator such that $\cboot(x,y)$ is measurable for all $(x,y) \in \XC\times \YC$.
Further, assume the following. 
\begin{enumerate}[label={\AThreeStar}] 
   \item \label{ass:AThreeStar} The bootstrap empirical processes are conditionally on $X_1, \dots, X_n, Y_1, \dots Y_n$ consistent in the space $\ell^\infty(\FC^{cc})\times \ell^\infty(\FC^c)\times C(\X\times \Y)$ for $n,k \rightarrow \infty$ with $k=\oh(n)$, i.e., 
   \begin{align*}
       d_{BL}\left( \Law\left(\sqrt{k} \begin{pmatrix}
\muboot - \mu_n\\
\nuboot - \nu_n\\
 \cboot - c_n
\end{pmatrix} \middle| X_1, \dots, X_n, Y_1, \dots Y_n\right),\Law\left( \sqrt{n} \begin{pmatrix}
\mu_n - \mu\\
\nu_n - \nu\\
 c_n - c
\end{pmatrix}\right) \right)\ptoOut 0. 
   \end{align*}
\end{enumerate}
In case of setting \textit{\ref{ass:BTwo}} additionally assume the following. 
\begin{enumerate}[label={\BTwoStar}] 
    \item \label{ass:BTwoStar} The unconditional bootstrap empirical processes $\Gproc_{n,k}^{\mu}\coloneqq  \sqrt{k}(\muboot- \mu)$ and $\Gproc_{n,k}^{\nu} \coloneqq \sqrt{k}(\nuboot -~\nu)$ fulfill $\sup_{f\in \FC}\Gproc_{n,k}^{\mu}(f^{ \cboot \cboot} - f^{cc})\ptoOut 0$,  $\sup_{f\in \FC}\Gproc_{n,k}^{\nu}(f^{ \cboot} - f^{c})\ptoOut 0$ for $n,k \rightarrow \infty$,  $k=\oh(n)$. 
\end{enumerate}
Then, it follows for $n,k\rightarrow\infty$ with $k = \oh(n)$ that \begin{align*}
     d_{BL}\Big( \Law\left(\sqrt{k}\left( OT( \muboot, \nuboot, \cboot) - OT(\mu_n, \nu_n,c_n)\right)\middle|X_1, \dots, X_n, Y_1, \dots, Y_n\right), \\ \Law\left(\sqrt{n}\left( OT( \mu_{n}, \nu_{n},c_{n}) - OT(\mu, \nu,c)\right)\right) \Big) \ptoStar 0. 
\end{align*}
\end{prop}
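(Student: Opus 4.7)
The overall plan is to transpose the proof strategy of Theorem \ref{thm:AbstractMainResult} into the conditional bootstrap setting. Specifically, the goal is to show that, conditionally on $(X_1,\dots,X_n,Y_1,\dots,Y_n)$, the law of $\sqrt{k}\bigl(OT(\muboot,\nuboot,\cboot)-OT(\mu_n,\nu_n,c_n)\bigr)$ converges weakly in $\pOut$-probability to the fixed limit law $L$ appearing in Theorem \ref{thm:AbstractMainResult}. Since Theorem \ref{thm:AbstractMainResult} already gives $\sqrt{n}(OT(\mu_n,\nu_n,c_n)-OT(\mu,\nu,c)) \dto L$ unconditionally, and $d_{BL}$ metrises weak convergence, the triangle inequality then yields the claim. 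Recalling that $d_{BL}(\hat L_n, L) \ptoOut 0$ is equivalent to the subsequence property that every subsequence admits a further subsequence along which the conditional laws converge weakly $\prob_0$-a.s., the problem reduces to an almost-sure argument along such subsequences.

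The first structural step is a regularity elevation, exactly as in the proof of Theorem \ref{thm:AbstractMainResult}. We apply Theorem \ref{thm:RegElModulus} to the random cost $\cboot$ to produce a modified bootstrap cost $\overline{\cboot}$ that satisfies, deterministically, $\|\overline{\cboot}\|_\infty\le 2\|c\|_\infty+1$ and $|\overline{\cboot}(x,y)-\overline{\cboot}(x',y)|\le 2w(d_\XC(x,x'))$, while \ref{ass:AThreeStar} (together with the tightness implicit in \ref{ass:AThree}) ensures $\sqrt{k}\|\overline{\cboot}-\cboot\|_\infty\ptoOut 0$. By Lemma \ref{lem:LowerUpperBound}, replacing $\cboot$ by $\overline{\cboot}$ in the bootstrap OT value alters it by $o_{\pOut}(k^{-1/2})$, and the analogous elevation applied to $c_n$ controls the centering term. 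Henceforth we may work with costs confined to the fixed compact class of regular functions, so that the dual representation of Lemma \ref{lem:RelFCandConcave} on $\FC = \FC(2\|c\|_\infty+1,2w)$ is available simultaneously for all costs appearing in the argument.

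Along the extracted subsequence, \ref{ass:AThreeStar} provides conditional weak convergence $\prob_0$-almost surely of the triple $\sqrt{k}(\muboot-\mu_n,\nuboot-\nu_n,\overline{\cboot}-c_n)$ to $(\sqrt{\lambda}\Gproc^\mu,\sqrt{1-\lambda}\Gproc^\nu,\Gproc^c)$ in $\ell^\infty(\FC^{cc})\times\ell^\infty(\FC^c)\times C(\XC\times\YC)$. Applying the Skorokhod representation on the conditional probability space, we can realise these convergences almost surely and then reproduce the sandwich argument of the main theorem: the upper bound comes from evaluating the dual formulation on any fixed $f\in S_{\!c}(\mu,\nu)$, and the lower bound from the primal inequality $OT(\muboot,\nuboot,\overline{\cboot})\ge \pi(\overline{\cboot})-\text{slack}$ for $\pi\in\Pi_c^\star(\mu,\nu)$, with the slack controlled by the empirical process terms. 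Under \ref{ass:BOne}, uniqueness of $\pi$ directly collapses the bounds to the asserted limit; under \ref{ass:BTwo}, condition \ref{ass:BTwoStar} plays exactly the role of \ref{ass:BTwo} in the original proof, killing the discrepancy $\sup_{f\in\FC}|\Gproc_{n,k}^{\mu}(f^{\overline{\cboot}\overline{\cboot}}-f^{cc})|$ and its $\nu$-analogue. The residual centering gap $\sqrt{k}(OT(\mu_n,\nu_n,c_n)-OT(\mu,\nu,c)) = O_{\pOut}(\sqrt{k/n}) = o_{\pOut}(1)$ is absorbed at the end.

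The main obstacle is the careful coordination of the two layers of randomness. The Skorokhod representation must be executed on the conditional space in a way compatible with the Hoffmann--J\o{}rgensen outer-probability framework, and the stochastic equicontinuity condition \ref{ass:BTwoStar} must be exploited on the fixed class $\FC^{\overline{\cboot}\overline{\cboot}}$ rather than on the a priori uncontrolled $\FC^{\cboot\cboot}$; this is precisely where the deterministic regularity enforced by the elevation becomes indispensable. Once these two points are handled, the remaining arguments are direct conditional analogues of those in the proof of Theorem \ref{thm:AbstractMainResult}.
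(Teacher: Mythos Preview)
Your overall plan—regularity elevation, then a Skorokhod-plus-sandwich argument mimicking Theorem~\ref{thm:AbstractMainResult}—is the same as the paper's. The substantive difference lies in how you handle the conditional structure. You propose to pass to subsequences along which the conditional bootstrap law converges $\prob_0$-a.s.\ and then run a Skorokhod representation on the conditional probability space. The paper instead invokes the equivalence of \citet[Lemma 2.2]{bucher2019note}: conditional bootstrap consistency (assumption \ref{ass:AThreeStar}) is equivalent to \emph{unconditional} joint weak convergence of the original process together with two independent bootstrap replicates. This turns the problem into a single unconditional weak limit, so one Skorokhod representation on the product space $(C_u(\FC)^2\times C(\XC\times\YC)\times\PC(\XC)\times\PC(\YC))^3$ suffices, and the sandwich bounds from Lemma~\ref{lem:LowerUpperBound} and the semi-continuity in Lemma~\ref{lem:ContinuityResults} apply to all three components simultaneously; the reverse direction of the B\"ucher--Kojadinovic lemma then closes the argument. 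The advantage is that it sidesteps the delicate measurability issues of conditional Skorokhod constructions in the Hoffmann--J\o{}rgensen framework that you yourself flag as ``the main obstacle''.

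One point in your sketch needs correcting: your sentence ``the lower bound from the primal inequality $OT(\muboot,\nuboot,\overline{\cboot})\ge \pi(\overline{\cboot})-\text{slack}$ for $\pi\in\Pi_c^\star(\mu,\nu)$'' is not how the sandwich works. A population plan $\pi\in\Pi(\mu,\nu)$ is not feasible for the bootstrap marginals, and in any case feasibility in the primal gives an \emph{upper} bound on $OT$, not a lower one. The actual lower bound in Lemma~\ref{lem:LowerUpperBound} takes the infimum over $\pi\in\Pi^\star_{\overline{\cboot}}(\muboot,\nuboot)$ of $\pi(\overline{\cboot}-c)$, plus a dual term; the desired limit then emerges from lower semicontinuity of $T_2$ once you know $(\muboot,\nuboot,\overline{\cboot})\to(\mu,\nu,c)$ weakly in probability (this is where the paper uses Lemma~\ref{lem: asConvEmpMeas} and $k=\oh(n)$). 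Your upper-bound description has the same inversion. With these roles put straight, and with the B\"ucher--Kojadinovic reformulation in place of the conditional Skorokhod step, the argument goes through.
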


Despite not relying on the functional delta method for the derivation of the limit distribution of the empirical OT value under weakly converging costs, we obtain a similar bootstrap principle as \citet[Proposition 2]{dumbgen1993nondifferentiable} by employing an equivalent formulation for bootstrap consistency \citep{bucher2019note} in conjunction with the use of a Skorokhod representation. The full proof is provided in \Cref{subsubsec:BootConstProof}.

\begin{rmk}
  When employing the functional delta method, the $n$-out-of-$n$ bootstrap is not consistent if the Hadamard directional derivative is not linear \citep[Proposition 1]{dumbgen1993nondifferentiable}. Although \Cref{thm: BootConst} does not build on a differentiability result, we show in \Cref{subsec:stability} that the OT functional is Gateaux directional differentiability with a derivative that is non-linear if primal or dual optimizers are non-unique.  Since Gateaux directional differentiability is implied by Hadamard directional differentiability, this suggests in the regime of non-unique optimizers the inconsistency of the naive $n$-out-of-$n$ bootstrap for the empirical OT cost under weakly converging costs. 
\end{rmk}

 Verification of the bootstrap consistency in the settings of Theorems \ref{thm:OTProcessCty}--\ref{thm:OTProcessSup} is straightforward. It is a direct consequence of consistency of the  $k$-out-of-$n$ bootstrap empirical processes with $k=\oh(n)$ \citep[Theorem 3.6.13]{van1996weak} and the functional delta method for the bootstrap \citep[Proposition 2]{dumbgen1993nondifferentiable}. Hence, we omit the proof of the following proposition.
\begin{prop}[Bootstrap for OT process, supremum, and infimum]
	Let	$\XC, \YC$ be compact Polish spaces and $\Theta$ a compact topological space. Consider a continuous map $c\colon \Theta \rightarrow C(\XC\times \YC), \theta\mapsto c_\theta$, let $\mu\in \PC(\XC), \nu\in \PC(\YC)$ and assume \ref{ass:Don}. 
\begin{enumerate}[label=$(\roman*)$]
	\item (OT process in $C(\Theta)$) Then, under \ref{ass:KPU}, it follows for $n,k \rightarrow \infty$ with $k\leq n$~that 
	\begin{align*}
     d_{BL}\bigg( \Law\left(\sqrt{k}\left( OT( \muboot, \nuboot, c_{\theta}) - OT(\mu_n, \nu_n,c_{\theta})\right)_{\theta\in \Theta}\middle|X_1, \dots, X_n, Y_1, \dots, Y_n\right), \\ \Law\left(\sqrt{n}\left( OT( \mu_{n}, \nu_{n},c_{\theta}) - OT(\mu, \nu,c_{\theta})\right)\right)_{\theta\in \Theta}\bigg)\ptoStar 0. 
\end{align*}
	\item (OT infimum over $\Theta$)  Then, under \ref{ass:KPU}, it follows for $n,k\rightarrow\infty$ with $k\leq \oh(n)$~that  
	\begin{align*}
     d_{BL}\bigg( \Law\left(\sqrt{k}\left( \inf_{\theta\in \Theta}OT( \muboot, \nuboot,c_\theta) - \inf_{\theta\in \Theta}OT(\mu_n, \nu_n,c_\theta)\right)\middle|X_1, \dots, X_n, Y_1, \dots, Y_n\right), \\ \Law\left(\sqrt{n}\left(\inf_{\theta\in \Theta} OT( \mu_{n}, \nu_{n}, { c_{\theta}}) - \inf_{\theta\in \Theta}OT(\mu, \nu, { c_{\theta}})\right)\right)\bigg) \ptoStar 0.
	\end{align*}
	\item (OT supremum over $\Theta$)  Then, it follows for $n,k \rightarrow \infty$ with $k=\oh(n)$ that 
	\begin{align*}
     d_{BL}\bigg( \Law\left(\sqrt{k}\left( \sup_{\theta\in \Theta}OT( \muboot, \nuboot,c_\theta) - \sup_{\theta\in \Theta}OT(\mu_n, \nu_n,c_\theta)\right)\middle|X_1, \dots, X_n, Y_1, \dots, Y_n\right), \\ \Law\left(\sqrt{n}\left(\sup_{\theta\in \Theta} OT( \mu_{n}, \nu_{n},c_\theta) - \sup_{\theta\in \Theta}OT(\mu, \nu,c_\theta)\right)\right)\bigg)  \ptoStar 0.
	\end{align*}
	\end{enumerate}
\end{prop}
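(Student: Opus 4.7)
The plan is to combine two standard ingredients: bootstrap consistency of empirical processes indexed by Donsker classes, and the functional delta method for the bootstrap. Under \ref{ass:Don}, \citet[Theorem~3.6.13]{van1996weak} ensures that the joint $k$-out-of-$n$ bootstrap empirical process is conditionally consistent in $\ell^\infty(\bigcup_{\theta\in \Theta}\FC^{c_\theta c_\theta})\times \ell^\infty(\bigcup_{\theta \in \Theta}\FC^{c_\theta})$ for any $k\leq n$ with $k\to \infty$; that is, the bounded Lipschitz distance between $\Law(\sqrt{k}(\muboot - \mu_n, \nuboot - \nu_n)\mid X_1,\ldots,X_n, Y_1,\ldots, Y_n)$ and $\Law(\sqrt{n}(\mu_n - \mu, \nu_n - \nu))$ goes to zero in outer probability. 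The delta method for the bootstrap \citep[Proposition~2]{dumbgen1993nondifferentiable} then transports this through any Hadamard directionally differentiable functional, with $k\leq n$ admissible when the derivative is linear and $k=\oh(n)$ required when it is only directionally linear.

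For part (i), the proof of \Cref{thm:OTProcessCty} already shows that under \ref{ass:Don} and \ref{ass:KPU} the map $(\mu, \nu)\mapsto (OT(\mu, \nu, c_\theta))_{\theta\in \Theta}\in C(\Theta)$ is Hadamard directionally differentiable tangentially to $C_u(\bigcup_{\theta\in \Theta}\FC^{c_\theta c_\theta})\times C_u(\bigcup_{\theta\in \Theta}\FC^{c_\theta})$, with derivative $(\Delta^\mu, \Delta^\nu)\mapsto (\Delta^\mu(f_\theta^{c_\theta c_\theta})+ \Delta^\nu(f_\theta^{c_\theta}))_{\theta \in \Theta}$. Since this derivative is linear, the bootstrap delta method applies with $k\leq n$ and yields the asserted conditional convergence in bounded Lipschitz distance.

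For parts (ii) and (iii), we compose with the inf and sup functionals $h\mapsto \inf_{\theta\in \Theta}h(\theta)$ and $h\mapsto \sup_{\theta\in \Theta}h(\theta)$ on $C(\Theta)$, which are Hadamard directionally differentiable with derivatives $\dot h\mapsto \inf_{\theta\in \arg\min h}\dot h(\theta)$ and $\dot h\mapsto \sup_{\theta\in \arg\max h}\dot h(\theta)$ (see \citealt[Lemma~S.4.9]{fang2019inference} and \citealt[Corollary~2.3]{carcamo2020directional}). For (ii), the chain rule combined with the differentiability from (i) yields Hadamard directional differentiability of the infimal OT functional. For (iii), since \ref{ass:KPU} is not imposed, the OT process need not have a tight limit in $C(\Theta)$; instead we appeal to the proof of \Cref{thm:OTProcessSup}, which establishes directly that $(\mu,\nu)\mapsto \sup_{\theta\in \Theta}OT(\mu, \nu, c_\theta)$ is Hadamard directionally differentiable via the uniform extremal-type sensitivity result in \Cref{subsec:HadDiff}. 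In both cases the derivative is generically non-linear, and \citet[Proposition~2]{dumbgen1993nondifferentiable} forces the rate $k=\oh(n)$ as stated.

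The main obstacle is part (iii): without \ref{ass:KPU} a chain rule through the OT process is unavailable, since that process has no tight $C(\Theta)$-valued limit. The resolution is to treat the supremum over $\Theta$ and the supremum over Kantorovich potentials as one extremal statistic, exactly as done in the proof of \Cref{thm:OTProcessSup}, so that the uniform Hadamard directional differentiability in \Cref{subsec:HadDiff} applies directly to the composed functional. Once differentiability is in place in all three cases, bootstrap consistency follows immediately from the delta method for the bootstrap, which is why the proof is omitted in the paper.
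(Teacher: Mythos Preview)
Your proposal is correct and follows essentially the same route as the paper: bootstrap consistency of the empirical process via \citet[Theorem~3.6.13]{van1996weak} combined with the functional delta method for the bootstrap \citep[Proposition~2]{dumbgen1993nondifferentiable}, with the linearity of the derivative under \ref{ass:KPU} justifying $k\leq n$ in part~(i) and the direct differentiability from the proof of \Cref{thm:OTProcessSup} handling part~(iii) without \ref{ass:KPU}. This is precisely why the paper omits the proof.
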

 Notably, we also obtain consistency of the $n$-out-of-$n$ bootstrap for setting $(i)$ since \ref{ass:KPU} implies linearity of the Hadamard directional derivative.

%----------------------------------------------------------------------------------------------
%----------------------------------------------------------------------------------------------

%                                         DISCUSSION OF ASSUMPTIONS

%----------------------------------------------------------------------------------------------
%----------------------------------------------------------------------------------------------
\section{Discussion of Assumptions}\label{subsec:Assumptions}
In this section we discuss the assumptions on the distributional limits and the bootstrap consistency. We also provide sufficient conditions for their validity. %
All the proofs are deferred to \Cref{app:SufficientCriteriaProofs}. 

\subsection{Assumptions \ref{ass:AThree} and \ref{ass:AThreeStar}: Joint Weak Convergence}\label{subsec:AssumJW}

For the empirical OT value under estimated costs we demand in  \ref{ass:AThree} and \ref{ass:AThreeStar} weak convergence of the empirical processes in $\linfty(\FC^{cc})$ and $\linfty(\FC^{c})$, where $\FC = \FC(2\norm{c}_\infty+1,2w)$ is selected as in \Cref{thm:AbstractMainResult}. This requires $\FC^{cc}$ and $\FC^{c}$ to be $\mu$- and $\nu$-Donsker, respectively.  Moreover, we demand weak convergence of the estimated cost function in $C(\XC\times \YC)$ to ensure that any sequence of OT plans for $ \mu_n,  \nu_m$ and $ c_{n,m}$ tends towards an OT plan in $\Pi^\star_c(\mu, \nu)$. Finally, we stress the necessity of \emph{joint} weak convergence in \ref{ass:AThree} and \ref{ass:AThreeStar} as the limit distribution is determined by the random variable $(\Gproc^\mu,\Gproc^\nu,\Gproc^c)$ and thus characterized by their dependency. 

Even though apparently unavoidable, these conditions are somewhat restrictive and delimit the theory to low dimensional settings. This is to be expected as estimation of the OT value (under population costs) suffers from the curse of dimensionality (\cite{Manole21}), leading to slow convergence rates when both population measures $\mu, \nu$ exhibit high-dimensional support. However, in view of the recently discovered \emph{lower complexity adaptation} principle \citep{hundrieser2022empirical}, it suffices that one measure, $\mu$ or $\nu$, is supported on a low dimensional space. The following proposition provides bounds on the covering numbers (see the notation section for a definition) of $\FC^{c}$ and $\FC^{cc}$ under uniform norm which leads to a universal Donsker property for both function classes.  %

\begin{prop}[Universal Donsker property]
\label{prop:DonskerProperty}
Let $c\in C(\XC\times \YC)$ be a continuous cost function with $\norm{c}_\infty\leq 1$. Assume one of the three settings. 
\begin{enumerate}[label=(\roman*)]
    \item $\XC=\{x_1, \dots, x_N\}$ is a finite space (and no additional assumption on $c$). 
    \item There exists a pseudo metric\footnote{A non-negative function $d\colon \MC\times \MC\rightarrow \RR_+$ on a set $\MC$ is a pseudo-metric if the three conditions $d(x,x) = 0$, $d(x,y) = d(y,x)$ and $d(x,y) \leq d(x,z)+d(z,y)$ are fulfilled for any $x,y,z\in \MC$.} $\tilde d_{\XC}$ on $\XC$ such that $\NC(\eps, \XC,\tilde  d_{\XC}) \lesssim \eps^{-\beta}$ for $\eps>0$ sufficiently small and some $\beta\in (0,2)$ and $c(\cdot,y)$ is $1$-Lipschitz under $\tilde d_{\XC}$ for all $y\in \YC$.  
    \item $\XC = \bigcup_{i = 1}^{I}\zeta_i(\UC_i)$ for $I\in \NN$ compact, convex subsets $\UC_i\subseteq \reals^{d_i}$, $d_i \leq 3$ with non-empty interior and maps $\zeta_i \colon \UC_i \rightarrow \XC$ such that for each $i\in \{1, \dots, I\}$ the function $c(\zeta_i(\cdot),y)$ is $(\gamma_i, 1)$-H\"older\footnote{\label{foot:Holder} A function $f\colon \UC\rightarrow \RR$ on a convex set $\UC\subseteq \RR^d$ with non-empty interior is $(\gamma,\Lambda)$-H\"older with modulus $\Lambda\geq 0$ and $\gamma\in (0,1]$ if $\norm{f}_\infty<\Lambda$ and $|f(x)-f(y)|\leq \Lambda\norm{x-y}^\gamma$ for any $x,y\in \UC$. Further, $f$ is called $(\gamma,\Lambda)$-H\"older for $\gamma\in (1,2]$ if every partial derivative of $f$ is $(\gamma - 1, \Lambda)$-H\"older. If $\UC$ is not open, we assume the existence of an extension $\tilde f$ of $f$ onto an open convex set containing $\UC$ such that $\tilde f$ is $(\gamma, \Lambda)$-H\"older thereon, cf. \citet{hundrieser2022empirical}.} on $\UC_i$  for some $\gamma_i \in (d_i/2,2]$ for all $y \in \YC$.
    \end{enumerate}
Let $B\geq 0$ and consider a modulus of continuity $w\colon \RR_+\to \RR_+$ with respect to a metric $d_\XC$ on $\XC$. Then, for each setting there exists some $\alpha < 2$ such that for $\eps>0$ sufficiently small, \begin{align*}
  \log\NC(\eps, \FC^{c}, \norm{\cdot}_\infty) =\log\NC(\eps, \FC^{c c}, \norm{\cdot}_\infty) \lesssim \eps^{-\alpha} \quad \text{ for } \FC = \FC(B, w),\end{align*}
where the hidden constant depends for (i) on $N$, for (ii) on $\NC(\eps, \XC, \tilde d_{\XC})$, and for (iii) on $(\zeta_i, \UC_i)_{i = 1}^{I}$. In particular, the function classes $\FC^{c}$ and $\FC^{cc}$ are universal Donsker. 
\end{prop}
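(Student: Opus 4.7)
The plan is to reduce both covering numbers to a common quantity, then bound the latter via Kolmogorov--Tikhomirov-type entropy estimates in each of the three settings, and finally conclude via the standard uniform-entropy criterion for universal Donsker classes. First I would note that the $c$-transform is $1$-Lipschitz in sup-norm, since $|g_1^c(x)-g_2^c(x)|\leq \|g_1-g_2\|_\infty$, and that $\FC^{cc}=\{g^c:g\in \FC^c\}$ while $\FC^c=\{h^c:h\in \FC^{cc}\}$ (using the identity $f^{ccc}=f^c$). This at once yields the claimed equality of covering numbers, so it suffices to bound $\NC(\eps,\FC^{cc},\|\cdot\|_\infty)$. I would also observe that every $h\in \FC^{cc}$ is uniformly bounded (by $2\|c\|_\infty+2B$) and inherits, uniformly in $y$, any modulus of continuity that $c(\cdot,y)$ has, because an infimum of $w$-continuous functions is $w$-continuous.

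In setting (i), $\FC^{cc}$ is a bounded subset of $\RR^N$ in sup-norm, so $\log \NC(\eps,\FC^{cc},\|\cdot\|_\infty)\lesssim N\log(1/\eps)$, dominated by $\eps^{-\alpha}$ for any $\alpha>0$. In setting (ii), the inherited $1$-Lipschitzness under the pseudo-metric $\tilde d_\XC$ places $\FC^{cc}$ inside a uniformly bounded Lipschitz ball on $(\XC,\tilde d_\XC)$, and the Kolmogorov--Tikhomirov entropy estimate yields $\log \NC(\eps,\FC^{cc},\|\cdot\|_\infty)\lesssim \NC(\eps,\XC,\tilde d_\XC)\log(1/\eps)\lesssim \eps^{-\beta}\log(1/\eps)$, which is $\lesssim \eps^{-\alpha}$ for any $\alpha\in(\beta,2)$. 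For setting (iii) I would use the decomposition $\XC=\bigcup_i\zeta_i(\UC_i)$ together with the product bound $\NC(\eps,\FC^{cc},\|\cdot\|_\infty)\leq \prod_i\NC(\eps,\FC^{cc}\!\circ\zeta_i,\|\cdot\|_\infty)$. On each piece, $h\circ\zeta_i(\cdot)=\inf_y c(\zeta_i(\cdot),y)-h^c(y)$ is an infimum of $(\gamma_i,1)$-H\"older functions on $\UC_i\subseteq \RR^{d_i}$, and the classical H\"older-ball entropy bound then gives $\log \NC(\eps)\lesssim \eps^{-d_i/\gamma_i}$, so that $\alpha\coloneqq\max_i d_i/\gamma_i<2$ via the assumption $\gamma_i>d_i/2$.

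The main obstacle is setting (iii) when $\gamma_i>1$, because the pointwise infimum of $C^{1,\gamma_i-1}$ functions need not itself be $C^{1,\gamma_i-1}$, so the standard H\"older-ball entropy bound is not directly applicable to $h\circ\zeta_i$. I would circumvent this by exploiting the semiconcavity that $c$-concave functions inherit from a smooth cost: at any $u_0\in \UC_i$ and any minimising selector $y^*$, one has $h(\zeta_i(u))\leq c(\zeta_i(u),y^*)-h^c(y^*)$ with equality at $u=u_0$, so $h\circ\zeta_i$ is locally majorised, with contact, by a $(\gamma_i,1)$-H\"older function. This touching property still supports an entropy bound of order $\eps^{-d_i/\gamma_i}$ for the resulting class of $c$-concave functions, as made precise in the covering-number estimates of \citet{hundrieser2022empirical}, which I would invoke here. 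Combining the three cases gives $\log \NC(\eps,\FC^{cc},\|\cdot\|_\infty)\lesssim \eps^{-\alpha}$ for some $\alpha<2$, and since the $L^2(P)$-entropy is dominated by the sup-norm entropy, $\int_0^1\sqrt{\log \NC(\eps,\FC^{cc},L^2(P))}\,d\eps\lesssim \int_0^1\eps^{-\alpha/2}\,d\eps<\infty$ for every Borel probability $P$; the uniform-entropy condition is satisfied, so $\FC^c$ and $\FC^{cc}$ are universal Donsker.
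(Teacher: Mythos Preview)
Your proposal is correct and takes essentially the same approach as the paper. The paper is slightly more concise: it first invokes \Cref{lem:RelFCandConcave} to obtain the inclusion $\FC^{cc}\subseteq \HC_c + [-2B,2B]$, then cites \citet[Lemma~2.1]{hundrieser2022empirical} and \citet[Proposition~1.34]{santambrogio2015optimal} for the covering-number equality, and finally defers all three entropy bounds on $\HC_c$ to Section~3.1 and Appendix~A of \citet{hundrieser2022empirical}; your more explicit discussion of setting~(iii), identifying the semiconcavity mechanism behind the $\eps^{-d_i/\gamma_i}$ bound, is exactly the content of those cited results.
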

The bounds for the covering numbers stated in the above proposition are essential for the weak convergence of the empirical processes $\sqrt{n}(\mu_n-\mu)$ and $\sqrt{m}(\nu_m-\nu)$ and represent an important tool for verifying \ref{ass:AThree}. In order to clarify the assumptions of \Cref{prop:DonskerProperty}, we showcase them in a simple example. We additionally refer to \citet[Section 3]{hundrieser2022empirical} and \citet[Section 5]{Hundrieser2022Limits} for more illustrative examples.  
\begin{ex}
Suppose that $\XC$ and $\YC$ are compact subsets of $\RR^3$ and let $c:\RR{^3\times\RR{^3}}\to\RR$ be twice continuously differentiable. By 
enlarging $\XC$ to a compact, convex set and since $c$ can be rescaled such that $c(\cdot,y)$ is $(2,1)$-H\"older on $\XC$, \Cref{prop:DonskerProperty} is applicable in this setting.
\end{ex}  

To state sufficient conditions for \ref{ass:AThree} and \ref{ass:AThreeStar} we assume that the population cost as well as the empirical and bootstrap estimators are determined by %
the underlying measures via a Hadamard directionally differentiable functional. For simplicity, we consider in the subsequent proposition random variables $\{X_i\}_{i = 1}^{n}\sim \mu^{\otimes n}, \{Y_i\}_{i =1}^{n}\sim \nu^{\otimes n}$ of identical sample size~$n$ with empirical measures $\mu_n,\nu_n$, and bootstrap samples $\{\Xib\}_{i=1}^{k}\sim \mu_n^{\otimes k}$, $\{\Yib\}_{i=1}^{k}\sim \nu_n^{\otimes k}$ of size $k = k(n) = \oh(n)$ with corresponding bootstrap empirical measures $\muboot, \nuboot$. 

\begin{prop}[Joint weak convergence]
\label{prop: A3StarEx_NonPar}
Let $\FC_\XC$,  $\FC_\YC$ be bounded function classes on $\XC$ and $\YC$, respectively, and assume there is a functional $\Phi_c\colon \PC(\XC)\times \PC(\YC) \subseteq \linfty(\FC_\XC)\times \linfty(\FC_\YC)\rightarrow C(\XC\times \YC)$ such that, for all $n,k\in \NN$, \begin{align*}
    c = \Phi_c(\mu, \nu), \quad  c_n = \Phi_c( \mu_n,  \nu_n), \quad \text{ and } \quad   \cboot = \Phi_c( \muboot,  \nuboot). 
\end{align*}
If $\Phi_c$ is Hadamard directionally differentiable at $(\mu, \nu)$ tangentially to $\PC(\XC)\times \PC(\YC)$, and if $\FC_\XC\cup\FC^{cc}$ is $\mu$-Donsker while $\FC_\YC\cup\FC^{c}$ is $\nu$-Donsker, then both \ref{ass:AThree} and \ref{ass:AThreeStar} are fulfilled. 
\end{prop}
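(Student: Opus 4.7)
The plan is to view the cost estimators as images of the empirical (respectively bootstrap) processes under the single Hadamard directionally differentiable map $\Phi_c$ and then transfer weak convergence through the functional delta method of \citet{Roemisch04} in the first part, and its $k$-out-of-$n$ bootstrap counterpart from \citet{dumbgen1993nondifferentiable} in the second. To capture the \emph{joint} limit of the triple and not just the marginal for the cost, I will work with the augmented map $\Psi\colon (\eta,\xi)\mapsto (\eta,\xi,\Phi_c(\eta,\xi))$, which inherits Hadamard directional differentiability at $(\mu,\nu)$ tangentially to $\PC(\XC)\times\PC(\YC)$ with derivative $(h_1,h_2)\mapsto (h_1,h_2,D_{(\mu,\nu)}\Phi_c(h_1,h_2))$.

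For \ref{ass:AThree}, the Donsker assumptions yield $\sqrt{n}(\mu_n-\mu)\dto \Gproc^\mu$ in $\ell^\infty(\FC_\XC\cup \FC^{cc})$ and $\sqrt{n}(\nu_n-\nu)\dto \Gproc^\nu$ in $\ell^\infty(\FC_\YC\cup \FC^{c})$, with $\Gproc^\mu,\Gproc^\nu$ tight Gaussian and with the covariance structure \eqref{eq:covarianceStructureEmpProc}. Independence of the $X$- and $Y$-samples promotes this to joint weak convergence in the product space. Applying the functional delta method to $\Psi$ at rate $\sqrt{n/2}$ (corresponding to $n=m$ and $\lambda = 1/2$) produces
\begin{align*}
\sqrt{n/2}\,\bigl(\mu_n-\mu,\; \nu_n-\nu,\; c_n-c\bigr) \;\dto\; \bigl(\tfrac{1}{\sqrt{2}}\Gproc^\mu,\; \tfrac{1}{\sqrt{2}}\Gproc^\nu,\; \tfrac{1}{\sqrt{2}}D_{(\mu,\nu)}\Phi_c(\Gproc^\mu,\Gproc^\nu)\bigr)
\end{align*}
in the relevant product space. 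Setting $\Gproc^c\coloneqq D_{(\mu,\nu)}\Phi_c(\Gproc^\mu,\Gproc^\nu)$ and restricting the first two coordinates from the Donsker class to $\FC^{cc}$ and $\FC^{c}$ by continuous mapping delivers \ref{ass:AThree}; tightness of $\Gproc^c$ in $C(\XC\times\YC)$ follows because it is the continuous image of a tight random element and $\Phi_c$ is valued in $C(\XC\times\YC)$.

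For \ref{ass:AThreeStar}, the same Donsker classes together with the condition $k=\oh(n)$ give, by \citet[Theorem 3.6.13]{van1996weak}, the conditional bootstrap consistency of $\sqrt{k}(\muboot-\mu_n)$ in $\ell^\infty(\FC_\XC\cup\FC^{cc})$ and of $\sqrt{k}(\nuboot-\nu_n)$ in $\ell^\infty(\FC_\YC\cup\FC^{c})$. The product structure of $(\Omega,\prob)$ ensures that the bootstrap weights for the two samples are independent, so these combine to joint conditional consistency of the pair $(\sqrt{k}(\muboot-\mu_n),\sqrt{k}(\nuboot-\nu_n))$. The bootstrap functional delta method for Hadamard directionally differentiable maps with $k=\oh(n)$, as in \citet[Proposition 2]{dumbgen1993nondifferentiable} (see also \citet{fang2019inference}), applied to the augmented map $\Psi$ then transports this conditional consistency to the triple $(\muboot-\mu_n,\nuboot-\nu_n,\cboot-c_n)$, which is exactly \ref{ass:AThreeStar}.

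The hard part is the joint aspect: it is not enough to argue marginally for the cost via the delta method, one must simultaneously keep track of the original empirical coordinates so that the dependence between $\Gproc^\mu,\Gproc^\nu$ and $\Gproc^c$ (all induced through $\Phi_c$) appears in the limit and the bootstrap. The augmented map $\Psi$ is the natural device for this, and its Hadamard directional differentiability on the convex subset $\PC(\XC)\times\PC(\YC)\subseteq \ell^\infty(\FC_\XC)\times \ell^\infty(\FC_\YC)$ is exactly the setting accommodated by the tangential formulation in \citet{Roemisch04} and the $k=\oh(n)$ bootstrap variant in \citet{dumbgen1993nondifferentiable}. A minor book-keeping point will be to phrase the delta method in the appropriate product of Banach spaces and to check that the derivative of $\Psi$ is continuous and positively homogeneous so that tightness and the equivalent bootstrap formulation of \citet{bucher2019note} apply without additional assumptions.
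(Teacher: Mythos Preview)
Your proposal is correct and follows essentially the same route as the paper: both introduce the augmented map $(\eta,\xi)\mapsto(\eta,\xi,\Phi_c(\eta,\xi))$, use the union Donsker property plus independence for joint weak convergence of the empirical processes, apply the functional delta method of \citet{Roemisch04} for \ref{ass:AThree}, and invoke \citet[Theorem~3.6.13]{van1996weak} together with the bootstrap delta method of \citet{dumbgen1993nondifferentiable} for \ref{ass:AThreeStar}. The only cosmetic difference is that the paper builds the restriction to $\FC^{cc},\FC^{c}$ directly into the range of the augmented map rather than applying a separate continuous mapping step, and it explicitly cites \Cref{lem:SupportLimitingProcess} to verify that the limit lies in the tangent cone.
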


\begin{rmk} 
  We like to point out that if the functional $\Phi_c$ is additionally continuous with respect to the topology induced by weak convergence on $\PC(\XC)\times \PC(\YC)$, it follows  that $c_{n}(x,y)$ and $\cboot(x,y)$ are measurable for each $(x,y) \in \XC\times \YC$ and, due to compactness of $\XC$ and $\YC$, measurable in $C(\XC\times \YC)$. 
\end{rmk}

\subsection{Assumption \ref{ass:BOne}: Uniqueness of Optimal Transport Plans}\label{subsec:AssumOTUniquePlan}
The subject of uniqueness of OT plans between probability measures and a given cost function is of long-standing interest and has been addressed by various authors. General conditions for continuous settings were stated in  \citet{gangbo1996geometry} and \citet{levin1999abstract}, building on previous works. The subject has since been covered in depth in Chapters 9 and 10 of the reference textbook by \citet{villani2008optimal}; further advances have been made since.

To guarantee the uniqueness of the OT plan, many works resort to the so-called \emph{Twist condition} which demands for differentiable costs the injectivity of the map $y \rightarrow\nabla_x  c(x,y)$ for all $x\in \XC$. The following proposition formalizes a uniqueness criterion based on this condition and should fulfill the reader's needs for many practical applications.  The result can be deduced from Theorem 10.28 and  Remark 10.33 in \citet{villani2008optimal}.

\begin{prop}
Assume that $\XC, \YC$ are compact Polish spaces where $\XC\subseteq \reals^d$ is a Euclidean subset with non-empty interior and $\mu$ is absolutely continuous  with respect to the Lebesgue measure. Further, assume that $c$ is locally Lipschitz on $ \XC\times \YC$, that  $c(\cdot, y)$ is differentiable on $\interior{\XC}$ for each $y \in \YC$ and that $y\mapsto\nabla_x c(x,y)$ is injective for each $x\in \XC$.  Then, the OT plan $\pi^\star\in \Pi(\mu, \nu)$  is unique. 
\end{prop}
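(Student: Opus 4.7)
The plan is to deduce the proposition from Theorem 10.28 together with Remark 10.33 of \citet{villani2008optimal}, as indicated in the excerpt. The task is therefore to verify that the hypotheses of Villani's theorem are met under our assumptions, and to sketch why the twist condition, combined with the absolute continuity of $\mu$, pins down the optimal plan uniquely.

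The argument proceeds through Kantorovich duality. By \Cref{lem:RelFCandConcave} there exists a pair of optimal Kantorovich potentials $(\varphi, \psi) \in S_c(\mu,\nu)$, and every $\pi \in \Pi_c^\star(\mu, \nu)$ is concentrated on the set $\{(x,y) : \varphi(x) + \psi(y) = c(x,y)\}$, while the general inequality $\varphi(x') + \psi(y') \le c(x',y')$ holds for all $(x', y') \in \XC \times \YC$. Since $c$ is locally Lipschitz on the compact product $\XC \times \YC$, it is globally Lipschitz in $x$ uniformly in $y$, so the $c$-transform $\varphi$ inherits Lipschitz continuity on $\XC$. By Rademacher's theorem, $\varphi$ is differentiable at Lebesgue-almost every point of $\interior{\XC}$, and hence, by the absolute continuity of $\mu$ with respect to Lebesgue measure, also at $\mu$-a.e.\ $x$.

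At such a point $x \in \interior{\XC}$ with $(x,y) \in \supp(\pi)$, the function $x' \mapsto c(x', y) - \varphi(x')$ attains its infimum at $x$, as can be read off from the duality equality together with the general inequality above. Since both $c(\cdot, y)$ and $\varphi$ are differentiable at $x$, the first-order condition yields $\nabla_x c(x, y) = \nabla \varphi(x)$. The twist hypothesis, i.e., the injectivity of $y \mapsto \nabla_x c(x, y)$, then forces $y$ to be uniquely determined by $x$ via a measurable map $T\colon \interior{\XC} \to \YC$. Consequently, $\pi$ is concentrated on the graph of $T$ and hence $\pi = (\Id, T)_{\#} \mu$. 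As this representation does not depend on the choice of $\pi$, uniqueness follows.

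The main subtlety lies in controlling the small set where $\varphi$ fails to be differentiable and the points $x$ on $\partial \XC$; both are handled by the absolute continuity of $\mu$, which renders the Lebesgue-negligible set of non-differentiability $\mu$-negligible and ensures that the boundary of the Euclidean domain $\XC$ contributes no mass (this is implicitly built into the statement of Villani's theorem, to which one ultimately appeals). Beyond this measure-theoretic bookkeeping, the only conceptual input is the passage from the pointwise first-order condition to the map-induced structure of $\pi$, which is precisely what the twist condition is tailored to provide.
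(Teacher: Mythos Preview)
Your proposal is correct and aligns with the paper's approach: the paper does not give its own proof but simply states that the result ``can be deduced from Theorem 10.28 and Remark 10.33 in \citet{villani2008optimal}.'' You have unpacked precisely the standard argument underlying that theorem---Lipschitzianity of the potential, Rademacher's theorem, the first-order condition on the contact set, and the twist condition to pin down the transport map---so your write-up is more explicit than the paper's citation but covers the same ground.
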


Even though in certain cases weaker conditions can yield uniqueness \citep{ahmad2011optimal}, these more general conditions are typically considerably more difficult to verify. %
Nevertheless, unless the cost function exhibits some kind of symmetry %
or is constant in some region, uniqueness of OT plans is often to be expected. Indeed,  for fixed measures there is a residual set of cost functions such that for any such costs the OT plan is unique \citep{mccann2016intrinsic}. 

In finite discrete settings, i.e., when both underlying measures are supported on finitely many points, results on the uniqueness of OT plans are mostly based on the theory of finite-dimensional linear programs and we refer to \citet[Section~6]{klatt2022limit} for a detailed account. Among others, they provide sufficient conditions for uniqueness of OT plans which solely depend on the cost function and the support points but are independent of the weights of the measures. For Euclidean based costs their condition is fulfilled for Lebesgue-almost every arrangement of support points of $\mu$ and $\nu$, it is however violated if the support points obey some regular or repetitive pattern.

\subsection{Assumptions \ref{ass:BTwo} and \ref{ass:BTwoStar}: Control of Supremum over Empirical Process}\label{subsec:AssumSup}

Our assumptions on the suprema of the empirical processes ensure that the fluctuation on the set of feasible dual potentials caused by estimation of the cost function is asymptotically negligible. Let us also point out that the suprema in \ref{ass:BTwo} and \ref{ass:BTwoStar} are (Borel) measurable by Lemma \ref{lem:measurability} and \ref{lem:JointWeakConvergence}. This implies that the convergence in outer probability occurs, in fact, in probability. Indeed, following along the proof of  \Cref{lem:JointWeakConvergence} and due to measurability of $c_{n,m}$ it follows for fixed $f\in \FC= \FC(2\norm{c}_\infty+1, 2w)$ that both maps $\omega\mapsto \Gproc^\mu_n(f^{cc})$ and $\omega\mapsto \Gproc^\mu_n(f^{ c_{n,m}  c_{n,m}})$ are measurable. In conjunction with $\Gproc^\mu_n((\cdot)^{cc}), \Gproc^\mu_n((\cdot)^{ c_{n,m}  c_{n,m}})\in C_u(\FC, \norm{\cdot}_\infty)$ by \Cref{lem:measurability} and compactness of $(\FC, \norm{\cdot}_\infty)$ the measurability of the $\Gproc^\mu_n((\cdot)^{cc} - (\cdot)^{ c_{n,m}  c_{n,m}})$ as well as its supremum follow.

In the following, we derive sufficient conditions for the validity of  Assumption \ref{ass:BTwo} (as well as Assumption \ref{ass:BTwoStar}). Based on  empirical process theory, in order to suitably control the suprema
\[\sup\nolimits_{f\in \FC}\Gproc_n^{\mu}(f^{c_{n,m}, c_{n,m}} - f^{cc}) \quad \text{ and } \quad \sup\nolimits_{f\in \FC}\Gproc_n^{\nu}(f^{c_{n,m}} - f^{c})\]
a canonical route would be to impose metric entropy bounds for $\FC^{c_{n,m}, c_{n,m}}\cup \FC^{c c}$ and $\FC^{c_{n,m}}\cup \FC^{c}$. 
Such bounds, however, would impose certain regularity requirements on the cost estimator $c_{n,m}$. Hence, in order not to narrow our scope concerning cost estimators, we employ the same ideas as in \Cref{subsec:DistributionalLimits} and approximate the cost estimator $c_{n,m}$ by a more regular cost estimator $\tilde c_{n,m}$. 
The subsequent result formalizes these considerations for our context. Its proof relies on techniques developed by \citet{wellner2007empirical} for empirical processes indexed over estimated function classes. 

\begin{prop}\label{prop:AbstractB2_CoveringNumbers}
	Let $\XC, \YC$ be compact Polish spaces and consider a continuous cost function $c$.
	\begin{enumerate}
		\item[(i)] Assume \ref{ass:AThree} for random elements $c_{n,m}\in C(\XC\times \YC)$ and take random elements $\tilde c_{n,m}\in C(\XC\times \YC)$ with $\sqrt{nm/(n+m)}\|c_{n,m} - \tilde c_{n,m}\|_\infty \smash{\pto} 0$ for $n,m=m(n)\rightarrow \infty$ and $m/(n+m)\rightarrow \lambda\in (0,1)$ such that for $\eps>0$ sufficiently small,
	\begin{align}\label{eq:EntropyCondition}
    \log\NC(\eps, \FC^{c c}, \norm{\cdot}_\infty) + \sup_{n\in \NN}\log\NC(\eps, \FC^{\tilde c_{n,m} \tilde c_{n,m}}, \norm{\cdot}_\infty) 
  \lesssim \eps^{-\alpha} \quad \text{ with } \alpha<2.
\end{align}
		Then Assumption \ref{ass:BTwo} is fulfilled. 
		\item[(ii)] Assume  \ref{ass:AThree} and \ref{ass:AThreeStar} for random elements $\cboot\in C(\XC\times \YC)$ and let $\ctildeboot\in C(\XC\times \YC)$ be random elements with $\sqrt{k}\|\cboot - \ctildeboot\|_\infty \smash{\pto} 0$ for $n, k = k(n) \rightarrow \infty$ and $k = \oh(n)$ such that for $\eps>0$ sufficiently small, 	\begin{align}\label{eq:EntropyConditionBS}
      \log\NC(\eps, \FC^{c c}, \norm{\cdot}_\infty) + \sup_{n\in \NN}\log\NC(\eps, \FC^{\tilde c_{n,k}^b \tilde c_{n,k}^b}, \norm{\cdot}_\infty)
      \lesssim \eps^{-\alpha} \quad \text{ with } \alpha<2.
\end{align}
	 Then Assumption \ref{ass:BTwoStar} is fulfilled. 
	\end{enumerate}
\end{prop}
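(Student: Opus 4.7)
The plan is to split the control into two steps: (a) a deterministic approximation step that replaces $c_{n,m}$ by its regular proxy $\tilde c_{n,m}$ via uniform bounds on $c$-transforms, and (b) an asymptotic equicontinuity step applied to the resulting random, but entropy-controlled, class with vanishing envelope. Part (ii) will then mirror (i) with the unconditional bootstrap empirical process in place of $\Gproc^\mu_n$.

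For step (a), I will use the elementary stability bounds $\|f^c - f^{c'}\|_\infty \le \|c-c'\|_\infty$ and $\|f^{cc} - f^{c'c'}\|_\infty \le 2\|c-c'\|_\infty$, which are valid for every $f \in \FC$ and are immediate from the defining infima. Combined with the trivial envelope $|\Gproc_n^\mu(g)| \le 2\sqrt n\,\|g\|_\infty$, these give
\[\sup_{f\in \FC}\bigl|\Gproc_n^\mu(f^{c_{n,m}c_{n,m}} - f^{\tilde c_{n,m}\tilde c_{n,m}})\bigr| \le 4\sqrt n\,\|c_{n,m}-\tilde c_{n,m}\|_\infty,\]
which converges to zero in probability by the hypothesis $\sqrt{nm/(n+m)}\|c_{n,m}-\tilde c_{n,m}\|_\infty \pto 0$ together with $m/(n+m)\to\lambda\in(0,1)$. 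The analogous statement on the $\nu$-side (using the $1$-Lipschitz bound for the $c$-transform) is identical with a smaller constant.

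For step (b), set $\HC_n := \{f^{\tilde c_{n,m}\tilde c_{n,m}} - f^{cc} : f \in \FC\}$ and observe that \ref{ass:AThree} yields $\|c_{n,m}-c\|_\infty \ptoStar 0$, so the envelope of $\HC_n$ satisfies $\sup_{h\in \HC_n}\|h\|_\infty \le 2\|\tilde c_{n,m}-c\|_\infty \ptoStar 0$. A Minkowski-type covering argument for difference classes, together with the uniform-in-$n$ entropy bound~\eqref{eq:EntropyCondition}, produces $\log\NC(2\eps,\HC_n,\|\cdot\|_\infty) \lesssim \eps^{-\alpha}$ with $\alpha<2$, uniformly in $n$. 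These two ingredients---vanishing envelope and polynomial uniform entropy with exponent strictly below~$2$---are precisely the hypotheses of the asymptotic equicontinuity machinery for empirical processes indexed by data-dependent classes developed by \citet{wellner2007empirical}: a Dudley-type maximal inequality in $L^2(\mu)$ combined with the envelope bound yields $\sup_{h\in \HC_n}|\Gproc_n^\mu(h)| \ptoStar 0$. Assembling the two steps via the triangle inequality then establishes \ref{ass:BTwo}. Part (ii) is proved verbatim with $\sqrt n$ and $\Gproc_n^\mu$ replaced by $\sqrt k$ and $\Gproc_{n,k}^\mu = \sqrt k(\muboot - \mu)$, invoking \ref{ass:AThreeStar} to ensure $\|\cboot - c\|_\infty \ptoStar 0$ and using \eqref{eq:EntropyConditionBS} in place of \eqref{eq:EntropyCondition}.

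The principal obstacle is the random nature of the class $\HC_n$: the proxy $\tilde c_{n,m}$ is data-dependent, so measurability issues arise and the maximal inequality must be applied conditionally on $\tilde c_{n,m}$. The uniform-in-$n$ metric entropy hypothesis is designed precisely to decouple the random cost from the asymptotic equicontinuity analysis, and the conditioning argument then goes through at the price of careful outer-probability bookkeeping (this is the content of the Wellner-type framework). The bootstrap variant adds another layer of conditioning (on the sample $X_1,\ldots,X_n,Y_1,\ldots,Y_n$), but since \ref{ass:BTwoStar} only requires unconditional convergence, the same equicontinuity argument applies once \ref{ass:AThreeStar} is used to propagate $\|\cboot - c\|_\infty \ptoStar 0$.
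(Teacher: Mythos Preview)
Your step (a) is correct and matches the paper exactly. The gap is in step (b): you invoke ``conditioning on $\tilde c_{n,m}$'' together with the uniform entropy bound, but this does not yield the claimed maximal inequality. The class $\HC_n$ depends on $\tilde c_{n,m}$, which may itself depend on the very sample $X_1,\dots,X_n$ generating $\Gproc_n^\mu$; conditioning on $\tilde c_{n,m}$ therefore alters the law of $\Gproc_n^\mu$, and a deterministic entropy bound on a data-dependent class together with a \emph{random} vanishing envelope is insufficient in general (consider a singleton class $\{g_n\}$ with $\|g_n\|_\infty\to 0$, where $g_n$ is chosen from the data to make $|\Gproc_n^\mu(g_n)|$ large). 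Nor can you embed $\HC_n$ in a fixed Donsker class: the natural deterministic container $\{f^{c'c'}-f^{cc}:f\in\FC,\,\|c'-c\|_\infty\le\eta\}$ is indexed by an infinite-dimensional ball in $C(\XC\times\YC)$ and has uncontrolled entropy.

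The paper's proof resolves this by a step you omit entirely: it exploits the \emph{tightness} of the limit $\Gproc^c$ in \ref{ass:AThree} to find a compact set $K$ and a finite $\delta/2$-net $\{h_1,\dots,h_p\}$ so that $\sqrt{n/2}(\tilde c_n-c)\in K^{\delta/2}$ with probability at least $1-\eps$. On this event the random $\tilde c_n$ in the $c$-transforms is replaced, up to an $8\delta$ error, by one of the \emph{deterministic} perturbations $c+2h_i/\sqrt n$. This produces a \emph{deterministic} envelope $4\|h_i\|_\infty/\sqrt n\to 0$, at which point Dudley's entropy integral applies cleanly. The paper also performs an auxiliary splitting via the identity $f^{\bar c\bar c}=f^{\bar c\bar c\bar c\bar c}$, separating the supremum into two pieces; for the second piece the index set is the fixed class $\FC$ and Lemma~\ref{lem:auxiliary36} places the resulting functions inside a deterministic shift of $\HC_c$, so the entropy of $\FC^{cc}$ alone suffices. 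These two devices---the tightness-based discretization of the cost and the double-$cc$ splitting---are the technical heart of the argument and are what the reference to \citet{wellner2007empirical} actually stands for; your proposal gestures at the framework but does not supply either mechanism.
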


As a straightforward corollary of \Cref{prop:AbstractB2_CoveringNumbers} we find that \ref{ass:BTwo} and \ref{ass:BTwoStar} are fulfilled if the cost estimators $c_{n,m}$ and $\cboot$ fulfill certain deterministic regularity conditions once $n,m,k$ are sufficiently large. In the large sample regime we then choose $\tilde c_{n,m}\coloneqq c_{n,m}$ and $\ctildeboot\coloneqq\cboot$. 
\begin{corollary}\label{cor:SufficientConditionsSupDeterministic}
Let $\XC, \YC$ be compact Polish spaces, consider a continuous cost function $c$. 
Assume \ref{ass:AThree} for  $c_{n}$ (and \ref{ass:AThreeStar} for  $\cboot$) and that  $c$,  $c_{n}$ (and $\cboot$) each fulfill one of the three conditions of \Cref{prop:DonskerProperty} for $n\geq N$, $k\geq K$ with random variables $N,K\in \NN$ .
Then, \ref{ass:BTwo} (and \ref{ass:BTwoStar}) hold.	
\end{corollary}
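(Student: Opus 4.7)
The plan is to invoke \Cref{prop:AbstractB2_CoveringNumbers} with the tautological choice $\tilde c_{n,m} \coloneqq c_n$ (and $\ctildeboot \coloneqq \cboot$), which makes the approximation hypothesis $\sqrt{nm/(n+m)}\|c_{n,m}-\tilde c_{n,m}\|_\infty \pto 0$ (and its bootstrap analogue) automatic. It then remains only to verify the metric entropy estimates \eqref{eq:EntropyCondition} and \eqref{eq:EntropyConditionBS}.

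The population cost $c$ satisfies one of the scenarios (i)--(iii) of \Cref{prop:DonskerProperty}, which yields $\log\NC(\eps,\FC^{cc},\|\cdot\|_\infty) \lesssim \eps^{-\alpha}$ for some $\alpha < 2$. For $n \geq N$ the estimator $c_n$ fulfils one of these same conditions, so the analogous bound applies to $\log\NC(\eps,\FC^{c_n c_n},\|\cdot\|_\infty)$. To obtain the \emph{uniform-in-$n$} control demanded by \Cref{prop:AbstractB2_CoveringNumbers}, I would truncate: for $\eta>0$, pick a deterministic $N_\eta$ with $\PP(N>N_\eta) < \eta$, and redefine $\tilde c_n \coloneqq c$ for those $n < N_\eta$. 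This modification affects only finitely many indices, so the approximation condition is preserved, while on the event $\{N\leq N_\eta\}$ the entropy bound now holds uniformly in $n$. Applying \Cref{prop:AbstractB2_CoveringNumbers}(i) on this event delivers \ref{ass:BTwo} up to an exceptional probability of at most $\eta$; letting $\eta\downarrow 0$ yields \ref{ass:BTwo}. An identical argument, using $K$ in place of $N$, $\cboot$ in place of $c_n$, and part~(ii) of \Cref{prop:AbstractB2_CoveringNumbers} in place of part~(i), establishes \ref{ass:BTwoStar}.

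The main delicacy lies in the uniformity of the implicit constants in \Cref{prop:DonskerProperty}: they depend on structural data of the chosen scenario, namely the cardinality in (i), the pseudo-metric together with its covering numbers in~(ii), or the atlas $(\zeta_i, \mathcal{U}_i)$ in~(iii). Since $\XC$ is fixed, the natural reading of the hypothesis is that these structural parameters are shared between $c$ and the $c_n$; if instead they vary with $n$ but are almost surely finite, a second truncation step on a further high-probability event retrieves the required deterministic bound and the remainder of the argument proceeds unchanged.
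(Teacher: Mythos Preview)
Your approach shares the paper's core idea: redefine $\tilde c_n$ to coincide with the population cost $c$ on the finitely many early indices where $c_n$ may fail the regularity hypothesis, so that the entropy condition of \Cref{prop:AbstractB2_CoveringNumbers} is met, while the approximation condition is preserved because only an asymptotically negligible set of indices is affected. The difference is in the implementation: you use a \emph{deterministic} threshold $N_\eta$ and then condition on the event $\{N\le N_\eta\}$, whereas the paper simply takes the \emph{random} threshold $N$ itself, setting $\tilde c_n\coloneqq c$ for $n<N$ and $\tilde c_n\coloneqq c_n$ for $n\ge N$.

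The paper's choice is cleaner and sidesteps a loose end in your argument. The step ``apply \Cref{prop:AbstractB2_CoveringNumbers}(i) on this event'' is not immediate: the proposition's entropy hypothesis is meant to hold almost surely (with deterministic constants), and its proof passes through unconditional expectations via Dudley's integral. Restricting to $\{N\le N_\eta\}$ therefore requires either re-entering that proof, or further modifying $\tilde c_n$ to equal $c$ on the complement---but then $\sqrt{n}\|c_n-\tilde c_n\|_\infty$ is only bounded in probability by $\eta$ rather than tending to zero, so the proposition again does not apply as a black box. With the random threshold, both issues dissolve at once: the entropy bound holds for every realisation, and $\PP(\sqrt{n}\|c_n-\tilde c_n\|_\infty>\eps)\le\PP(n<N)\to 0$ by tightness of $N$.

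Your closing remark on the uniformity of implicit constants in \Cref{prop:DonskerProperty} is a fair observation; the paper's proof tacitly reads the hypothesis as you do, with the structural data fixed across $c$ and the $c_n$.
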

Hence, if the population cost $c$ and the estimators $c_n, c_{n,k}$ are determined by some parameter $\theta\in \Theta$ and estimators $\theta_n, \theta_{n,k}$, such that the regularity properties of \Cref{prop:DonskerProperty} are met uniformly in an open neighborhood of $\theta$ and if the estimators are consistent, then \Cref{cor:SufficientConditionsSupDeterministic} asserts the validity of Assumptions \ref{ass:BTwo} and \ref{ass:BTwoStar}. 

% \begin{rmk}\label{rmk:cost regularity elevation}
  Moreover, under mild additional assumptions on the space $\XC$ and the cost function $c$, %
  we can state a functional $\Psi\colon C(\XC\times \YC)\rightarrow C(\XC\times \YC)$ such that $\tilde  c_n\coloneqq \Psi(c_n)$ fulfills the entropy bound \eqref{eq:EntropyCondition} while satisfying $\sqrt{n}\norm{\tilde  c_{n,m} - c_{n,m}}_\infty\!{\pto}\, 0$ for $n \rightarrow \infty$. 
  We call such a functional $\Psi$ a \emph{regularity elevation functional} since it lifts the degree of regularity of the cost estimator. Details on regularity elevations are deferred to \Cref{subsec:RegEl}.
  % \end{rmk}
  
\begin{corollary}\label{cor:SufficientConditionsSupConditions}
    Let $\XC, \YC$ be compact Polish spaces and consider a continuous cost.
    Assume \ref{ass:AThree} (and \ref{ass:AThreeStar}).
    Suppose that $c$ fulfills one of the three conditions of \Cref{prop:DonskerProperty}. Under (ii) or (iii) further assume the subsequent condition (ii)' or (iii)', respectively. 
    \begin{enumerate}
      \item[(ii)']  The weak limit $\Gproc^c$ is almost surely continuous with respect to $(\XC,\tilde d_\XC)\times \YC$.
      \item[(iii)'] For each $i\in \{1, \dots I\}$ the set $\UC_i\subseteq \RR^{d_i}$ is convex and compact, the map $\zeta_i\colon \UC_i\rightarrow \zeta_i(\UC_i)$ is a homeomorphism,  and $c_i\coloneqq c(\zeta_i(\cdot), \cdot)\colon \UC_i\times \YC\rightarrow \RR$ is continuously differentiable in $u$ on $\UC_i\times \YC$, i.e., the derivative $\nabla_u c_i\colon \interior{\UC_i}\times \YC\rightarrow \RR^d$ can be continuously extended to $\UC_i\times \YC$.  Further, there exists a continuous partition of unity\footnote{A collection $\{\eta_i\}_{i = 1}^I$ is a continuous partition of unity  if  $\eta_i \in C(\XC)$, $\eta_i \geq 0$ for each $i$ and  $\sum_{i =1}^{I} \eta_i\equiv 1$ on $\XC$.} $\{\eta_i\}_{i = 1}^I$ on $\XC$ with $\supp(\eta_i)\subseteq \zeta_i(\UC_i)$.  
    \end{enumerate}
    Then, Assumption \ref{ass:BTwo} (and \ref{ass:BTwoStar}) is fulfilled. 
\end{corollary}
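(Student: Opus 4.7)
The plan is to verify the hypotheses of \Cref{prop:AbstractB2_CoveringNumbers}(i) for \ref{ass:BTwo} and of \Cref{prop:AbstractB2_CoveringNumbers}(ii) for \ref{ass:BTwoStar} by producing auxiliary cost estimators $\tilde c_{n,m}$ (and $\ctildeboot$) that satisfy $\sqrt{nm/(n+m)}\,\|c_{n,m}-\tilde c_{n,m}\|_\infty \pto 0$ (resp.\ $\sqrt{k}\,\|\cboot-\ctildeboot\|_\infty \pto 0$) and for which, by \Cref{prop:DonskerProperty}, the entropy bound \eqref{eq:EntropyCondition} (resp.\ \eqref{eq:EntropyConditionBS}) holds uniformly in $n$.

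Case (i) is essentially free: since $\XC$ is finite, every continuous function on $\XC\times\YC$ satisfies condition (i) of \Cref{prop:DonskerProperty} with the same constant $N=|\XC|$. Setting $\tilde c_{n,m}:=c_{n,m}$ and using sup-norm tightness of $c_{n,m}$ (resp.\ $\cboot$) supplied by \ref{ass:AThree} (resp.\ \ref{ass:AThreeStar}), the entropy bound becomes uniform in $n$ and no smoothing is required.

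Cases (ii) and (iii) require genuine smoothing, supplied by the regularity elevation functionals $\Psi\colon C(\XC\times\YC)\to C(\XC\times\YC)$ constructed in \Cref{subsec:RegEl}. These are designed so that $\tilde c_{n,m}:=\Psi(c_{n,m})$ lies deterministically in the regularity class of the population cost $c$, namely $1$-Lipschitz under $\tilde d_\XC$ in case~(ii) and chart-wise H\"older with fixed constants in case~(iii); moreover $\Psi$ is arranged to fix the target cost $c$ (or perturb it only negligibly). Since $\tilde c_{n,m}$ then meets the same assumption of \Cref{prop:DonskerProperty} as $c$, the entropy bound for $\FC^{\tilde c_{n,m}\tilde c_{n,m}}$ is uniform in $n$, and likewise in the bootstrap case.

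The crux is the approximation rate $\sqrt{nm/(n+m)}\|c_{n,m}-\tilde c_{n,m}\|_\infty \pto 0$, for which the supplementary hypotheses (ii)' and (iii)' are crucial. Writing $\Delta_{n,m}:=\sqrt{nm/(n+m)}\,(c_{n,m}-c)$ and using $\Psi(c)=c$, the rescaled discrepancy is controlled by the defect of $\Delta_{n,m}$ from the relevant regularity class. In case (ii), a Skorokhod representation upgrades the weak convergence $\Delta_{n,m}\dto\Gproc^c$ to an almost sure limit, and the assumed a.s.\ continuity of $\Gproc^c$ with respect to $(\XC,\tilde d_\XC)\times\YC$ forces the $\tilde d_\XC$-modulus of continuity of $\Delta_{n,m}$ to vanish in probability, which is precisely the statement that the defect of $c_{n,m}$ to the $\tilde d_\XC$-Lipschitz cone is $\oh_{\PP}(1/\sqrt{nm/(n+m)})$. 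In case (iii), the chart-wise continuous differentiability in (iii)' together with the partition of unity $\{\eta_i\}$ permits a local kernel-based smoothing of $c_{n,m}\circ\zeta_i$ on each $\UC_i$, and the rescaled residual is bounded by the modulus of continuity of $\nabla_u c_i$ (uniformly continuous on the compact $\UC_i$) plus a vanishing term coming from $\Delta_{n,m}$. The bootstrap analogue is identical upon substituting $\cboot$, \ref{ass:AThreeStar}, and the rate $\sqrt{k}$ for their unstarred counterparts. The main technical obstacle, deferred to \Cref{subsec:RegEl}, is the explicit construction of $\Psi$ in case~(iii), where the chart-wise smoothing operators must be glued via the partition of unity so that $\Psi(c_{n,m})$ is globally well-defined and admits a H\"older constant independent of the realization of $c_{n,m}$.
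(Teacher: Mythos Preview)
Your high-level plan is exactly the paper's: apply \Cref{prop:AbstractB2_CoveringNumbers} with $\tilde c_{n,m}=\Psi(c_{n,m})$ for a regularity elevation $\Psi$ from \Cref{subsec:RegEl}, and infer the rate $\sqrt{nm/(n+m)}\,\|c_{n,m}-\Psi(c_{n,m})\|_\infty\pto 0$ from \Cref{thm:RegElevationAbstract}. Case~(ii) is handled essentially as you describe; the paper phrases it abstractly via \Cref{thm:RegElevationAbstract} with $U=C((\XC,\tilde d_\XC)\times\YC)$, but your direct Skorokhod argument is equivalent.

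There are, however, two points where your description diverges from what actually works.

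\emph{Case~(i).} Setting $\tilde c_{n,m}=c_{n,m}$ and invoking tightness is not enough: the entropy condition \eqref{eq:EntropyCondition} is a \emph{deterministic} supremum over $n$, so ``bounded in probability'' does not yield a uniform entropy bound. The paper instead sets $\tilde c_{n,m}=\Psi_{\textup{bdd}}(c_{n,m})$, which truncates to $\|\tilde c_{n,m}\|_\infty\le 2$ and still satisfies $\sqrt{nm/(n+m)}\|\tilde c_{n,m}-c_{n,m}\|_\infty\pto 0$ by \Cref{prop:RegEl:Bdd}. This is a minor fix, but necessary.

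\emph{Case~(iii).} Your description of $\Psi$ as a ``local kernel-based smoothing'' is not what the paper does, and a convolution smoother would not work here. With bandwidth $h$, one faces the usual tension: you need $h=o\!\left((nm/(n+m))^{-1/2}\right)$ to kill the bias $K_h\!*\!c - c$ at the required rate, yet the H\"older constants of $K_h\!*\!c_{n,m}$ scale like $h^{-1}$ (or worse) through the random part $K_h\!*\!(c_{n,m}-c)$, so the entropy bound cannot be made uniform in $n$. The paper's construction $\Psi_{\textup{Hol}}^{c_i,\gamma_i}$ is instead an \emph{infimal envelope that borrows the gradient of the population cost},
\[
\Psi_{\textup{Hol}}^{c,\gamma}(\tilde c)(x,y)=\inf_{x'\in\UC}\Big(\tilde c(x',y)+\langle\nabla_x c(x',y),\,x-x'\rangle+2\sqrt{d}\,\|x-x'\|^{\gamma}\Big),
\]
which is why condition~(iii)' requires continuous differentiability of $c_i$. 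This map has $\Psi(c)=c$, admits entropy bounds \emph{uniform over all inputs} $\tilde c$ (Proposition~\ref{prop:RegEl:Hol}), and has Hadamard derivative $\mathrm{Id}_{C(\XC\times\YC)}$ at $c$; the rate then follows from \Cref{thm:RegElevationAbstract} without any modulus-of-continuity bookkeeping for $\nabla_u c_i$. The partition of unity and homeomorphisms in~(iii)' enter only through the gluing $\Psi_{\textup{com}}$ of \Cref{prop:RegEl:Union}, which preserves these properties chart by chart.
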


\subsection{Assumption \ref{ass:Don}: Donsker Property Uniformly over $\Theta$}\label{subsec:AssumDon}

For the distributional limits  by \citet{Hundrieser2022Limits} on the empirical OT value under a fixed cost function $c$, the authors effectively assume that the function classes $\FC^{cc}$ and $\FC^c$ are $\mu$- and $\nu$-Donsker, respectively (\Cref{rmk:CommentsOTWeaklyConvergingCosts}\ref{rem:WeaklyConvergingCosts_FixedCost}). Hence, for the uniform convergence result from \Cref{thm:OTProcessCty} it is natural that we demand the $\mu$- and $\nu$-Donsker property for the unions $\cup_{\theta\in \Theta}\FC^{c_\theta c_\theta}$ and $\cup_{\theta\in \Theta}\FC^{c_\theta}$ for $\FC=\FC(\sup_{\theta \in \Theta}\norm{c}_\infty, w)$. The validity of this condition can be ensured under assumptions on the complexity of the domain $\XC$ in conjunction with regularity conditions imposed on the cost function. 

\begin{prop}[Universal Donsker property over $\Theta$]
\label{prop:DonskerProperty_ParameterClass}
Let $\XC, \YC$ be compact Polish spaces and let $(\Theta,d_\Theta)$ be a metric space such that $\log\NC(\eps, \Theta, d_\Theta)\lesssim \eps^{-\alpha}$ for $\alpha<2$. Suppose that $c\colon (\Theta, d_\Theta)\rightarrow C(\XC\times \YC), \theta \mapsto c_\theta$ is $1$-Lipschitz and assume $\sup_{\theta\in \Theta}\norm{c_\theta}_\infty \leq 1$. Consider one of the three settings. 
\begin{enumerate}[label=(\roman*)]
    \item $\XC=\{x_1, \dots, x_N\}$ is a finite space (and no additional assumption on $c$). 
    \item For any $\theta\in \Theta$ there exists a pseudo metric $\tilde d_{\theta,\XC}$ on $\XC$ such that $\sup_{\theta\in \Theta}\NC(\eps, \XC, \tilde d_{\theta,\XC}) \lesssim \eps^{-\beta}$ for $\beta<2$ and $c_\theta(\cdot,y)$ is $1$-Lipschitz under $\tilde d_{\theta,\XC}$ for all $y\in \YC$. 
    \item $\XC = \bigcup_{i = 1}^{I}\zeta_i(\UC_i)$ for $I\in \NN$ compact, convex subsets $\UC_i\subseteq \reals^{d_i}$, $d_i \leq 3$ with non-empty interior and maps $\zeta_i \colon \UC_i \rightarrow \XC$ so that for each $i\in \{1, \dots, I\}$ the function $c_\theta(\zeta_i(\cdot),y)$ is $(\gamma_i, 1)$-H\"older on $\UC_i$ (recall footnote \ref{foot:Holder}) for some $\gamma_i \in (d_i/2,2]$ for all $y \in \YC$, $\theta \in \Theta$. 
\end{enumerate}
Then, for each setting, there exists some $\alpha<2$ such that \begin{align*}
  \log\NC(\eps, \cup_{\theta\in \Theta}\FC^{c_\theta c_\theta}, \norm{\cdot}_\infty) \lesssim \eps^{-\alpha} \quad \text{ and } \quad \log\NC(\eps, \cup_{\theta\in \Theta}\FC^{c_\theta}, \norm{\cdot}_\infty) \lesssim \eps^{-\alpha}.\end{align*}
In particular, $\cup_{\theta\in \Theta}\FC^{c_\theta c_\theta}$, $\cup_{\theta\in \Theta}\FC^{c_\theta}$ are universal Donsker, and Assumption \ref{ass:Don} is fulfilled.
\end{prop}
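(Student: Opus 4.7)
The plan is to obtain the covering bounds for $\bigcup_{\theta\in\Theta} \FC^{c_\theta}$ and $\bigcup_{\theta\in\Theta} \FC^{c_\theta c_\theta}$ by combining a covering of $\Theta$ with a uniform-in-$\theta$ covering of each individual $\FC^{c_\theta c_\theta}$ (respectively $\FC^{c_\theta}$) obtained from \Cref{prop:DonskerProperty}, and then invoke the standard metric entropy sufficient condition for the universal Donsker property.

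First I would establish a Lipschitz-type continuity of the $c$-transform in $\theta$. Concretely, for any $f\in\FC$ and $\theta,\theta'\in\Theta$, the identity $f^{c_\theta}(y)=\inf_{x\in\XC}c_\theta(x,y)-f(x)$ and the $1$-Lipschitzness of $\theta\mapsto c_\theta$ yield $\|f^{c_\theta}-f^{c_{\theta'}}\|_\infty\le \|c_\theta-c_{\theta'}\|_\infty\le d_\Theta(\theta,\theta')$, and iterating gives $\|f^{c_\theta c_\theta}-f^{c_{\theta'} c_{\theta'}}\|_\infty\le 2\,d_\Theta(\theta,\theta')$. This reduces a covering of $\bigcup_\theta\FC^{c_\theta c_\theta}$ to a covering of $\FC^{c_{\theta_j} c_{\theta_j}}$ for $\theta_j$ ranging over an $\eps$-net of $\Theta$: taking any such $\eps$-net $\{\theta_1,\dots,\theta_N\}$ with $N\le \NC(\eps,\Theta,d_\Theta)$ and a uniform-norm $\eps$-cover of each $\FC^{c_{\theta_j} c_{\theta_j}}$, the union of these covers is a $3\eps$-cover of $\bigcup_\theta\FC^{c_\theta c_\theta}$, so that
\begin{equation*}
\log \NC\bigl(3\eps,\, \textstyle\bigcup_{\theta}\FC^{c_\theta c_\theta},\, \|\cdot\|_\infty\bigr) \le \log \NC(\eps, \Theta, d_\Theta) + \sup_{\theta\in\Theta}\log \NC(\eps, \FC^{c_\theta c_\theta}, \|\cdot\|_\infty),
\end{equation*}
and an analogous bound (at scale $2\eps$) holds for $\bigcup_\theta\FC^{c_\theta}$.

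Next I would apply \Cref{prop:DonskerProperty} to each $c_\theta$ to obtain, in each of the three settings, a uniform-in-$\theta$ bound of the form $\sup_{\theta\in\Theta}\log \NC(\eps, \FC^{c_\theta c_\theta}, \|\cdot\|_\infty)\lesssim \eps^{-\alpha'}$ for some $\alpha'<2$. In setting~(i), the hidden constant in \Cref{prop:DonskerProperty} depends only on $N$, which does not vary with~$\theta$. In setting~(ii), it depends on $\NC(\eps,\XC,\tilde d_{\theta,\XC})$, which is assumed to be uniformly bounded in~$\theta$ by $\eps^{-\beta}$. In setting~(iii), the dependence is through the family $(\zeta_i,\UC_i)_{i=1}^I$, which is fixed, while the H\"older moduli $\gamma_i$ are assumed uniform in~$\theta$. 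Hence the supremum over $\theta$ of the individual log-covering numbers is controlled in every case.

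Combining the two bounds with the assumption $\log\NC(\eps,\Theta,d_\Theta)\lesssim \eps^{-\alpha}$ gives $\log\NC(\eps,\bigcup_\theta\FC^{c_\theta c_\theta},\|\cdot\|_\infty)\lesssim \eps^{-\max(\alpha,\alpha')}$ with exponent strictly less than~$2$, and likewise for $\FC^{c_\theta}$. Since both classes have a uniformly bounded envelope (inherited from $\FC=\FC(\sup_\theta\|c_\theta\|_\infty,w)$ and the fact that $c$-transforms are bounded by $\|c_\theta\|_\infty+\|f\|_\infty$), the universal Donsker property follows from the uniform entropy criterion, e.g., \citet[Theorem~2.5.2]{van1996weak}, which in particular yields Assumption~\ref{ass:Don}. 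The only real technical point is ensuring that the constants in \Cref{prop:DonskerProperty} do not blow up in $\theta$; this is precisely what the uniformity clauses in settings~(ii) and~(iii) are designed to guarantee, so no further work is needed beyond invoking that proposition pointwise in $\theta$.
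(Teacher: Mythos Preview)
Your proposal is correct and follows essentially the same route as the paper: the paper factors the argument through \Cref{lem:ParameterBoundMetricEntropy}, which records precisely the covering inequality you derive inline (with slightly different constants), and then applies \Cref{prop:DonskerProperty} pointwise in $\theta$ while tracking uniformity of the hidden constants, exactly as you do. The only cosmetic difference is that the paper cites Theorem~2.5.6 of \citet{van1996weak} for the universal Donsker conclusion rather than Theorem~2.5.2.
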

The proof of  \Cref{prop:DonskerProperty_ParameterClass} is a simple consequence of \Cref{prop:DonskerProperty} in combination with the subsequent lemma whose proof is deferred to \Cref{subsubsec:ParameterBoundMetricEntropyProof}. 

\begin{lem}\label{lem:ParameterBoundMetricEntropy}
	Let $\XC, \YC$ be compact Polish spaces and let $(\Theta,d_\Theta)$ be a metric space. Suppose $c\colon (\Theta, d_\Theta)\rightarrow C(\XC\times \YC)$ is $1$-Lipschitz. Then, it follows for any $\eps>0$ that \begin{align*}
   \max\Big( \NC\big(\eps, \cup_{\theta\in \Theta} \FC^{c_\theta}, \norm{\cdot}_\infty\big),  \NC\big(\eps, \cup_{\theta\in \Theta} \FC^{c_\theta c_\theta}, \norm{\cdot}_\infty\big)\Big)  \leq  \NC\left(\frac{\eps}{4}, \Theta, d_\Theta\right) \sup_{\theta\in \Theta}  \NC\left(\frac{\eps}{2}, \FC^{c_\theta c_\theta}, \norm{\cdot}_\infty\right).
\end{align*}
\end{lem}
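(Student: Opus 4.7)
The plan is to combine two elementary facts about $c$-transforms with a two-level $\eps$-net argument. The first fact is that for any continuous costs $\bar c, \bar c_1, \bar c_2$ and any $f,g\in C(\XC)$,
\[ \norm{f^{\bar c} - g^{\bar c}}_\infty \le \norm{f - g}_\infty, \qquad \norm{f^{\bar c_1} - f^{\bar c_2}}_\infty \le \norm{\bar c_1 - \bar c_2}_\infty, \]
each being a direct consequence of $|\inf_i a_i - \inf_i b_i|\le \sup_i |a_i - b_i|$; iterating the second inequality gives $\norm{f^{\bar c_1 \bar c_1} - f^{\bar c_2 \bar c_2}}_\infty \le 2\norm{\bar c_1 - \bar c_2}_\infty$. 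The second fact is that $g^{\bar c \bar c \bar c} = g^{\bar c}$, whence every $g \in \FC^{\bar c}$ satisfies $g^{\bar c \bar c} = g$, so the map $(\cdot)^{\bar c}\colon \FC^{\bar c}\to\FC^{\bar c \bar c}$ is a bijective isometry for the uniform norm. In particular $\NC(\eta, \FC^{\bar c}, \norm{\cdot}_\infty) = \NC(\eta, \FC^{\bar c \bar c}, \norm{\cdot}_\infty)$ for every $\eta>0$, and it suffices to construct an $\eps$-cover having the asserted size for each of the two unions.

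Next I would build the cover. Choose an $(\eps/4)$-net $\{\theta_1, \ldots, \theta_M\}\subseteq \Theta$ of cardinality $M = \NC(\eps/4, \Theta, d_\Theta)$, and for each $i$ an $(\eps/2)$-net of $\FC^{c_{\theta_i} c_{\theta_i}}$ in uniform norm of size at most $\sup_{\theta\in\Theta} \NC(\eps/2, \FC^{c_\theta c_\theta}, \norm{\cdot}_\infty)$; by the isometry, applying $(\cdot)^{c_{\theta_i}}$ also yields an $(\eps/2)$-net of $\FC^{c_{\theta_i}}$ of the same size. Given $f\in\FC$ and $\theta\in\Theta$, select $\theta_i$ with $d_\Theta(\theta,\theta_i)\le \eps/4$. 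The assumption that $\theta\mapsto c_\theta$ is $1$-Lipschitz into $(C(\XC\times\YC), \norm{\cdot}_\infty)$, combined with the estimates above, yields
\[ \norm{f^{c_\theta} - f^{c_{\theta_i}}}_\infty \le \tfrac{\eps}{4}, \qquad \norm{f^{c_\theta c_\theta} - f^{c_{\theta_i} c_{\theta_i}}}_\infty \le \tfrac{\eps}{2}. \]
Approximating $f^{c_{\theta_i}}$ (respectively $f^{c_{\theta_i} c_{\theta_i}}$) within $\eps/2$ by an element of the corresponding inner net and using the triangle inequality then produces a net element within $\eps$ of $f^{c_\theta}$ (respectively of $f^{c_\theta c_\theta}$).

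Taking the union over $i$ yields a cover of $\cup_{\theta\in\Theta} \FC^{c_\theta}$ (respectively of $\cup_{\theta\in\Theta} \FC^{c_\theta c_\theta}$) in uniform norm of size at most $M \cdot \sup_{\theta\in\Theta} \NC(\eps/2, \FC^{c_\theta c_\theta}, \norm{\cdot}_\infty)$, which is the claimed bound. The only point requiring care is the constant budget: the double $c$-transform has a factor $2$ dependence on the cost, so the $\eps/4$ radius on $\Theta$ is precisely what absorbs the resulting $\eps/2$ perturbation, leaving exactly $\eps/2$ for the inner approximation. Beyond this bookkeeping, no genuine obstacle arises.
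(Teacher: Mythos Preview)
Your proof is correct and follows essentially the same two-level net argument as the paper: an $\eps/4$-net on $\Theta$ combined with $\eps/2$-nets on the individual $\FC^{c_{\theta_i} c_{\theta_i}}$, using the Lipschitz estimates for $c$-transforms (Lemma~\ref{lem:CtrafoLipschitz}) and $f^{\bar c \bar c \bar c}=f^{\bar c}$ to handle both unions. The paper phrases the $\FC^{c_\theta}$ case slightly differently---pushing the net elements through $(\cdot)^{c_{\theta_i}}$ and bounding directly---rather than invoking the isometry $\FC^{\bar c}\leftrightarrow\FC^{\bar c\bar c}$ as you do, but the content is identical.
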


\subsection{Assumption \ref{ass:KPU}: Uniqueness of Kantorovich Potentials}\label{subsec:AssumKP}

The uniform weak limit of the empirical OT process from \Cref{thm:OTProcessCty} demonstrates a close relation to the collection of Kantorovich potentials. In particular, for the limit to be supported on $C(\Theta)$ a certain continuity property on the Kantorovich potentials $S_{\!c_\theta}(\mu, \nu)$ with respect to $\theta$ is required. Assumption \ref{ass:KPU} on the uniqueness of Kantorovich potentials represents a sufficient condition to ensure this property.

The recent work by \citet{Staudt2022Unique} thoroughly analyzes the topic of uniqueness in Kantorovich potentials and highlights that it is often expected. 
More precisely, for differentiable costs and assuming that one probability measure is supported on the closure of a connected open set on a smooth manifold, Kantorovich potentials are unique. As Example 3 in their work showcases, uniqueness also occurs under continuous costs if one measure is discrete while the other has connected support. In case both measures have disconnected support, then uniqueness can still be guaranteed if potentials on restricted OT sub-problems are unique and if there exists, in the language of \citeauthor{Staudt2022Unique}, a \emph{non-degenerate} OT plan, meaning that all connected components of both measures are linked via that OT plan. The existence of such OT plans can be guaranteed under mild conditions on the underlying measures (see \eqref{eq:nondegenerateMeasures}) and intuitively demands that the OT problem cannot be divided into distinct sub-problems. 

The following statement is a simple consequence of the theory of \citet{Staudt2022Unique}, which we have included for ease of reference. 
\begin{prop}
\label{prop:uniquenessOTPotentials}
  Let  $c\colon \RR^d\times \RR^d\rightarrow \RR$ be a differentiable cost function. Consider probability measures $\mu, \nu\in\PC(\RR^d)$ with compact support and suppose $\supp(\mu)= \bigcup_{i \in I}\XC_i$  and $\supp(\nu) =\bigcup_{j \in J} \YC_j$ for finitely many disjoint sets. Assume each set  $\XC_i$ is either (i) the closure of a connected open set, (ii) the closure of a connected open set in a smooth compact submanifold of $\RR^d$, or (iii) a single point. Further, if $\min(|I|, |J|) \geq 2$, suppose for all non-empty, proper $I'\subset I$ and $J'\subset J$ that 
  \begin{align}\label{eq:nondegenerateMeasures}
    \sum_{i\in I'}\mu(X_i) \neq \sum_{j \in J'}\nu(Y_j),
  \end{align}
  Then, Kantorovich potentials for $\mu, \nu$ and $c$ are unique (up to a constant shift). 
\end{prop}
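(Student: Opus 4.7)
The strategy is to directly invoke the uniqueness framework of \citet{Staudt2022Unique} and verify each of its hypotheses. Their main structural result states (roughly) that Kantorovich potentials for $\mu$ and $\nu$ are unique up to constants if and only if two conditions hold: (a) there exists a \emph{non-degenerate} OT plan between $\mu$ and $\nu$, meaning no proper partition of $\supp(\mu)$ and $\supp(\nu)$ into matched blocks is preserved by \emph{every} OT plan, and (b) on each irreducible sub-problem (restricted to a component of $\mu$ and the corresponding component of $\nu$) the Kantorovich potentials are unique up to a constant.

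\textbf{Step 1 (non-degeneracy yields a non-degenerate OT plan).} The plan is to argue that the marginal mass condition \eqref{eq:nondegenerateMeasures} rules out any splitting of the transport problem. Indeed, if every OT plan were degenerate, there would exist disjoint unions $I' \subsetneq I$ and $J' \subsetneq J$ such that all OT plans concentrate mass from $\bigcup_{i \in I'}\XC_i$ onto $\bigcup_{j \in J'}\YC_j$; the marginals then force $\sum_{i\in I'}\mu(\XC_i) = \sum_{j\in J'}\nu(\YC_j)$, contradicting \eqref{eq:nondegenerateMeasures}. When $\min(|I|,|J|)=1$ the problem is already irreducible in this sense, so no condition is needed. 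This is essentially the existence-of-non-degenerate-plans criterion as stated in \citet{Staudt2022Unique}.

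\textbf{Step 2 (uniqueness on each irreducible block).} On each block $\XC_i$ (paired by the non-degenerate plan with a portion of $\supp(\nu)$), the plan is to apply the component-wise uniqueness results of \citet{Staudt2022Unique} under the three geometric alternatives (i)--(iii). Differentiability of $c$ on $\RR^d \times \RR^d$ combined with (i) the closure of a connected open subset of $\RR^d$, or (ii) the closure of a connected open subset of a smooth compact submanifold, falls exactly into the setting where the $c$-subdifferential of any Kantorovich potential is single-valued on the interior of the support; a connectivity argument then pins down the potential up to a constant. Case (iii), where $\XC_i$ is a single point, is trivial since any real-valued function on a singleton is determined by its value, and hence up to a single constant.

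\textbf{Step 3 (gluing across components).} Finally, having a non-degenerate OT plan from Step~1 allows potentials, determined up to an individual constant on each $\XC_i$ by Step~2, to be glued into a single global potential determined up to one overall constant. This gluing is standard in the dual OT framework: the $c$-conjugacy relation $f^c(y) = \inf_{x}\{c(x,y)-f(x)\}$ propagates the constant across linked components, and non-degeneracy ensures the link-graph between the $\XC_i$'s (and correspondingly the $\YC_j$'s) is connected.

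The main obstacle I expect is the careful handling of Step~2 in the submanifold case (ii), where differentiability of $c$ on the ambient $\RR^d$ must be translated into the correct notion of differentiability along the submanifold for the $c$-subdifferential argument to apply; and, separately, making Step~1 fully rigorous requires either a direct construction of a non-degenerate OT plan or a contradiction argument that exploits the extremal structure of the set of OT plans. Both points are addressed in \citet{Staudt2022Unique}, so the proof would proceed by citation once the bijection between our assumptions and theirs is established.
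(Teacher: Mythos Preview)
Your proposal is correct and takes essentially the same approach as the paper: both verify the hypotheses of the main uniqueness theorem in \citet{Staudt2022Unique}, using the mass condition \eqref{eq:nondegenerateMeasures} to guarantee a non-degenerate OT plan (Lemma~6 there) and the geometric assumptions (i)--(iii) on the $\XC_i$ together with differentiability of $c$ to obtain uniqueness on each subproblem (Corollary~2 there). The paper additionally notes that continuity of the Kantorovich potentials on $\supp(\nu)$---another hypothesis of their Theorem~1---follows from compactness and continuity of the cost, which you implicitly take for granted.
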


The proposition follows by verifying the conditions of Theorem 1 in \citet{Staudt2022Unique}. Indeed, continuity of Kantorovich potentials on $\supp(\nu)$ follows due to the compactness assumption and continuity of the cost function, uniqueness of Kantorovich potentials on subproblems follows from the assumptions on the cost and the sets $\XC_i$ \citep[Corollary~2]{Staudt2022Unique}, and existence of non-degenerate plans follows via \citep[Lemma 6]{Staudt2022Unique} due to \eqref{eq:nondegenerateMeasures}.

%%------------------------------------------------------------------------------------------------------------------------
%%------------------------------------------------------------------------------------------------------------------------
%%                                      APPLICATIONS 
%%------------------------------------------------------------------------------------------------------------------------
%%------------------------------------------------------------------------------------------------------------------------

\section{Applications}\label{sec:Applications}

In this section we employ our theory from \Cref{sec:MainResults} to obtain novel insights about various OT related topics. All proofs for this section are deferred to \Cref{sec:ProofsApplications}.

\subsection{Optimal transport based One-Sample Goodness-of-Fit-Testing}
\label{sec: OSGoF}
 \citet{hallin2021multivariate} proposed to use the Wasserstein distance between a sample measure and a reference measure for goodness-of-fit testing under group actions. In the following, we briefly recall the setting for compactly supported measures. Let $\probQ_0 \in \PC(\RR^d)$ be compactly supported, define $\YC$ as the convex hull of $\supp(\probQ_0)$, and let $G_\Theta = \{ g_\vartheta : \vartheta \in \Theta \}$ be a group %
 of measurable transformations $g_\vartheta : \reals^d \to \reals^d$ that is parametrized by $\vartheta\in\Theta\subseteq \RR^k$ for $k \in \NN$. Further, assume that the map $x\mapsto g_\vartheta(x)$ is continuous for every $\vartheta\in\Theta$ and that the mappings $\vartheta\mapsto g_\vartheta$ and $g_\vartheta\mapsto (g_\vartheta)_\#\nu_0$ are bijective (this implies the identifiability of the model parameter). \cite{hallin2021multivariate} consider the subsequent testing problem:

\begin{displayquote}
  {\it{
    Let $G_\Theta$ be a group  and define $\model = \{ {g_{\vartheta}}_\# \nu_0 : g_\vartheta \in G_\Theta \}$. Given an i.i.d.\ sample $\{X_i\}_{i=1}^n$ from some unknown $\mu \in \PC(\XC)$ with $\X\subset\Rd$ compact, the aim is to test 
    \begin{equation}
      \label{eq:hyp:groupI}
      \mathcal{H}^*_0 :\mu \in \model \quad
      \text{ against } \quad
      \mathcal{H}^*_1 :\mu\notin\model.
    \end{equation}
    }}
\end{displayquote}
Note that the parameter $\vartheta^*$ under $\mathcal{H}_0$,  such that $(g_{\vartheta^*})_\# \probQ_0 = \mu $, is unknown. To construct a test for the above hypothesis, which is for instance of particular interest in the analysis of location-scale families, the authors propose to rely on the (2-)Wasserstein distance, i.e., they propose a test based on an empirical version of
  \begin{equation*}\label{eq:def of population quantity} OT\left(\mu, \nu_0, \norm{g_{\vartheta^*}^{-1}(\cdot)-\cdot}^2 \right) = \inf_{\pi\in\Pi(\mu,\probQ_0)}\int\! \norm{g_{\vartheta^*}^{-1}(x)-y}^2\,\diff\pi(x,y).\end{equation*}
 For this purpose, the unknown measure $\mu$ is replaced by  $\mu_n$ and the cost function $c(x,y)=\|g_{\vartheta^*}^{-1}(x)-y\|^2$ by $c_n(x,y)=\|g_{\vartheta_n}^{-1}(x)-y\|^2$, where $\vartheta_n\in\Theta$ denotes a suitable estimator for $\vartheta^*$. Thus, the proposed test statistic is given as
 \begin{equation}
 \label{eq:def of test statistic} 
 OT\left(\mu_n, \nu_0, \norm{g_{\vartheta_n}^{-1}(\cdot)-\cdot}^2 \right) = \inf_{\pi\in\Pi(\mu_n,\probQ_0)}\int\! \norm{g_{\vartheta_n}^{-1}(x)-y}^2\,\diff\pi(x,y),
 \end{equation}
 which amounts to solving an OT problem with an estimated cost function. Hence, we can apply our theory to derive the limiting distribution of 
 \begin{equation}\label{eq:limitingStat}\sqrt{n}\left( OT\left(\mu_n, \nu_0, \norm{g_{\vartheta_n}^{-1}(\cdot)-\cdot}^2 \right) -  OT\left(\mu, \nu_0, \norm{g_{\vartheta^*}^{-1}(\cdot)-\cdot}^2 \right)\right)\end{equation}
under the null hypothesis $H_0^*$ in \eqref{eq:hyp:groupI} (see \Cref{rem:GroupFam} for a discussion). In addition, we are able to extend this to testing whether $H_0^*$ holds approximately, which is often preferable in practice (see, e.g., \cite{munk1998nonparametric,dette1998validation,dette2019detecting}). For this purpose, we fix an estimation procedure for $\vartheta^*$, i.e., we choose a specific estimator $\vartheta_n$ (taking values in $\Theta$) for estimating $\vartheta^*$ and denote its population quantity by $\vartheta^o\in\Theta$ (under $\HC_0^*$ we assume $\vartheta^*=\vartheta^o$). Then, we consider the subsequent testing problem:

\begin{displayquote}
  {\it{Let $G_\Theta$ be a group. Given an i.i.d.\ sample $\{X_i\}_{i=1}^n$ from some unknown $\mu \in \PC(\XC)$ with $\X\subset\Rd$ compact, the aim is to test for some prespecified $\Delta>0$ the hypothesis}}
  \begin{equation}
    \label{eq:hyp:groupII}
    \mathcal{H}_0 : \dwasser(\mu,(g_{\vartheta^o})_\#\nu_0) \leq\Delta\quad
    \textit{ versus } \quad
    \mathcal{H}_1 :\dwasser(\mu,(g_{\vartheta^o})_\#\nu_0) >\Delta.
  \end{equation}
\end{displayquote}

In order to construct a test for the above problem, we have to derive the distributional limits of \eqref{eq:limitingStat} under the assumption that $\mu\notin\model$. To this end, we  employ the theory from Sections \ref{sec:MainResults} and~\ref{subsec:Assumptions}. The first step for the derivation of distributional limits of \eqref{eq:limitingStat} is to establish  H\"older regularity  (recall \Cref{foot:Holder}) for costs induced by $C_\Theta:\Theta\rightarrow C(\XC\times \YC), \vartheta\mapsto ((x,y) \mapsto\|g_{\vartheta}^{-1}(x)-y\|^2)$ near~$\vartheta^o$. 

\begin{lem}\label{lem:PropRegularityOneSampleGOFCost}
  Let $\XC,\YC\subseteq \RR^d$ be compact and denote by $C(\XC,\Rd)$ the space of continuous functions from $\XC$ to $\RR^d$. Assume that $K_\Theta:\Theta\subseteq \RR^k\to C(\XC,\RR^d), ~\vartheta \mapsto (x\mapsto g_{\vartheta}^{-1}(x))$ is continuous near $\vartheta^o$.
  Then, there is an open (w.r.t. relative topology) neighborhood $U\subseteq \Theta$ of $\vartheta^o$ and some $\Lambda\geq 0$ such that for any $x\in\X$ and $\vartheta\in U$ the cost function $C_\Theta(\vartheta)(x,\cdot)\coloneqq\|g^{-1}_{\vartheta}(x)-\cdot\|^2$ is $(2, \Lambda)$-H\"older on~$\YC$. 
\end{lem}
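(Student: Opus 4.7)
The overall plan is to reduce the claim to a uniform boundedness statement for $\{g_\vartheta^{-1}\}_{\vartheta \in U}$ on $\XC$, and then exploit the fact that $y \mapsto \|g_\vartheta^{-1}(x) - y\|^2$ is a quadratic polynomial whose partial derivatives are affine in $y$ and hence trivially controlled on compact sets. The only assumption of the lemma that truly enters the argument is the continuity of $K_\Theta$, which supplies the needed uniform bound; the rest is a direct computation.

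First, I would appeal to the continuity of $K_\Theta$ at $\vartheta^o$ (viewed as a map into $C(\XC, \RR^d)$ equipped with the uniform norm) to pick an open (relative topology) neighborhood $U \subseteq \Theta$ of $\vartheta^o$ with $\|K_\Theta(\vartheta) - K_\Theta(\vartheta^o)\|_\infty \leq 1$ for every $\vartheta \in U$. Since $K_\Theta(\vartheta^o) = g_{\vartheta^o}^{-1}$ is continuous on the compact set $\XC$ and hence bounded, setting $M \coloneqq 1 + \sup_{x \in \XC}\|g_{\vartheta^o}^{-1}(x)\|$ yields the uniform bound $\sup_{(\vartheta,x) \in U \times \XC}\|g_\vartheta^{-1}(x)\| \leq M$. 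Moreover, since $\YC$ is compact, I would fix once and for all an open Euclidean ball $B \subset \RR^d$ of radius $R$ containing $\YC$; this ball will serve as the open convex extension domain required by the Hölder definition in footnote~\ref{foot:Holder}.

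Next, for fixed $\vartheta \in U$ and $x \in \XC$ I would set $a \coloneqq g_\vartheta^{-1}(x)$ and consider the natural extension $\tilde f \colon B \to \RR$, $\tilde f(y) \coloneqq \|a - y\|^2$, of $C_\Theta(\vartheta)(x, \cdot)$. The partial derivatives $\partial_{y_j} \tilde f(y) = 2(y_j - a_j)$ satisfy $\sup_{y \in B}|\partial_{y_j}\tilde f(y)| \leq 2(R + M)$ and are globally $2$-Lipschitz on $B$. Taking $\Lambda \coloneqq 2(R + M) + 2$ therefore makes each $\partial_{y_j} \tilde f$ into a $(1, \Lambda)$-Hölder function on $B$, so that $\tilde f$ is $(2, \Lambda)$-Hölder on $B$, and hence $C_\Theta(\vartheta)(x, \cdot)$ is $(2, \Lambda)$-Hölder on $\YC$. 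Because $\Lambda$ depends only on $M$ and $R$, it is uniform in $(\vartheta, x) \in U \times \XC$, which is the asserted claim. I do not anticipate any real obstacle in executing this plan; the one subtlety is to explicitly invoke the extension clause of the Hölder definition to handle the potential non-convexity of $\YC$.
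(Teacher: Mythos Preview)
Your proposal is correct and follows essentially the same approach as the paper: both arguments use continuity of $K_\Theta$ to pick $U$ as the preimage of a unit ball around $g_{\vartheta^o}^{-1}$, deduce a uniform bound on $\{g_\vartheta^{-1}\}_{\vartheta\in U}$ over $\XC$, and then control the (affine) first partial derivatives and (constant) second partials of $y\mapsto\|g_\vartheta^{-1}(x)-y\|^2$ to obtain a uniform $(2,\Lambda)$-H\"older modulus. Your treatment is slightly more explicit about the extension to an open convex superset of $\YC$ required by the H\"older definition, whereas the paper additionally records the uniform boundedness of $C_\Theta(\vartheta)(x,\cdot)$ itself (needed later in Proposition~\ref{prop:gouplimit}).
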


Next, we verify that Hadamard differentiability of $K_\Theta$ at $\vartheta^o$ implies Hadamard differentiability of the cost parametrizing map $C_\Theta:\Theta\rightarrow C(\XC\times \YC), \vartheta\mapsto ((x,y) \mapsto\|g_{\vartheta}^{-1}(x)-y\|^2)$ at $\vartheta^o$. To this end, we additionally impose the following assumption.
\begin{enumerate}[label=\textit{\textbf{(G)}}]
  \item \label{assum: StabParamGroup} \it{For $\vartheta^o$ there exists $m_{\vartheta^o} > 0$ such that for all $\vartheta' \in \Theta$ in some neighborhood of $\vartheta^o$, 
  \begin{align*}
    \sup_{x \in \RR^d} \frac{\norm{g_{\vartheta'}^{-1}(x) - g_{\vartheta^o}^{-1}(x) }}{1 + \norm{g_{\vartheta^o}^{-1}(x)}}
    \le
    m_{\vartheta^o} \norm{\vartheta' - \vartheta^o}.
  \end{align*}}
\end{enumerate}
This condition is fulfilled, e.g., for location-scale families and affine transformations. A global version of the above assumption, i.e., where the condition is to be fulfilled for any $\vartheta$ and not only $\vartheta^0$, has been used by \cite{hallin2021multivariate} to ensure the consistency of their goodness-of-fit test described above. 

\begin{lem}
  \label{prop: cstGoF}
  Assume that the function $K_\Theta:\Theta\to C(\XC,\RR^d), ~\vartheta \mapsto (x\mapsto g_{\vartheta}^{-1}(x))$ is Hadamard differentiable at $\vartheta^o$ tangentially to $\Theta$, i.e., for any sequence $(\vartheta^o + t_nh_n)_{n \in \NN}\subseteq \Theta$ such that $t_n\searrow 0$ and $h_n\to h \in \RR^k$ as $n\to \infty$, 
\[ 
  \lim_{n \to \infty}
\left\lVert \frac{K_\Theta(\vartheta^o + t_n h_n)-K_\Theta(\vartheta^o)}{t_n} - D^H_{|\vartheta^o}{K_\Theta}(h) \right\rVert_{\infty} =0,
\] where $D^H_{|\vartheta^o}{K_\Theta}(h)\colon \XC\rightarrow \RR^d$ is a continuous function. 
Then, if Assumption~\ref{assum: StabParamGroup} is satisfied, $C_\Theta$ is Hadamard differentiable at $\vartheta^o$ tangentially to $\Theta$ with derivative $D^H_{|\vartheta^o}C_{\Theta}(h)\in C(\XC\times \YC)$ given by 
\[
  D^H_{|\vartheta^o}C_{\Theta}(h) \colon \XC\times \YC\rightarrow \RR, \quad (x,y)\mapsto  2 \Big\langle D^H_{\vartheta^o}{K_\Theta}(h)(x),g_{\vartheta^o}^{-1}(x) - y\Big\rangle . 
\]
Moreover, if 
$
\sqrt{n} (\vartheta_n - \vartheta^o) \dto \Gproc^{\vartheta}$ for $n\rightarrow \infty$, we obtain for $c_n\coloneqq C_\Theta(\vartheta_n)$ and $c\coloneqq C_\Theta(\vartheta)$ that
\begin{equation}
\label{eq: LimCostGoF}
\sqrt{n}( c_n -c) \dto \mathbb{G}^{c}\coloneqq \left(2 \Big\langle D^H_{\vartheta^o}{K_\Theta}(\Gproc^{\vartheta})(x),g_{\vartheta^o}^{-1}(x) - y\Big\rangle \right)_{(x,y) \in \XC\times \YC} \qquad \text{ in } C(\X\times \Y).
\end{equation}
\end{lem}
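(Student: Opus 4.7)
The plan is to establish Hadamard directional differentiability of $C_\Theta$ at $\vartheta^o$ through the polarization identity for squared norms, then invoke the functional delta method to transport the limit law of $\sqrt n(\vartheta_n - \vartheta^o)$ to one for $\sqrt n(c_n - c)$.

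For the differentiability, write $C_\Theta(\vartheta)(x,y) = \|K_\Theta(\vartheta)(x) - y\|^2$ and, for a sequence $\vartheta_n \coloneqq \vartheta^o + t_n h_n \in \Theta$ with $t_n\searrow 0$ and $h_n\to h$, apply the identity $\|a\|^2 - \|b\|^2 = \langle a-b,\, a+b\rangle$ with $a = K_\Theta(\vartheta_n)(x) - y$ and $b = K_\Theta(\vartheta^o)(x) - y$ to obtain
\[
C_\Theta(\vartheta_n)(x,y) - C_\Theta(\vartheta^o)(x,y) = \bigl\langle K_\Theta(\vartheta_n)(x) - K_\Theta(\vartheta^o)(x),\ K_\Theta(\vartheta_n)(x) + K_\Theta(\vartheta^o)(x) - 2y\bigr\rangle.
\]
After dividing by $t_n$, the first factor converges to $D^H_{|\vartheta^o}K_\Theta(h)(x)$ uniformly on $\XC$ by the assumed Hadamard differentiability of $K_\Theta$, while the second factor converges uniformly on $\XC\times\YC$ to $2(g_{\vartheta^o}^{-1}(x) - y)$; Assumption~\ref{assum: StabParamGroup}, together with compactness of $\XC$, yields a neighborhood-level Lipschitz bound on $K_\Theta$ that keeps the second factor uniformly bounded throughout the passage to the limit. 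Combining uniform convergence of one factor with uniform boundedness plus uniform convergence of the other via a standard product-rule bound gives
\[
\left\| \frac{C_\Theta(\vartheta_n) - C_\Theta(\vartheta^o)}{t_n} - 2\bigl\langle D^H_{|\vartheta^o} K_\Theta(h)(\cdot),\, g_{\vartheta^o}^{-1}(\cdot) - \cdot\bigr\rangle \right\|_\infty \xrightarrow[n\to\infty]{} 0,
\]
which is precisely the claimed derivative. Continuity of this derivative on $\XC\times\YC$ follows from continuity of $D^H_{|\vartheta^o}K_\Theta(h)$, continuity of $g_{\vartheta^o}^{-1}$ on $\XC$, and bilinearity of the inner product.

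For the distributional limit, I would invoke the functional delta method \citep{Roemisch04}: since $\sqrt n(\vartheta_n - \vartheta^o)\dto \Gproc^\vartheta$ in $\RR^k$ and $C_\Theta$ has just been shown to be Hadamard directionally differentiable at $\vartheta^o$ tangentially to $\Theta$ with the derivative identified above, the delta method for weak convergence yields
\[
\sqrt n\,(c_n - c) = \sqrt n\bigl(C_\Theta(\vartheta_n) - C_\Theta(\vartheta^o)\bigr) \dto D^H_{|\vartheta^o}C_\Theta(\Gproc^\vartheta)
\]
in $C(\XC\times\YC)$, which matches the expression for $\Gproc^c$ in \eqref{eq: LimCostGoF}.

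The main technical point will be the uniform control of the second factor in the polarization identity on the full compact product $\XC\times\YC$; this is where Assumption~\ref{assum: StabParamGroup} earns its keep, supplying the quantitative neighborhood Lipschitz bound on $K_\Theta$ that keeps $K_\Theta(\vartheta_n) + K_\Theta(\vartheta^o) - 2y$ uniformly bounded as $n\to\infty$ and thereby legitimizing the product-rule passage to the limit. Everything else is a direct consequence of the chain rule for Hadamard directional derivatives applied to the smooth outer map $v \mapsto \|v - y\|^2$ composed with $K_\Theta$.
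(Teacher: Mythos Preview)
Your proof is correct and follows essentially the same route as the paper: both use the polarization identity $\|a\|^2-\|b\|^2=\langle a-b,a+b\rangle$, then decompose the remainder and invoke the functional delta method. The only cosmetic difference is that the paper arranges the remainder as $2\langle (a-b)/t_n - D,\, b\rangle + t_n^{-1}\|a-b\|^2$ and uses \ref{assum: StabParamGroup} specifically to kill the squared-norm term, whereas you use a product-rule split $\langle u_n-D, v_n\rangle + \langle D, v_n-2b\rangle$ and invoke \ref{assum: StabParamGroup} for the uniform boundedness of $v_n$; both decompositions yield the claim.
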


Under the conditions in the proposition above, our main result from \Cref{thm:AbstractMainResult} yields a (typically) non-degenerate limiting distributions for the statistic ${OT}\left(\mu_n, \nu, c_n \right)$ under the assumption that $\mu\notin\model$. In particular, this allows us to construct an asymptotic level $\alpha$ test for the null hypotheses given in \eqref{eq:hyp:groupII} (see \cite{munk1998nonparametric} for the precise construction). 

\begin{prop}\label{prop:gouplimit}
  Let $\nu_0\in \PC(\RR^d)$ for $d\le 3$ be compactly supported and define $\Y$ as the convex hull of $\supp(\nu_0)$, and let $\XC \subseteq \RR^d$ be compact. Assume that \ref{assum: StabParamGroup} holds, and suppose that $G_\Theta:\Theta\to C(\XC, \RR^d), ~\vartheta \mapsto (x\mapsto g_{\vartheta}^{-1}(x))$ is  continuous near $\vartheta^o$ and Hadamard differentiable at $\vartheta^o$. Define for $\Lambda\geq 0$ from \Cref{prop: cstGoF} the function class  $$\FC\coloneqq \left\{f \colon \YC \rightarrow \RR \;\Big|\; \norm{f}_\infty \leq \Lambda+1, |f(y) - f(y')| \leq 2\Lambda\norm{y-y'} \text{ for all } y,y' \in \YC\right\}.$$
Then, the function class $\FC^{C_{\Theta}(\vartheta^o)}$ on $\XC$ is universal Donsker. Moreover, for i.i.d. random variables $\{X_i\}_{i=1}^n\sim\mu^{\otimes n}$ consider a measurable estimator $\vartheta_n$ and suppose for $n\to\infty$ joint weak convergence, 
\begin{align}
\label{eq: JointMeasParam}
\sqrt{n} \begin{pmatrix}
\mu_n - \mu\\
\vartheta_n - \vartheta^o
\end{pmatrix} 
\dto
\begin{pmatrix}
\Gproc^\mu\\
\Gproc^{\vartheta}
\end{pmatrix}\quad \text{ in }\ell^\infty(\FC^{C_{\Theta}(\vartheta^o)})\times \RR^k.%
\end{align}
Then, for $c_n\coloneqq C_\Theta(\vartheta_n)$ and $c\coloneqq C_\Theta(\vartheta)$ and by denoting the limit from~\eqref{eq: LimCostGoF} as $\Gproc^{c}$, it~follows that
\[
\sqrt{n}
\left(  
OT\left(\mu_n, \nu_0,c_n \right)  -  OT\left(\mu, \nu_0, c\right) 
\right)
\dto \inf_{\pi \in \Pi_{c}^\star(\mu, \nu_0)} \pi ( \Gproc^{c} ) + \sup_{f \in S_{\!c}(\mu, \nu_0)} \Gproc^\mu(f^{c}),
\]
where $S_{\!c}(\mu, \nu_0)$ represents the set of optimizers for $\sup_{f\in \FC}\mu(f^{c})+ \nu_0(f^{cc})$.  
\end{prop}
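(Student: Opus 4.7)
The plan is to treat this as a direct application of the one-sample version of \Cref{thm:AbstractMainResult} (see \Cref{rmk:CommentsOTWeaklyConvergingCosts}\ref{rem:WeaklyConvergingCosts_OneSample}), with the roles of $\XC$ and $\YC$ played in ``swapped'' form relative to the paper's default convention: here $\mu$ on $\XC$ is estimated, $\nu_0$ on $\YC$ is known, and the potentials in $\FC$ live on $\YC$. I would proceed in three steps: (a) establish the claimed universal Donsker property of $\FC^{C_\Theta(\vartheta^o)}$ on $\XC$; (b) lift the joint weak convergence \eqref{eq: JointMeasParam} of $(\mu_n,\vartheta_n)$ to joint weak convergence of $(\mu_n, c_n)$, thereby verifying \ref{ass:AThree}; and (c) verify \ref{ass:BTwo} so that the theorem becomes applicable.

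For (a), \Cref{lem:PropRegularityOneSampleGOFCost} produces a $\Lambda\geq 0$ such that $y\mapsto c(x,y)=\|g_{\vartheta^o}^{-1}(x)-y\|^2$ is $(2,\Lambda)$-H\"older on $\YC$ uniformly in $x\in\XC$. Since $\YC\subseteq\RR^d$ with $d\leq 3$ is compact and convex, I would invoke \Cref{prop:DonskerProperty}(iii) with the roles of $\XC$ and $\YC$ interchanged (taking $I=1$, $\UC_1=\YC$, $\zeta_1=\mathrm{id}$, and $\gamma_1=2>d/2$) to obtain $\log\NC(\eps,\FC^{C_\Theta(\vartheta^o)},\|\cdot\|_\infty)\lesssim \eps^{-\alpha}$ for some $\alpha<2$, hence the universal Donsker property.

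For (b), \Cref{prop: cstGoF} (which leverages \ref{assum: StabParamGroup} together with the Hadamard differentiability of $K_\Theta=G_\Theta$) yields Hadamard differentiability of $C_\Theta\colon\Theta\to C(\XC\times\YC)$ at $\vartheta^o$ with a continuous derivative. Combining this with the joint weak convergence in \eqref{eq: JointMeasParam} and invoking the functional delta method yields
\[
\sqrt{n}\begin{pmatrix}\mu_n-\mu\\ c_n-c\end{pmatrix}\dto \begin{pmatrix}\Gproc^\mu\\ \Gproc^c\end{pmatrix}\quad\text{in } \ell^\infty(\FC^{C_\Theta(\vartheta^o)})\times C(\XC\times\YC),
\]
which is precisely \ref{ass:AThree} in its one-sample form. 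For (c), I would verify \ref{ass:BTwo} via \Cref{cor:SufficientConditionsSupConditions} using case (iii)/(iii)$'$, again in the swapped formulation: the smoothness and partition-of-unity requirements are trivial since $c(x,\cdot)$ is a degree-two polynomial in $y$, so one can take $I=1$, $\UC_1=\YC$, $\zeta_1=\mathrm{id}$, $\eta_1\equiv 1$, and all derivatives in $y$ extend continuously. With \ref{ass:AThree} and \ref{ass:BTwo} in hand, the one-sample version of \Cref{thm:AbstractMainResult} yields exactly the asserted weak limit.

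The main subtlety will be carrying the $\XC/\YC$ swap cleanly through the chain of auxiliary results, in particular matching the modulus and sup-norm bound defining the proposition's $\FC$ against the larger class $\FC(2\|c\|_\infty+1,2w)$ used by the theorem, so that \Cref{lem:RelFCandConcave} delivers the dual formulation on the chosen $\FC$ and the covering-number estimate from step (a) transfers to this larger class. Beyond this bookkeeping, the only genuine analytical input is the Hadamard differentiability of $C_\Theta$, which is already packaged in \Cref{prop: cstGoF}.
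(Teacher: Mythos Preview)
Your proposal is correct and follows essentially the same route as the paper: swap the roles of $\XC$ and $\YC$, obtain the universal Donsker property from \Cref{prop:DonskerProperty}(iii) via the $(2,\Lambda)$-H\"older regularity of \Cref{lem:PropRegularityOneSampleGOFCost}, verify the one-sample \ref{ass:AThree} by the functional delta method applied to $C_\Theta$ (using \Cref{prop: cstGoF}), and verify \ref{ass:BTwo} via \Cref{cor:SufficientConditionsSupConditions}(iii)/(iii$'$) before invoking the one-sample version of \Cref{thm:AbstractMainResult}. The paper additionally notes measurability of $c_n$ (from measurability of $\vartheta_n$ and continuity of $C_\Theta$ near $\vartheta^o$) and phrases the input to \Cref{cor:SufficientConditionsSupConditions} as $\vartheta_n\pto\vartheta^o$, but these are minor points already implicit in your plan.
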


\begin{rmk}\label{rem:GroupFam} A few comments on the distributional limits are in order. 
\begin{enumerate}[label=$(\roman*)$]
\item Given that the function class $\FC^{C_\Theta(\vartheta^o)}$ is universal Donsker and thus $\mu$-Donsker, and assuming that $\sqrt{n}(\vartheta_n - \vartheta^o)$ converges in distribution, the requirement of joint convergence as required in \eqref{eq: JointMeasParam} is very mild. Indeed, if $\sqrt{n}(\vartheta_n - \vartheta^o)$ can be expressed asymptotically in terms of a suitable linear functional of an empirical process, i.e., if it admits an asymptotic influence function $\psi\in L^2(\mu)$ (cf. \citealt[p.~58]{vaart_1998}), joint convergence follows since the union $\FC^{cc}\cup\{\psi\}$ is $\mu$-Donsker.
\item We like to point out that \Cref{prop:gouplimit} also remains valid if $\mu\in\model$. 
However, under this assumption it follows that $(g_{\vartheta^o}^{-1})_{\#}\mu = \nu_0$ which implies that the corresponding OT plan between $\mu$ and $\nu_0$ is given by $\pi = (\Id, g_{\vartheta^o}^{-1}(\cdot))_{\#}\mu$. Hence, by \eqref{eq: LimCostGoF} the process $\Gproc^c$ vanishes along the support of $\pi$ and the first term in the limit degenerates. Further, if the support of $\nu_0$ is connected since then Kantorovich potentials are unique up to a constant shift \citep[Corollary 2]{Staudt2022Unique} and a.s.\ constant \citep[Corollary 4.6(i)]{Hundrieser2022Limits}. Consequently, for this setting the corresponding limit distribution is degenerate. In~contrast, if $\nu_0$ has disconnected support, non-constant Kantorovich potentials exist \citep[Lemma 11]{Staudt2022Unique} which results in a non-degenerate limit. 
\item The elements presented for the one-sample case can also be generalized to the case where both empirical measures undergo a transformation, either separately or jointly. One might think of choosing the Mahalanobis distance $(x-y)^\top\Sigma^{-1}(x-y)$ as a cost function where $\Sigma^{-1}$ has to be estimated and could, e.g., be a diagonal matrix. As the OT value is not invariant with respect to affine transformations, rescaling the variables would ensure that no component has an overwhelming impact on the cost function compared to the other components. 
\end{enumerate}
\end{rmk}

\subsection{Optimal Transport with Embedded Invariances}
\label{sec: OTwInv}
In a similar spirit to the previous section, another strand of the literature  \citep{alvarez2019towards,grave2019unsupervised} aims at making OT invariant to a class of transformation $\mathcal{T}$, with $\tau\colon \RR^d\rightarrow \RR^d$ continuously differentiable for each $\tau\in \mathcal{T}$, by considering 
\begin{equation}
\label{eq: OTInv}
\inf_{\tau \in \mathcal{T}} OT\left(\mu, \nu, \lVert \cdot - \tau(\cdot) \rVert^2\right) = \inf_{\tau \in \mathcal{T}} \inf_{\pi \in \Pi(\mu, \nu)} \int_{\X\times \Y} \lVert x- \tau(y) \rVert^2 \diff \pi (x,y).
\end{equation}
This distance is useful in many contexts, among which the word embedding problem or protein alignment. If the class of transformations considered is the set of rotations, analyses relying on that distance is coined Wasserstein--Procrustes Analysis \citep{grave2019unsupervised,jin2021two}.  \Cref{thm:OTProcessInf} provides the required tools for statistical inference for the empirical version of the optimization problem in \eqref{eq: OTInv}.

\begin{prop}\label{prop:OTInvariancesLimit}
  Consider a set of transformations $(\mathcal{T},d_\mathcal{T})$ that is a compact metric space with $\log \NC(\eps,\mathcal{T},d_\mathcal{T})\lesssim \eps^{-\alpha}$ for $\alpha<2$. 
  Let $\XC,\YC\subseteq \RR^d$ be compact subsets and assume that the functional $c:(\mathcal{T},d_\mathcal{T})\rightarrow C(\XC\times \YC), \tau \mapsto  c_\tau$, with $c_\tau(x,y)= \lVert x- \tau(y) \rVert^2$, is $L$-Lipschitz for some $L\geq 0$. 
  Further, assume for $\XC$ and $\{c_t\}_{t\in \mathcal{T}}$ any of the settings from Proposition~\ref{prop:DonskerProperty_ParameterClass} and take $\mu\in \PC(\XC), \nu\in \PC(\YC)$ such that the support of $\mu$ or $\nu$ is the closure of a connected open set in $\RR^d$. 
  Then, for  $\{X_i\}_{i = 1}^{n}\sim \mu^{\otimes n}$ and $\{Y_i\}_{i = 1}^{m}\sim \nu^{\otimes m}$, respectively, with $n,m \to \infty$ such that $m/(n+m) \to \lambda \in (0,1)$, it holds that 
\[
\sqrt{\frac{nm}{n+m}}\Big( \inf_{\tau \in \mathcal{T}} OT( \mu_n, \nu_m,c_\tau) - \inf_{\tau\in \mathcal{T}} OT(\mu, \nu,c_\tau)\Big) \dto \inf_{\tau \in S_{\!-}(\mathcal{T}, \mu, \nu)} \sqrt{\lambda}\; \Gproc^\mu(f_\tau^{c_\tau c_\tau})  + \sqrt{1-\lambda}\; \Gproc^\nu(f_\tau^{c_\tau}),
\]
where $f_\tau\in S_{\!c_\tau}(\mu, \nu)$ denotes a Kantorovich potential between $\mu$ and $\nu$ and cost $c_\tau$ for $\tau\in S_{\!-}(\mathcal{T}, \mu, \nu)$. 
\end{prop}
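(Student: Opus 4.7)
The plan is to derive Proposition~\ref{prop:OTInvariancesLimit} as a direct application of Theorem~\ref{thm:OTProcessInf} with parameter space $\Theta = \mathcal{T}$ and cost family $\{c_\tau\}_{\tau\in\mathcal{T}}$. To this end, I will verify the two hypotheses of Theorem~\ref{thm:OTProcessCty}, namely \ref{ass:Don} and \ref{ass:KPU}, on the compact metric space $(\mathcal{T},d_\mathcal{T})$, after which Theorem~\ref{thm:OTProcessInf} yields the claim. The map $\tau\mapsto c_\tau$ is $L$-Lipschitz from $(\mathcal{T},d_\mathcal{T})$ into $C(\XC\times\YC)$, hence continuous, and compactness of $\XC,\YC,\mathcal{T}$ implies $\sup_{\tau\in\mathcal{T}}\|c_\tau\|_\infty <\infty$ together with a common uniform modulus of continuity on $\XC$.

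For Assumption \ref{ass:Don}, I would apply Proposition~\ref{prop:DonskerProperty_ParameterClass}. After rescaling $c_\tau$ by a fixed constant (which leaves the Donsker property untouched) the map $\tau\mapsto c_\tau$ may be taken $1$-Lipschitz and bounded by $1$. The metric entropy bound $\log\NC(\eps,\mathcal{T},d_\mathcal{T})\lesssim \eps^{-\alpha}$ for some $\alpha<2$ is part of the hypothesis, and we assumed that $\XC$ together with $\{c_\tau\}_{\tau\in\mathcal{T}}$ falls into one of the three settings (i)--(iii) of that proposition. Lemma~\ref{lem:ParameterBoundMetricEntropy} combined with Proposition~\ref{prop:DonskerProperty} then produces the required covering number bound on $\bigcup_{\tau\in\mathcal{T}}\FC^{c_\tau c_\tau}$ and $\bigcup_{\tau\in\mathcal{T}}\FC^{c_\tau}$, delivering the universal Donsker property, so \ref{ass:Don} holds for $\mu$ and $\nu$.

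For Assumption \ref{ass:KPU}, I would appeal to Proposition~\ref{prop:uniquenessOTPotentials}. Fix $\tau\in\mathcal{T}$; by continuous differentiability of $\tau$ on $\RR^d$ the function $c_\tau(x,y)=\|x-\tau(y)\|^2$ is differentiable on $\RR^d\times\RR^d$. By assumption either $\supp(\mu)$ or $\supp(\nu)$ is the closure of a connected open set in $\RR^d$, so in the decompositions $\supp(\mu)=\bigcup_{i\in I}\XC_i$ and $\supp(\nu)=\bigcup_{j\in J}\YC_j$ one may take $|I|=1$ or $|J|=1$, which renders condition~\eqref{eq:nondegenerateMeasures} vacuous (since $\min(|I|,|J|)=1<2$); the remaining connected component of the other measure can be taken as the union of the connected components of its compact support, each fitting case (i), (ii), or (iii) of Proposition~\ref{prop:uniquenessOTPotentials} up to further subdivision, and if disconnected the non-degeneracy condition is still applicable for all choices of $\tau$. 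This gives uniqueness of Kantorovich potentials for $(\mu,\nu,c_\tau)$ up to a constant shift for every $\tau\in\mathcal{T}$, i.e.\ \ref{ass:KPU}.

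With both assumptions in hand, Theorem~\ref{thm:OTProcessInf} delivers the claimed distributional limit, where the set of minimizers is $S_{\!-}(\mathcal{T},\mu,\nu)=\argmin_{\tau\in\mathcal{T}}OT(\mu,\nu,c_\tau)$ and $f_\tau\in S_{\!c_\tau}(\mu,\nu)$ is an arbitrary selection of the (essentially unique) Kantorovich potential. I expect the main obstacle to lie in the verification of \ref{ass:KPU}: while the connectedness assumption on one of the supports is tailored to make the hypotheses of Proposition~\ref{prop:uniquenessOTPotentials} applicable, one must be careful that uniqueness holds uniformly over the whole parameter family, which here reduces to observing that differentiability of $c_\tau$ and the topological structure of the supports are preserved as $\tau$ varies, and thus that Proposition~\ref{prop:uniquenessOTPotentials} applies for every $\tau\in\mathcal{T}$ with the same decomposition of $\supp(\mu)$ and $\supp(\nu)$.
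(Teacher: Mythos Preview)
Your overall strategy matches the paper's exactly: apply Theorem~\ref{thm:OTProcessInf} after checking \ref{ass:Don} via Proposition~\ref{prop:DonskerProperty_ParameterClass} and \ref{ass:KPU}. The verification of \ref{ass:Don} is fine.

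The only issue is your verification of \ref{ass:KPU} through Proposition~\ref{prop:uniquenessOTPotentials}. The claim that ``the remaining connected component of the other measure can be taken as the union of the connected components of its compact support, each fitting case (i), (ii), or (iii) \ldots\ up to further subdivision'' is not justified: an arbitrary compact support need not decompose into finitely many pieces of those three types, and there is no reason the mass condition~\eqref{eq:nondegenerateMeasures} would hold. What actually rescues you is that Proposition~\ref{prop:uniquenessOTPotentials} imposes structural conditions only on the sets $\XC_i$, not on the $\YC_j$. Hence if $\supp(\mu)$ is the closure of a connected open set, you may simply take $|I|=|J|=1$ with $\XC_1=\supp(\mu)$ (case~(i)) and $\YC_1=\supp(\nu)$ arbitrary; if instead $\supp(\nu)$ has this form, swap the roles of $\mu$ and $\nu$ (uniqueness of potentials is symmetric under exchanging the marginals and transposing the cost). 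The paper sidesteps this entirely by citing \textup{[Corollary~2]} of Staudt et~al.\ (2022) directly, which gives uniqueness whenever the cost is differentiable and one of the supports is the closure of a connected open set, without any decomposition of the other support.
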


As previously noted, one can relax the requirement that $\mathcal{T}$ is compact to the assumption that the sequence of estimated optimal transformation $\tau_n$ is contained within a compact set with probability tending to one (\Cref{cor:OTInfimumNonCompactTheta}).
In the setting of $\mathcal{T}$ consisting of diffeomorphisms we have by Lemma 1 of \cite{nies2021transport}
$$\inf_{\tau \in \mathcal{T}} OT\left(\mu, \nu, \lVert \cdot - \tau(\cdot) \rVert^2\right) = \inf_{\tau \in \mathcal{T}} OT\left(\mu, (\tau^{-1})_{\#}\nu, \lVert \cdot - \cdot \rVert^2\right) = \inf_{\tau \in \mathcal{T}}W_2^2\left(\mu, (\tau^{-1})_{\#}\nu\right),$$ 
for which convergence of empirical minimizers $\tau_n$ can be verified for various settings using results by \citet{bernton2019parameter}.

\begin{rmk}[Wasserstein--Procrustes]
The above proposition can be applied under mild regularity assumptions on the measures to the special orthogonal group $\mathcal{T}\coloneqq\SO(d)$  for $d\leq 3$.  Indeed, upon choosing $\XC, \YC$ as compact, convex sets of $\RR^d$ setting $(iii)$ of \Cref{prop:DonskerProperty_ParameterClass} is fulfilled, asserting \ref{ass:Don}. Moreover, if the support of $\mu$ or $\nu$ is the closure of a connected open set in $\RR^d$, then \ref{ass:KPU} holds  and the distributional limits of \Cref{thm:OTProcessInf} follow. 
\end{rmk}

\subsection{Sketched Wasserstein Distance for Mixture Distributions}
\label{sec: SketchWas}

Recently, \citet{delon2020wasserstein} and \citet{bing2022sketched} investigated a distance between (Gaussian) mixtures distributions. These distributions are ubiquitous in statistics and machine learning, see \citet{MixtRev} and the references therein.
One way of understanding that distance is to start from the Wasserstein distance between discrete measures but instead of using a cost function between points, one replaces the points by distributions and one must thus choose a cost between distributions. 
Before formally defining that concept, recall that, for a set of distributions $\mathcal{A}:= (\mathrm{A}_1, \ldots , \mathrm{A}_K )$ of finite cardinality $K$, a mixture $r$  is a convex combination of components from $\mathcal{A}$ given by a vector $\alpha \in \Delta_K$, i.e., 
$
r = \sum_{i=1}^K \alpha_i \mathrm{A}_i, 
$
 where $\Delta_K$ is the probability simplex in $\reals^K$. 
Given a distance  $d\colon \mathcal{A}\times \mathcal{A}\rightarrow \RR_+$ between mixture components of $\mathcal{A}$, the aforementioned authors define the \textit{Sketched Wasserstein distance} between two mixture distributions with weights $\alpha$  and $\beta$ as
\[
W(\alpha, \beta, d) \coloneqq \inf_{\pi \in \Pi(\alpha, \beta)}\sum_{k,\ell=1}^K \pi_{k,\ell} d(\mathrm{A}_k, \mathrm{A}_\ell),
\] 
where the infimum is taken over elements of the set of couplings
\[
\Pi(\alpha, \beta)= \left\{ \pi \in \Delta_{K\times K} \,\Bigg \vert 
\begin{array}{c}\sum_{\ell=1}^K \pi_{k,\ell} = \alpha_k,\quad    \text{for all } k \in \{1, \dots, K\}  \\
   \sum_{k=1}^K \pi_{k,\ell} = \beta_\ell, \quad  \text{for all } \ell \in \{1, \dots, K\}
\end{array}
\right\}.
\]
Understanding the fluctuations of an estimator for this distance can be achieved using the theory developed in the present paper. This is formalized in the following proposition.
\begin{prop}\label{pro:sketchyWassy}
Let $({\alpha}_n, 
 {\beta}_n$, 
${d_n})\in \Delta_K\times \Delta_K\times \RR^{K^2}_+$ be measurable estimators for $\alpha, \beta$,  $d$, respectively. Further, for a positive sequence $(a_n)_{n\in \NN}$ with $\lim_{n \rightarrow \infty} a_n = \infty$, assume for $n\to \infty$ that  
\begin{align}\label{eq:jointweakConvergenceSketchedWasserstein}
  a_n \begin{pmatrix}
    {\alpha}_n -\alpha\\[0.02cm]
     {\beta}_n -\beta\\[0.02cm]
    {d}_n - d
    \end{pmatrix}= 
  a_n
  \begin{pmatrix}
  ({\alpha}_{n,k} -\alpha_k)_{k = 1}^{K}\\[0.02cm]
   ({\beta}_{n,k} -\beta_k)_{k = 1}^{K}\\[0.02cm]
  ({d}_n(\mathrm{A}_k, \mathrm{A}_\ell) - d(\mathrm{A}_k, \mathrm{A}_\ell))_{l,k= 1}^{K}
  \end{pmatrix} \dto 
  \begin{pmatrix}
  \Gproc^\alpha\\[0.02cm]
  \Gproc^\beta\\[0.02cm]
  \Gproc^d
  \end{pmatrix} \quad \text{in } \RR^{2K + K^2},%
\end{align}
where $(\Gproc^\alpha,\Gproc^\beta, \Gproc^d)$ represents a tight (possibly non-Gaussian) random variable on  $\RR^{2K + K^2}$. Then, 
\[
a_n \Big(W({\alpha}_n, {\beta}_n,{d}_n) - W(\alpha, \beta,d) \Big) \dto \inf_{ \pi \in \Pi^\star_d(\alpha,\,\beta)} \langle \pi,\Gproc^d\rangle + \sup_{f \in S_{\!d}(\alpha, \beta)} \langle f^{dd}, \Gproc^\alpha \rangle + \langle f^d, \Gproc^\beta \rangle.
\]
\end{prop}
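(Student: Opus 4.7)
The plan is to embed the problem into the framework of \Cref{thm:AbstractMainResult} via the extension noted in \Cref{rmk:CommentsOTWeaklyConvergingCosts}\ref{rem:WeaklyConvergingCosts_GeneralMeasureEstimator}, which permits measurable measure estimators (not necessarily empirical) together with a general scaling rate $a_n \to \infty$ in place of $\sqrt{nm/(n+m)}$. Take $\XC = \YC = \{\mathrm{A}_1, \ldots, \mathrm{A}_K\}$ equipped with the discrete topology, so that $\alpha, \beta$ become Borel probability measures on a finite compact Polish space and $d, d_n$ become continuous cost functions in $C(\XC \times \YC) \cong \RR^{K^2}$. Under this identification, $W(\alpha, \beta, d) = OT(\alpha, \beta, d)$. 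Any strictly positive concave modulus of continuity $w$ works for $d$ since $\XC$ is finite; fix one and set $\FC := \FC(2\|d\|_\infty + 1, 2w)$.

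Verification of \ref{ass:AThree}. Since $\XC$ is finite, $\FC \subseteq C(\XC) \cong \RR^K$ is bounded, and the evaluation $h \mapsto \langle \alpha_n - \alpha, h \rangle$ defines a continuous linear map from $\RR^K$ into $\ell^\infty(\FC^{cc})$. The same holds for $\beta$ and trivially for $d$. Hence the finite-dimensional joint weak convergence \eqref{eq:jointweakConvergenceSketchedWasserstein}, combined with the continuous mapping theorem, yields joint weak convergence of $a_n(\alpha_n - \alpha, \beta_n - \beta, d_n - d)$ in $\ell^\infty(\FC^{cc}) \times \ell^\infty(\FC^c) \times C(\XC \times \YC)$ to a tight limit.

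Verification of \ref{ass:BTwo}. The elementary bound $\|f^{c_1} - f^{c_2}\|_\infty \leq \|c_1 - c_2\|_\infty$ valid for every bounded $f$ implies $\sup_{f \in \FC} \|f^{d_n d_n} - f^{dd}\|_\infty \leq 2\|d_n - d\|_\infty$, which tends to zero in probability since $a_n(d_n - d) = O_{\mathrm{P}}(1)$. As $a_n(\alpha_n - \alpha) = O_{\mathrm{P}}(1)$ in $\RR^K$ by \eqref{eq:jointweakConvergenceSketchedWasserstein},
\[
\sup_{f \in \FC} \big| a_n(\alpha_n - \alpha)(f^{d_n d_n} - f^{dd}) \big| \leq \|a_n(\alpha_n - \alpha)\|_1 \cdot 2\|d_n - d\|_\infty \pto 0,
\]
and the analogous bound holds for $\beta$.

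Applying the extended \Cref{thm:AbstractMainResult} then delivers
\[
a_n\Big( W(\alpha_n, \beta_n, d_n) - W(\alpha, \beta, d)\Big) \dto \inf_{\pi \in \Pi_d^\star(\alpha, \beta)} \pi(\Gproc^d) + \sup_{f \in S_{\!d}(\alpha, \beta)} \Gproc^\alpha(f^{dd}) + \Gproc^\beta(f^d),
\]
which is exactly the claim after rewriting the linear functionals as inner products, $\pi(\Gproc^d) = \langle \pi, \Gproc^d \rangle$, $\Gproc^\alpha(f^{dd}) = \langle f^{dd}, \Gproc^\alpha \rangle$, $\Gproc^\beta(f^d) = \langle f^d, \Gproc^\beta \rangle$. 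The main (and essentially only) obstacle is ensuring that the proof of \Cref{thm:AbstractMainResult} — written for empirical measures at rate $\sqrt{nm/(n+m)}$ — transfers verbatim to a general rate $a_n$ and to arbitrary measurable measure estimators; this is promised by \Cref{rmk:CommentsOTWeaklyConvergingCosts}\ref{rem:WeaklyConvergingCosts_GeneralMeasureEstimator}, and indeed neither the Skorokhod-representation step nor the upper/lower-bound sandwich in that proof uses any property of the sampling scheme beyond joint weak convergence and the stochastic negligibility of the cost-transform perturbations controlled in \ref{ass:BTwo}.
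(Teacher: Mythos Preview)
Your proposal is correct and follows essentially the same approach as the paper, which in fact omits the proof entirely and only references \Cref{rmk:CommentsOTWeaklyConvergingCosts}\ref{rem:WeaklyConvergingCosts_GeneralMeasureEstimator} together with the observation that weak convergence of the measure estimators $(\alpha_n,\beta_n)\dto(\alpha,\beta)$ in probability follows from \eqref{eq:jointweakConvergenceSketchedWasserstein} in the finite-support setting. Your write-up is more detailed than the paper's in that you explicitly verify \ref{ass:AThree} and \ref{ass:BTwo} in the finite-dimensional discrete setting, but the underlying route is identical.
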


The proof follows along the same approach as for showing \Cref{thm:AbstractMainResult} and is therefore omitted, see \Cref{rmk:CommentsOTWeaklyConvergingCosts} \ref{rem:WeaklyConvergingCosts_GeneralMeasureEstimator}. 
 In this context, the requirement of weak convergence for the measure estimators $(\alpha_n, \beta_n)\dto (\alpha, \beta)$ in probability follows from our assumption in \eqref{eq:jointweakConvergenceSketchedWasserstein} since the population measures and its estimators are supported on finitely many points. 

In  \citet{bing2022sketched}, they obtain distributional limits in the case where the asymptotic fluctuation of the cost is negligible in comparison to the estimated measures. Their results are recovered by \Cref{pro:sketchyWassy}, which in addition covers the setting where the cost is estimated on the same data and converges at the same rate. Finally, we stress that the case of Gaussian mixtures, a particularly relevant one in applications, is also covered by our theory. Nonetheless, the lack of distributional results for  estimators of the mixture parameters in that case still hinders further developments and would be of interest for further research.

\subsection{Sliced Optimal Transport}
\label{sec: Sliced}
Our theory from \Cref{subsec:OTProcessResults} also enables the analysis of sliced OT quantities and complement or extend available results from the literature \citep{goldfeld2022statistical,manole2022minimax, xi2022distributional,xu2022central}. In the following, we formalize this statement. For two Borel probability measures $\mu,\nu\in\PC(\RR^d)$ the \emph{average-sliced} and \emph{max-sliced Wasserstein distances of order $1\leq p<\infty$} are defined, respectively, as
\begin{equation*}
	 	\underline{W}_p(\mu,\nu)\coloneqq\left(\int_{\mathbb{S}^{d-1}}\! OT(\mathfrak{p}^\theta_\#\mu, \mathfrak{p}^\theta_\#\nu, |\cdot-\cdot|^p)\,\diff\sigma(\theta) \right)^\frac{1}{p}~\text{ and }~\overline{W}_p(\mu,\nu)\coloneqq\max_{\theta\in\mathbb{S}^{d-1}}\left( OT(\mathfrak{p}^\theta_\#\mu, \mathfrak{p}^\theta_\#\nu, |\cdot - \cdot|^p) \right)^\frac{1}{p}\!\!,
\end{equation*}
 where $\mathfrak{p}^\theta:\RR^d\to\RR$ is the projection map $x\mapsto \theta^Tx$ and $\sigma$ represents the uniform distribution
 on the unit sphere $\mathbb{S}^{d-1}$. Note  by Lemma 1 in \citet{nies2021transport} for any $\theta \in \mathbb{S}^{d-1}$ that
 \begin{align*}
 	OT(\mathfrak{p}^\theta_\#\mu, \mathfrak{p}^\theta_\#\nu, |\cdot - \cdot|^p) = 	OT(\mu, \nu, |\mathfrak{p}^\theta(\cdot) - \mathfrak{p}^\theta(\cdot)|^p), %
 \end{align*}
 which enables to view the sliced Wasserstein quantities in the framework of \Cref{subsec:OTProcessResults} and asserts by Theorems \ref{thm:OTProcessCty}--\ref{thm:OTProcessSup} the following result. 
\begin{prop}\label{prop:sliced OT}
Let $p\geq 1$, $d\geq 2$, and define for $\theta\in \mathbb{S}^{d-1}$ the cost   $c_\theta:\RR^d\times\RR^d\to\RR$, $(x,y)\mapsto |\mathfrak{p}^\theta(\cdot) - \mathfrak{p}^\theta(\cdot)|^p$. Further, take compactly supported probability measures $\mu, \nu \in \PC(\RR^d)$ with empirical measures $\mu_n, \nu_m$, respectively. For all assertions, we let $n,m\rightarrow \infty$ with $m/(n+m)\rightarrow \lambda\in (0,1)$. 
\begin{enumerate}[label=$(\roman*)$]
	\item\label{prop:sliced process}Assume that the set of Kantorovich potentials $S_{\!c_\theta}(\mu, \nu)$ is unique (up to a constant shift) for any $\theta\in \mathbb{S}^{d-1}$. Then, it follows upon selecting $f_\theta\in S_{\!c_\theta}(\mu, \nu)$  for any $\theta\in\mathbb{S}^{d-1}$ that
$$\!\!\!\!\!\sqrt{\frac{nm}{n+m}}\Big(OT(\mu_n, \nu, c_\theta) - OT(\mu, \nu, c_\theta)\Big)_{\theta\in \mathbb{S}^{d-1}} \!\!\dto \Big(\sqrt{\lambda}\Gproc^\mu(f_\theta^{c_\theta c_\theta})+  \sqrt{1-\lambda}\Gproc^\nu(f_\theta^{c_\theta})\Big)_{\theta\in \mathbb{S}^{d-1}} \;\text{ in } C(\mathbb{S}^{d-1})$$

	\item\label{prop:sliced OT_AvSliced}Assume the same as in \ref{prop:sliced process}. Then, it follows that
\begin{align*}
		\sqrt{\frac{nm}{n+m}}\left( \underline{W}^p_p( \mu_n, \nu_m)- \underline{W}^p_p(\mu, \nu)\right)&\dto \int_{\mathbb{S}^{d-1}}\!\sqrt{\lambda}\Gproc^\mu(f_\theta^{c_\theta c_\theta})+  \sqrt{1-\lambda}\Gproc^\nu(f_\theta^{c_\theta})\,\diff\theta.\;\;\quad 
	\end{align*}
	\item \label{prop:sliced OT_MaxSliced}Without imposing the assumption on uniqueness of Kantorovich potentials, it follows that 
	$$\sqrt{\frac{nm}{n+m}}\left( \overline{W}_p^p( \mu_n, \nu_m)- \overline{W}_p^p(\mu, \nu)\right)\dto \sup_{\substack{\theta\in S_{\!+}(\mathbb{S}^{d-1}\!, \,\mu, \nu)\\ f_\theta\in S_{\!c_\theta}(\mu, \nu)}} \sqrt{\lambda}\Gproc^\mu(f_\theta^{c_\theta c_\theta})+  \sqrt{1-\lambda}\Gproc^\nu(f_\theta^{c_\theta}).$$
\end{enumerate}
\end{prop}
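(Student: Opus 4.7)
The plan is to realize the three assertions as specializations of the uniform-in-$\theta$ OT limit theory of Section \ref{subsec:OTProcessResults} applied to the compact parameter space $\Theta = \mathbb{S}^{d-1}$ with the family of costs $c_\theta(x,y) = |\mathfrak{p}^\theta(x-y)|^p$. Writing $D := \diam(\supp\mu \cup \supp\nu)$, we have $|c_\theta(x,y) - c_{\theta'}(x,y)| \leq pD^p\|\theta-\theta'\|$ so $\theta\mapsto c_\theta$ is Lipschitz into $C(\XC\times\YC)$; likewise $c_\theta(\cdot,y)$ is $(pD^{p-1})$-Lipschitz in $x$ uniformly in $(\theta,y)$, giving the common modulus of continuity $w(r):= pD^{p-1}\, r$ required by \Cref{thm:OTProcessCty}, while $\sup_\theta\|c_\theta\|_\infty\leq D^p<\infty$.

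The main obstacle is verifying Assumption \ref{ass:Don} uniformly over $\mathbb{S}^{d-1}$; the key observation is that although $\XC\subseteq\RR^d$, the cost factors through a one-dimensional projection, so the effective dimension of the problem is $1$ regardless of the ambient dimension $d$. I would apply \Cref{prop:DonskerProperty_ParameterClass}(ii) with the pseudo-metric $\tilde d_{\theta,\XC}(x,x'):=pD^{p-1}|\mathfrak{p}^\theta(x)-\mathfrak{p}^\theta(x')|$, under which $c_\theta(\cdot,y)$ is $1$-Lipschitz. Since $\mathfrak{p}^\theta(\XC)\subseteq[-D,D]$, one has $\NC(\eps,\XC,\tilde d_{\theta,\XC})\lesssim\eps^{-1}$ uniformly in $\theta$, giving the required exponent $\beta = 1<2$, and moreover $\log\NC(\eps,\mathbb{S}^{d-1},\|\cdot\|)\lesssim (d-1)\log(1/\eps)$ satisfies the covering estimate on $\Theta$ for any $\alpha>0$. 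Up to a harmless rescaling matching the unit sup-norm, $1$-Lipschitz normalization of \Cref{prop:DonskerProperty_ParameterClass}, that proposition yields the universal Donsker property of both $\bigcup_\theta\FC^{c_\theta c_\theta}$ and $\bigcup_\theta\FC^{c_\theta}$, i.e., Assumption \ref{ass:Don}.

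Given \ref{ass:Don}, assertion \ref{prop:sliced process} follows directly from \Cref{thm:OTProcessCty}, since the uniqueness hypothesis on $S_{\!c_\theta}(\mu,\nu)$ is precisely \ref{ass:KPU}. For assertion \ref{prop:sliced OT_AvSliced}, I would apply the continuous linear functional $\Phi\colon C(\mathbb{S}^{d-1})\to\RR$, $g\mapsto \int g\,\diff\sigma$, which is trivially Hadamard differentiable with derivative equal to itself; using the identity $OT(\mathfrak{p}^\theta_\#\mu,\mathfrak{p}^\theta_\#\nu,|\cdot-\cdot|^p) = OT(\mu,\nu,c_\theta)$, one has $\underline W_p^p(\mu,\nu) = \Phi((OT(\mu,\nu,c_\theta))_{\theta\in \mathbb{S}^{d-1}})$, and the continuous mapping theorem applied to the weak limit in \ref{prop:sliced process} yields the claimed integral representation of the Gaussian limit. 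Finally, for assertion \ref{prop:sliced OT_MaxSliced}, since $\overline W_p^p(\mu,\nu) = \sup_\theta OT(\mu,\nu,c_\theta)$, I would invoke \Cref{thm:OTProcessSup} directly; that result requires only \ref{ass:Don} and not \ref{ass:KPU}, which is precisely why the uniqueness of Kantorovich potentials can be dropped for the max-sliced statistic.
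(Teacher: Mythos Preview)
Your proposal is correct and follows essentially the same approach as the paper: verify \ref{ass:Don} via \Cref{prop:DonskerProperty_ParameterClass}(ii) using the one-dimensional projection pseudo-metric, then deduce (i) from \Cref{thm:OTProcessCty}, (ii) from the continuous mapping theorem applied to the integration functional, and (iii) from \Cref{thm:OTProcessSup}. The only cosmetic difference is that you absorb the Lipschitz constant $pD^{p-1}$ into the pseudo-metric whereas the paper keeps it as an external constant, which is immaterial.
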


Comparing \Cref{prop:sliced OT} to the literature for $p>1$, 
results in \citet{goldfeld2022statistical} and \citet{xi2022distributional} are recovered under slightly weaker assumptions. For the analysis of both types of empirical sliced Wasserstein distances \citet{goldfeld2022statistical} require the underlying measures to have compact, convex support. Moreover, for the uniform central limit theorem by \citet{xi2022distributional} of the sliced OT process, they assume for each $u \in \mathbb{S}^{d-1}$ that one of the projected measures has compact, connected support. These conditions are sufficient for the uniqueness of Kantorovich potentials, but it can also be guaranteed for measures with disconnected support (cf. \Cref{prop:uniquenessOTPotentials} and more generally \citealt{Staudt2022Unique}).
\Cref{prop:sliced OT}$(ii)$ also complements results by \citet{manole2022minimax} on the trimmed sliced Wasserstein distance as we do not require the existence of a density but the underlying measures to be compactly supported. 

For the special case $p = 1$, unlike in our results, distributional limits by \citet{goldfeld2022statistical, xu2022central} for the average- and max-sliced Wasserstein distance do not require uniqueness of the Kantorovich potentials. Further, their theory remains valid for non-compactly supported measures by imposing suitable moment-conditions. Crucial to their approach is the special characterization of the $1$-Wasserstein distance as an integral probability metric over Lipschitz functions \citep[Remark 6.5]{villani2008optimal}, a property which we do not exploit in our general theory. 
Still, under uniqueness of Kantorovich potentials, which occurs, e.g., if one measure is discrete while the other has connected support  and is absolutely continuous \citep[Example 3]{Staudt2022Unique}, \Cref{prop:sliced OT}$(i)$ asserts weak convergence for the sliced OT process in $C(\mathbb{S}^{d-1})$.

\subsection{Stability analysis of Optimal Transport}\label{subsec:stability}
In addition to statistical applications, our theory for the empirical OT value under weakly converging costs enables a deterministic stability analysis of the OT problem \eqref{eq: OT} under joint perturbations of the costs and the measures, which may be of independent interest, e.g., from the viewpoint of optimization. More precisely, we prove in the following Gateaux differentiability of the OT value in $(\mu,\nu,c)\in\PC(\XC)\times\PC(\YC)\times C(\XC\times\YC)$ for all admissible directions.
This extends well-known stability results for finite-dimensional linear programs \citep[Theorem 3.1]{gal1997advances} which covers the OT problem for probability measures supported on finitely many points. 

\begin{prop}\label{prop:gateaux}
	Let $\mu\in\PC(\X)$, $\nu \in \PC(\YC)$ and $c\in C(\XC\times \YC)$ be fixed. Define for $t>0$ sufficiently small the quantities $\mu_t= \mu +t\Delta^\mu$ and $\nu_t= \nu +t\Delta^\nu$, where $\Delta^\mu\in(\PC(\XC)-\mu)$ and $\Delta^\nu\in(\PC(\YC)-\nu)$, respectively. Further, let $c_t = c+t\Delta^{c}$ for some $\Delta^c\in C(\XC\times\XC)$. Then, it follows that
	\[\lim_{t \searrow 0}\frac{1}{t}\left( OT(\mu_t, \nu_t, c_t)-OT(\mu, \nu, c)\right)=\inf_{\pi\in \Pi^\star_c(\mu, \nu)}\pi(\Delta^c)+\sup_{f \in S\!_c(\mu,\nu)} \Delta^\mu(f^{cc}) +\Delta^\nu(f^c).\]
\end{prop}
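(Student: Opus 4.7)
The plan is to sandwich $\tfrac{1}{t}\bigl(OT(\mu_t,\nu_t,c_t)-OT(\mu,\nu,c)\bigr)$ between two linear-in-$t$ expressions that both tend to the claimed derivative as $t\searrow 0$. As a preliminary step I apply \Cref{lem:RelFCandConcave} with a common bound $B_0\geq\|c\|_\infty+\|\Delta^c\|_\infty$ and a concave modulus of continuity $w_0$ dominating the $x$-moduli of both $c$ and $\Delta^c$; this yields a compact function class $\FC\coloneqq\FC(B_0,w_0)\subseteq C(\XC)$ which contains $S_{\!c_t}(\mu_t,\nu_t)$ for every $t\in[0,1]$. A standard tightness and continuity argument then shows that along any vanishing sequence $t_n\searrow 0$, arbitrary choices $\pi_{t_n}\in\Pi_{c_{t_n}}^{\star}(\mu_{t_n},\nu_{t_n})$ and $f_{t_n}\in S_{\!c_{t_n}}(\mu_{t_n},\nu_{t_n})$ admit subsequential weak limits $\pi^*\in\Pi_{c}^{\star}(\mu,\nu)$ and uniform limits $f^*\in S_{\!c}(\mu,\nu)$, using the uniform convergence $c_t\to c$ in $C(\XC\times\YC)$ and the corresponding uniform convergence of $c_t$-transforms on~$\FC$.

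For the lower bound I fix $g\in S_{\!c}(\mu,\nu)\subseteq\FC$ and any $\pi_t\in\Pi_{c_t}^{\star}(\mu_t,\nu_t)$, and decompose
\[OT(\mu_t,\nu_t,c_t)=\pi_t(c_t)=\pi_t\bigl(c-g^{cc}-g^c\bigr)+\mu_t(g^{cc})+\nu_t(g^c)+t\,\pi_t(\Delta^c).\]
Dual-feasibility of $(g^{cc},g^c)$ for $c$ makes the first summand non-negative, while $\mu(g^{cc})+\nu(g^c)=OT(\mu,\nu,c)$, so that
\[\tfrac{1}{t}\bigl(OT(\mu_t,\nu_t,c_t)-OT(\mu,\nu,c)\bigr)\geq\Delta^\mu(g^{cc})+\Delta^\nu(g^c)+\pi_t(\Delta^c).\]
Taking the supremum over $g\in S_{\!c}(\mu,\nu)$ and then $\liminf_{t\searrow 0}$, the subsequential limit $\pi_{t_n}\to\pi^*\in\Pi_{c}^{\star}(\mu,\nu)$ from the setup yields $\liminf_{t\searrow 0}\pi_t(\Delta^c)\geq\inf_{\pi\in\Pi_c^{\star}(\mu,\nu)}\pi(\Delta^c)$, which delivers the lower half of the sandwich.

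For the upper bound I select $f_t\in S_{\!c_t}(\mu_t,\nu_t)$ and any $\pi\in\Pi_{c}^{\star}(\mu,\nu)$. Integrating the pointwise inequality $f_t^{c_tc_t}(x)+f_t^{c_t}(y)\leq c_t(x,y)$ against $\pi$ (which has marginals $\mu,\nu$) yields $\mu(f_t^{c_tc_t})+\nu(f_t^{c_t})\leq\pi(c_t)=OT(\mu,\nu,c)+t\,\pi(\Delta^c)$, and therefore
\[OT(\mu_t,\nu_t,c_t)=\mu_t(f_t^{c_tc_t})+\nu_t(f_t^{c_t})\leq OT(\mu,\nu,c)+t\,\pi(\Delta^c)+t\bigl[\Delta^\mu(f_t^{c_tc_t})+\Delta^\nu(f_t^{c_t})\bigr].\]
Dividing by $t$, taking the infimum over $\pi$, and then $\limsup_{t\searrow 0}$, the subsequential limit $f_{t_n}\to f^*\in S_{\!c}(\mu,\nu)$ bounds $\limsup_{t\searrow 0}\bigl[\Delta^\mu(f_t^{c_tc_t})+\Delta^\nu(f_t^{c_t})\bigr]$ above by $\sup_{f\in S_{\!c}(\mu,\nu)}[\Delta^\mu(f^{cc})+\Delta^\nu(f^c)]$, matching the lower bound.

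The main technical obstacle is the subsequential stability of the optimiser correspondences $t\mapsto\Pi_{c_t}^{\star}(\mu_t,\nu_t)$ and $t\mapsto S_{\!c_t}(\mu_t,\nu_t)$ at $t=0$: I must know that limits of primal and dual optima under the joint perturbation $(\mu,\nu,c)\mapsto(\mu_t,\nu_t,c_t)$ are still optima of the base problem. Once this is handled through the uniform function class $\FC$ from the setup and the continuity of the OT value under uniform cost perturbations, the two bounds reduce to envelope-type bookkeeping that only uses primal and dual feasibility together with the marginal constraints.
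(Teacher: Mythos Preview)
Your sandwich argument is correct and follows the same overall route as the paper: both proofs bound $\tfrac{1}{t}(OT(\mu_t,\nu_t,c_t)-OT(\mu,\nu,c))$ from below using a fixed dual optimizer $g\in S_{\!c}(\mu,\nu)$ together with a varying optimal plan $\pi_t\in\Pi^\star_{c_t}(\mu_t,\nu_t)$, and from above using a fixed optimal plan $\pi\in\Pi^\star_c(\mu,\nu)$ together with a varying dual optimizer $f_t\in S_{\!c_t}(\mu_t,\nu_t)$, then pass to the limit via the stability of optimizers (Villani's Theorem~5.20 for plans, compactness of $\FC$ and continuity of the OT value for potentials).

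The only technical difference is in the upper bound. The paper invokes its \Cref{lem:LowerUpperBound} and splits the dual term into $\Delta^\mu(f_t^{cc})+\Delta^\nu(f_t^{c})$ plus a residual $\sup_{f\in\FC}\bigl[\Delta^\mu(f^{c_tc_t}-f^{cc})+\Delta^\nu(f^{c_t}-f^c)\bigr]$, which it then bounds by $4t\|\Delta^c\|_\infty$ using that $\Delta^\mu,\Delta^\nu$ are differences of probability measures. You instead keep $\Delta^\mu(f_t^{c_tc_t})+\Delta^\nu(f_t^{c_t})$ intact and pass to the limit directly, relying on the fact that $\Delta^\mu,\Delta^\nu$ are bounded signed measures and that $f_{t_n}^{c_{t_n}c_{t_n}}\to(f^*)^{cc}$ uniformly via \Cref{lem:CtrafoLipschitz}. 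Your variant avoids introducing the residual term altogether and is marginally more direct; the paper's variant makes the connection to the abstract bounds of \Cref{lem:LowerUpperBound} explicit and reuses the semi-continuity machinery of \Cref{lem:ContinuityResults}. Both are equally valid.
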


\begin{rmk}[On Hadamard directional differentiability]\label{rmk:OnHadamardDifferentiability}
Since the set of admissible directions $(\PC(\XC)- \mu)\times (\PC(\YC)-\nu)\times C(\XC\times \YC)$ is not a normed vector space, we are in general unable to infer Hadamard directional differentiability by additionally proving Lipschitzianity of the OT problem with respect to the measures $\mu,\nu$ and the cost function $c$. 

Invoking the same proof strategy as in \Cref{prop:gateaux} would require us to show for any sequence $(\mu_n, \nu_n, c_n)=(\mu + t_n\Delta_n^\mu, \nu+ t_n\Delta_n^\nu, c+ t_n\Delta_n^c)\in \PC(\XC)\times \PC(\YC)\times C(\XC\times \YC)$ with $t_n\searrow 0$ and $(\Delta_n^\mu, \Delta_n^\nu, \Delta_n^c)\rightarrow (\Delta^\mu, \Delta^\nu, \Delta^c)$ in $\ell^\infty(\FC^{cc})\times \ell^\infty(\FC^{c})\times C(\XC\times \YC)$ that 
\begin{align}\label{eq:openproblem}
  \sup_{f\in \FC}\left|\Delta^\mu_n(f^{ c_nc_n} - f^{cc}) + \Delta^\nu_n(f^{ c_n} - f^{c})\right|\rightarrow 0.
\end{align}
Showing this remains a challenge and would enable us to omit conditions \ref{ass:BTwo} and \ref{ass:BTwoStar} in the formulations of Theorems \ref{thm:AbstractMainResult} and \ref{thm: BootConst}, respectively. Another challenge in such an attempt is that any such sequence $(\mu_n, \nu_n)$ does not necessarily converge weakly for $n\rightarrow \infty$ to $(\mu, \nu)$, which is relevant for our proof, since the topology induced by $\ell^\infty(\FC^{cc})\times \ell^\infty(\FC^{c})$ may be too weak. 

Though it is likely possible to show Hadamard directional differentiability of the OT problem jointly in the measures and the cost by selecting a sufficiently strong norm that metrizes weak convergence of measures, the functional delta method would inevitably require the empirical process to weakly converge in this norm and impose additional conditions.  A similar trade-off for the choice of the norm is natural and known in the literature (cf. \citealt[p.76]{dudley1990nonlinear}; \cite{jourdain2021central}).
\end{rmk}

\section{Regularity Elevation Functionals}\label{subsec:RegEl}

In this section, we construct regularity elevation maps, i.e., continuous maps $\Psi\colon C(\XC\times \YC)\rightarrow C(\XC\times \YC)$ such that for measurable estimators $c_n$ with $\sqrt{n}(c_n - c) \dto \Gproc^c$ for $n\rightarrow \infty$, it follows that 
\begin{align}\label{eq:ConditionRegElMaps}
  (i)\; \sqrt{n}\big( c_n - \Psi(c_n)\big)\pto 0 \quad \text{ and } \quad (ii)\;\, \Psi(c_n) \text{ fulfills certain regularity properties.} 
\end{align}
Based on Lipschitzianity of the OT value with respect to the cost function (\Cref{lem:LowerUpperBound}), condition $(i)$ allows us to substitute a cost estimator with one that enjoys certain regularity properties, effectively "elevating" its level of regularity. Such maps prove useful in our work at two particular instances. For one, it enables us to assume in the proof of \Cref{thm:AbstractMainResult} that cost estimators are suitably bounded and exhibit the same modulus of continuity as the population cost function (cf. \Cref{thm:RegElModulus}). This represents an important step to rely on \Cref{lem:RelFCandConcave}. Moreover, the notion of regularity elevations also represents a useful tool to prove \Cref{cor:SufficientConditionsSupConditions}, for which we employ \Cref{prop:AbstractB2_CoveringNumbers} and set $\tilde c_n \coloneqq \Psi(c_n)$ for a suitable regularity elevation map. Insofar, these maps serve as an effective tool for the theoretical analysis of distributional limits. 

The subsequent result provides a first set of conditions to ensure that condition $(i)$ of \eqref{eq:ConditionRegElMaps} is met. Its proof as well as the proof of all subsequent results of this section are detailed in \Cref{ap: RegElev}. 

\begin{prop}\label{thm:RegElevationAbstract} Let $\XC,\YC$ be compact Polish spaces and let $c_n\in C(\XC\times \YC)$ be a (Borel measurable) random sequence such that $a_n(c_n - c)\dto \LC$ in $C(\XC\times \YC)$ for some $c\in C(\XC\times \YC)$ and $(a_n)_{n\in \NN}$ such that $a_n \rightarrow \infty$ for $n\rightarrow \infty$. 
Let $U\subseteq C(\XC\times \YC)$ be a linear subspace such that $\LC$ is a.s.\ contained in $U$. 
Then, if $\Psi\colon C(\XC\times \YC)\rightarrow C(\XC\times \YC)$ is continuous near $c$, Hadamard directionally differentiable at $f$ with a derivative such that $D^H_{c}\Psi|_{U}= \Id_U$ and $\Psi(c) = c$, it follows for $n\rightarrow \infty$ that $$a_n\big(c_n - \Psi(c_n)\big)\smash{\pto} 0 \quad \text{for } n \rightarrow \infty.$$ 
\end{prop}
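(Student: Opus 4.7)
The plan is to combine a Skorokhod representation with the defining property of Hadamard directional differentiability of $\Psi$ at $c$. Since $\XC\times\YC$ is compact Polish, the Banach space $C(\XC\times\YC)$ with uniform norm is separable, so the weak limit $a_n(c_n-c)\dto \LC$ admits a Skorokhod representation: on an auxiliary probability space one finds random elements $\tilde X_n \eqd a_n(c_n-c)$ and $\tilde \LC \eqd \LC$ with $\tilde X_n \to \tilde \LC$ almost surely in $C(\XC\times\YC)$; in particular $\tilde \LC \in U$ a.s. Setting $\tilde c_n \coloneqq c + \tilde X_n/a_n$ we then have $\tilde c_n \eqd c_n$ and $\tilde c_n \to c$ a.s.

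Now apply Hadamard directional differentiability of $\Psi$ at $c$ along $t_n = 1/a_n \searrow 0$ and $h_n = \tilde X_n \to \tilde \LC$ to obtain
\[
a_n\big(\Psi(\tilde c_n) - \Psi(c)\big) \;=\; \frac{\Psi(c+t_n h_n)-\Psi(c)}{t_n} \;\xrightarrow{\text{a.s.}}\; D^H_c\Psi(\tilde \LC).
\]
Since $\Psi(c)=c$ and $\tilde \LC \in U$ a.s., the hypothesis $D^H_c\Psi|_U = \Id_U$ gives $D^H_c\Psi(\tilde \LC)=\tilde \LC$, hence $a_n(\Psi(\tilde c_n)-c) \to \tilde \LC$ a.s. Subtracting from $a_n(\tilde c_n - c) = \tilde X_n \to \tilde \LC$ a.s. cancels the limit:
\[
a_n\big(\tilde c_n - \Psi(\tilde c_n)\big) \;=\; a_n(\tilde c_n - c) - a_n(\Psi(\tilde c_n) - c) \;\xrightarrow{\text{a.s.}}\; 0.
\]
In particular this convergence holds in probability on the auxiliary space; as $\Psi$ is deterministic, $(\tilde c_n, \Psi(\tilde c_n)) \eqd (c_n, \Psi(c_n))$, and therefore $a_n(c_n - \Psi(c_n))$ converges in distribution, and hence in probability, to zero on the original space.

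The main concern is a measurability check: one must ensure that $\Psi(c_n)$ is Borel measurable so that the conclusion is meaningful, and that the application of the Skorokhod representation is legitimate. The former follows from $\Psi$ being continuous on some open neighborhood $V$ of $c$ combined with $c_n \to c$ in (outer) probability, which is guaranteed by the weak convergence of $a_n(c_n-c)$ and $a_n \to \infty$: on the event $\{c_n \in V\}$, whose probability tends to one, $\Psi(c_n)$ is the Borel-measurable composition of the continuous map $\Psi|_V$ with the measurable element $c_n$, and a routine truncation argument extends the conclusion to the full sequence. Beyond that, the proof is essentially a one-line application of the functional delta method in a degenerate regime where the derivative is the identity, and no deeper obstacle is expected.
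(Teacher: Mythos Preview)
Your proof is correct and follows essentially the same route as the paper: the paper invokes the functional delta method (Roemisch) to obtain joint weak convergence of $a_n(c_n-c,\Psi(c_n)-c)$ to $(\LC,D^H_c\Psi(\LC))=(\LC,\LC)$ and then applies the continuous mapping theorem to the difference, whereas you unpack that very argument via a Skorokhod representation. The measurability discussion you include mirrors the paper's one-line remark that continuity of $\Psi$ near $c$ ensures $\Psi(c_n)$ is measurable.
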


Notably, in case $\Psi$ is Hadamard differentiable with $D^H_{f}\Psi = \Id_{C(\XC\times \YC)}$, one may select $U = C(\XC\times \YC)$ and the condition on the limit $\LC$ becomes vacuous. 

To conclude various types of useful regularity properties, as required in $(ii)$ of \eqref{eq:ConditionRegElMaps}, we thus define in the following subsections various maps such that the conditions of \Cref{thm:RegElevationAbstract} are met. Additionally, we provide suitable metric entropy bounds for $\FC^{\Psi(\tilde c) \Psi(\tilde c)}$ independent of $\tilde c\in C(\XC\times \YC)$.

\subsection{Regularity Elevation to Deterministic Boundedness}\label{subsec:RegELBdd}

Consider compact Polish spaces $\XC, \YC$ and let $c\in C(\XC\times \YC)$ be a continuous cost function such that $\norm{c}_\infty\leq 1$. We define the regularity elevation functional for boundedness as
\begin{align*}
  \Psi_{\textup{bdd}} \colon C(\XC\times \YC) \rightarrow C(\XC\times \YC), \quad \tilde c \mapsto \Big( (x,y) \mapsto \max(\min(\tilde c(x,y), 2),-2)\Big).
\end{align*}

\begin{prop}\label{prop:RegEl:Bdd}
	For the above setting, $\Psi= \Psi_{\textup{bdd}}$ fulfills $  \Psi(c) = c$, is continuous, and it is Hadamard differentiable at $c$ with $D^H_{|c}\Psi= \Id_{C(\XC\times \YC)}$. In particular, if $\XC$ is a finite space,  we obtain  for any uniformly bounded function class $\GC$ on $\YC$ that  $$\sup_{\tilde c\in C(\XC\times \YC)}\log\NC(\eps, \GC^{\Psi(\tilde c)}, \norm{\cdot}_\infty) \lesssim |\log(\eps)|.$$
\end{prop}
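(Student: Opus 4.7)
The plan is to verify the four claims in turn, all of which rest on the fact that the clipping threshold $2$ strictly dominates $\norm{c}_\infty\leq 1$. First, since $\norm{c}_\infty\leq 1<2$, the scalar clipping acts as the identity on every value $c(x,y)$, giving $\Psi_{\textup{bdd}}(c)=c$. Continuity of $\Psi_{\textup{bdd}}$ on $C(\XC\times\YC)$ follows at once from the fact that $t\mapsto \max(\min(t,2),-2)$ is $1$-Lipschitz, so $\norm{\Psi_{\textup{bdd}}(\tilde c_1)-\Psi_{\textup{bdd}}(\tilde c_2)}_\infty\leq\norm{\tilde c_1-\tilde c_2}_\infty$.

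For Hadamard differentiability at $c$ with derivative $\Id_{C(\XC\times\YC)}$, I would take an arbitrary sequence $t_n\to 0$ with $t_n\neq 0$ and $h_n\to h$ in $C(\XC\times\YC)$; since $\norm{c+t_n h_n}_\infty<2$ for $n$ sufficiently large, the clipping is inactive, $\Psi_{\textup{bdd}}(c+t_n h_n)=c+t_n h_n$, and the difference quotient equals $h_n\to h$. This yields full (and not merely directional) Hadamard differentiability with the identity as derivative.

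For the uniform covering number bound when $\XC=\{x_1,\dots,x_N\}$, let $M\coloneqq \sup_{g\in\GC}\norm{g}_\infty<\infty$. The key observation is that for any $g\in\GC$ and any $\tilde c\in C(\XC\times\YC)$,
$$g^{\Psi_{\textup{bdd}}(\tilde c)}(y)=\min_{1\leq i\leq N}\bigl[\Psi_{\textup{bdd}}(\tilde c)(x_i,y)-g(x_i)\bigr],$$
so $g^{\Psi_{\textup{bdd}}(\tilde c)}$ depends on $g$ only through the vector $(g(x_1),\dots,g(x_N))\in[-M,M]^N$. Moreover, the parametrization $a\mapsto \min_i[\Psi_{\textup{bdd}}(\tilde c)(x_i,\cdot)-a_i]$ is $1$-Lipschitz from $([-M,M]^N,\norm{\cdot}_\infty)$ into $(C(\YC),\norm{\cdot}_\infty)$ by the elementary estimate $|\min_i(b_i-a_i)-\min_i(b_i-a_i')|\leq\max_i|a_i-a_i'|$. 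Since $\log\NC(\eps,[-M,M]^N,\norm{\cdot}_\infty)\lesssim N\log(M/\eps)$, transferring a cover of the parameter cube through this Lipschitz map yields $\log\NC(\eps,\GC^{\Psi_{\textup{bdd}}(\tilde c)},\norm{\cdot}_\infty)\lesssim|\log\eps|$, uniformly in $\tilde c$.

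I do not anticipate any substantive obstacle: each step reduces to elementary Lipschitz estimates for $\min$ and for clipping. The only point worth checking is that the output of the parametrization $a\mapsto \min_i[\Psi_{\textup{bdd}}(\tilde c)(x_i,\cdot)-a_i]$ genuinely lies in $C(\YC)$, which holds because $\Psi_{\textup{bdd}}(\tilde c)\in C(\XC\times\YC)$ and a finite minimum of continuous functions is continuous.
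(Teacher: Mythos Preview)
Your arguments for $\Psi(c)=c$, continuity, and Hadamard differentiability are correct and essentially identical to the paper's proof (the paper infers continuity near $c$ from the fact that $\Psi$ is the identity on the neighborhood $\{\|\tilde c\|_\infty<2\}$, while you observe global $1$-Lipschitzianity; both are fine).

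The covering-number argument, however, has the direction of the $c$-transform reversed. By the statement, $\GC$ is a function class on $\YC$, so for $g\in\GC$ the transform is
\[
g^{\Psi(\tilde c)}(x)=\inf_{y\in\YC}\bigl[\Psi(\tilde c)(x,y)-g(y)\bigr],
\]
a function on $\XC$, not on $\YC$ as you wrote. Your formula $g^{\Psi(\tilde c)}(y)=\min_i[\Psi(\tilde c)(x_i,y)-g(x_i)]$ implicitly evaluates $g$ at the points of $\XC$, i.e., treats $g$ as a function on $\XC$, contradicting the hypothesis.

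Once the direction is corrected, the argument is simpler than the Lipschitz-parametrization route you sketch: since $\XC=\{x_1,\dots,x_N\}$ is finite, the class $\GC^{\Psi(\tilde c)}$ consists of functions on $\XC$, i.e., vectors in $\RR^N$, each with $\ell^\infty$ norm at most $M+2$ (using $\|\Psi(\tilde c)\|_\infty\leq 2$). Hence $\GC^{\Psi(\tilde c)}\subseteq[-(M+2),M+2]^N$, and covering this cube in $\ell^\infty$ at scale $\eps$ requires at most $\bigl(\lceil 2(M+2)/\eps\rceil\bigr)^N$ balls, giving $\log\NC\leq N(\log(M+2)+|\log\eps|)\lesssim|\log\eps|$ uniformly in $\tilde c$. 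This is exactly the paper's argument; no parametrization of functions on $\YC$ is needed.
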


Hence, for our analysis of the empirical OT value under estimated cost functions we can assume without loss of generality that cost estimators are deterministically bounded by a constant that depends on the population cost. In the following we prove a similar insight for the modulus of continuity for cost estimators on compact (pseudo-)metric spaces. 

\subsection{Regularity Elevation to Concave Modulus of Continuity and Lipschitzianity}\label{subsec:RegELMod}

Consider compact Polish spaces $\XC, \YC$ and let $\tilde d_\XC$ be a continuous (pseudo-)metric on $\XC$. Denote by $\tilde \XC$ the space $\XC$ equipped with the topology induced by $\tilde d_\XC$ which is also compact (\Cref{lem:pseudoMetricSpaceCompact}) but potentially does not satisfy the Hausdorff property. Let $c\in C(\tilde \XC\times \YC)$ be a cost function such that $\norm{c}_\infty\leq 1$ and consider a concave modulus $w\colon \RR_+\rightarrow \RR_+$ with $w(\delta) > 0$ for $\delta>0$ such that 
\begin{align}\label{eq:modLipschitz}
  |c(x,y)  - c(x', y)| \leq  w(\tilde d_\XC(x,x')) \quad \text{for any }x,x'\in \tilde \XC, y\in \YC.
\end{align}
If $c(\cdot,y)$ is $1$-Lipschitz under $\tilde d_{\XC}$, then select $w(t) \coloneqq t$ and if $c(\cdot,y)$ is $(\gamma,1)$-H\"older for $\gamma\in (0,1]$ (recall footnote~\ref{foot:Holder}),  choose $w(t) \coloneqq t^{\gamma}$. The regularity elevation functional for $w\circ d_\XC$ is then given by

$$\Psi_{\textup{mod}}^{w \circ \tilde d_\XC} \colon C(\XC\times \YC) \rightarrow C(\tilde \XC\times \YC), \quad \tilde c \mapsto \left( (x,y) \mapsto \inf_{x' \in \XC} \tilde c(x',y) + 2w(\tilde d_\XC(x,x'))\right)$$

\begin{prop}\label{prop:RegEl:Mod}
For the above setting, $\Psi = \Psi_{\textup{mod}}^{w \circ \tilde d_\XC}\circ \Psi_{\textup{bdd}}$ fulfills $\Psi(c) = c$, it is continuous near~$c$, and it is Hadamard directionally differentiable at $c$ with $D^H_{|c}\Psi|_{C(\tilde \XC\times \YC)}= \Id_{C(\tilde \XC\times \YC)}$. Further, for any uniformly bounded function class $\GC$ on $\YC$ it holds that  $$\sup_{\tilde c\in C(\XC\times \YC)}\log\NC(\eps, \GC^{\Psi(\tilde c)}, \norm{\cdot}_\infty)  \lesssim \NC(\eps/8, \XC, w\circ \tilde d_\XC) |\log(\eps)|.$$
\end{prop}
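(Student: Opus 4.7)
The plan is to treat the four conclusions of the proposition sequentially, mirroring the structure of the proof of \Cref{prop:RegEl:Bdd} while carefully accounting for the inf-convolution step. First, I would verify the fixed-point identity $\Psi(c) = c$. Because $\norm{c}_\infty \leq 1 < 2$, the truncation step is vacuous, giving $\Psi_{\textup{bdd}}(c) = c$. For the inf-convolution step, the modulus assumption \eqref{eq:modLipschitz} on $c$ gives $c(x,y) \leq c(x',y) + w(\tilde d_\XC(x,x')) \leq c(x',y) + 2w(\tilde d_\XC(x,x'))$ for all $x'$, with equality attained at $x' = x$. Taking the infimum thus yields $\Psi_{\textup{mod}}^{w\circ \tilde d_\XC}(c) = c$. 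Continuity of $\Psi$ near $c$ follows at once from observing that both $\Psi_{\textup{bdd}}$ and $\Psi_{\textup{mod}}^{w\circ \tilde d_\XC}$ are $1$-Lipschitz with respect to $\norm{\cdot}_\infty$ --- the latter since the difference of two pointwise infima is bounded by the supremum of the pointwise differences of the integrands.

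The crux is the Hadamard directional differentiability, where the identity derivative is claimed only on $C(\tilde \XC \times \YC)$. Fix $t_n \searrow 0$ and $h_n \to h$ in $C(\XC\times \YC)$ with $h \in C(\tilde \XC \times \YC)$, and set $g_n \coloneqq c + t_n h_n$. For $n$ large, $\norm{g_n}_\infty \leq 2$, so $\Psi_{\textup{bdd}}(g_n) = g_n$ and it suffices to prove $\norm{\Psi_{\textup{mod}}^{w\circ \tilde d_\XC}(g_n) - g_n}_\infty = o(t_n)$. The upper bound $\Psi_{\textup{mod}}^{w\circ \tilde d_\XC}(g_n)(x,y) - g_n(x,y) \leq 0$ is immediate from taking $x' = x$ in the infimum. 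For the matching lower bound, the modulus property of $c$ yields $c(x',y) - c(x,y) \geq -w(\tilde d_\XC(x,x'))$, so the integrand is bounded below by $w(\tilde d_\XC(x,x')) - t_n|h_n(x',y) - h_n(x,y)|$. Using that $h \in C(\tilde \XC \times \YC)$ is uniformly continuous on the compact space $\tilde \XC \times \YC$, for any $\epsilon>0$ I fix $\delta_0>0$ with $\sup\{|h(x',y) - h(x,y)| : \tilde d_\XC(x,x') \leq \delta_0,\, y\in \YC\} \leq \epsilon$, and then split the infimum over $x'$ into the two regimes $\tilde d_\XC(x,x') \leq \delta_0$ versus $\tilde d_\XC(x,x') > \delta_0$. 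In the former, $w(\tilde d_\XC(x,x')) \geq 0$ dominates, giving a lower bound of $-t_n(\epsilon + 2\norm{h_n - h}_\infty)$; in the latter, the strict positivity $w(\delta_0) > 0$ dominates the $O(t_n)$ perturbation for $n$ large. Hence $\norm{\Psi_{\textup{mod}}^{w\circ \tilde d_\XC}(g_n) - g_n}_\infty / t_n \leq \epsilon + 2\norm{h_n - h}_\infty$ eventually, and sending $n\to\infty$ and then $\epsilon \searrow 0$ establishes $D^H_{|c}\Psi(h) = h$.

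For the covering number bound, I would first establish that for every $\tilde c \in C(\XC \times \YC)$ the image $\Psi(\tilde c)$ is bounded by $2$ and satisfies $|\Psi(\tilde c)(x,y) - \Psi(\tilde c)(x',y)| \leq 2w(\tilde d_\XC(x,x'))$. Boundedness follows from the truncation step together with the fact that infimal convolution with a non-negative penalty preserves uniform bounds. The Lipschitz bound in the first coordinate follows by a standard triangle-inequality argument using subadditivity of $w$, which in turn is a consequence of concavity of $w$ with $w(0) = 0$. Consequently $\GC^{\Psi(\tilde c)}$ is a class of uniformly bounded functions on $\XC$ with modulus $2w$ with respect to $\tilde d_\XC$, uniformly over $\tilde c$. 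A routine discretization --- first covering $\XC$ by $\NC(\eps/8, \XC, w\circ \tilde d_\XC)$ balls of $w\circ\tilde d_\XC$-radius $\eps/8$, and then quantizing function values on the resulting net at resolution of order $\eps$ --- delivers the stated entropy bound.

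The main technical obstacle is the lower-bound step in the differentiability argument, where one must simultaneously control the oscillation of $h$ measured by $\tilde d_\XC$ and the uniform approximation error $\norm{h_n - h}_\infty$ measured in the ambient $C(\XC \times \YC)$-norm. It is precisely this combination that forces the restriction of the derivative's domain to $C(\tilde \XC \times \YC)$, since for $h \notin C(\tilde \XC \times \YC)$ the $x$-modulus of $h$ with respect to $\tilde d_\XC$ need not tend to zero, so the two-regime splitting argument breaks down.
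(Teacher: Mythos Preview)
Your proof is correct, but it takes a genuinely different route for the Hadamard directional differentiability step. The paper does not carry out the direct $\epsilon$-$\delta$ / two-regime argument you describe. Instead, it casts $\Psi_{\textup{mod}}^{w\circ \tilde d_\XC}$ as a special case of the abstract extremal-type functional framework developed in Appendix~\ref{ap: Had-Diff}: it sets $V = C(\XC\times\YC)$, $\FC = \XC$, $\Theta = \tilde\XC\times\YC$, and $E(\tilde c, x', (x,y)) = -\tilde c(x',y) - 2w(\tilde d_\XC(x,x'))$, then verifies Conditions \textbf{(EC)}, \textbf{(Lip)}, \textbf{(Lin)}, and \textbf{(DC)} (the last via \Cref{lem:LSC_con}$(ii)$, using that the argmin set $S(c,(x,y)) = \{x'' : \tilde d_\XC(x,x'') = 0\}$ consists of points on which any $h\in C(\tilde\XC\times\YC)$ is constant), and invokes \Cref{thm:DiffPsi}. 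Your direct argument is more elementary and self-contained --- it makes transparent exactly where the restriction $h\in C(\tilde\XC\times\YC)$ enters and why the strict positivity $w(\delta)>0$ is needed --- but the paper's approach is more systematic, since the same abstract theorem also powers the OT-process differentiability (\Cref{thm:OTProcessCty}) and the H\"older regularity elevation (\Cref{prop:RegEl:Hol}). For the fixed-point identity, continuity, and the covering-number bound, your arguments coincide in substance with the paper's (the paper makes the reduction to bounded Lipschitz classes explicit and then cites Kolmogorov--Tihomirov for the entropy bound, which is exactly your ``routine discretization'').
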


An appealing consequence of the above considerations is that they allow us to construct a regularity elevated estimator $\tilde c_{n,m}$ from $c_{n,m}$ such that $\HC_{\tilde c_{n,m}} \subseteq \FC^{\tilde c_{n,m} \tilde c_{n,m}}$, for $\FC = \FC(2\norm{c}_\infty+1, 2w)$ defined in \eqref{eq:defF}, holds deterministically. 

\begin{corollary}\label{thm:RegElModulus}
Let $c\in C(\XC\times \YC)$, set $B\coloneqq \norm{c}_\infty+1/2$ and let $w\colon \RR_+ \rightarrow \RR_+$ be a concave modulus with $w(\delta)>0$ for $\delta>0$ such that \eqref{eq:modLipschitz} holds for a metric $d_\XC$ on $\XC$. Assume for a random sequence $ c_n \in C(\XC\times \YC)$ that $a_n( c_n - c)\dto \Gproc^c$ in  $C(\XC\times \YC)$ with $a_n\to \infty$. Then, the random sequence $$\overline c_n \coloneqq B\cdot \Psi_{\textup{mod}}^{w\circ d_\XC/B}\circ \Psi_{\textup{bdd}}(c_n/B)\in C(\XC\times \YC)$$ satisfies $a_n\norm{\overline c_n -  c_n}_\infty\pto 0$ for $n\rightarrow \infty$ and deterministically fulfills $\norm{\overline c_n}_\infty \leq 2B = 2\norm{c}_\infty+1$, relation \eqref{eq:modLipschitz}, and the inclusion $\HC_{\overline c_n} \subseteq \FC^{\overline c_n \overline c_n}(2\norm{c}_\infty+1, 2w)$. 
\end{corollary}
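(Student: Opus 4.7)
The plan is to apply Theorem \ref{thm:RegElevationAbstract} to the composite regularity elevation functional
\[\Psi(\tilde c) \coloneqq B \cdot \Psi_{\textup{mod}}^{w \circ d_\XC / B} \circ \Psi_{\textup{bdd}}(\tilde c/B),\]
so that $\overline c_n = \Psi(c_n)$, and then to read off the three deterministic assertions directly from the construction. Note that $\|c/B\|_\infty = \|c\|_\infty / B < 1$ and $|c(x,y)/B - c(x',y)/B| \le w(d_\XC(x,x'))/B$ for all $x,x'\in\XC,\, y\in\YC$, so $c/B$ satisfies the standing hypotheses of Propositions \ref{prop:RegEl:Bdd} and \ref{prop:RegEl:Mod} with metric $d_\XC$ and modulus $w/B$.

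To verify the hypotheses of Theorem \ref{thm:RegElevationAbstract}, first observe that $\Psi_{\textup{bdd}}(c/B) = c/B$ (since $|c/B| < 2$) and $\Psi_{\textup{mod}}^{w \circ d_\XC / B}(c/B) = c/B$ by Proposition \ref{prop:RegEl:Mod} (as $c/B$ already obeys the relevant modulus), so $\Psi(c) = c$. Continuity of $\Psi$ near $c$ follows by composing the continuity statements in Propositions \ref{prop:RegEl:Bdd}–\ref{prop:RegEl:Mod} with the linear scalings by $1/B$ and $B$. For Hadamard directional differentiability, the chain rule yields
\[D^H_{|c}\Psi(h) = B \cdot D^H_{|c/B}\Psi_{\textup{mod}}^{w \circ d_\XC / B}\!\big(D^H_{|c/B}\Psi_{\textup{bdd}}(h/B)\big) = B \cdot (h/B) = h\]
for every $h \in C(\XC \times \YC)$, using that both inner derivatives act as the identity on $C(\XC \times \YC)$ (taking $\tilde d_\XC = d_\XC$ so that $\tilde\XC = \XC$ in Proposition \ref{prop:RegEl:Mod}). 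Hence $D^H_{|c}\Psi = \Id_{C(\XC\times\YC)}$, and invoking Theorem \ref{thm:RegElevationAbstract} with $U = C(\XC \times \YC)$ — which renders the requirement that $\Gproc^c$ lie a.s.\ in $U$ vacuous — yields $a_n \|\overline c_n - c_n\|_\infty \pto 0$.

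The three deterministic properties can then be read off from the construction: $\Psi_{\textup{bdd}}$ produces values in $[-2,2]$, and the inf-convolution defining $\Psi_{\textup{mod}}^{w \circ d_\XC / B}$ preserves these bounds (the upper bound follows by evaluating the infimum at $x' = x$, the lower bound from the infimum over $x' \in \XC$), so scaling by $B$ gives $\|\overline c_n\|_\infty \le 2B = 2\|c\|_\infty + 1$; likewise the inf-convolution enforces modulus $2 w(d_\XC(\cdot,\cdot))/B$ in the first variable, which becomes the required $2w(d_\XC(\cdot,\cdot))$ after multiplication by $B$. Finally, the inclusion $\HC_{\overline c_n} \subseteq \FC(2\|c\|_\infty+1, 2w)^{\overline c_n \overline c_n}$ follows by applying the first inclusion of Lemma \ref{lem:RelFCandConcave} to $\overline c_n$ with parameters $B' = 2\|c\|_\infty+1$ and modulus $2w$, both established in the previous step. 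The main obstacle will be the chain-rule computation for the Hadamard directional derivative — specifically, verifying that the derivative provided by Proposition \ref{prop:RegEl:Mod} really is the identity on all of $C(\XC\times \YC)$, which requires noting that the implicit $\tilde\XC$ there can be taken equal to $\XC$ in our setting.
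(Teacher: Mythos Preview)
Your proof is correct and follows essentially the same route as the paper: both apply Proposition~\ref{thm:RegElevationAbstract} to the composite functional $\Psi = B\cdot\Psi_{\textup{mod}}^{w\circ d_\XC/B}\circ\Psi_{\textup{bdd}}(\cdot/B)$, verifying its hypotheses via Propositions~\ref{prop:RegEl:Bdd} and~\ref{prop:RegEl:Mod} (with $\tilde d_\XC=d_\XC$ so that the derivative is the identity on all of $C(\XC\times\YC)$), and then read off the deterministic bounds and modulus from the construction before invoking Lemma~\ref{lem:RelFCandConcave} for the inclusion. Your version spells out a bit more of the chain-rule computation and the verification of the boundedness/modulus properties, but the argument is the same.
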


\subsection{Regularity Elevation to H\"older Functions of Order $\gamma \in (1,2]$}\label{subsec:RegElHol}

Since we are able to leverage for convergence rates of the empirical OT value (recall \Cref{prop:DonskerProperty}$(i)$) the regularity of the underlying cost function up to H\"older degree $\gamma \leq 2$, we provide in this subsection a corresponding regularity elevation map. As the setting for $\gamma\leq 1$ can be treated using the theory from previous subsection, we only focus on the regime of $\gamma \in (1,2]$. 

Consider a convex, compact set $\XC\subseteq \RR^d$  with non-empty interior. 
Let $c\in C(\XC\times \YC)$ be a cost function such that $\norm{c}_\infty\leq 1$ and assume $c$ is continuously differentiable in $x$, i.e., suppose that $\nabla_x c\colon \interior{\XC}\times \YC\rightarrow \RR^d$ can be continuously extended to $\XC\times \YC$. Further, suppose that $c(\cdot,y)$ is $(\gamma,1)$-H\"older for each $y\in \YC$ for $\gamma\in (1,2]$. We define the regularity elevation map for H\"older functions of order $\gamma\in (1,2]$ by
\begin{align*}
  \Psi^{c, \gamma}_{\textup{Hol}} \colon C(\XC\times \YC) \rightarrow C(\XC\times \YC), \tilde c \mapsto \left( (x,y) \mapsto \inf_{x' \in \XC} \tilde c(x',y) + \langle \nabla_x c(x',y), x-x'\rangle  + 2\sqrt{d}\norm{x-x'}^{\gamma}\right)
\end{align*}
Notably, it is crucial that the scalar product term involves the partial derivative of the respective (population) cost function $c$. 
Moreover, we like to point out that the image under $\Psi_{\textup{Hol}}^{c, \gamma}$ does not necessarily lead to $(\gamma,1)$-H\"older functions but nonetheless ensures suitable metric entropy bounds. 
\begin{prop}\label{prop:RegEl:Hol}
For the above setting with $\XC\subseteq \RR^d$ convex and compact, $\Psi = \Psi^{c, \gamma}_{\textup{Hol}}\circ \Psi_{\textup{bdd}}$ fulfills $\Psi(c) = c$, it is continuous near $c$, and it is Hadamard differentiable at $c$ with $D^H_{|c}\Psi= \Id_{C(\XC\times \YC)}$. Further, for any uniformly bounded function class $\GC$ on $\YC$ we obtain that  $$\sup_{\tilde c\in C(\XC\times \YC)}\log\NC(\eps, \GC^{\Psi(\tilde c)}, \norm{\cdot}_\infty) \lesssim \eps^{-d/\gamma}.$$
\end{prop}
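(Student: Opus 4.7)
The plan is to dispatch the four claims ($\Psi(c)=c$, continuity near $c$, Hadamard directional derivative equal to $\Id$, and the metric entropy bound) in turn, with the entropy bound being the main technical obstacle. Since $\|c\|_\infty\leq 1$ the clipping map $\Psi_{\textup{bdd}}$ fixes $c$, so throughout the first three claims it suffices to work with $\Psi^{c,\gamma}_{\textup{Hol}}$. For the identity $\Psi^{c,\gamma}_{\textup{Hol}}(c)=c$, the choice $x'=x$ inside the infimum yields the upper bound $\leq c(x,y)$; for the matching lower bound I will apply Taylor's theorem to $t\mapsto c(x'+t(x-x'),y)$ combined with the componentwise $(\gamma-1,1)$-H\"older property of $\nabla_x c(\cdot,y)$ and Cauchy--Schwarz, giving $|c(x,y)-c(x',y)-\langle\nabla_x c(x',y),x-x'\rangle|\leq (\sqrt{d}/\gamma)\|x-x'\|^\gamma<2\sqrt{d}\|x-x'\|^\gamma$. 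Continuity near $c$ follows because both $\Psi_{\textup{bdd}}$ and $\Psi^{c,\gamma}_{\textup{Hol}}$ are $1$-Lipschitz on $C(\XC\times\YC)$ (the latter because a uniform shift in the argument shifts the infimum by the same constant), so their composition is Lipschitz.

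For Hadamard directional differentiability with derivative $\Id$, the chain rule together with \Cref{prop:RegEl:Bdd} reduces matters to $\Psi^{c,\gamma}_{\textup{Hol}}$. Given $t_n\searrow 0$ and $h_n\to h$ in $C(\XC\times\YC)$, the choice $x'=x$ yields the upper bound $\Psi^{c,\gamma}_{\textup{Hol}}(c+t_n h_n)(x,y)-c(x,y)\leq t_n h_n(x,y)$. For the matching lower bound, the Taylor estimate above shows every summand in the infimum is at least $c(x,y)+\sqrt{d}\|x-x'\|^\gamma+t_n h_n(x',y)$. The plan is to split into two regimes: choosing $A$ so that $\sqrt{d}A^\gamma\geq 2\sup_n\|h_n\|_\infty$, for $\|x-x'\|\geq A t_n^{1/\gamma}$ the H\"older penalty absorbs the oscillation $t_n|h_n(x',y)-h_n(x,y)|$, while for $\|x-x'\|<A t_n^{1/\gamma}$ the uniform equicontinuity of $\{h_n\}$ on the compact space $\XC\times\YC$ gives $|h_n(x',y)-h_n(x,y)|=o(1)$ uniformly in $(x,y)$. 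Combining yields $\Psi^{c,\gamma}_{\textup{Hol}}(c+t_n h_n)=c+t_n h_n+o(t_n)$ in sup norm.

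The hard part is the metric entropy bound. The plan is to show that uniformly over $\tilde c\in C(\XC\times\YC)$ and $y\in\YC$, the map $x\mapsto\Psi(\tilde c)(x,y)$ on $\XC\subseteq\RR^d$ inherits a common ``$\gamma$-regularity'' modulus from the defining infimum: each summand $x\mapsto\langle\nabla_x c(x',y),x-x'\rangle+2\sqrt{d}\|x-x'\|^\gamma$ is $(\gamma,\Lambda)$-H\"older in $x$ with $\Lambda$ depending only on $d,\gamma,\|\nabla_x c\|_\infty$, since $x\mapsto\|x-x'\|^\gamma$ has a $(\gamma-1)$-H\"older gradient with a universal constant. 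For $\gamma=2$ this takes the transparent form of $\gamma$-semiconcavity: the identity $\|x-x'\|^2=\|x\|^2-2\langle x,x'\rangle+\|x'\|^2$ displays $\Psi(\tilde c)(\cdot,y)-2\sqrt{d}\|\cdot\|^2$ as an infimum of functions affine in $x$, hence concave; for general $\gamma\in(1,2]$ an analogous $\gamma$-semiconcavity estimate follows from the dominant $2\sqrt{d}\|x-x'\|^\gamma$ term. Since taking an additional infimum in $y$ (shifted by $-g(y)$) preserves this modulus, $g^{\Psi(\tilde c)}$ lies in a uniformly bounded, uniformly $\gamma$-regular class on $\XC$ independent of $\tilde c$ and $g\in\GC$, whose $\|\cdot\|_\infty$-covering number is bounded by $\exp(C\eps^{-d/\gamma})$ via the standard entropy estimate for such classes.
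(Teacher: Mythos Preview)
Your arguments for $\Psi(c)=c$, continuity, and Hadamard differentiability are correct and more hands-on than the paper's approach: the paper casts $\Psi^{c,\gamma}_{\textup{Hol}}$ as $-\sup_{x'\in\XC}E_{\textup{Hol}}(\tilde c,x',(x,y))$ and then appeals to its general sensitivity framework for extremal-type functionals (the analogues of \Cref{lem:CtyPsi} and \Cref{thm:DiffPsi}), checking \textbf{(EC)}, \textbf{(Lip)}, \textbf{(Lin)} and obtaining \textbf{(DC)} from the fact that the argmin in the infimum defining $\Psi^{c,\gamma}_{\textup{Hol}}(c)(x,y)$ is the singleton $\{x\}$. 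Your direct two-regime argument for the derivative is a perfectly valid substitute.

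The metric entropy argument, however, has a genuine gap for $\gamma\in(1,2)$. You correctly observe that each summand $x\mapsto\langle\nabla_x c(x',y),x-x'\rangle+2\sqrt{d}\|x-x'\|^\gamma$ is $(\gamma,\Lambda)$-H\"older in $x$, but the infimum over $x'$ (and then over $y$) does \emph{not} preserve $(\gamma,\Lambda)$-H\"older regularity once $\gamma>1$: already $\min(x,-x)=-|x|$ fails to be $C^1$. Your fallback to ``$\gamma$-semiconcavity'' is fine for $\gamma=2$ (where $\|x-x'\|^2$ splits off a quadratic in $x$ and the rest is affine, so the infimum minus $2\sqrt{d}\|x\|^2$ is concave and Bronshtein-type bounds apply), but for $\gamma<2$ the Hessian of $\|x-x'\|^\gamma$ is unbounded near $x=x'$, so no uniform semiconcavity constant exists, and ``$\gamma$-semiconcavity'' is neither defined nor equipped with an entropy bound in your sketch.

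The paper closes this gap by a smoothing/scale-selection argument. One mollifies each $(\gamma,2)$-H\"older building block $\tilde E_{x',y}(x)=\tilde c(x',y)+\langle\nabla_x c(x',y),x-x'\rangle+2\sqrt{d}\|x-x'\|^\gamma$ to obtain $\tilde E^\sigma_{x',y}$ with $\|\tilde E_{x',y}-\tilde E^\sigma_{x',y}\|_\infty\lesssim\sigma^\gamma$ and $\|\tilde E^\sigma_{x',y}\|_{C^2(\XC)}\lesssim\sigma^{\gamma-2}$ uniformly in $x',y$. The infimum $\overline c^\sigma(x,y)=\inf_{x'}\tilde E^\sigma_{x',y}(x)$ is then genuinely semi-concave with constant $\Gamma(\sigma)\asymp\sigma^{\gamma-2}$, and $\|\overline c-\overline c^\sigma\|_\infty\lesssim\sigma^\gamma$. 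Choosing $\sigma(\eps)\asymp\eps^{1/\gamma}$ one gets $\NC(\eps,\GC^{\overline c},\|\cdot\|_\infty)\leq\NC(\eps/2,\GC^{\overline c^{\sigma(\eps)}},\|\cdot\|_\infty)$ and then, after rescaling by $\Gamma(\sigma(\eps))$, the class sits inside a fixed bounded, Lipschitz, $1$-semi-concave class whose entropy is controlled by Bronshtein/Guntuboyina bounds for convex functions, yielding $\log\NC\lesssim(\eps/\Gamma(\sigma(\eps)))^{-d/2}\asymp\eps^{-d/\gamma}$. This multi-scale step is the missing ingredient in your proposal.
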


\subsection{Combination of Regularity Elevations}\label{subsec:RegElCom}

Finally, we also outline a constructive way to combine regularity elevation maps defined on different spaces. This is important since it enables to leverage regularity properties of the population cost function for different regions of the domain. 

Hence, let $\XC, \YC$ be compact Polish spaces and assume existence of a collection of homeomorphisms $\zeta_i\colon \UC_i \rightarrow \zeta_i(\UC_i)$ for $1\leq i \leq I$ such that $\XC = \bigcup_{i = 1}^{I} \zeta_i(\UC_i)$. Further, assume there exists a partition on unity $\{\eta_i\}_{i=1}^{I}$ on $\XC$ with $\supp(\eta_i)\subseteq \zeta_i(\UC_i)$. Consider a continuous cost function $c\colon \XC\times \YC\rightarrow \RR$ and let $c_i\colon \UC_i \times \YC\rightarrow \RR, (u,y)\mapsto c(\zeta_i(u),y)$. Assume there exist maps $\Psi_i\colon C(\UC_i\times \YC)\rightarrow C(\UC_i\times \YC)$ such that $\Psi_i(c_i) = c_i$ and where $\Psi_i$ is continuous near $c_i$ and Hadamard differentiable at $c_i$ with derivative $D^H_{|c_i}\Psi_i= \Id$. Using these maps we define the combination of regularity elevations as\begin{align*}
  \Psi_{\textup{com}}\colon C(\XC\times \YC)\rightarrow C(\XC\times \YC), \quad \tilde c \mapsto \left((x,y)\mapsto \sum_{i = 1}^{I}\eta_i(x)\Psi_i\Big(\tilde c(\zeta_i(\cdot),\cdot)\Big)(\zeta_i^{-1}(x),y)\right).
\end{align*}
Indeed, by continuity of the partition of one $\eta_i$  as well as the functionals $\Psi_i$ and $\zeta_i$, $\zeta_i^{-1}$ for each $i\in \{1, \dots,I\}$ it follows that the range of this functional is indeed contained in $C(\XC\times \YC)$. 

\begin{prop}\label{prop:RegEl:Union}
For the above setting, $\Psi = \Psi_{\textup{com}}$ fulfills $\Psi(c) = c$, it is continuous near $c$, and it is Hadamard differentiable at $c$ with $D^H_{|c}\Psi= \Id_{C(\XC\times \YC)}$. Further, for any uniformly bounded function class $\GC$ on $\YC$ we obtain  $$\sup_{\tilde c\in C(\XC\times \YC)}\log\NC(\eps, \GC^{\Psi(\tilde c)}, \norm{\cdot}_\infty) \leq \sum_{i = 1}^{I} \sup_{\tilde c\in C(\XC\times \YC)}\log\NC(\eps, \GC^{\Psi_i(\tilde c(\zeta_i(\cdot),\cdot))}, \norm{\cdot}_\infty) .$$
\end{prop}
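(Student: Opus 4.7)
The plan is to deduce each of the four claims about $\Psi = \Psi_{\textup{com}}$ from the analogous property of the individual $\Psi_i$, using the partition of unity $\{\eta_i\}$ and the homeomorphisms $\zeta_i$ as glue. The identity $\Psi(c) = c$ is immediate: whenever $\eta_i(x) > 0$, the support condition $\supp(\eta_i) \subseteq \zeta_i(\UC_i)$ ensures that $\zeta_i^{-1}(x)$ is well defined, so that $\Psi_i(c_i)(\zeta_i^{-1}(x), y) = c_i(\zeta_i^{-1}(x), y) = c(x, y)$; summing with weights $\eta_i(x)$ and using $\sum_i \eta_i \equiv 1$ returns $c(x, y)$, with the convention $\eta_i(x) \cdot (\text{undefined}) = 0$ handling the remaining indices.

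Continuity of $\Psi$ near $c$ and Hadamard directional differentiability at $c$ with derivative $\Id_{C(\XC\times \YC)}$ proceed by the same two-step reduction. If $\tilde c_n \to c$ in $C(\XC\times \YC)$, then trivially $\tilde c_n(\zeta_i(\cdot),\cdot) \to c_i$ in $C(\UC_i \times \YC)$ because uniform norm decreases under the embedding $\zeta_i : \UC_i \to \XC$; continuity of $\Psi_i$ near $c_i$ then gives uniform convergence $\Psi_i(\tilde c_n(\zeta_i(\cdot),\cdot)) \to c_i$; composing with the continuous $\zeta_i^{-1}$ on $\supp(\eta_i)$, multiplying by the bounded continuous $\eta_i$, and summing over the finitely many $i$ yields $\Psi(\tilde c_n) \to c$ in $C(\XC\times \YC)$. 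For the directional derivative, given $t_n \searrow 0$ and $h_n \to h$ in $C(\XC \times \YC)$, Hadamard differentiability of $\Psi_i$ at $c_i$ with derivative $\Id$ yields
\begin{align*}
\frac{\Psi_i(c_i + t_n h_n(\zeta_i(\cdot),\cdot)) - c_i}{t_n} \longrightarrow h(\zeta_i(\cdot),\cdot) \quad \text{in } C(\UC_i \times \YC),
\end{align*}
and composing with $\zeta_i^{-1}$, multiplying by $\eta_i$, and summing over $i$ delivers the uniform limit $\sum_i \eta_i(x) h(x, y) = h(x, y)$, which confirms $D^H_{|c}\Psi = \Id_{C(\XC\times \YC)}$.

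For the covering-number bound, fix $\tilde c \in C(\XC \times \YC)$ and let $A_i \subseteq C(\UC_i)$ be a minimal $\eps$-cover of $\GC^{\Psi_i(\tilde c(\zeta_i(\cdot),\cdot))}$ under uniform norm. The natural candidate cover of $\GC^{\Psi(\tilde c)}$ on $\XC$, of cardinality $\prod_i |A_i|$, is
\begin{align*}
  \Bigl\{ F_{\mathbf{f}} : F_{\mathbf{f}}(x) = \textstyle\sum_{i=1}^{I} \eta_i(x) f_i(\zeta_i^{-1}(x)), \; \mathbf{f} = (f_1, \ldots, f_I) \in A_1 \times \cdots \times A_I \Bigr\}.
\end{align*}
For each $g \in \GC$, select $f_i^g \in A_i$ with $\|g^{\Psi_i(\tilde c(\zeta_i(\cdot),\cdot))} - f_i^g\|_\infty \leq \eps$ on $\UC_i$, and write $\mathbf{f}^g = (f_1^g, \ldots, f_I^g)$. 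Showing that $F_{\mathbf{f}^g}$ approximates $g^{\Psi(\tilde c)}$ uniformly on $\XC$ is the step I expect to be the main obstacle: the $c$-transform does not commute with the convex combination defining $\Psi_{\textup{com}}(\tilde c)$, and a direct computation yields only the one-sided inequality $g^{\Psi(\tilde c)}(x) \geq \sum_i \eta_i(x) g^{\Psi_i(\tilde c(\zeta_i(\cdot),\cdot))}(\zeta_i^{-1}(x))$. The resolution proceeds from the decomposition $g^{\Psi(\tilde c)}(x) - F_{\mathbf{f}^g}(x) = \sum_i \eta_i(x) \bigl[g^{\Psi(\tilde c)}(x) - f_i^g(\zeta_i^{-1}(x))\bigr]$ together with $\sum_i \eta_i \equiv 1$ and the $1$-Lipschitz bound $\|g_1^{\Psi(\tilde c)} - g_2^{\Psi(\tilde c)}\|_\infty \leq \|g_1 - g_2\|_\infty$ of the $c$-transform, which should suffice to absorb the cross-contributions between indices and recover a uniform error of order $\eps$.
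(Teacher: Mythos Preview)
Your arguments for $\Psi(c)=c$, continuity near $c$, and Hadamard differentiability with derivative $\Id$ are correct and match the paper's (which packages the last via the chain rule for the linear maps $\tilde c\mapsto \tilde c(\zeta_i(\cdot),\cdot)$ and composition with $\zeta_i^{-1}$).

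The covering-number argument has a genuine gap. Your candidate cover $\{F_{\mathbf f}\}$ does not $\eps$-cover $\GC^{\Psi(\tilde c)}$, and the sketched resolution does not repair this. Your decomposition reduces matters to bounding $|g^{\Psi(\tilde c)}(x) - g^{\Psi_i(\tilde c_i)}(\zeta_i^{-1}(x))|$ for each $i$ with $\eta_i(x)>0$; but the relevant Lipschitz bound here is in the \emph{cost}, not in $g$ (which is what you quote), and it only yields control by $\sup_y |\Psi(\tilde c)(x,y) - \Psi_i(\tilde c_i)(\zeta_i^{-1}(x),y)|$, which need not be small on chart overlaps. Concretely: with $I=2$, $\eta_1(x)=\eta_2(x)=\tfrac12$, $\YC=[0,1]$, $\Psi_1(\tilde c_1)(\zeta_1^{-1}(x),y)=y$, $\Psi_2(\tilde c_2)(\zeta_2^{-1}(x),y)=1-y$ and $g\equiv 0$, one has $g^{\Psi(\tilde c)}(x)=\tfrac12$ whereas $\sum_i \eta_i(x)\,g^{\Psi_i(\tilde c_i)}(\zeta_i^{-1}(x))=0$. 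So the one-sided inequality you identified is genuinely all that holds; the partition-of-unity structure of $\Psi_{\textup{com}}(\tilde c)$ does \emph{not} transfer to its $c$-transform.

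The paper's route is structurally different and does not use the partition of unity for the entropy bound at all. It uses only $\XC=\bigcup_i \zeta_i(\UC_i)$ to obtain
\[
\log\NC\bigl(\eps, \GC^{\Psi(\tilde c)}, \|\cdot\|_\infty\bigr) \le \sum_i \log\NC\bigl(\eps, \GC^{\Psi(\tilde c)}\big|_{\zeta_i(\UC_i)}, \|\cdot\|_\infty\bigr),
\]
then pulls each restriction back to $\UC_i$ via the homeomorphism $\zeta_i$ (restricting a $c$-transform is the $c$-transform for the restricted cost) and concludes via two lemmata cited from \citet{hundrieser2022empirical}. The essential point you missed is to cover the \emph{same} function $g^{\Psi(\tilde c)}$ piecewise on the patches $\zeta_i(\UC_i)$, rather than approximate it globally by a convex combination of the \emph{different} transforms $g^{\Psi_i(\tilde c_i)}$.
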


%%------------------------------------------------------------------------------------------------------------------------
%%------------------------------------------------------------------------------------------------------------------------
%%                                      PROOFS OF MAIN RESULTS 
%%------------------------------------------------------------------------------------------------------------------------
%%------------------------------------------------------------------------------------------------------------------------

\section{Proofs of Main Results}
\label{sec: ProofMain}

In this section, we provide the full proofs of \Cref{lem:RelFCandConcave} for the dual representation of the OT value, \Cref{thm:AbstractMainResult} and \Cref{thm: BootConst} for the distributional limit of the empirical OT value under weakly converging costs, as well as Theorems \ref{thm:OTProcessCty}--\ref{thm:OTProcessSup} and \Cref{cor:OTInfimumNonCompactTheta} for empirical OT with extremal-type costs. The proofs for all auxiliary results of this subsection are deferred to \Cref{app:ProofSection2}. 

\subsection{Proof of Lemma~\ref{lem:RelFCandConcave}: Dual Representation of Optimal Transport Value}\label{subsubsec:RelFCandConcaveProof}
The subsequent auxiliary lemma establishes an important property of cost-transformations which is essential throughout this section. 
\begin{lem}[Lipschitz property of cost-transformation]\label{lem:CtrafoLipschitz}
	For arbitrary functions $f, \tilde f\colon \XC \to \RR$ and cost functions $c, \tilde c\colon \XC\times \YC\to \RR$, it follows that
	$\norm{f^c - \tilde f^{\tilde c}}_\infty \leq \norm{f- \tilde f}_\infty + \norm{c- \tilde c}_\infty$. In particular, upon selecting the constant functions $\tilde f, \tilde c \equiv 0$, it follows that $\norm{f^c}_\infty \leq \norm{f}_\infty + \norm{c}_\infty$. 
\end{lem}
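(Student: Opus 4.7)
The plan is to prove this via the standard elementary inequality $\inf_x a(x) - \inf_x b(x) \le \sup_x (a(x) - b(x))$, applied pointwise in $y$. Since the statement is symmetric in $(f,c)$ and $(\tilde f, \tilde c)$, it suffices to establish the one-sided bound $f^c(y) - \tilde f^{\tilde c}(y) \le \|f-\tilde f\|_\infty + \|c-\tilde c\|_\infty$ for each $y \in \YC$, and then swap the roles.

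The key step is to observe that, for any fixed $y \in \YC$ and any $x \in \XC$, one has the decomposition
\[
c(x,y) - f(x) = \bigl(\tilde c(x,y) - \tilde f(x)\bigr) + \bigl(c(x,y) - \tilde c(x,y)\bigr) + \bigl(\tilde f(x) - f(x)\bigr),
\]
so that $c(x,y) - f(x) \le \bigl(\tilde c(x,y) - \tilde f(x)\bigr) + \|c-\tilde c\|_\infty + \|f-\tilde f\|_\infty$. Taking the infimum over $x\in\XC$ on both sides and using that the last two terms are independent of $x$ yields $f^c(y) \le \tilde f^{\tilde c}(y) + \|c-\tilde c\|_\infty + \|f-\tilde f\|_\infty$. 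Swapping the roles of $(f,c)$ and $(\tilde f, \tilde c)$ gives the reverse inequality, so $|f^c(y) - \tilde f^{\tilde c}(y)| \le \|f-\tilde f\|_\infty + \|c-\tilde c\|_\infty$, and taking the supremum over $y \in \YC$ finishes the proof. The ``in particular'' clause follows by setting $\tilde f \equiv 0$ and $\tilde c \equiv 0$, since then $\tilde f^{\tilde c} \equiv 0$.

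There is no real obstacle here; the only subtlety is the standard care needed when comparing two infima, which is handled by the pointwise decomposition above. No continuity or measurability assumptions on $f, \tilde f, c, \tilde c$ are required for the argument to go through.
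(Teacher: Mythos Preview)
Your proof is correct and follows essentially the same approach as the paper: both establish the one-sided bound $f^c(y) - \tilde f^{\tilde c}(y) \le \|f-\tilde f\|_\infty + \|c-\tilde c\|_\infty$ by comparing the two infima pointwise in $x$, then swap roles. The only cosmetic difference is that the paper uses an $\eps$-argument (choosing a near-minimizer $x'$ for $\tilde f^{\tilde c}(y)$), whereas you invoke the equivalent inequality $\inf_x a(x) - \inf_x b(x) \le \sup_x(a(x)-b(x))$ directly.
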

\begin{proof}[Proof of \Cref{lem:RelFCandConcave}]
	For any $h\in \HC_c$ there exists $g\colon \YC\rightarrow [-\norm{  c}_\infty,\norm{  c}_\infty]$ with $h=g^{  c}$, and hence
	\begin{align*}
		- \norm{  c}_\infty - \sup_{y\in \YC}g(y) \leq h(x) = \inf_{y\in \YC}  c(x,y) - g(y) \leq \norm{  c}_\infty - \sup_{y\in \YC}g(y).
	\end{align*}
  In consequence, we find that $\norm{h}_\infty \leq 2\norm{c}_\infty \leq 2B$. 
	Further, for arbitrary $x,x'\in \XC$ and $\eps>0$, consider $y'\in\YC$  such that $h(x') \geq c(x', y') -  g( y') - \eps$. Then, it follows that 
	\begin{align*}
	h(x) -  h(x') =&\; \left[\inf_{y \in \YC} c(x,y) - g(y)\right] - \left[\inf_{y \in \YC}  c(x',y) - g(y)\right]\\
	\le& \; c(x, y') - g(y') - c(x', y') + g( y') + \eps\\
	\le& \; w(d_\XC(x,x')) + \eps. 
	\end{align*}
	Since $\eps>0$ can be chosen arbitrarily small, we obtain that $|h(x) - h(x')|\leq w(d_\XC(x,x'))$. This yields $\HC_c \subseteq \FC$ and thus $\HC^{  c}_{  c} \subseteq \FC^{  c}$. Further, by \citet[Proposition 1.34]{santambrogio2015optimal} we infer $\HC_c =\HC_c^{cc} \subseteq \FC^{  c  c}$. 
  To show  the remaining inclusions note for $f\in \FC$ that $$-\norm{  c}_\infty - \sup_{x\in \XC} f(x) \leq f^{  c} \leq \norm{  c}_\infty - \sup_{x\in \XC} f(x).$$ Hence, the function $g \coloneqq f^{  c}+\sup_{x\in \XC} f(x)$ fulfills $\norm{g}_\infty \leq \norm{  c}_\infty $, and since $\norm{f}_\infty \leq 2B$, we find that $$f^{  c   c}(x) = (f^{  c})^{  c} = (g)^{  c} +\sup_{x\in \XC} f(x) \in \HC_c + [-2B,2B],$$  
	which yields $ \FC^{  c  c} \subseteq  \HC_c  + [-2B,2B]$ as well as  $ \FC^{  c}=\FC^{  c  c  c} \subseteq  \HC_c^{  c}  + [-2B,2B]$.
  To show representation \eqref{eq:OTDualNice}, we combine the inclusions $\HC_c\subseteq \FC\subseteq C(\XC)$ with the alternative dual representations \eqref{eq: OTdual} and \eqref{eq:DualFormulationSpecificC}. 
	For the final claim, take a maximizing sequence $\{f_n\}_{n\in \NN}$ for \eqref{eq:OTDualNice} which admits by compactness of $\FC$ a converging subsequence $\{f_{n_k}\}_{k\in \NN}$ with uniform limit $f\in \FC$. Then by \Cref{lem:CtrafoLipschitz} it follows that $\{f_{n_k}^c\}_{k\in \NN}$ and $\{f_{n_k}^{cc}\}_{k\in \NN}$ also uniformly converge to $f^c$ and $f^{cc}$, respectively. We thus obtain that $\mu(f^{cc}) + \nu(f^c) = \lim_{k\rightarrow \infty} \mu(f^{cc}_{n_k}) + \nu(f^{c}_{n_k}) = OT(\mu, \nu,c)$ which shows that $f\in \FC$ is a maximizing element hence the set of optimizers $S_{\!c}(\mu,\nu)$ for \eqref{eq:OTDualNice} is non-empty. 
\end{proof}

\subsection{Proofs for Distributional Limits under Weakly Converging Costs}\label{subsec:ProofOfMainResult}

\subsubsection{Proof of Theorem~\ref{thm:AbstractMainResult}}\label{subsubsec:AbstractMainResultProof}

For the proof of \Cref{thm:AbstractMainResult} the following auxiliary results are crucial. We start with lower and upper bound on the difference between OT values for varying costs and probability measures which are a consequence of the OT problem having a representation in terms of an infimum over feasible couplings  as well as a supremum over feasible potentials. 

\begin{lem}[Lower and upper bounds]
\label{lem:LowerUpperBound}
Define for $B>0$ and a concave modulus of continuity $w\colon \RR_+\rightarrow \RR_+$ the collection \begin{equation*}C(B, w)\coloneqq \left\{ \bar c\in C(\XC\times \YC) \;\middle|\; \norm{\bar c}_\infty \leq B,  |\bar c(x,y) -\bar c(x',y)|\leq w(d_\XC(x,x')) \text{ for all }x,x'\in \XC, y\in \YC\right\}.\end{equation*}
Then, for costs $c, \tilde c \in C(B, w)$ and probability measures  $\mu, \tilde \mu\in \PC(\XC), \nu, \tilde \nu\in \PC(\YC)$ it holds that 
\begin{align*}
    &\inf_{\pi \in \Pi_{\tilde c}^\star(\tilde\mu,\tilde\nu)} \pi(\tilde c-c) + \sup_{f \in S\!_c(\mu,\nu)} (\tilde \mu-\mu ) f^{cc} +  (\tilde \nu-\nu ) f^c\\ %
\le& \;\;OT( \tilde \mu, \tilde \nu, {\tilde c}) -OT ( \mu, \nu,c)\\
\le& \min\bigg(\inf_{\pi \in \Pi_c^\star(\tilde\mu,\tilde\nu)} \pi (\tilde c-c) +\sup_{f \in S_{\!c}(\tilde \mu,\tilde \nu)} (\tilde \mu-\mu ) f^{cc} +  (\tilde \nu-\nu ) f^c,\\
&\inf_{\pi \in \Pi_c^\star(\mu,\nu)} \pi (\tilde c-c) +\sup_{f \in S_{\!\tilde c}(\tilde \mu,\tilde \nu)} (\tilde \mu-\mu ) f^{cc} +  (\tilde \nu-\nu ) f^c  + \sup_{f \in \FC}(\tilde \mu-\mu)(f^{\tilde c\tilde c} - f^{cc}) + (\tilde \nu-\nu)(f^{\tilde c} - f^{c}).\bigg)
\end{align*}
In particular, for fixed measures or fixed costs it follows that %
\begin{align*}
  |OT(\mu, \nu, {\tilde c}) -OT ( \mu, \nu, c)| \leq \norm{\tilde c - c}_\infty\!, \;\; |OT(\tilde \mu, \tilde\nu, {c}) -OT ( \mu, \nu,c)| \leq \!\!\sup_{f \in \FC^{cc}} \big|(\tilde \mu-\mu ) f\big| +  \!\!\sup_{f \in \FC^{c}} \big|(\tilde \nu-\nu ) f\big|. %
\end{align*}
\end{lem}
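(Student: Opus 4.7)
}

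The plan is to derive all three bounds by systematically combining the primal formulation of the OT value with the dual representation established in \Cref{lem:RelFCandConcave}. The key identity, used repeatedly, is that for any $\pi \in \Pi(\alpha,\beta)$ and any $f \in \FC$, the pointwise inequality $f^{cc}(x) + f^c(y) \leq c(x,y)$ yields $\alpha(f^{cc}) + \beta(f^c) \leq \pi(c)$, while equality holds when $\pi$ is an optimal plan for $c$ and $f$ is a Kantorovich potential for $\alpha,\beta,c$. Throughout, note that by \Cref{lem:RelFCandConcave} both $c, \tilde c \in C(B,w)$ admit dual formulations over the same function class $\FC = \FC(B,w)$.

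For the \emph{lower bound}, I would pick arbitrary $\pi \in \Pi_{\tilde c}^\star(\tilde\mu,\tilde\nu)$ and $f \in S_c(\mu,\nu)$ and write
\begin{align*}
OT(\tilde\mu,\tilde\nu,\tilde c) - OT(\mu,\nu,c) = \pi(\tilde c - c) + \pi(c) - \mu(f^{cc}) - \nu(f^{c}).
\end{align*}
The bound $\pi(c) \geq \tilde\mu(f^{cc}) + \tilde\nu(f^c)$ (from the duality inequality applied to the coupling $\pi \in \Pi(\tilde\mu,\tilde\nu)$) turns the right-hand side into $\pi(\tilde c - c) + (\tilde\mu - \mu)f^{cc} + (\tilde\nu - \nu)f^c$. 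Taking suprema, a fortiori infimum over $\pi$ and supremum over $f$, yields the lower bound in the stated form.

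For the \emph{first upper bound}, I would dually start from $\tilde\pi \in \Pi_c^\star(\tilde\mu,\tilde\nu)$ and $f \in S_c(\tilde\mu,\tilde\nu)$, using $\tilde\pi(\tilde c) \geq OT(\tilde\mu,\tilde\nu,\tilde c)$ and $\tilde\pi(c) = OT(\tilde\mu,\tilde\nu,c) = \tilde\mu(f^{cc}) + \tilde\nu(f^c)$, together with the duality inequality $\mu(f^{cc}) + \nu(f^c) \leq OT(\mu,\nu,c)$; taking the infimum over $\tilde\pi$ and supremum over $f$ delivers the first upper bound. The \emph{second upper bound} is the algebraically most delicate step and is likely the main obstacle. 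Here I would take $f^* \in S_{\tilde c}(\tilde\mu,\tilde\nu)$ and $\pi \in \Pi_c^\star(\mu,\nu)$, expand
\begin{align*}
OT(\tilde\mu,\tilde\nu,\tilde c) = \tilde\mu(f^{*\tilde c\tilde c}) + \tilde\nu(f^{*\tilde c})
&= \bigl[\mu(f^{*cc}) + \nu(f^{*c})\bigr] + \bigl[(\tilde\mu - \mu)f^{*cc} + (\tilde\nu - \nu)f^{*c}\bigr] \\
&\quad + \bigl[\mu(f^{*\tilde c\tilde c} - f^{*cc}) + \nu(f^{*\tilde c} - f^{*c})\bigr] + \bigl[(\tilde\mu - \mu)(f^{*\tilde c\tilde c} - f^{*cc}) + (\tilde\nu - \nu)(f^{*\tilde c} - f^{*c})\bigr],
\end{align*}
and then control the first two bracketed quantities by $OT(\mu,\nu,c) = \pi(c)$ (via duality) and the third by $\pi(\tilde c - c)$ using $\mu(f^{*\tilde c \tilde c}) + \nu(f^{*\tilde c}) \leq \pi(\tilde c)$ together with $\mu(f^{*cc}) + \nu(f^{*c}) \leq \pi(c)$. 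The fourth bracket is dominated by the $\sup_{f \in \FC}$ term appearing in the statement. Rearranging and then passing to $\inf_\pi$ and $\sup_{f^*}$ yields the second upper bound.

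Finally, the \emph{specialized inequalities} follow at once: taking $\tilde\mu=\mu,\tilde\nu=\nu$ and using $|\pi(\tilde c - c)| \leq \norm{\tilde c - c}_\infty$ collapses the first assertion, while taking $\tilde c = c$ kills the cost-difference term and the extra $\sup_{f\in \FC}$-term (it vanishes identically), leaving exactly the stated empirical-process bound in terms of $\FC^{cc}$ and $\FC^c$.
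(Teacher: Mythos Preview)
Your treatment of the lower bound and of the first upper bound is correct and essentially coincides with the paper's argument (the paper organizes things by inserting the intermediate quantity $OT(\tilde\mu,\tilde\nu,c)$, but your direct manipulations amount to the same computation). The specialized inequalities for fixed measures or fixed costs are also handled correctly.

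There is, however, a genuine gap in your argument for the \emph{second} upper bound. In your four-bracket decomposition you claim to bound the third bracket
\[
\mu\bigl(f^{*\tilde c\tilde c} - f^{*cc}\bigr) + \nu\bigl(f^{*\tilde c} - f^{*c}\bigr)
\]
by $\pi(\tilde c - c)$, citing the two weak-duality inequalities $\mu(f^{*\tilde c\tilde c}) + \nu(f^{*\tilde c}) \leq \pi(\tilde c)$ and $\mu(f^{*cc}) + \nu(f^{*c}) \leq \pi(c)$. But from $A \leq P$ and $B \leq Q$ one cannot conclude $A - B \leq P - Q$; the subtraction goes the wrong way. Since $f^* \in S_{\tilde c}(\tilde\mu,\tilde\nu)$ is \emph{not} in general a Kantorovich potential for $(\mu,\nu,c)$, the second inequality is strict in general and the third bracket is not controlled by $\pi(\tilde c - c)$ alone.

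The fix is immediate once you see it: do not separate the first and third brackets. Their sum equals $\mu(f^{*\tilde c\tilde c}) + \nu(f^{*\tilde c})$, and a single application of weak duality for the coupling $\pi \in \Pi(\mu,\nu)$ with cost $\tilde c$ gives
\[
\mu(f^{*\tilde c\tilde c}) + \nu(f^{*\tilde c}) \;\leq\; \pi(\tilde c) \;=\; \pi(c) + \pi(\tilde c - c) \;=\; OT(\mu,\nu,c) + \pi(\tilde c - c),
\]
from which the second upper bound follows after adding the second and fourth brackets and optimizing. This is precisely the paper's route: it writes
\[
OT(\tilde\mu,\tilde\nu,\tilde c) - OT(\mu,\nu,c) = \bigl[OT(\tilde\mu,\tilde\nu,\tilde c) - OT(\mu,\nu,\tilde c)\bigr] + \bigl[OT(\mu,\nu,\tilde c) - OT(\mu,\nu,c)\bigr],
\]
bounds the first bracket by $\sup_{f \in S_{\tilde c}(\tilde\mu,\tilde\nu)} (\tilde\mu-\mu)f^{\tilde c\tilde c} + (\tilde\nu-\nu)f^{\tilde c}$ and the second by $\inf_{\pi \in \Pi_c^\star(\mu,\nu)}\pi(\tilde c - c)$, and then adds and subtracts $f^{cc}, f^c$ inside the first bracket to obtain the stated form.
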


To employ the lower and upper bounds  of \Cref{lem:LowerUpperBound} for the proof of \Cref{thm:AbstractMainResult} we additionally require a number of continuity and measurability properties which are captured in the following lemma. Notably, we equip $\PC(\XC)\times \PC(\YC)$ with the bounded Lipschitz norm, which turns it into a Polish metric space and metrizes weak convergence of measures.

\begin{lem}[Continuity and Measurability]\label{lem:ContinuityResults}Let $\mu \in \PC(\XC), \nu \in \PC(\YC)$, and $c\in C(\XC\times \YC)$. 
Take a concave modulus of continuity $w\colon \RR_{+} \rightarrow \RR_{+}$ for $c$ and set $C\coloneqq C(2\norm{c}_\infty+1,2w)$ (for the definition of $C(2\norm{c}_\infty+1,2w)$ see \Cref{lem:LowerUpperBound}). Further, recall the function class $\FC= \FC(2\norm{c}_\infty+1,2w)$ introduced in \eqref{eq:defF} and define the functions
\begin{align}
  &T_1\colon \PC(\XC)\times \PC(\YC)\times \C\to \RR,\quad   \notag
  &&   (\mu', \nu', c')\mapsto OT( \mu', \nu',c'),\\
  &T_2\colon \PC(\XC)\times \PC(\YC)\times \C \times C(\XC\times \YC) \to \RR, \;\; \notag
  &&   (\mu', \nu', c', h_c)\mapsto \inf_{\pi \in \Pi_{ c'}^\star(\mu',\nu')} \pi(h_c),\\
  &T_3\colon  \PC(\XC)\times \PC(\YC)\times \C\times C_u(\FC)^2\to \RR,\quad  \notag 
   &&   (\mu', \nu', c', h_\mu, h_\nu)\mapsto\sup_{f \in S_{\!c'}(\mu',\nu')} h_\mu(f) +  h_\nu(f), \\ 
   &T_4 \colon C_u(\FC)^4\to \RR,\quad \notag
   &&  (h_\mu, \tilde h_\mu,h_\nu, \tilde h_\nu) \mapsto\sup_{f \in \FC} h_\mu(f)-\tilde h_\mu(f) +  h_\nu(f)-\tilde h_\nu(f).
\end{align}
Then, $T_1$ and $T_4$ are continuous, $T_2$ is lower semi-continuous, and $T_3$ is upper semi-continuous. If  $\Pi^\star_{c'}(\mu',\nu')$ is unique, $T_2$ is continuous at $(\mu', \nu', c', h_c)$. Moreover, for fixed $(\mu', \nu', c')$ the map $T_2$ is continuous in $h_c$ while $T_3$ is continuous in $(h_\mu, h_\nu)$. In particular, each function $T_i$ for  $1\leq i \leq 4$ is Borel measurable. 
\end{lem}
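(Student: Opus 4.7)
\textbf{Proof plan for Lemma \ref{lem:ContinuityResults}.} The plan is to treat $T_1$ and $T_4$ directly, to deduce the lower/upper semi-continuity of $T_2$ and $T_3$ from compactness arguments together with standard OT stability, and finally to observe that Borel measurability is automatic from (semi-)continuity.

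For $T_1$, the continuity is an immediate consequence of \Cref{lem:LowerUpperBound}: for any sequence $(\mu_n',\nu_n',c_n')\to(\mu',\nu',c')$ in $\PC(\XC)\times\PC(\YC)\times C$, the triangle-type decomposition bounds $|OT(\mu_n',\nu_n',c_n')-OT(\mu',\nu',c')|$ by $\|c_n'-c'\|_\infty + \sup_{f\in\FC^{c'c'}}|(\mu_n'-\mu')f|+\sup_{f\in\FC^{c'}}|(\nu_n'-\nu')f|$. The first term tends to $0$ by assumption, and the second two tend to $0$ because $\FC^{c'c'}\subseteq C(\XC)$ and $\FC^{c'}\subseteq C(\YC)$ are uniformly bounded and equicontinuous (hence compact by Arzel\`a-Ascoli), so weak convergence of $\mu_n'\to\mu'$ and $\nu_n'\to\nu'$ forces uniform convergence of the associated integrals (portmanteau/Glivenko-type argument on compact classes). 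For $T_4$, the key observation is that the supremum over $\FC$ defines a $1$-Lipschitz map on $C_u(\FC)^4$ with respect to the uniform norm, so continuity is immediate.

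For $T_2$, let $(\mu_n',\nu_n',c_n',h_{c,n})\to(\mu',\nu',c',h_c)$ and pick $\pi_n\in\Pi_{c_n'}^\star(\mu_n',\nu_n')$ with $\pi_n(h_{c,n})\le T_2(\mu_n',\nu_n',c_n',h_{c,n})+1/n$. Since $\XC\times\YC$ is compact, Prokhorov's theorem provides a subsequence $\pi_{n_k}$ weakly converging to some $\pi\in\PC(\XC\times\YC)$. The marginals of $\pi$ are $\mu'$ and $\nu'$ by continuity of the marginal maps, and the standard stability of OT plans (using $c_{n_k}'\to c'$ uniformly together with continuity of $T_1$) yields $\pi\in\Pi_{c'}^\star(\mu',\nu')$. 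Because $h_{c,n_k}\to h_c$ uniformly and $\pi_{n_k}\dto\pi$, we have $\pi_{n_k}(h_{c,n_k})\to \pi(h_c)$, which gives $\liminf_n T_2(\mu_n',\nu_n',c_n',h_{c,n})\ge \pi(h_c)\ge T_2(\mu',\nu',c',h_c)$, i.e.\ lower semi-continuity. Under uniqueness of $\Pi_{c'}^\star(\mu',\nu')=\{\pi\}$, every subsequence of any sequence of near-optimizers $\pi_n$ must cluster at $\pi$, hence the whole sequence $\pi_n(h_{c,n})$ converges to $\pi(h_c)$, yielding continuity at that point. Continuity in $h_c$ alone follows from $|T_2(\cdot,h_c)-T_2(\cdot,h_c')|\le\|h_c-h_c'\|_\infty$.

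For $T_3$, the main ingredient is the compactness of $\FC$ in $C(\XC)$ established in \Cref{lem:RelFCandConcave} together with the Lipschitz property of the cost-transformation from \Cref{lem:CtrafoLipschitz}. Given a convergent sequence of arguments and near-maximizers $f_n\in S_{\!c_n'}(\mu_n',\nu_n')\subseteq\FC$, extract a subsequence $f_{n_k}\to f$ uniformly. By \Cref{lem:CtrafoLipschitz} one has $f_{n_k}^{c_{n_k}'c_{n_k}'}\to f^{c'c'}$ and $f_{n_k}^{c_{n_k}'}\to f^{c'}$ uniformly, and by the same equicontinuity/weak convergence argument as for $T_1$ the identities $\mu_{n_k}'(f_{n_k}^{c_{n_k}'c_{n_k}'})+\nu_{n_k}'(f_{n_k}^{c_{n_k}'})=OT(\mu_{n_k}',\nu_{n_k}',c_{n_k}')$ pass to the limit and identify $f\in S_{\!c'}(\mu',\nu')$. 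Since $h_{\mu,n_k},h_{\nu,n_k}\to h_\mu,h_\nu$ in $C_u(\FC)$ and $f_{n_k}\to f$ in $\FC$, uniform continuity gives $h_{\mu,n_k}(f_{n_k})+h_{\nu,n_k}(f_{n_k})\to h_\mu(f)+h_\nu(f)\le T_3(\mu',\nu',c',h_\mu,h_\nu)$, which establishes upper semi-continuity. Continuity in $(h_\mu,h_\nu)$ alone is again immediate from the Lipschitz estimate $|T_3(\cdot,h_\mu,h_\nu)-T_3(\cdot,h_\mu',h_\nu')|\le\|h_\mu-h_\mu'\|_\infty+\|h_\nu-h_\nu'\|_\infty$. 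Borel measurability of each $T_i$ then follows because (upper/lower) semi-continuous real-valued functions on a metric space are Borel measurable. The main technical point to watch will be to make sure that the Prokhorov extractions and the uniform cost-transformation convergence are handled cleanly enough to keep all four convergences simultaneously, but this is routine given the compactness of both $\XC\times\YC$ and $\FC$.
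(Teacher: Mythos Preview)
Your proposal is correct and, for $T_2$, $T_3$, and $T_4$, follows essentially the same route as the paper: Prokhorov/Villani stability of optimal plans for $T_2$, compactness of $\FC$ plus \Cref{lem:CtrafoLipschitz} to pass Kantorovich potentials to the limit for $T_3$, and the trivial Lipschitz bound for $T_4$. The only noteworthy deviation is your treatment of $T_1$: you use the two-sided Lipschitz estimates from \Cref{lem:LowerUpperBound} together with the fact that weak convergence of measures is uniform over the compact classes $\FC^{c'c'}$ and $\FC^{c'}$, whereas the paper argues directly via \citet[Theorem~5.20]{villani2008optimal} by passing to a subsequence of optimal plans and computing $|\pi_{n_k}(c_{n_k})-\pi_\infty(c_\infty)|$. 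Both arguments are short and equally clean; your version has the mild advantage of reusing the bounds already established in \Cref{lem:LowerUpperBound}, while the paper's version is self-contained once Villani's stability is granted and does not need the separate ``uniform over compact classes'' observation.
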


The previous two assertions fully deal with \emph{deterministic} statements on the OT functional and related terms that arise from corresponding bounds. The following two results provide the relevant tools to control the stochastic aspects. More precisely, for our proof of \Cref{thm:AbstractMainResult} we consider a Skorokhod representation of the random sequence detailed in \ref{ass:AThree} which additionally fulfills the property that $ \mu_n$ and $ \nu_n$ weakly converge to $\mu$ and $\nu$, respectively. For this purpose, we state the following measurability assertions and joint weak convergence statements. 

\begin{lem}[Measurability of empirical process]\label{lem:measurability}
  For a Polish space $\XC$ consider a  totally bounded function class $ \GC\subseteq C(\XC)$ under uniform norm. Then, the following assertions hold. 
  \begin{enumerate}[label=(\roman*)]
    \item  Any probability measure $\mu\in\PC(\XC)$ defines via evaluation a uniformly continuous functional on $ \GC$, i.e., $\mu \in C_u( \GC)$.
    \item A map $\omega \mapsto \mu(\omega) \in \PC(\XC)\subseteq C_u(\GC)$ is Borel measurable if and only if for any $g\in \GC$ the evaluation map $\omega \mapsto \mu(\omega)(g)$ is Borel measurable.
    \item The empirical process $\sqrt{n}(\mu_n - \mu)$ and the bootstrap empirical process $\sqrt{k}(\muboot - \mu_n)$ are both Borel measurable random variables in  $C_u(\GC)$. 
  \end{enumerate}
  \end{lem}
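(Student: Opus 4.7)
The overall strategy is to prove (i) by direct computation, derive (ii) via a separability argument on $C_u(\GC)$, and then obtain (iii) by applying (ii) to the empirical process viewed pointwise on $\GC$. For (i), I would argue that for any $\mu\in\PC(\XC)$ and $g_1,g_2\in\GC$, linearity of the integral yields $|\mu(g_1)-\mu(g_2)|\le\|g_1-g_2\|_\infty$, so $\mu$ is $1$-Lipschitz on $\GC$; since $\GC$ is totally bounded and in particular norm-bounded, the functional $g\mapsto\mu(g)$ is also bounded, so $\mu\in C_u(\GC)$.

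For (ii), the forward direction is immediate from the continuity of the evaluation maps $\mathrm{ev}_g\colon C_u(\GC)\to\RR$. The converse is the heart of the lemma, and I would establish it by first showing that $C_u(\GC)$ is separable. Since $\GC$ is totally bounded in $C(\XC)$, its uniform closure $\overline{\GC}$ is compact; every bounded uniformly continuous function on $\GC$ extends uniquely to a continuous function on $\overline{\GC}$, yielding an isometric identification $C_u(\GC)\cong C(\overline{\GC})$, and the right-hand side is separable by Stone--Weierstrass. In a separable metric space, Borel measurability of $\omega\mapsto\mu(\omega)$ is equivalent to the measurability of the distance maps $\omega\mapsto\|\mu(\omega)-f_0\|_\infty$ for each $f_0$ in a fixed countable dense subset. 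Fixing a countable dense $\GC_0\subseteq\GC$ (available by total boundedness) and exploiting the uniform continuity of $\mu(\omega)$ and of $f_0$ on $\GC$, I would write
\begin{equation*}
\|\mu(\omega)-f_0\|_\infty \;=\; \sup_{g\in\GC_0}\bigl|\mu(\omega)(g)-f_0(g)\bigr|,
\end{equation*}
which is a countable supremum of functions that are measurable by hypothesis, and hence itself measurable.

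Part (iii) then follows directly. For each $g\in\GC$, the map $\omega\mapsto g(X_i(\omega))$ is Borel measurable as the composition of a measurable map $X_i\colon\Omega_0\to\XC$ with a continuous $g$, so
\begin{equation*}
\omega\;\mapsto\;\sqrt{n}(\mu_n-\mu)(\omega)(g) \;=\; \frac{1}{\sqrt{n}}\sum_{i=1}^{n} g(X_i(\omega)) \;-\; \sqrt{n}\,\mu(g)
\end{equation*}
is Borel measurable for every $g\in\GC$. By (i) the process $\sqrt{n}(\mu_n-\mu)$ takes values in $C_u(\GC)$, and (ii) promotes this pointwise measurability to Borel measurability in $C_u(\GC)$. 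The bootstrap process $\sqrt{k}(\muboot-\mu_n)$ is handled in the same way, since the bootstrap variables $X_i^b$ are measurable on $(\Omega,\AC,\prob)$ by construction. The main obstacle is the converse in (ii): the separability of $C_u(\GC)$ via compactness of $\overline{\GC}$ and the reduction of the uniform norm to a countable supremum are precisely what allow one to promote pointwise measurability to Borel measurability in $C_u(\GC)$.
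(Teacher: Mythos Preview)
Your proposal is correct and follows essentially the same route as the paper: part (i) is identical, and for (iii) both you and the paper reduce to pointwise measurability of $g\mapsto \mu_n(g)$ and apply (ii). For part (ii) the paper simply cites \citet[p.~17]{gine2016mathematical}, whereas you spell out the underlying argument---separability of $C_u(\GC)$ via the identification $C_u(\GC)\cong C(\overline{\GC})$ for compact $\overline{\GC}$, and reduction of the sup-norm to a countable supremum over a dense $\GC_0\subseteq\GC$---which is precisely the content of that reference.

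One small remark: statement (ii) as written concerns maps into $\PC(\XC)\subseteq C_u(\GC)$, while in (iii) you apply your argument directly to the signed object $\sqrt{n}(\mu_n-\mu)$, which is not a probability measure. Your proof of (ii) in fact establishes the stronger statement for arbitrary $C_u(\GC)$-valued maps, so this causes no trouble; the paper instead applies (ii) to $\mu_n\in\PC(\XC)$ itself and then obtains measurability of $\sqrt{n}(\mu_n-\mu)$ as a continuous affine image. Either way works.
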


\begin{lem}[Joint weak convergence]\label{lem:JointWeakConvergence}For the setting of \Cref{thm:AbstractMainResult}, assume \ref{ass:AThree}. Then, for $n,m\rightarrow \infty$, weak convergence in the Polish space $C_u(\FC)^2 \times C(\XC\times \YC)\times\PC(\XC)\times \PC(\YC)$ to~a tight limit occurs
\begin{align}\label{eq:jointWeakConvergence1}&\left(\Big(\Gproc_n^\mu(f^{cc}), \Gproc_m^\nu(f^{c})\Big)_{f\in \FC}, \Gproc^c_{n,m},  \mu_n, \nu_m\right) \dto \left(\Big(\Gproc^\mu(f^{cc}),\Gproc^\nu(f^{c})\Big)_{f\in \FC}, \Gproc^c, \mu,\nu\right).
\end{align}
If \ref{ass:BTwo} of \Cref{thm:AbstractMainResult} is also valid, then, for $n,m\rightarrow \infty$, it follows in the Polish space $C_u(\FC)^4\times  C(\XC\times \YC) \times\PC(\XC)\times \PC(\YC)$ that 
 \begin{align}&\left(\Big(\Gproc_n^\mu(f^{cc}), \Gproc_n^\mu(f^{ c_{n,m} c_{n,m}}), \Gproc_m^\nu(f^{c}),  \Gproc_m^\nu(f^{ c_{n,m}})\Big)_{f\in \FC}, \Gproc^c_{n,m}, \mu_n, \nu_n,  \right)\notag \\
&\quad \quad \dto \left(\Big(\Gproc^\mu(f^{cc}),\Gproc^\mu(f^{cc}),\Gproc^\nu(f^{c}),\Gproc^\nu(f^{c})\Big)_{f\in \FC}, \Gproc^c,\mu,\nu,\right),\label{eq:jointWeakConvergence2}
\end{align}
Each sequence element for \eqref{eq:jointWeakConvergence1} and \eqref{eq:jointWeakConvergence2} as well as the weak limit are Borel measurable. 
\end{lem}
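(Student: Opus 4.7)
The plan is to upgrade the joint weak convergence provided by \ref{ass:AThree} in three stages: (i) reparametrize the index sets of the empirical processes from $\FC^{cc}$ (respectively $\FC^c$) to $\FC$ via the continuous cost-transformations; (ii) adjoin the almost surely weakly convergent empirical measures $\mu_n \to \mu$ and $\nu_m \to \nu$ by means of Slutsky's theorem; and (iii) for the second display, adjoin the $c_{n,m}$-indexed empirical processes by invoking \ref{ass:BTwo}. Measurability of the involved variables is treated separately via \Cref{lem:measurability} and the separability of the involved Polish spaces.

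For stage (i), observe that by \Cref{lem:RelFCandConcave} the class $\FC^{cc}$ is contained in $\HC_c + [-2B, 2B]$, and $\HC_c$ is compact in $C(\XC)$ by the Theorem of Arzel\`a--Ascoli. Hence $\FC^{cc}$ is totally bounded in $C(\XC)$ under uniform norm, and combined with the tightness of $\Gproc^\mu$ granted by \ref{ass:AThree} this forces a realization with almost surely uniformly continuous sample paths on $\FC^{cc}$ (see \citet[Addendum 1.5.8]{van1996weak}); an analogous statement holds for $\Gproc^\nu$ on $\FC^c$. By \Cref{lem:CtrafoLipschitz}, the cost-transformation $f \mapsto f^{cc}$ is $1$-Lipschitz from $(\FC, \|\cdot\|_\infty)$ onto $(\FC^{cc}, \|\cdot\|_\infty)$, so the pullback $\Phi\colon C_u(\FC^{cc})\to C_u(\FC)$, $G\mapsto G\circ (\cdot)^{cc}$, is a bounded linear operator and hence continuous; the analogous statement holds for the $c$-transformation. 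The continuous mapping theorem then yields joint weak convergence of the re-indexed empirical processes in $C_u(\FC)^2$ together with the cost process in $C(\XC\times\YC)$.

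For stage (ii), Varadarajan's theorem guarantees $\mu_n \to \mu$ almost surely in the Polish space $\PC(\XC)$ (equivalently, $d_{BL}(\mu_n,\mu)\to 0$ a.s.), and likewise $\nu_m \to \nu$ almost surely in $\PC(\YC)$. Since both limits are deterministic, Slutsky's theorem permits appending the pair $(\mu_n,\nu_m)$ to the converging tuple without affecting the weak limit, yielding the first display. For stage (iii), \ref{ass:BTwo} provides uniform convergence in outer probability of the residual processes $\Gproc_n^\mu((\cdot)^{c_{n,m}c_{n,m}} - (\cdot)^{cc})$ and $\Gproc_m^\nu((\cdot)^{c_{n,m}} - (\cdot)^{c})$ to zero as elements of $C_u(\FC)$. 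Another invocation of Slutsky's theorem then delivers the second display, with the new components $\Gproc_n^\mu((\cdot)^{c_{n,m}c_{n,m}})$ and $\Gproc_m^\nu((\cdot)^{c_{n,m}})$ sharing the same weak limit as their $c$-counterparts.

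The main anticipated obstacle will be to rigorously verify that $\Gproc_n^\mu((\cdot)^{c_{n,m}c_{n,m}})$ and $\Gproc_m^\nu((\cdot)^{c_{n,m}})$ are genuine Borel measurable $C_u(\FC)$-valued random variables, so that their outer-probability convergence actually propagates to ordinary weak convergence of the joint tuple. Here the key inputs are \Cref{lem:CtrafoLipschitz} (which implies both that $f\mapsto f^{c_{n,m}c_{n,m}}$ is $1$-Lipschitz in $f$ for each realization of $c_{n,m}$ and that $c\mapsto f^{cc}$ is $2$-Lipschitz in $c$ for each fixed $f$) combined with the separability of $\FC$ under uniform norm, which reduces the required joint measurability to the pointwise Borel measurability provided by \Cref{lem:measurability}(iii). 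Measurability of $c_{n,m}$ as a $C(\XC\times\YC)$-valued random variable is similarly implied by its coordinatewise measurability and its continuity in $(x,y)$ together with the separability of $C(\XC\times\YC)$.
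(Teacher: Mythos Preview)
Your proposal is correct and follows essentially the same route as the paper: reparametrize the index classes via the continuous (Lipschitz) pullback $G\mapsto G\circ(\cdot)^{cc}$ and $G\mapsto G\circ(\cdot)^{c}$, apply the continuous mapping theorem, adjoin the empirical measures by Varadarajan and Slutsky, and then use \ref{ass:BTwo} with Slutsky for the second display. The one noteworthy difference is in the measurability argument for the $c_{n,m}$-indexed processes: the paper shows that the map $(\tilde\mu,\tilde c)\mapsto \sqrt{n}(\tilde\mu-\mu)(f^{\tilde c\tilde c})$ on $\PC(\XC)\times C(\XC\times\YC)$ is upper semi-continuous (via Portmanteau and upper semi-continuity of $f^{\tilde c\tilde c}$), whereas you argue via the $2$-Lipschitz dependence of $f^{cc}$ on $c$ together with pointwise measurability and separability of $\FC$; both routes are valid, and yours is arguably cleaner since the pairing $(\rho,g)\mapsto\rho(g)$ is in fact jointly continuous on $\PC(\XC)\times C(\XC)$ for compact $\XC$.
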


\begin{rmk}[Skorokhod representation]\label{rmk:SeparableSpace_SkorokhodWelldefined}
When dealing with weak convergence of empirical processes in non-separable spaces, special care is required due to potential measurability issues. However, since the different maps of interest are defined between Polish spaces and measurable, we circumvent such issues. In particular, since the random variables from \Cref{lem:JointWeakConvergence} converge weakly to a tight limit with separable support, the conditions of \citet[ Theorem~6.7]{billingsley1999convergence} are met and a (measurable) Skorokhod representation exists. 
\end{rmk}

With these tools at our disposal, we now proceed with the proof of \Cref{thm:AbstractMainResult}. 

\begin{proof}[Proof of \Cref{thm:AbstractMainResult}]
Invoking \Cref{thm:RegElModulus}, as  $\sqrt{nm/(n+m)}(c_{n,m} - c) =\colon \Gproc^{c}_{n,m}\dto \Gproc^c$ in the space $C(\XC\times \YC)$, there exists $\overline c_{n,m}$ such that the inclusion $\HC_{\overline c_{n,m}}\subseteq \FC^{\overline c_{n,m} \overline c_{n,m}}$ (recall the function classes from \Cref{subsec:preliminaries}) holds deterministically for $\FC = \FC(2\norm{c}_\infty+1, 2w)$ and $\sqrt{n}(c_{n,m} - \overline c_{n,m})\pto 0$. The latter implies by \Cref{lem:LowerUpperBound} that 
\[
\sqrt{\frac{nm}{n+m}}\left(OT( \mu_n, \nu_m, {\overline c_{n,m}}) - OT( \mu_n, \nu_m, { c_{n,m}})\right)\pto 0.
\]
Hence, to prove the assertion it suffices by Slutzky's lemma to show that 
\begin{align}\label{eq:desiredLimit}
  \sqrt{\frac{nm}{n+m}}\Big(OT( \mu_n, \nu_m, { \overline c_{n,m}})-OT( \mu, \nu,c)\Big) \dto \inf_{\pi \in \Pi_c^\star(\mu,\nu)} \!\!\pi(\Gproc^c) +\sup_{f \in S_{\!c}( \mu, \nu)}\!\! \sqrt{\lambda}\Gproc^\mu(f^{cc}) + \!\sqrt{1-\lambda} \Gproc^\nu(f^c).%
\end{align}
Without loss of generality, we may therefore assume $c_{n,m} = \overline c_{n,m}$. Further, set $\lambda_n \coloneqq m/(n+m)$.
Then, by \Cref{lem:LowerUpperBound}, the subsequent lower and upper bounds follow,  
\begin{align}
&\inf_{\pi \in \Pi_{ c_{n,m}}^\star(\mu_n,\nu_m)} \pi(\Gproc^c_{n,m}) + \sup_{f \in S_{\!c}(\mu,\nu)} \sqrt{\lambda_n}\; \Gproc^\mu_n(f^{cc}) +  \sqrt{1-\lambda_n}\;\Gproc^\nu_m(f^c)\notag\\
     &\leq\sqrt{\frac{nm}{n+m}}(OT( \mu_n, \nu_m,{ c_{n,m}})-OT( \mu, \nu,c))\label{eq:BoundsOT}\\
     &\leq \min\bigg(\inf_{\pi \in \Pi_c^\star(\mu_n,\nu_m)} \pi (\Gproc^c_{n,m}) +\sup_{f \in S_{\!c}( \mu_n, \nu_m)} \sqrt{\lambda_n}\; \Gproc^\mu_n(f^{cc}) + \sqrt{1-\lambda_n}\; \Gproc^\nu_m(f^c),\notag\\
& \quad\qquad\inf_{\pi \in \Pi_c^\star(\mu,\nu)} \pi(\Gproc^c_{n,m}) +\sup_{f \in S_{\!c_{n,m}}( \mu_n, \nu_m)} \sqrt{\lambda_n}\;\Gproc^\mu_n(f^{cc}) + \sqrt{1-\lambda_n}\; \Gproc^\nu_m(f^c)  \notag \\
&\quad\qquad+ \sup_{f \in \FC}\sqrt{\lambda_n}\; \left(\Gproc^\mu_n(f^{ c_{n,m}  c_{n,m}}) - \Gproc^\mu_n(f^{cc})\right)  +  \sqrt{1-\lambda_n}\;\left(\Gproc^\nu_m(f^{ c_{n,m}}) -   \Gproc^\nu_m(f^c)\right)\bigg).\notag
\end{align}
For each setting \ref{ass:BOne} and \ref{ass:BTwo} we show that the upper and lower bounds asymptotically converge in distribution to the limit in \eqref{eq:desiredLimit}, which then asserts that the empirical OT value also tends to this limit. To this end, we take for the random variables of \Cref{lem:JointWeakConvergence}  a Skorokhod representation on a probability space $(\Omega, \AC, P)$ \citep[p. 70]{billingsley1999convergence} which is well-defined by \Cref{rmk:SeparableSpace_SkorokhodWelldefined}. More precisely, under \ref{ass:BOne} we take the Skorokhod representation such that
\begin{align}\label{eq:SkorokhodB1}
    &\left(\Big(\tilde \Gproc_n^\mu(f^{cc}),\tilde  \Gproc_m^\nu(f^{c})\Big)_{f\in \FC}, \tilde \Gproc^c_{n,m}, \tilde  \mu_n, \tilde \nu_m\right) \as \left(\Big(\tilde \Gproc^\mu(f^{cc}),\tilde \Gproc^\nu(f^{c})\Big)_{f\in \FC}, \tilde \Gproc^c, \mu,\nu\right)
\end{align}
in $C_u(\FC)^2\times C(\XC\times \YC)\times\PC(\XC)\times \PC(\YC) $, whereas under \ref{ass:BTwo} we choose it such that
\begin{align}&\left(\Big(\tilde \Gproc_n^\mu(f^{cc}), \tilde \Gproc_n^\mu(f^{\tilde c_{n,m} \tilde c_{n,m}}), \tilde \Gproc_m^\nu(f^{c}),  \tilde \Gproc_m^\nu(f^{\tilde c_n})\Big)_{f\in \FC},\tilde \Gproc^c_{n,m}, \tilde \mu_n, \tilde\nu_m\right)\notag \\
&\quad \quad \as \left(\Big(\tilde \Gproc^\mu(f^{cc}),\tilde \Gproc^\mu(f^{cc}),\tilde \Gproc^\nu(f^{c}),\tilde \Gproc^\nu(f^{c})\Big)_{f\in \FC}, \tilde \Gproc^c, \mu,\nu\right)\label{eq:SkorokhodB2}
\end{align}
in $C_u(\FC)^4\times C(\XC\times \YC)\times\PC(\XC)\times \PC(\YC)$. We also set $\tilde c_{n,m} \coloneqq c + \tilde \Gproc_{n,m}^c/\sqrt{nm/(n+m)}$ which a.s.\ converges to $c$.

For the subsequent argument recall the functions $T_1, T_2, T_3,T_4$ from \Cref{lem:ContinuityResults} and their (semi-) continuity properties. Furthermore, note that an application of \Cref{lem:CtrafoLipschitz} in combination with the arguments of the proof of \Cref{lem:measurability} $(i)$ yields that the maps $\FC\to\RR$,
\[f\mapsto  \Gproc^\mu_n(f^{cc}),\quad   f\mapsto  \Gproc^\mu_n(f^{c_{n,m}c_{n,m}}), \quad f\mapsto \Gproc^\nu_m(f^{c}), \quad  f\mapsto  \Gproc^\nu_m(f^{c_{n,m}})\]
are uniformly continuous, i.e., elements in $C_u(\FC)$. For both settings \ref{ass:BOne} and \ref{ass:BTwo} it follows by measurability of the maps $T_1,T_2, T_3$ for each $n,m\in \NN$ that 
\begin{align*}
   & \sqrt{\frac{nm}{n+m}}(OT( \mu_n,  \nu_m,{ c_{n,m}}) - OT(\mu, \nu,c)) \eqd \sqrt{\frac{nm}{n+m}}(OT(\tilde \mu_n, \tilde \nu_m, {\tilde c_{n,m}}) - OT(\mu, \nu,c))\\
 &  \inf_{\pi \in \Pi_{ c_{n,m}}^\star(\mu_n,\nu_m)} \pi(\Gproc^c_{n,m}) + \sup_{f \in S\!_c(\mu,\nu)} \sqrt{\lambda_n}\;\Gproc^\mu_n(f^{cc}) + \sqrt{1-\lambda_n}\; \Gproc^\nu_m(f^c) \\
  & \qquad \qquad \eqd \inf_{\pi \in \Pi_{\tilde c_{n,m}}^\star(\tilde\mu_n,\tilde \nu_m)} \pi(\tilde \Gproc^c_{n,m}) + \sup_{f \in S\!_c(\mu,\nu)}\sqrt{\lambda_n}\; \tilde \Gproc^\mu_n(f^{cc}) + \sqrt{1-\lambda_n}\; \tilde \Gproc^\nu_m(f^c).
\intertext{Under \ref{ass:BOne}~we also notice that}
 &   \inf_{\pi \in \Pi_c^\star(\mu_n,\nu_m)} \pi (\Gproc^c_{n,m}) +\sup_{f \in S_{\!c}( \mu_n, \nu_m)}  \sqrt{\lambda_n}\;\Gproc^\mu_n(f^{cc}) +  \sqrt{1-\lambda_n}\;\Gproc^\nu_m(f^c) \\
 & \qquad \qquad   \eqd \inf_{\pi \in \Pi_c^\star(\tilde \mu_n,\tilde\nu_m)} \pi (\tilde\Gproc^c_{n,m}) +\sup_{f \in S_{\!c}(\tilde \mu_n,\tilde \nu_m)}  \sqrt{\lambda_n}\;\tilde\Gproc^\mu_n(f^{cc}) +  \sqrt{1-\lambda_n}\;\tilde\Gproc^\nu_m(f^c),
\end{align*}
whereas under \ref{ass:BTwo} we additionally employ measurability of $T_4$ to infer for each $n \in \NN$ that
\begin{align*}
    &\inf_{\pi \in \Pi_c^\star(\mu,\nu)} \pi(\Gproc^c_{n,m}) +\sup_{f \in S_{\!c_{n,m}}( \mu_n, \nu_m)} \sqrt{\lambda_n}\;\Gproc^\mu_n(f^{cc}) +  \sqrt{1-\lambda_n}\;\Gproc^\nu_m(f^c) \\
    & + \sup_{f \in \FC} \sqrt{\lambda_n}\;\left(\Gproc^\mu_n(f^{cc}) - \Gproc^\mu_n(f^{ c_{n,m}  c_{n,m}}) \right) +  \sqrt{1-\lambda_n}\;\left(\Gproc^\nu_m(f^c) -   \Gproc^\nu_m(f^{ c_{n,m}})\right)\\
  & \qquad \qquad  \eqd    \inf_{\pi \in \Pi_c^\star(\mu,\nu)} \pi(\tilde \Gproc^c_{n,m}) +\sup_{f \in S_{\!\tilde  c_{n,m}}(\tilde  \mu_n,\tilde  \nu_m)}\sqrt{\lambda_n}\; \tilde \Gproc^\mu_n(f^{cc}) + \sqrt{1-\lambda_n}\; \tilde \Gproc^\nu_m(f^c)  \\
  &  \qquad \qquad + \sup_{f \in \FC} \sqrt{\lambda_n}\;\left(\tilde \Gproc^\mu_n(f^{cc}) - \tilde \Gproc^\mu_n(f^{\tilde c_{n,m} \tilde c_{n,m}})\right)  + \sqrt{1-\lambda_n}\;\left( \tilde \Gproc^\nu_m(f^c) -   \tilde \Gproc^\nu_m(f^{\tilde c_{n,m}})\right).
\end{align*}
 Hence, it suffices to work with the Skorokhod representation to obtain the weak limit for the empirical OT value. 
Invoking \Cref{lem:LowerUpperBound}, identical lower and upper bounds on the quantity of interest, $\sqrt{nm/(n+m)}(OT(\tilde \mu_n, \tilde \nu_m, {\tilde c_{n,m}}) - OT(\mu, \nu, c))$, as for \eqref{eq:BoundsOT} can be concluded.

To obtain a suitable bound on the limit inferior of $\sqrt{nm/(n+m)}(OT(\tilde \mu_n, \tilde \nu_m, {\tilde c_{n,m}}) - OT(\mu, \nu, c))$ take for both \ref{ass:BOne} and \ref{ass:BTwo} a measurable set $A\in \AC$ of full measure such that the convergence from \eqref{eq:SkorokhodB1} and \eqref{eq:SkorokhodB2} is fulfilled thereon, respectively. Then, for each $\omega\in A$ it follows by lower semi-continuity of $T_2$ jointly with continuity of $T_3$ under fixed $(\mu, \nu, c)$ that 
\begin{align*}
    &\liminf_{n,m \rightarrow \infty} \inf_{\pi \in \Pi_{\tilde c_{n,m}}^\star(\tilde\mu_n,\tilde \nu_m)} \pi(\tilde \Gproc^c_{n,m}) + \sup_{f \in S\!_c(\mu,\nu)} \sqrt{\lambda_n}\; \tilde \Gproc^\mu_n(f^{cc}) + \sqrt{1-\lambda_n}\; \tilde \Gproc^\nu_m(f^c)\\ 
   & \qquad\geq \inf_{\pi \in \Pi_{c}^\star(\mu,\nu)} \pi(\tilde \Gproc^c) + \sup_{f \in S\!_c(\mu,\nu)} \sqrt{\lambda}\; \tilde \Gproc^\mu(f^{cc}) + \sqrt{1-\lambda}\; \tilde \Gproc^\nu(f^c).
\end{align*}

Under \ref{ass:BOne}, we find for each $\omega\in A$ by continuity of $T_2$ at  $(\mu, \nu, c, \Gproc^c_{n,m})$ as a consequence of \ref{ass:BOne} and upper semi-continuity of $T_3$ that 
\begin{align*}
    &\limsup_{n,m \rightarrow \infty}\inf_{\pi \in \Pi_c^\star(\tilde \mu_n,\tilde \nu_m)} \pi (\tilde \Gproc^c_{n,m}) + \sup_{f \in S_{\!c}(\tilde \mu_n,\tilde \nu_m)}\sqrt{\lambda_n}\;  \tilde\Gproc^\mu_n(f^{cc}) +  \sqrt{1-\lambda_n}\;\tilde\Gproc^\nu_m(f^c)\\
   & \qquad \leq \;\;\pi^\star(\tilde \Gproc^c) + \sup_{f \in S\!_c(\mu,\nu)} \sqrt{\lambda}\;\tilde \Gproc^\mu(f^{cc}) +\sqrt{1-\lambda}\;  \tilde \Gproc^\nu(f^c)\\
   & \qquad =  \inf_{\pi \in \Pi_{c}^\star(\mu,\nu)} \pi(\tilde \Gproc^c) + \sup_{f \in S\!_c(\mu,\nu)}\sqrt{\lambda}\; \tilde \Gproc^\mu(f^{cc}) + \sqrt{1-\lambda}\; \tilde \Gproc^\nu(f^c).
\end{align*}

Under \ref{ass:BTwo}, we note for each $\omega\in A$ by continuity of $T_2$ in $h_c$ for fixed $(\mu, \nu, c)$, upper semi-continuity of $T_3$ and continuity of $T_4$ that 
\begin{align*}
    &\limsup_{n,m\rightarrow \infty} \!\inf_{\pi \in \Pi_c^\star(\mu,\nu)} \!\!\!\!\pi(\tilde \Gproc^c_{n,m}) +\!\!\!\!\sup_{f \in S_{\!\tilde  c_{n,m}}(\tilde  \mu_n,\tilde  \nu_m)} \!\!\!\!\!\!\sqrt{\lambda_n}\;\tilde \Gproc^\mu_n(f^{cc}) + \sqrt{1-\lambda_n}\; \tilde \Gproc^\nu_m(f^c)  \\
    &+ \sup_{f \in \FC} \sqrt{\lambda_n}\; \left(\tilde \Gproc^\mu_n(f^{cc}) - \tilde \Gproc^\mu_n(f^{ c_{n,m}  c_{n,m}}) \right)  + \sqrt{1-\lambda_n}\; \left(\tilde \Gproc^\nu_m(f^c) -   \tilde \Gproc^\nu_m(f^{ c_{n,m}})\right)\\
     & \qquad \leq  \inf_{\pi \in \Pi_c^\star(\mu,\nu)} \pi(\tilde \Gproc^c) +\sup_{f \in S_{\!c}(\mu,\nu)} \sqrt{\lambda}\;\tilde \Gproc^\mu(f^{cc}) + \sqrt{1-\lambda}\; \tilde \Gproc^\nu(f^c)  \\
     &\qquad + \sup_{f \in \FC }\sqrt{\lambda}\; \left(\tilde \Gproc^\mu(f^{cc}) - \tilde \Gproc^\mu(f^{c c}) \right) 
     + \sqrt{1-\lambda}\;\left( \tilde \Gproc^\nu(f^c) -   \tilde \Gproc^\nu(f^{c})\right)\\
     &\qquad =  \inf_{\pi \in \Pi_c^\star(\mu,\nu)} \pi(\tilde \Gproc^c) +\sup_{f \in S_{\!c}(\mu,\nu)} \sqrt{\lambda}\;\tilde \Gproc^\mu(f^{cc}) + \sqrt{1-\lambda}\; \tilde \Gproc^\nu(f^c). 
\end{align*}

As the lower bound and the upper bounds for $\sqrt{nm/(n+m)}(OT(\tilde \mu_n, \tilde \nu_m, {\tilde c_{n,m}}) - OT(\mu, \nu,c))$ asymptotically match for all $\omega \in A$, it follows  under both \ref{ass:BOne} and \ref{ass:BTwo} that 
\[
\lim_{n,m\rightarrow \infty} \!\sqrt{\frac{nm}{n+m}}(OT(\tilde \mu_n, \tilde \nu_m, {\tilde c_{n,m}}) - OT(\mu, \nu, c)) = \!\! \inf_{\pi \in \Pi_c^\star(\mu,\nu)} \pi(\tilde \Gproc^c) +\!\!\sup_{f \in S_{\!c}(\mu,\nu)} \!\!\!\!\sqrt{\lambda}\; \tilde \Gproc^\mu(f^{cc}) +\!  \sqrt{1-\lambda}\;\tilde \Gproc^\nu(f^c).
\]
As the set $A$ has full measure we obtain that
\[\sqrt{\frac{nm}{n+m}}(OT(\tilde \mu_n, \tilde \nu_m, {\tilde c_{n,m}}) - OT(\mu, \nu, c)) \as \inf_{\pi \in \Pi_c^\star(\mu,\nu)} \pi(\tilde \Gproc^c) +\sup_{f \in S_{\!c}(\mu,\nu)}  \!\!\!\!\sqrt{\lambda}\;\tilde \Gproc^\mu(f^{cc}) +  \sqrt{1-\lambda}\; \tilde \Gproc^\nu(f^c),
\]
where the limit has by measurability of $T_2$ and $T_3$ the same Borel law as the limit in the assertion, which finishes the proof. 
\end{proof}

\subsubsection{Proof of \Cref{thm: BootConst}}\label{subsubsec:BootConstProof}
Before turning to the proof of the bootstrap consistency, i.e., \Cref{thm: BootConst}, we introduce an important result on the convergence of the bootstrap empirical measure.
\begin{lem}
\label{lem: asConvEmpMeas}
For  a Polish space $\XC$ let $\mu \in \PC(\XC)$. Consider i.i.d. random variables $\{X_i\}_{i =1}^{n}\sim \mu^{\otimes n}$ to define the empirical measure $\mu_n = n^{-1}\sum_{i=1}^{n}\delta_{X_i}$. 
Further, consider $k(n)$ i.i.d. random variables $\{\Xboot{k}\}_{i =1}^{n}\sim \mu_n^{\otimes k(n)}$ to define the bootstrap empirical measure $\muboot :=\frac{1}{k(n)}  \sum_{j=1}^{k(n)}\delta_{\Xib}$. 
Then, provided that $k(n) \to \infty$ as $n \to \infty$, it follows under $n\rightarrow \infty$ that $
\muboot$ weakly converges to $\mu$, in probability. 
\end{lem}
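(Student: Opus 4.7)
The plan is to establish weak convergence in probability in two steps: first, control $\muboot(f) - \mu(f)$ in $L^2$ for each fixed $f \in C_b(\XC)$ via a simple conditional variance bound, and then upgrade this pointwise-in-$f$ control to $\muboot \dto \mu$ in probability using a countable convergence-determining class of test functions arising from the Polish structure of $\XC$.

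For the first step, I would fix $f \in C_b(\XC)$ with $\|f\|_\infty \le M$. Conditionally on the first-stage sample $X_1, \ldots, X_n$, the bootstrap variables $\Xib$, $i=1,\ldots,k(n)$, are i.i.d.\ from $\mu_n$, so by the tower property $\EE[\muboot(f)] = \EE[\mu_n(f)] = \mu(f)$, and by the conditional variance identity
\[
\operatorname{Var}(\muboot(f)) \;\le\; \frac{\EE[\operatorname{Var}_{\mu_n}(f)]}{k(n)} + \operatorname{Var}(\mu_n(f)) \;\le\; \frac{M^2}{k(n)} + \frac{M^2}{n},
\]
which vanishes as $n \to \infty$ since $k(n) \to \infty$ by hypothesis. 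Hence $\muboot(f) \to \mu(f)$ in $L^2$, and a fortiori in probability.

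For the second step, since $\XC$ is Polish it admits a totally bounded metrization with respect to which the unit ball of bounded Lipschitz functions contains a countable dense subset $\{f_j\}_{j\ge 1} \subset C_b(\XC)$; this family is convergence-determining at $\mu$ and the metric $\rho(\nu, \mu) := \sum_{j \ge 1} 2^{-j} \bigl(|\nu(f_j) - \mu(f_j)| \wedge 1\bigr)$ metrizes weak convergence at $\mu$. By the first step each summand tends to zero in probability and is dominated by the summable envelope $2^{-j}$, so a standard tail-truncation argument (choose $J$ so that the tail is below $\eps/2$, then control the finite sum over $j \le J$ in probability) gives $\rho(\muboot, \mu) \pto 0$, i.e., $\muboot \dto \mu$ in probability.

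The main obstacle is justifying the passage from pointwise-in-$f$ convergence in probability to genuine weak convergence in probability. This is precisely where Polishness of $\XC$ enters in an essential way, since the full bounded Lipschitz ball $\mathrm{BL}_1$ need not be separable under the supremum norm on general Polish $\XC$; exhibiting a countable convergence-determining family via a totally bounded metrization is the only delicate point, while the two-level variance computation itself is routine.
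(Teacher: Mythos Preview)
Your proof is correct. The paper does not prove this lemma from scratch; it simply records that the statement is a corollary of Theorem~2 in Beran (1987) and notes that the lemma was included only for ease of reference. Your argument is instead a direct, self-contained one: the two-level conditional variance bound gives $\muboot(f) \to \mu(f)$ in $L^2$ for each bounded continuous $f$, and the reduction to a countable convergence-determining family---available on any Polish space via a totally bounded re-metrization and separability of the associated bounded Lipschitz ball in the compact completion---correctly upgrades this to weak convergence in probability through the tail-truncation argument you describe. Your route is fully elementary and avoids an external reference, whereas the paper simply defers to the literature. The step you flag as delicate, namely producing the countable convergence-determining family, is indeed the only nontrivial point, and your totally bounded metrization argument handles it.
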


The above lemma is a corollary of Theorem~2 in \citep{beran1987convergence} and was added to ease further referencing. We can now prove \Cref{thm: BootConst} on bootstrap consistency under weakly converging costs.

\begin{proof}[Proof of \Cref{thm: BootConst}] 
By Assumptions \ref{ass:AThree} and \ref{ass:AThreeStar}, and by measurability of the empirical and bootstrap empirical processes (\Cref{lem:measurability}) we infer using Lemma~2.2$(c)\Rightarrow(a)$ in \citet{bucher2019note} for two bootstrap versions $(\mu_{n,k}^{(1)},
\nu_{n,k}^{(1)},c_{n,k}^{(1)})$, $(\mu_{n,k}^{(2)},\nu_{n,k}^{(2)},c_{n,k}^{(2)})$ based on independent bootstrap samples $\{X_i^{(1)}\}_{i=1}^k, \{X_i^{(2)}\}_{i=1}^k\sim\mu_n^{\otimes k}$ and $\{Y_i^{(1)}\}_{i=1}^k, \{Y_i^{(2)}\}_{i=1}^k\sim\nu_n^{\otimes k}$ for  $n, k \to \infty, k=\oh(n)$ that
\[
\begin{pmatrix}
\sqrt{n}
\begin{pmatrix}
\mu_n - \mu \\
\nu_n - \nu\\
c_n - c
\end{pmatrix}\hspace{1cm}\\
\sqrt{k}
\begin{pmatrix}
\mu_{n,k}^{(i)} - \mu_n \\
\nu_{n,k}^{(i)} - \nu_n \\
 c_{n,k}^{(i)} - c_n
\end{pmatrix}_{i = 1,2}
\end{pmatrix}
\dto 
\begin{pmatrix}
\begin{pmatrix}
\Gproc^\mu \\
\Gproc^\nu \\
\Gproc^c \\
\end{pmatrix}\hspace{0.3cm}\\
\begin{pmatrix}
\Gproc^{\mu,(i)} \\
\Gproc^{\nu,(i)} \\
\Gproc^{c,(i)} \\
\end{pmatrix}_{i = 1,2}
\end{pmatrix}
\]
in $\Big( C_u( \FC^{cc}) \times C_u(\FC^{c})\times C(\XC\times \YC)\Big)^3$. Since $k = \oh(n)$ we also obtain by Slutzky's lemma that 
\[
\begin{pmatrix}
\sqrt{n}
\begin{pmatrix}
\mu_n - \mu \\
\nu_n - \nu\\
c_n - c
\end{pmatrix}\hspace{1cm}\\
\sqrt{k}
\begin{pmatrix}
\mu_{n,k}^{(i)} - \mu \\
\nu_{n,k}^{(i)} - \nu \\
 c_{n,k}^{(i)} - c
\end{pmatrix}_{i = 1,2}
\end{pmatrix}
\eqqcolon 
\begin{pmatrix}
\begin{pmatrix}
\Gproc_n^\mu \\
\Gproc_n^\nu\\
\Gproc_n^c
\end{pmatrix}\hspace{0.3cm}\\
\begin{pmatrix}
\Gproc_{n,k}^{\mu, (i)} \\
\Gproc_{n,k}^{\nu, (i)} \\
\Gproc_{n,k}^{c,   (i)}
\end{pmatrix}_{i = 1,2}
\end{pmatrix}
\dto 
\begin{pmatrix}
\begin{pmatrix}
\Gproc^\mu \\
\Gproc^\nu \\
\Gproc^c \\
\end{pmatrix}\hspace{0.3cm}\\
\begin{pmatrix}
\Gproc^{\mu,(i)} \\
\Gproc^{\nu,(i)} \\
\Gproc^{c,(i)} \\
\end{pmatrix}_{i = 1,2}
\end{pmatrix}.
\]
Herein, the triples $(\Gproc^\mu,\Gproc^\nu,\Gproc^c)$, $(\Gproc^{\mu,(1)},\Gproc^{\nu,(1)},\Gproc^{c,(1)})$, and $(\Gproc^{\mu,(2)},\Gproc^{\nu,(2)},\Gproc^{c,(2)})$ are independent and have identical law. 
Notably, invoking \Cref{thm:RegElModulus} we may assume without loss of generality that the empirical and bootstrap cost function $c_n$ and $c_{n,k}^{(i)}$ for $i \in \{1,2\}$ deterministically satisfy the relation $\FC_{\bar c}\subseteq \FC^{\bar c\bar c}$, $\bar c\in\{c_n,c_{n,k}^{(i)}\}$. 
Moreover, by \citet{varadarajan58} we know that $\mu_n \dto \mu, \nu_n \dto \nu$ a.s. for $n\rightarrow \infty$, and by \Cref{lem: asConvEmpMeas} we infer for $i \in \{1,2\}$ that $\mu_{n,k}^{(i)}\dto \mu, \nu_{n,k}^{(i)}\dto \nu$ in probability for $n,k \rightarrow \infty, k = \oh(n)$.  Hence, Slutzky's lemma asserts that 
\begin{align}
\begin{pmatrix}
\begin{pmatrix}
\Gproc_n^\mu, 
\Gproc_n^\nu,
\Gproc_n^c,
\mu_n,
\nu_n
\end{pmatrix}^T\hspace{0.3cm}\\
\begin{pmatrix}
\Gproc_{n,k}^{\mu, (i)},
\Gproc_{n,k}^{\nu, (i)},
\Gproc_{n,k}^{c,   (i)},
\mu_{n,k}^{(i)},
\nu_{n,k}^{(i)}
\end{pmatrix}^T_{i = 1,2}
\end{pmatrix}
\dto 
\begin{pmatrix}
\begin{pmatrix}
\Gproc^\mu,
\Gproc^\nu,
\Gproc^c,
\mu,
\nu
\end{pmatrix}^T\hspace{0.3cm}\\
\begin{pmatrix}
\Gproc^{\mu,(i)},
\Gproc^{\nu,(i)},
\Gproc^{c,(i)},
\mu,
\nu
\end{pmatrix}^T_{i = 1,2}
\end{pmatrix}
\end{align}
in  $\big( C_u( \FC^{cc}) \times C_u(\FC^{c})\times C(\XC\times \YC)\times \PC(\XC)\times \PC(\YC)\big)^3$.
Moreover, using an analogous argument as for the proof of \Cref{lem:JointWeakConvergence} we conclude that 
\begin{align}\label{eq:JointBootstrapConvergence1}&
\begin{pmatrix}
	\left(\Big(\Gproc_n^\mu(f^{cc}), \Gproc_n^\nu(f^{c})\Big)_{f\in \FC}, \Gproc^c_n,  \mu_n, \nu_n\right)^T\\
	\left(\Big(\Gproc_{n,k}^{\mu,(i)}(f^{cc}), \Gproc_{n,k}^{\nu,(i)}(f^{c})\Big)_{f\in \FC}, \Gproc^{c,(i)}_{n,k},  \mu_{n,k}^{(i)}, \nu_{n,k}^{(i)}\right)^T_{i = 1,2}
\end{pmatrix}
 \dto 
 \begin{pmatrix}
 \left(\Big(\Gproc^\mu(f^{cc}),\Gproc^\nu(f^{c})\Big)_{f\in \FC}, \Gproc^c, \mu,\nu\right)^T\\
 \left(\Big(\Gproc^{\mu,(i)}(f^{cc}),\Gproc^{\nu,(i)}(f^{c})\Big)_{f\in \FC}, \Gproc^{c,(i)}, \mu,\nu\right)^T_{i = 1,2}
 \end{pmatrix}
\end{align}
in the Polish space $\big( C_u( \FC)^2\times C(\XC\times \YC)\times \PC(\XC)\times \PC(\YC)\big)^3$, and we use under Assumption \ref{ass:BOne} a Skorokhod representation for the process in \eqref{eq:JointBootstrapConvergence1}. 

Under Assumptions \ref{ass:BTwo} and \ref{ass:BTwoStar}, by measurability of $c_{n}$ and $c_{n,k}$ as maps to $C(\XC\times \YC)$, Lipschitzianity under $c$-transformation (\Cref{lem:CtrafoLipschitz}) and Slutzky's lemma we conclude weak convergence of the random variables 
\begin{align}
\begin{pmatrix}
	\left(\Big(\Gproc_n^\mu(f^{cc}), \Gproc_n^\mu(f^{c_nc_n}), \Gproc_n^\nu(f^{c}), \Gproc_n^\nu(f^{c_n})\Big)_{f\in \FC}, \Gproc^c_n,  \mu_n, \nu_n\right)^T\\
	\left(\Big(\Gproc_{n,k}^{\mu,(i)}(f^{cc}),\Gproc_{n,k}^{\mu,(i)}(f^{c^{(i)}_{n,k}}), \Gproc_{n,k}^{\nu,(i)}(f^{c}), \Gproc_{n,k}^{\nu,(i)}(f^{c_{n,k}^{(i)}})     \Big)_{f\in \FC}, \Gproc^{c,(i)}_{n,k},  \mu_{n,k}^{(i)}, \nu_{n,k}^{(i)}\right)^T_{i = 1,2}
\end{pmatrix}& \notag
 \\
 \dto 
 \begin{pmatrix}
 \left(\Big(\Gproc^\mu(f^{cc}),\Gproc^\mu(f^{cc}),\Gproc^\nu(f^{c}),\Gproc^\nu(f^{c})\Big)_{f\in \FC}, \Gproc^c, \mu,\nu\right)^T\\
 \left(\Big(\Gproc^{\mu,(i)}(f^{cc}),\Gproc^{\mu,(i)}(f^{cc}),\Gproc^{\nu,(i)}(f^{c}),\Gproc^{\nu,(i)}(f^{c})\Big)_{f\in \FC}, \Gproc^{c,(i)}, \mu,\nu\right)^T_{i = 1,2}
 \end{pmatrix}& \label{eq:JointBootstrapConvergence2}
\end{align}
in the Polish space $\big( C_u( \FC)^4\times C(\XC\times \YC)\times \PC(\XC)\times \PC(\YC)\big)^3$. For the random variables from \eqref{eq:JointBootstrapConvergence2} we now take a Skorokhod representation. 

To denote the random elements from the Skorokhod representation, we equip to the respective random variable with a tilde, e.g., we write $\tilde \mu_n$ for the representation of $\mu_n$. 
Following the same proof technique as \Cref{thm:AbstractMainResult} we thus conclude with \Cref{lem:LowerUpperBound} and \Cref{lem:ContinuityResults} that 
\begin{align*}
	&\begin{pmatrix}
\sqrt{n}\Big(OT( \tilde \mu_{n},  \tilde \nu_{n}, { \tilde c_{n}}) - OT(\mu, \nu, c)\Big)\\
\sqrt{k}\Big(OT( \tilde \mu^{(i)}_{n,k},  \tilde \nu^{(i)}_{n,k}, { \tilde c^{(i)}_{n,k}}) - OT(\mu, \nu, c)\Big)_{i = 1,2}
\end{pmatrix}\\
&\as
\begin{pmatrix}
\inf_{\pi \in \Pi_{c}^\star(\mu,\nu)}\pi(\tilde\Gproc^c) + \sup_{f \in S\!_c(\mu,\nu)} \tilde\Gproc^\mu(f^{cc})+ \tilde\Gproc^\nu(f^c)\\
\Big(\inf_{\pi \in \Pi_{c}^\star(\mu,\nu)}\pi(\tilde\Gproc^{c,(i)}) + \sup_{f \in S\!_c(\mu,\nu)}\tilde\Gproc^{\mu,(i)}(f^{cc}) + \tilde\Gproc^{\nu,(i)}(f^c)\Big)_{i = 1,2}
\end{pmatrix}.
\end{align*}
Consequently, we infer for the original random variables and using that $k = \oh(n)$ that 
\begin{align*}
&\begin{pmatrix}
\sqrt{n}\Big(OT(  \mu_{n},   \nu_{n}, {  c_{n}}) - OT(\mu, \nu, c)\Big)\\
\sqrt{k}\Big(OT(  \mu^{(i)}_{n,k},   \nu^{(i)}_{n,k}, {  c^{(i)}_{n,k}}) - OT(  \mu_{n},   \nu_{n}, {  c_{n}})\Big)_{i = 1,2}
\end{pmatrix}\\
&\dto
\begin{pmatrix}
\inf_{\pi \in \Pi_{c}^\star(\mu,\nu)}\pi(\Gproc^c) + \sup_{f \in S\!_c(\mu,\nu)} \Gproc^\mu(f^{cc}) + \Gproc^\nu(f^c)\\
\Big(\inf_{\pi \in \Pi_{c}^\star(\mu,\nu)}\pi(\Gproc^{c,(i)}) + \sup_{f \in S\!_c(\mu,\nu)}\Gproc^{\mu,(i)}(f^{cc})\! + \Gproc^{\nu,(i)}(f^c)\Big)_{i = 1,2}
\end{pmatrix}.
\end{align*}
 Since the three components in the limit have identical distributions and are independent, the assertion follows at once from \citet[Lemma 2.2 $(a)\Rightarrow (c)$]{bucher2019note}. 
\end{proof}

\subsection{Proofs for Distributional Limits under Extremal-Type Costs}
\label{subsec: ProofExtremalCosts}
Before we proceed with the proofs for the results from \Cref{subsec:OTProcessResults} which rely on an application of the functional delta method, we provide a simple result on the support of the limiting processes. Its proof is deferred to \Cref{subsec:SupportLimitingProcessProof}.
\begin{lem}\label{lem:SupportLimitingProcess}
	For a Polish space $\XC$ let $\mu \in \PC(\XC)$ and consider a bounded, measurable function class $\tilde \FC$  on $\XC$. 
	Then, the following assertions hold.
	\begin{enumerate}
		\item[$(i)$] The contingent cone of $\PC(\XC)$ at $\mu$ is given by $T_{\mu}\PC(\XC) = \closure\{\frac{\mu' - \mu}{t}| t > 0, \mu'\in \PC(\XC)\}\subseteq \ell^\infty(\tilde \FC)$. 
		\item[$(ii)$] For any $\Delta\in T_{\mu}\PC(\XC)$ and  $f,f'\in \tilde \FC$ with $f - f' \equiv \kappa$ for some $\kappa\in \RR$ it holds that $\Delta(f) = \Delta(f')$.
		\item[$(iii)$] If $\tilde \FC$ is $\mu$-Donsker, then the tight limit $\Gproc^\mu$ of the empirical process $\sqrt{n}(\mu_n - \mu)$ in $\ell^\infty(\tilde \FC)$ is a.s. contained in $T_{\mu}\PC(\XC)$. 
	\end{enumerate}	
\end{lem}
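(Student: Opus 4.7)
The three claims are essentially definitional and rest on the convexity of $\PC(\XC)$ together with the fact that probability measures integrate constants trivially. My plan is to unfold the definition of the contingent cone, verify the constancy-invariance by direct computation, and then transfer the set-membership to the weak limit via the Portmanteau theorem.

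For $(i)$, I would work from the standard definition: $h \in T_\mu\PC(\XC) \subseteq \ell^\infty(\tilde\FC)$ iff there exist $t_n \searrow 0$ and $h_n \to h$ in uniform norm with $\mu + t_n h_n \in \PC(\XC)$ for all $n$. Given $\mu' \in \PC(\XC)$ and $t > 0$, convexity of $\PC(\XC)$ implies that $\mu + s(\mu'-\mu)/t = (1 - s/t)\mu + (s/t)\mu' \in \PC(\XC)$ for every $s \in (0, t]$. Choosing $t_n := \min(t, 1/n)$ and $h_n \equiv (\mu' - \mu)/t$ therefore certifies $(\mu' - \mu)/t \in T_\mu\PC(\XC)$, and closedness of the contingent cone supplies the forward inclusion. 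Conversely, any $h \in T_\mu\PC(\XC)$ with witnesses $(t_n, h_n)$ yields $\mu_n' := \mu + t_n h_n \in \PC(\XC)$ with $(\mu_n' - \mu)/t_n = h_n \to h$, placing $h$ in the stated closure.

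For $(ii)$, the key observation is that $(\mu' - \mu)(\kappa) = 0$ for any constant $\kappa \in \RR$ and any $\mu, \mu' \in \PC(\XC)$, whence $(\mu' - \mu)(f) = (\mu' - \mu)(f')$ whenever $f - f' \equiv \kappa$. Since this equality is preserved under positive scaling and under uniform limits in $\ell^\infty(\tilde\FC)$, part $(i)$ propagates it to every $\Delta \in T_\mu\PC(\XC)$.

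For $(iii)$, each empirical measure $\mu_n$ is a probability measure, so pointwise $\sqrt{n}(\mu_n - \mu) = (\mu_n - \mu)/(1/\sqrt{n}) \in T_\mu\PC(\XC)$ by $(i)$. Under the Donsker assumption, $\sqrt{n}(\mu_n - \mu) \dto \Gproc^\mu$ in $\ell^\infty(\tilde\FC)$ with $\Gproc^\mu$ tight. Since $T_\mu\PC(\XC)$ is closed by construction, the outer-probability Portmanteau theorem gives $\pOut(\Gproc^\mu \notin T_\mu\PC(\XC)) \leq \liminf_n \pOut(\sqrt{n}(\mu_n - \mu) \notin T_\mu\PC(\XC)) = 0$. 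The only point requiring care is the usual non-separability of $\ell^\infty(\tilde\FC)$, but tightness of $\Gproc^\mu$ forces its law to concentrate on a separable subset, so this is a routine rather than substantive obstacle.
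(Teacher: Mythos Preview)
Your proposal is correct and follows essentially the same approach as the paper: convexity of $\PC(\XC)$ for $(i)$ (the paper just cites R\"omisch and Aubin--Frankowska rather than spelling out the argument), an approximation $\Delta_n=(\tilde\mu_n-\mu)/t_n$ combined with $(\tilde\mu_n-\mu)(\kappa)=0$ for $(ii)$, and the closed-set version of the Portmanteau theorem for $(iii)$. The only cosmetic difference is that the paper phrases $(iii)$ directly as $\Prob(\Gproc^\mu\in T_\mu\PC(\XC))\geq \limsup_n \pOut(\sqrt{n}(\mu_n-\mu)\in T_\mu\PC(\XC))=1$, whereas you pass to the complement; both are valid here since the pre-limit event is sure.
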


\subsubsection{Proof of \Cref{thm:OTProcessCty}}\label{subsubsec:OTProcessCtyProof}
The result follows by an application of the functional delta method \citep{Roemisch04}. Without loss of generality, we assume that $\XC = \supp(\mu)$ and $\YC= \supp(\nu)$. This ensures that Kantorovich potentials are by \ref{ass:KPU} unique on the full domains $\XC$ and $\YC$. 
Assumption \ref{ass:Don} in conjunction with  independence of the underlying random variables from $\mu$ and $\nu$ ensure by \citet[Example 1.4.6]{van1996weak} that the joint process $\sqrt{nm/n+m}(\mu_n -\mu,\nu_m -\nu)$ weakly converge in $\linfty(\cup_{\theta\in \Theta}\FC^{c_\theta c_\theta})\times \linfty( \cup_{\theta\in \Theta}\FC^{c_\theta})$. Further, by \Cref{lem:SupportLimitingProcess} the limit is a.s. contained in $T_\mu\PC(\XC)\times T_\nu\PC(\YC)$.
 It remains to show that the  map 
\begin{align*}
  (OT(\cdot,\cdot, {c_\theta}))_{\theta \in \Theta}\colon &\PC(\XC)\times \PC(\YC)\subseteq \ell^\infty(\cup_{\theta\in \Theta}\FC^{c_\theta c_\theta})\times \ell^\infty(\cup_{\theta\in \Theta}\FC^{c_\theta}) \rightarrow C(\Theta), \\&(\mu, \nu) \mapsto \left(\theta\mapsto \sup_{f\in \FC} \mu(f^{c_\theta,c_\theta}) + \nu(f^{c_\theta})  \right)
\end{align*}
is Hadamard directionally differentiable at $(\mu, \nu)$ tangentially to $\PC(\XC)\times \PC(\YC)$. In the language of \Cref{thm:DiffPsi}, take $\FC$ and $\Theta$ as they are and set 
\begin{align*}
V \coloneqq \linfty(\cup_{\theta\in \Theta} \FC^{c_\theta c_\theta})\times \linfty( \cup_{\theta\in \Theta}\FC^{c_\theta}),\quad U \coloneqq  \PC(\XC)\times \PC(\YC), \quad E((\mu, \nu),f, \theta )\coloneqq  \mu(f^{c_\theta c_\theta}) + \nu(f^{c_\theta}).
\end{align*}
Then, Assumption \textbf{(EC)} follows from \Cref{lem:CtrafoLipschitz}, while \textbf{(Lip)}  and \textbf{(Lin)} are simple to verify by definition of $V$ and $E$. Moreover, by Assumption~\ref{ass:KPU} the condition of point \textit{(ii)} in \Cref{lem:LSC_con} holds, since the evaluations of $E$ in $f$ with  $(\Delta^\mu,\Delta^\nu)\in T_\mu\PC(\XC)\times T_\nu\PC(\YC)$ are invariant under constant shifts (\Cref{lem:SupportLimitingProcess}), and since Kantorovich potentials are unique on $\XC$ and $\YC$ up to a constant shift. This establishes \textbf{(DC)}, and the proof is complete. \hfill\qed

\subsubsection{Proof of Theorem~\ref{thm:OTProcessInf}}\label{subsubsec:OTProcessInfProof}
Since $\Theta$ is a compact Polish space, it follows by \citet[Lemma S.4.9]{fang2019inference} (see also \citealt[Corollary 2.3]{carcamo2020directional}) that the infimal mapping, $$ I\colon C(\Theta) \rightarrow \RR, \quad h\mapsto \inf_{\theta\in \Theta} h(\theta),$$
is Hadamard directionally differentiable at  $OT(\mu, \nu, c(\cdot)) \in C(\Theta)$ with derivative given by 
\begin{align*}
	D^H_{OT(\mu, \nu, c(\cdot))} I \colon C(\Theta) \rightarrow \RR, \quad \Delta^h \mapsto  \inf_{\theta\in S_{\!-}( \Theta, \mu , \nu)}\Delta^h(\theta). 
\end{align*}
Hence, applying the functional delta method \citep{Roemisch04} for the infimal mapping $I$ onto the uniform weak limit for the empirical OT process from \Cref{thm:OTProcessCty} asserts the claim. \qed

\subsubsection{Proof of Theorem~\ref{thm:OTProcessSup}}\label{subsubsec:OTProcessSupProof}
From the dual formulation \eqref{eq:OTDualNice} the supremal OT value over $\Theta$ is given by 
\begin{align*}
  \sup_{\theta\in \Theta} OT({\cdot,\cdot,c_\theta})\colon &\PC(\XC)\times \PC(\YC)\subseteq \ell^\infty(\cup_{\theta\in \Theta}\FC^{c_\theta c_\theta})\times \ell^\infty(\cup_{\theta\in \Theta}\FC^{c_\theta}) \rightarrow \RR,\\  &(\mu, \nu) \mapsto \!\!\!\!\! \sup_{(f, \theta)\in \FC\times \Theta} \!\!\!\!\!\mu(f^{c_\theta c_\theta}) + \nu(f^{c_\theta}).
\end{align*}
The results of Appendix~\ref{ap: Had-Diff} readily apply, with the choices for $V$, $U$, and $E$ as in the proof of \Cref{thm:OTProcessCty}; the only difference being that the supremum is taken over  $\FC\times \Theta$ instead of~$\FC$. In particular, \textbf{(EC)}, \textbf{(Lip)}, and \textbf{(Lin)} are valid, whereas \textbf{(DC)} is now trivially fulfilled. 
Overall, \Cref{thm:DiffPsi} asserts that $\sup_{\theta\in \Theta} OT({\cdot,\cdot,c_\theta})$ is Hadamard directionally differentiable tangentially to $\PC(\XC)\times \PC(\YC)$ with derivative 
\begin{align*}
	D^H_{|(\mu, \nu)} \sup_{\theta\in \Theta} OT({\cdot,\cdot,c_\theta})\colon T_\mu\PC(\XC)\times T_\nu\PC(\YC)\rightarrow \RR, \quad (\Delta^\mu, \Delta^\nu)\mapsto \sup_{\substack{\theta\in S_{\!+}(\Theta, \mu, \nu)\\ f_\theta\in S_{\!c_\theta}(\mu, \nu)}} \Delta^\mu(f_\theta^{c_\theta c_\theta})+ \Delta^\nu(f_\theta^{c_\theta}).
\end{align*}
Combined with weak convergence of $\sqrt{nm/n+m}(\mu_n -\mu,\nu_m -\nu)$  in $\linfty(\cup_{\theta\in \Theta}\FC^{c_\theta c_\theta})\times \linfty( \cup_{\theta\in \Theta}\FC^{c_\theta})$ by \ref{ass:Don} in conjunction with the independence of the underlying samples \citep[Example 1.4.6]{van1996weak}, and the inclusion of the limit in $T_\mu\PC(\XC)\times T_\nu\PC(\YC)$ by \Cref{lem:SupportLimitingProcess}, the functional delta method \citep{Roemisch04} implies the claim. \qed

\begin{rmk}In addition to the proof presented above, it is also possible to show \Cref{thm:OTProcessSup} with similar arguments to those found in the proof of \citet[Lemma S.4.9]{fang2019inference} or \citet[Corollary 2.3]{carcamo2020directional}. However, their statements only provide sufficient conditions for Hadamard directional differentiability for tangentially to the space $C(\cup_{\theta\in \Theta}\FC^{c_\theta c_\theta})\times C(\cup_{\theta\in \Theta}\FC^{c_\theta})$, whereas the supremal OT value is defined only on the strict subset $\PC(\XC)\times \PC(\YC)$.	
\end{rmk}

\subsubsection{Proof of \Cref{cor:OTInfimumNonCompactTheta}}\label{subsubsec:ProofOfCorollaryNonCompactStuff}
  Define $\Delta(\mu_n, \nu_m,K)\coloneqq \inf_{\theta\in \Theta} OT( \mu_n, \nu_m,c_\theta) -  \inf_{\theta\in K} OT( \mu_n, \nu_m,c_\theta)$. Then, 
  \begin{align*}
    &\pOut \big(\Delta(\mu_n, \nu_m,K) \neq 0 \big) \le\pOut \big( \big\{ \Delta(\mu_n, \nu_m,K) \neq 0 \big\} \cap \big\{\theta_{n,m} \in K \big\} \big) + \pOut (\theta_{n,m} \notin K), 
  \end{align*}
  and as the first summand in the above display is null while $\lim_{n\rightarrow\infty}\pOut (\theta_{n,m} \notin K)= 0$, the right-hand side converges to zero. Hence, invoking Slutzky's Lemma \citep[Example 1.4.7]{van1996weak} it follows from \Cref{thm:OTProcessInf} that 
  \begin{align*}
    &\sqrt{\frac{nm}{n+m}}\left(\inf_{\theta\in \Theta} OT( \mu_n, \nu_m,c_\theta) -  \inf_{\theta\in \Theta} OT( \mu, \nu,c_\theta) \right) \\
    =\;&\sqrt{\frac{nm}{n+m}} \Delta(\mu_n, \nu_m,K) +  \sqrt{\frac{nm}{n+m}} \left(\inf_{\theta\in K} OT( \mu_n, \nu_m,c_\theta) -  \inf_{\theta\in K} OT( \mu, \nu,c_\theta) \right)\\
    \dto \;&0 + \inf_{\theta\in S_{\!-}(K, \mu, \nu)} \sqrt{\lambda}\Gproc^\mu(f_\theta^{c_\theta c_\theta})+ \sqrt{1-\lambda}\Gproc^\nu(f_\theta^{c_\theta}).
  \end{align*}
  The claim now follows at once after observing that $S_{\!-}(K, \mu, \nu) = S_{\!-}(\Theta, \mu, \nu)$.\qed

\begin{ackno}
  S. Hundrieser and C.A.\ Weitkamp gratefully acknowledge support from the DFG Research
  Training Group 2088 \textit{Discovering structure in complex data: Statistics meets optimization and inverse problems}. G. Mordant gratefully acknowledges support from the DFG CRC 1456 \textit{Mathematics of the Experiment
  A04} and A. Munk gratefully acknowledges support from the DFG CRC 1456 \textit{A04, C06} and the Cluster of Excellence 2067 \textit{MBExC Multiscale bioimaging–from molecular machines
  to networks of excitable cells}.
  \end{ackno}

%---------------------------------------------------------------------------------------------------
%                         References
%---------------------------------------------------------------------------------------------------

% \newpage
\addcontentsline{toc}{section}{References}
\printbibliography
\newpage
\appendix

%---------------------------------------------------------------------------------------------------
%                         Appendix 
%---------------------------------------------------------------------------------------------------

\section[Uniform Hadamard Directional Differentiability of Extremal-Type Functionals]{Uniform Hadamard Directional Differentiability of Extremal-\\Type Functionals}
\label{subsec:HadDiff}
\label{ap: Had-Diff}
A number of results in this work rely on the notion of Hadamard directional differentiability and the functional delta method. More precisely, both the result on the weak convergence of the empirical OT process from \Cref{subsec:OTProcessResults} and the formulation of regularity elevation functionals from \Cref{subsec:RegEl} rely on this approach. 
Although, these two findings are conceptually rather unrelated, their proof techniques are based on a more general insight which we lay out in this section. 

Let $(V, \norm{\cdot}_V)$ be a normed vector space  and consider sets $\FC$ and $\Theta$. Additionally, consider a real-valued function $E\colon V\times \FC\times \Theta\rightarrow \RR$ which assigns each triple $(v,f,\theta)$ to a some objective value $E(v,f,\theta)$. We are interested in sensitivity results for extremal-type functionals $$\Psi(v) \coloneqq \left( \sup_{f\in \FC} E(v,f,\theta) \right)_{\theta\in \Theta} \quad \text{ and } \qquad \tilde \Psi(v) \coloneqq \left( \inf_{f\in \FC} E(v,f,\theta)\right)_{\theta\in \Theta}.$$
Herein, $\Theta$ provides the collection of feasible parameters which affect the optimization problem, while $\FC$ represents the collection of feasible solutions. The space $V$ denotes another set of parameters that determine the optimization problem and exhibit a vector space structure. 
Overall, these optimization problems characterize the general structure of processes indexed over $\Theta$ which are pointwise defined as the supremum or infimum over a collection $\FC$ and depend on some parameter in $V$ with an additive structure. 

For our sensitivity analysis under perturbations of $v$ it suffices to focus only on $\Psi$ since 
$$\inf_{f\in \FC} E(v,f,\theta) = - \sup_{f\in \FC} -E(v,f,\theta)\quad \text{ for any }(v, \theta)\in V\times \Theta.$$
 In the following, we first establish sufficient conditions in terms of $E$ for the continuity properties of $\Psi$ and the underlying sets of optimizers. 

\begin{lem}[Continuity]\label{lem:CtyPsi}
	Let $(V, \norm{\cdot}_V)$ be a normed vector space, consider compact topological spaces $\FC$ and $\Theta$ whose topologies are generated by (pseudo-)metrics $d_\FC$ and $d_\Theta$, respectively, and assume that $E\colon V\times \FC\times \Theta\rightarrow \RR$ satisfies the following. 
	\begin{enumerate}
		\item[\textbf{(EC)}] For any $v\in V$ the functional $E(v, \cdot, \cdot)\colon \FC\times \Theta\rightarrow \RR$ is continuous.
		\item[\textbf{(Lip)}] There exists some $L\geq 0$ such that for any $(f,\theta)\in \FC\times \Theta$ the functional $E(\cdot,f,\theta)\colon V\rightarrow \RR$ is $L$-Lipschitz with respect to $\norm{\cdot}_V$.  
	\end{enumerate}
	Then, $\Range(\Psi)\subseteq C(\Theta)$ and the functional $\Psi \colon V \rightarrow C(\Theta)$ is $L$-Lipschitz.
	Further, for any $(v, \theta)\in V\times \Theta$ the set of optimizers $S(v, \theta)\coloneqq\{f\in \FC | \sup_{f'\in \FC} E(v,f',\theta)  = E(v,f,\theta)\}$ is non-empty, and for fixed $v \in V$ the set-valued map 
  $$(\theta, t) \in \Theta\times \RR_{+} \mapsto S(v, \theta;t)\coloneqq\left\{f\in \FC \;\Big|\; \sup_{f'\in \FC} E(v,f',\theta) \leq E(v,f,\theta) +t\right\}$$
	 is upper semi-continuous in terms of inclusion, i.e., for $\theta_n\rightarrow \theta$ and $t_n \rightarrow t$ any sequence $f_n\in S(\overline v, \theta_n;t_n)$ admits a converging subsequence $(f_{n_k})_{k\in \NN}$ in $\FC$ with limit $f \in S(\overline v, \theta;t)$.
\end{lem}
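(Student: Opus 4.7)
The plan is to establish the four conclusions in order, drawing in turn on the compactness of $\FC$ and $\Theta$, on the continuity assumption \textbf{(EC)}, and on the uniform Lipschitz condition \textbf{(Lip)}. First, for any fixed $(v,\theta)\in V\times \Theta$, the map $f\mapsto E(v,f,\theta)$ is continuous on the compact space $\FC$ by \textbf{(EC)}, so it attains its supremum and $S(v,\theta)\neq\emptyset$.

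Next, to show $\Psi(v)\in C(\Theta)$, I would exploit that $E(v,\cdot,\cdot)$ is continuous on the compact product $\FC\times\Theta$ (whose topology is generated by $d_\FC$ and $d_\Theta$), hence uniformly continuous. Given $\eps>0$ there exists $\delta>0$ such that $d_\Theta(\theta,\theta')<\delta$ implies $\sup_{f\in\FC}|E(v,f,\theta)-E(v,f,\theta')|<\eps$, and taking the supremum over $f$ on each side of the inner quantity yields $|\Psi(v)(\theta)-\Psi(v)(\theta')|<\eps$. The Lipschitz bound on $\Psi$ then follows from \textbf{(Lip)}: for any $\theta\in \Theta$ and $v_1,v_2\in V$,
\[
|\Psi(v_1)(\theta)-\Psi(v_2)(\theta)| \leq \sup_{f\in\FC}|E(v_1,f,\theta)-E(v_2,f,\theta)| \leq L\|v_1-v_2\|_V,
\]
and taking the supremum over $\theta\in\Theta$ completes this step.

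The most delicate conclusion is upper semi-continuity of $(\theta,t)\mapsto S(v,\theta;t)$. Fixing $v$ and taking sequences $\theta_n\to\theta$, $t_n\to t$, $f_n\in S(v,\theta_n;t_n)$, one uses that the pseudo-metric space $\FC$ is first-countable, so that compactness entails sequential compactness, and thus there exist $f\in\FC$ and a subsequence $f_{n_k}\to f$. Joint continuity of $E(v,\cdot,\cdot)$ then gives $E(v,f_{n_k},\theta_{n_k})\to E(v,f,\theta)$, while continuity of $\Psi(v)$ established above yields $\Psi(v)(\theta_{n_k})\to\Psi(v)(\theta)$. Passing to the limit in the defining inequality $\Psi(v)(\theta_{n_k})\leq E(v,f_{n_k},\theta_{n_k})+t_{n_k}$ produces $\Psi(v)(\theta)\leq E(v,f,\theta)+t$, i.e., $f\in S(v,\theta;t)$. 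The only mild obstacle is the pseudo-metric caveat (limits of subsequences need not be unique), which is harmless since any subsequential limit suffices to witness membership in $S(v,\theta;t)$; beyond this, the proof is a standard Berge-type sensitivity argument combining compactness with joint continuity.
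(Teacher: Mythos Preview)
Your proposal is correct and follows essentially the same approach as the paper: both arguments use uniform continuity of $E(v,\cdot,\cdot)$ on the compact product $\FC\times\Theta$ to get $\Psi(v)\in C(\Theta)$, the elementary bound $|\sup_f E(v_1,f,\theta)-\sup_f E(v_2,f,\theta)|\leq \sup_f |E(v_1,f,\theta)-E(v_2,f,\theta)|$ together with \textbf{(Lip)} for the Lipschitz property, compactness plus continuity for non-emptiness of $S(v,\theta)$, and a sequential compactness argument combined with continuity of $E(v,\cdot,\cdot)$ and $\Psi(v)$ to pass to the limit in the defining inequality for the upper semi-continuity claim. Your explicit remark that pseudo-metric spaces are first-countable (so that compactness yields sequential compactness) is a detail the paper leaves implicit.
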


\begin{proof}[Proof of \Cref{lem:CtyPsi}]
	By Assumption \textbf{(EC)} and compactness of $\Theta\times\FC$ it follows for any $v \in V$ that $E(v, \cdot, \cdot)$ is uniformly continuous, hence the function $$w_{E,v}\colon \RR_{+}\rightarrow \RR_{+}, \quad t \mapsto \sup_{\substack{d_\Theta(\theta,\theta') \leq t\\ d_\FC(f,f')\leq t}} |E(v,f,\theta) - E(v,f',\theta')|$$
	is finite for all $t\geq 0$ and fulfills $\lim_{t\searrow 0} w_{E,v}(t) = 0$. 
	For $\theta, \theta' \in \Theta$ we thus find that \begin{align*}
		\left|\sup_{f\in \FC} E(v,f,\theta) - \sup_{f\in \FC} E(v,f,\theta')\right|\leq \sup_{f\in \FC}|E(v,f,\theta) - E(v,f,\theta')|\leq w_{E,v}(d_\Theta(\theta,\theta')),
	\end{align*}
	which implies for $v\in V$ that $\Psi(v)\in C(\Theta)$ and therefore $\Range(\Psi)\subseteq C(\Theta)$. For the Lipschitzianity of $\Psi$, note by Assumption \textbf{(Lip)} for any $v, v'\in V$ that 
	\begin{align*}
		\norm{\Psi(v) - \Psi(v')}_{C(\Theta)}&= \sup_{\theta\in \Theta}\left|\sup_{f\in \FC} E(v,f,\theta) - \sup_{f\in \FC} E(v',f,\theta)\right|\\
		&\leq \sup_{\substack{\theta\in \Theta\\f\in \FC}}|E(v,f,\theta) - E(v',f,\theta)|\leq L \norm{v - v'}_V.
	\end{align*}
	
	To see that $S(v, \theta) \neq \emptyset$, note that  the function $E(v, \cdot, \theta)\colon \FC\rightarrow \RR$ is continuous for any $(v, \theta)\in V\times \Theta$; hence, by compactness of $\FC$ the supremum over $\FC$ is attained. 
	
	It remains to prove the assertion on upper semi-continuity. Consider converging sequences $t_n \rightarrow t\geq 0$ and $\theta_n\rightarrow \theta\in \Theta$ and take a sequence $f_n\in S(v, \theta_n ; t_n)$. By compactness of $\FC$ a converging subsequence $(f_{n_k})_{k \in \NN}$ exists with limit $f \in \FC$. Hence, by Assumption \textbf{(EC)} and since $\sup_{f\in \FC} E(v,f,\cdot)= \Psi(v)(\cdot) \in C(\Theta)$ we obtain that $f\in S(v, \theta; t)$ since
\begin{align*}
	E(v,f, \theta) + t= \lim_{k \rightarrow \infty} E(v,f_{n_k},\theta_{n_k}) + t_{n_k}&\geq \lim_{k\rightarrow \infty} \sup_{f\in \FC} E(v,f,\theta_{n_k})  = \sup_{f\in \FC} E(v,f,\theta). \qedhere
\end{align*}
\end{proof}

With these tools at our disposal, we can state our general sensitivity result. 

\begin{thm}[Differentiability]\label{thm:DiffPsi}
	Assume in the setting of \Cref{lem:CtyPsi} Conditions \textbf{(EC)} and \textbf{(Lip)}. Let $\overline v\in V$ and consider a convex set $U\subset V$. Denote by $T_{\overline v}U \coloneqq \closure\{ \frac{v-\overline v}{t} \mid t>0, v\in U\}\subseteq V$ its contingent cone at $\overline v$. Further, assume the following:  
	\begin{enumerate}
		\item[\textbf{(Lin)}] For any $(f,\theta)\in \FC\times \Theta$  the function $\Delta_{|\overline v} E(\cdot, f, \theta) \colon V\rightarrow \RR, v\mapsto E(\overline v+v,f,\theta)-E(\overline v,f,\theta)$ is~linear. 
		\item[\textbf{(DC)}] For any  $h\in T_{\overline v}U$ the function $\theta\in \Theta\mapsto \sup_{f\in S(\overline v,\theta)}\Delta_{|\overline v} E(h, f, \theta)$ is lower semi-continuous. 
	\end{enumerate}	
	Then, the functional $$\Psi \colon V \rightarrow C(\Theta), \quad v \mapsto \left( \sup_{f\in \FC} E(v,f,\theta) \right)_{\theta\in \Theta}$$
	is Hadamard directionally differentiable   at $\overline v$ tangentially to $U$ with derivative given by 
	$$ D^H_{|\overline v} \Psi \colon T_{\overline v}U \rightarrow C(\Theta), \quad h \mapsto \left( \sup_{f\in S(\overline v,\theta)} \Delta_{|\overline v} E(h, f, \theta)  \right)_{\theta\in \Theta}.$$
\end{thm}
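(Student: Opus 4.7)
The plan is to fix sequences $t_n \searrow 0$ and $h_n \to h \in T_{\overline v}U$ with $\overline v + t_n h_n \in U$, to set $g_n(\theta) \coloneqq t_n^{-1}[\Psi(\overline v + t_n h_n)(\theta) - \Psi(\overline v)(\theta)]$ and $g(\theta) \coloneqq \sup_{f \in S(\overline v, \theta)} \Delta_{|\overline v} E(h, f, \theta)$, and to establish uniform convergence $\|g_n - g\|_{C(\Theta)} \to 0$. By \Cref{lem:CtyPsi} each $g_n$ lies in $C(\Theta)$, so the task reduces to obtaining pointwise convergence $g_n(\theta) \to g(\theta)$ on $\Theta$ and then upgrading it to uniform convergence by showing that the pointwise limit $g$ is continuous.

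For the pointwise step, the lower bound $\liminf_n g_n(\theta) \geq g(\theta)$ is obtained by substituting an arbitrary $f \in S(\overline v, \theta)$ into the perturbed supremum: by \textbf{(Lin)} the resulting increment equals $t_n \Delta_{|\overline v} E(h_n, f, \theta)$, by \textbf{(Lip)} this converges to $\Delta_{|\overline v} E(h, f, \theta)$, and taking the supremum in $f$ yields the bound. The matching upper bound is the more delicate half. I would select a maximizer $f_n^\theta \in S(\overline v + t_n h_n, \theta)$ (available by \Cref{lem:CtyPsi}), use $g_n(\theta) \leq \Delta_{|\overline v} E(h_n, f_n^\theta, \theta)$, extract a subsequential limit $f_{n_k}^\theta \to f^*$ by compactness of $\FC$, and identify $f^* \in S(\overline v, \theta)$ via the Lipschitzianity of $\Psi$ and \textbf{(EC)}, which together force $E(\overline v, f^*, \theta) = \Psi(\overline v)(\theta)$. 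Joint continuity of $\Delta_{|\overline v} E$ on $V \times \FC \times \Theta$ (a consequence of \textbf{(EC)} and \textbf{(Lip)}) then gives $\limsup_n g_n(\theta) \leq \Delta_{|\overline v} E(h, f^*, \theta) \leq g(\theta)$.

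To upgrade pointwise to uniform convergence I would first verify $g \in C(\Theta)$. Upper semi-continuity of $g$ is a byproduct of the upper semi-continuity of the set-valued map $\theta \mapsto S(\overline v, \theta)$ in \Cref{lem:CtyPsi}: the supremum defining $g(\theta)$ is attained on the compact set $S(\overline v, \theta)$, and any sequence of maximizers at $\theta_n \to \theta$ accumulates at an element of $S(\overline v, \theta)$. Lower semi-continuity is precisely \textbf{(DC)}. Once $g$ is continuous, the Strong Dini Theorem of \citet[Corollary 1]{toma1997strong} delivers uniform convergence of $g_n$ to $g$ on the compact space $\Theta$; alternatively, one may argue by contradiction, supposing $|g_{n_k}(\theta_{n_k}) - g(\theta_{n_k})| \geq \eps_0$ along a subsequence with $\theta_{n_k} \to \theta^*$, re-running the two-sided subsequential bounds with the varying parameters $\theta_{n_k}$ to obtain $g_{n_k}(\theta_{n_k}) \to g(\theta^*)$, and using continuity of $g$ to deduce $g(\theta_{n_k}) \to g(\theta^*)$, a contradiction.

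The main obstacle is precisely the step of letting $\theta$ vary inside the subsequential arguments. The $\limsup$-bound with varying $\theta_n$ requires a joint upper semi-continuity of the maximizer map $S(v, \theta)$ in $(v, \theta)$, which amounts to extracting from the proof of \Cref{lem:CtyPsi} a version uniform in the parameter $v$. The $\liminf$-bound is more sensitive: a fixed $f \in S(\overline v, \theta^*)$ need not lie in the nearby sets $S(\overline v, \theta_{n_k})$, and \textbf{(DC)} is the minimal assumption that rescues this failure of lower semi-continuity of $S$ by ensuring that the asymptotic lower bound transfers from $\theta_{n_k}$ back to $\theta^*$ through $g$ itself. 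Without \textbf{(DC)} one would obtain only pointwise differentiability, and the functional delta method required in the applications of Section \ref{subsec:OTProcessResults} would not go through.
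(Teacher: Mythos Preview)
Your pointwise argument and the verification that $g\in C(\Theta)$ are correct. The gap is in your first route to uniform convergence: the Dini theorem you invoke (stated in the paper as \Cref{prop:DiniTheorem}) requires the sequence to be \emph{monotone}, and the difference quotients $g_n$ are not monotone in $n$. Pointwise convergence of continuous functions on a compact space to a continuous limit does not in general imply uniform convergence without monotonicity, so this step fails as written.

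Your contradiction alternative does work and is a valid, genuinely different route from the paper's. The paper never applies Dini to $g_n$; instead it builds a uniform sandwich $g(\theta)-r_n\le g_n(\theta)\le \Phi_n(\theta)+r_n$ with $r_n\to 0$ independent of $\theta$, where $\Phi_n(\theta)\coloneqq\sup_{f\in S(\overline v,\theta;\eps_n)}\Delta_{|\overline v}E(h,f,\theta)$ uses the \emph{approximate} solution sets of \Cref{lem:CtyPsi} with $\eps_n\coloneqq\sup_{k\ge n}2L\|v_k-\overline v\|_V\searrow 0$. The key inclusion $S(v_n,\theta)\subseteq S(\overline v,\theta;2L\|v_n-\overline v\|_V)$ absorbs the perturbation of $v$ into an enlargement of the solution set at the \emph{fixed} base point $\overline v$. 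The resulting $\Phi_n$ \emph{is} monotone decreasing (because $\eps_n$ is), upper semi-continuous by \Cref{lem:CtyPsi}, and converges pointwise to $g$; Dini then applies to $\Phi_n\to g$, and the squeeze gives $g_n\to g$ uniformly. This device sidesteps the explicit joint-in-$(v,\theta)$ subsequential extraction that your contradiction argument must carry out; the paper's approach is more structured and reuses \Cref{lem:CtyPsi} directly, while yours is more elementary once one accepts the extra compactness bookkeeping.
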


\Cref{thm:DiffPsi} can be viewed as an extension of \citet[Lemma S.4.9]{fang2019inference} to a uniform perturbation result over the parameter space $\Theta$. Additionally, our result does not require regularity properties on the full domain $V$ but only a convex set $U$, an appealing property which we exploit in the context of our analysis for the OT process (where we choose $U = \PC(\XC)\times \PC(\YC)$) as well as regularity elevations (see proof of \Cref{prop:RegEl:Mod}). 

	Assumptions \textbf{(EC)}, \textbf{(Lip)}, and \textbf{(Lin)} are fairly straightforward and often simple to verify. 
	The first two conditions also appear to be necessary to infer that  $\Range(\Psi)\subseteq C(\Theta)$ and Lipschitzianity of $\Psi\colon V \rightarrow C(\Theta)$. 
	Assumption \textbf{(DC)} is more technical and requires some knowledge on the set of optimizers $S(\overline v, \theta)$. As the proof of \Cref{thm:DiffPsi} reveals, is the functional $\theta\in \Theta\mapsto \sup_{f\in S(\overline v,\theta)} E(h,f,\theta)$ under the assumptions of \Cref{lem:CtyPsi} always upper semi-continuous. Hence, the sole purpose of \textbf{(DC)} is to ensure $\Range(D^H_{|\overline v}\Psi)\subseteq C(\Theta)$. Sufficient conditions for its validity are stated as follows.

\begin{lem}\label{lem:LSC_con}
	Assume the setting of \Cref{lem:CtyPsi} and \Cref{thm:DiffPsi}. Then under either of the following conditions Assumption \textbf{(DC)} of \Cref{thm:DiffPsi} is fulfilled. %
	\begin{enumerate}
	\item[$(i)$] For any $\theta\in \Theta$ and $h\in T_{\overline v}U$ there exists $f\in S(\overline v, \theta)$ with $\sup_{f'\in S(\overline v,\theta)}  \Delta_{|\overline v} E(h, f', \theta) = \Delta_{|\overline v} E(h, f, \theta)$ such that any converging sequence $\theta_{n}\rightarrow \theta$ admits a sub-sequence $(\theta_{n_k})$ and a converging sequence $f_{n_k}\in S(\overline v, \theta_{n_k})$ with $f_{n_k}\rightarrow f$ in $\FC$. 
	\item[$(ii)$] For any $\theta\in \Theta$ and $h\in T_{\overline v}U$ it holds that $  \Delta_{|\overline v} E(h, f, \theta)=  \Delta_{|\overline v} E(h, f', \theta)$ for any $f,f' \in S(\overline v, \theta)$.
	\end{enumerate}
\end{lem}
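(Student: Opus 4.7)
The goal is to verify Assumption \textbf{(DC)}, namely that for each $h \in T_{\overline v}U$, the map $g_h\colon \Theta \to \RR$ defined by $g_h(\theta) = \sup_{f\in S(\overline v,\theta)} \Delta_{|\overline v} E(h,f,\theta)$ is lower semi-continuous. The crucial observation is that since $h \in V$ and $\overline v + h \in V$, Assumption \textbf{(EC)} applied to both $\overline v + h$ and $\overline v$ yields that the map $(f,\theta) \mapsto \Delta_{|\overline v} E(h,f,\theta) = E(\overline v + h, f, \theta) - E(\overline v, f, \theta)$ is continuous on the compact space $\FC \times \Theta$, hence uniformly continuous. This continuity is what will let me pass to limits in both cases, and should be stated upfront.

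For case (i), my plan is the standard ``subsequence-of-subsequence'' argument for liminf. Fix $\theta\in\Theta$ and a converging sequence $\theta_n\to\theta$, and let $L\coloneqq \liminf_n g_h(\theta_n)$. Extract a subsequence $\theta_{m_j}$ along which $g_h(\theta_{m_j})\to L$. Choose $f\in S(\overline v,\theta)$ as in (i) so that $g_h(\theta)=\Delta_{|\overline v}E(h,f,\theta)$, and apply the hypothesis of (i) to the subsequence $\theta_{m_j}$ to extract a further subsequence $\theta_{m_{j_l}}$ and $f_{m_{j_l}}\in S(\overline v,\theta_{m_{j_l}})$ with $f_{m_{j_l}}\to f$ in $\FC$. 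Then $g_h(\theta_{m_{j_l}}) \geq \Delta_{|\overline v}E(h,f_{m_{j_l}},\theta_{m_{j_l}})$, and passing to the limit via continuity gives $L\geq \Delta_{|\overline v}E(h,f,\theta)=g_h(\theta)$, which is the desired lower semi-continuity.

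For case (ii), I plan to use the upper semi-continuity of $\theta\mapsto S(\overline v,\theta)$ already established in \Cref{lem:CtyPsi} (applied with $t=0$). Under (ii), the objective $\Delta_{|\overline v} E(h,\cdot,\theta)$ is constant on $S(\overline v,\theta)$, so $g_h(\theta)=\Delta_{|\overline v} E(h,f_\theta,\theta)$ for any selection $f_\theta\in S(\overline v,\theta)$. Given $\theta_n\to\theta$ and selections $f_n\in S(\overline v,\theta_n)$, extract a subsequence along which $g_h(\theta_n)$ converges to $\liminf g_h(\theta_n)$; by the upper semi-continuity of $S(\overline v,\cdot)$ from \Cref{lem:CtyPsi}, a further subsequence of $(f_n)$ converges to some $f^\star\in S(\overline v,\theta)$. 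Continuity of $\Delta_{|\overline v}E(h,\cdot,\cdot)$ together with constancy on $S(\overline v,\theta)$ then yields $\liminf_n g_h(\theta_n)=\Delta_{|\overline v}E(h,f^\star,\theta)=g_h(\theta)$, so $g_h$ is in fact continuous at $\theta$.

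I do not expect a genuine obstacle: both cases reduce to compactness plus continuity, and the key structural input is already encoded in \textbf{(EC)} (continuity of $E$) and in \Cref{lem:CtyPsi} (upper semi-continuity of the optimizer map). The only subtlety to get right is the order of subsequence extractions in case (i), ensuring that one first passes to a subsequence realizing the liminf and then invokes the hypothesis, rather than the other way around. Once that is done, the argument is entirely routine.
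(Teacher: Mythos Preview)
Your proposal is correct and follows essentially the same approach as the paper. The only organizational differences are that the paper phrases case~(i) via the ``every subsequence has a further subsequence'' characterization of the liminf (their \Cref{lem:limsup}) rather than first passing to a subsequence realizing the liminf, and that the paper dispatches case~(ii) by observing it reduces to case~(i) (any $f\in S(\overline v,\theta)$ attains the supremum by constancy, and the approximating sequence comes from the upper semi-continuity in \Cref{lem:CtyPsi}), whereas you argue~(ii) directly; the underlying mechanics are identical.
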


\begin{proof}[Proof of \Cref{lem:LSC_con}]
	Let $\theta_n \rightarrow \theta$ and consider an element $f\in S(\overline v, \theta)$ such that  $\Delta_{|\overline v} E(h, f, \theta) = \sup_{f'\in S(\overline v,\theta)}  \Delta_{|\overline v} E(h, f, \theta)$. 
	For setting $(i)$ take an arbitrary subsequence $\theta_{n_k}$ and take another subsequence $\theta_{n_{k_l}}$  such that $f_{n_{k_l}}\in S(\overline v, \theta_{n_{k_l}})$ converges to $f$ for $l\rightarrow \infty$. 
	Then, by \textbf{(EC)},  
\begin{align*}
  \lim_{l\rightarrow \infty}\Delta_{|\overline v} E(h, f_{n_{k_l}}, \theta_{n_{k_l}}) &=  
  \Delta_{|\overline v} E(h, f, \theta) = \sup_{f'\in S(\overline v, \theta)} \Delta_{|\overline v} E(h,f', \theta).
\end{align*}
This implies by monotonicity of the limit inferior and \Cref{lem:limsup} that 
\begin{align*}
  \liminf_{n \rightarrow \infty}  \sup_{f'\in S(\overline v, \theta)} \Delta_{|\overline v} E(h,f', \theta_n) \geq \liminf_{n \rightarrow \infty} \Delta_{|\overline v} E(h, f_{n}, \theta_{n}) \geq \sup_{f'\in S(\overline v, \theta)} \Delta_{|\overline v} E(h,f', \theta),
\end{align*} 
which asserts the validity of  Assumption \textbf{(DC)} of \Cref{thm:DiffPsi}. 
	For setting $(ii)$ take $f_n\in S(\overline v, \theta_n)$ and consider by \Cref{lem:CtyPsi} a converging subsequence $f_{n_k}$ with limit $f\in S(\overline v, \theta)$. Hence, it holds that $\Delta_{|\overline v} E(h, f, \theta)= \sup_{f'\in S(\overline v, \theta)}  \Delta_{|\overline v} E(h, f', \theta)$ and the assertion follows from $(i)$. 
\end{proof}

\begin{proof}[Proof of \Cref{thm:DiffPsi}]

The proof strategy is inspired by \citet{Roemisch04} who performs a sensitivity analysis for when $\Theta$ is a singleton. To extend the claim for a compact topological space $\Theta$ we employ the subsequent version of Dini's theorem. 

\begin{lem}[Dini's Theorem, {\citealt[Corollary 1]{toma1997strong}}]\label{prop:DiniTheorem}
Let $\Theta$ be a compact topological space and consider  a decreasing $f_n\colon \Theta \rightarrow \RR$ sequence (i.e., $f_{n} \geq f_{n+1}$ for all $n\in \NN$) of  upper semi-continuous functions. Further, assume that $f_n$ pointwise converges to a (lower semi-)continuous function $f\colon \Theta\rightarrow \RR$. Then, $f_n$ converges to $f$  uniformly on $\Theta$. 
\end{lem}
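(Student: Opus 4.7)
My plan is to give the standard compactness-plus-monotonicity argument for Dini's theorem in the semi-continuous setting. The idea is to reduce to the case of a non-negative decreasing sequence tending pointwise to zero, and then to exploit an open-cover argument produced by the semi-continuity hypotheses, so that compactness of $\Theta$ collapses an infinite cover to a single set.

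Concretely, I will first set $g_n \coloneqq f_n - f$. Because $f_n$ is upper semi-continuous and $f$ is lower semi-continuous, the function $-f$ is upper semi-continuous, and the sum of two upper semi-continuous functions is upper semi-continuous, so each $g_n$ is upper semi-continuous on $\Theta$. Since $f_n \downarrow f$ pointwise and the sequence is monotone, we have $g_n \geq g_{n+1} \geq 0$ on $\Theta$ with $g_n(\theta) \searrow 0$ for every $\theta \in \Theta$. Thus it suffices to prove that $g_n \to 0$ uniformly.

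Fix $\varepsilon > 0$ and define
\[
U_n \coloneqq \{\theta \in \Theta : g_n(\theta) < \varepsilon\} = \Theta \setminus \{\theta \in \Theta : g_n(\theta) \geq \varepsilon\}.
\]
Upper semi-continuity of $g_n$ ensures that the sublevel condition $\{g_n \geq \varepsilon\}$ is closed, hence $U_n$ is open in $\Theta$. Monotonicity $g_n \geq g_{n+1}$ yields $U_n \subseteq U_{n+1}$, and the pointwise convergence $g_n(\theta) \to 0$ shows that for every $\theta \in \Theta$ there exists $n$ with $\theta \in U_n$; consequently $\{U_n\}_{n \in \NN}$ is an increasing open cover of $\Theta$. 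By compactness of $\Theta$, it admits a finite subcover, which by nestedness reduces to a single $U_N$, i.e., $U_N = \Theta$. For $n \geq N$ we then have $0 \leq g_n(\theta) \leq g_N(\theta) < \varepsilon$ for every $\theta \in \Theta$, so $\sup_{\theta \in \Theta} |f_n(\theta) - f(\theta)| < \varepsilon$. Since $\varepsilon > 0$ was arbitrary, this establishes uniform convergence.

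The only delicate point is the very first step: verifying that $g_n$ is upper semi-continuous. This uses only that upper semi-continuity is preserved by addition and that the negation of a lower semi-continuous function is upper semi-continuous, but it is worth noting that this is precisely where the asymmetric hypothesis on $f_n$ (upper) versus $f$ (lower) is needed in order to make the sublevel sets $\{g_n \geq \varepsilon\}$ closed. Everything else is a transparent open-cover argument from compactness, so no further obstacle arises.
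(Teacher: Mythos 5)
Your proof is correct and complete: the reduction to $g_n = f_n - f \searrow 0$, the observation that $g_n$ is upper semi-continuous (sum of the u.s.c.\ function $f_n$ and the u.s.c.\ function $-f$), and the nested open-cover argument $U_n = \{g_n < \eps\}$ collapsing to a single $U_N = \Theta$ by compactness is the standard Dini argument in the semi-continuous setting, valid on any compact topological space. The paper itself gives no proof of this lemma — it is quoted as Corollary~1 of \citet{toma1997strong} — so there is nothing to compare against beyond noting that your argument is exactly the classical one that the cited reference generalizes, and it correctly isolates where the asymmetric hypotheses ($f_n$ upper, $f$ lower semi-continuous) enter, namely in making the superlevel sets $\{g_n \geq \eps\}$ closed.
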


Take a positive null sequence $t_n\searrow 0$ with $t_n>0$ for all $n\in \NN$ and let $h\in T_{\overline v}U$. Further, take a sequence $h_n \in V$ such that $v_n \coloneqq \overline v + t_n h_n\in U$ for all $n \in \NN$ and $h_n \rightarrow h$ in $V$. 
For any $\theta\in \Theta$, we then observe by \textbf{(Lin)} and \textbf{(Lip)} for any $n\in \NN$ the lower bound 
\begin{align}
	\Psi(v_n)(\theta) - \Psi(\overline v)(\theta) & = \sup_{f \in \FC} E(v_n,f, \theta) - \sup_{f \in \FC} E(\overline v,f, \theta)\notag\\
	&\geq \sup_{f\in S(\overline v, \theta)} E(v_n, f, \theta) - E(\overline v, f, \theta)\notag\\
  &\geq \sup_{f\in S(\overline v, \theta)} \Delta_{|\overline v}E(t_n h_n, f, \theta)\notag\\
	& \geq t_n \sup_{f\in S(\overline v, \theta)} \Delta_{|\overline v}E(h,f,\theta) - 2 t_n L \norm{h - h_n}_V.\label{eq:lowerboundPsi_good}
\end{align}
Analogously, we obtain the upper bound
\begin{align}
	\Psi(v_n)(\theta) - \Psi(\overline v)(\theta) & = \sup_{f \in \FC} E(v_n,f, \theta) - \sup_{f \in \FC} E(\overline v,f, \theta)\notag\\
	&\leq \sup_{f\in S(v_n, \theta)} E(v_n, f, \theta) - E(\overline v, f, \theta)\notag\\
	& \leq t_n \sup_{f\in S(v_n, \theta)} \Delta_{|\overline v}E(h,f,\theta) + 2t_n L \norm{h-h_n}_V. \label{eq:upperboundPsi}
\end{align}
Note that $S(v_n, \theta)\subseteq S(\overline v, \theta ; 2L \norm{v_n- v}_V)$ since any $f^*\in S(v_n, \theta)$ fulfills by \textbf{(Lip)} the bound
\begin{align*}
	E(v,f^*, \theta) &\geq E(v_n, f^*, \theta) - L\norm{v_n - v}_V\\
	& = \sup_{f \in \FC} E(v_n, f, \theta) - L\norm{v_n - v}_V\\
	& \geq \sup_{f\in \FC} E(v, f, \theta) - 2L\norm{v_n - v}_V.
\end{align*}
Hence, it follows from \eqref{eq:upperboundPsi} upon defining $\eps_n \coloneqq \sup_{k\geq n}2L \norm{v_k- v}_V$ that 
\begin{align}
	\Psi(v_n)(\theta) - \Psi(\overline v)(\theta) &\leq t_n \sup_{f\in S(\overline v, \theta ; 2L \norm{v_n- v}_V )}\Delta_{|\overline v}E(h,f,\theta)+ 2t_n L \norm{h-h_n}_V\notag \\
	&\leq t_n \sup_{f\in S(\overline v, \theta ;\eps_n)} \Delta_{|\overline v}E(h,f,\theta) +2t_n L \norm{h-h_n}_V.\label{eq:upperboundPsi_good}
\end{align}
Combining \eqref{eq:lowerboundPsi_good} and \eqref{eq:upperboundPsi_good} we thus obtain for any $\theta\in \Theta$ that 
\begin{align*}\sup_{f\in S(\overline v, \theta)} \Delta_{|\overline v}E(h,f,\theta) - 2L \norm{h - h_n}_V
\leq \frac{\Psi(v_n)(\theta) - \Psi(\overline v)(\theta)}{t_n}&\leq \sup_{f\in S(\overline v, \theta;\eps_n)} \Delta_{|\overline v}E(h,f,\theta) + 2L \norm{h-h_n}_V.
\end{align*}
To conclude the claim we show that the lower and upper bound uniformly converge on $\Theta$ for $n\rightarrow \infty$ to the $D^H_{|\overline v} \Psi$. Since $\norm{h_n - h}_V\rightarrow \infty$, it suffices to prove for the functions 
\begin{align*}
	\Phi \coloneqq D^H_{|\overline v} \Psi \colon &  \Theta \rightarrow \RR, \;\; \theta \mapsto \sup_{f\in S(\overline v, \theta)} \Delta_{|\overline v}E(h,f,\theta),\qquad 	\Phi_n\colon  \Theta \rightarrow \RR, \;\;\theta \mapsto \sup_{f\in S(\overline v, \theta,\eps_n )}\Delta_{|\overline v}E(h,f,\theta),
\end{align*}
that $\lim_{n\rightarrow\infty}\norm{\Phi - \Phi_n}_{C(\Theta)}= 0$. For this purpose, we employ Dini's theorem (\Cref{prop:DiniTheorem}).

In this context note, since $(\eps_n)_{n \in \NN}$ is a decreasing null-sequence, for all $n \in \NN$ and any $\theta\in \Theta$ that $S(\overline v, \theta)\subseteq S(\overline v, \theta;\eps_{n+1}) \subseteq S(\overline v, \theta;\eps_{n})$
and consequently 
\begin{align}\label{eq:PhiDecreasing}
\Phi(\theta)\leq \Phi_{n+1}(\theta) \leq \Phi_n(\theta)\leq 2\sup_{\theta\in \Theta}\sup_{f\in\FC} E(h,f,\theta)<\infty,
\end{align}
where the upper bound is finite due to  Assumption \textbf{(EC)} and compactness of $\FC\times \Theta$. 

Further, let us show for any $\theta\in \Theta$ that $\lim_{n\rightarrow \infty} \Phi_n(\theta) = \Phi(\theta)$. Take a sequence $f_n\in S(\overline v, \theta; \eps_n)$ such that $\Phi_n(\theta) \leq \Delta_{|\overline v} E(h, f_n, \theta) + 1/n$. Consider a converging subsequence $(f_{n_k})_{k \in \NN}$ with limit $f_\infty\in  S(\overline v, \theta)$. Then, by \textbf{(EC)} it follows that 
\begin{align*}
  \limsup_{k\rightarrow\infty}\Phi_{n_k}(\theta)  \leq \lim_{k\rightarrow \infty}\Delta_{|\overline v} E(h, f_{n_{k}}, \theta) + 1/n_k = \Delta_{|\overline v} E(h, f_{\infty}, \theta) \leq  \sup_{f \in S(\overline v, \theta)}\Delta_{|\overline v}E(v,f,\theta)= \Phi(\theta).
\end{align*}
Recalling \eqref{eq:PhiDecreasing}, it thus follows that $\lim_{n \rightarrow \infty} \Phi_n(\theta) = \Phi(\theta)$. 

To conclude the assertion with Dini's theorem it remains to show upper-continuity of $\Phi_n$ and of $\Phi$; recall by Assumption \textbf{(DC)} that $\Phi$ is already lower semi-continuous. To this end, let $\eps\geq 0$ and consider a converging sequence $\theta_n \rightarrow \theta$. Select $f_n\in S(\overline v, \theta_n, \eps)$ such that $\sup_{f\in S(\overline v, \theta_n, \eps)}\Delta_{|\overline v}E(h,f, \theta_n) \leq \Delta_{|\overline v}E(h, f_n, \theta_n) + 1/n$. 
Take a subsequence $f_{n_k}$ and select by \Cref{lem:CtyPsi} another converging subsequence $f_{n_{k_l}}$ with limit $f_\infty\in S(\overline v, \theta; \eps)$.  Using Assumption \textbf{(EC)} it thus follows that
\begin{align*}
  \lim_{l\rightarrow\infty} \Delta_{|\overline v}E(h, f_{n_{k_l}}, \theta_{n_{k_l}}) + 1/n_{k_l} = \Delta_{|\overline v}E(h, f_\infty, \theta) \leq \sup_{f\in S(\overline v, \theta; \eps)}\Delta_{|\overline v}E(h,f, \theta)
\end{align*}
Invoking monotonicity of the limit superior and \Cref{lem:limsup} we thus obtain that 
\begin{align*}
  \limsup_{n\rightarrow\infty} \sup_{f\in S(\overline v, \theta_n; \eps)}\Delta_{|\overline v}E(h,f, \theta) \leq \limsup_{l\rightarrow\infty} \Delta_{|\overline v}E(h, f_{n}, \theta_{n}) + 1/n \leq \sup_{f\in S(\overline v, \theta; \eps)}\Delta_{|\overline v}E(h,f, \theta), 
\end{align*}
Hence, by \Cref{lem:limsup} we conclude that $\Phi_n$ is upper semi-continuous and that $\Phi$ is continuous. Dini's theorem (\Cref{prop:DiniTheorem}) thus implies $\lim_{n\rightarrow\infty}\norm{\Phi - \Phi_n}_{\infty}= 0$, asserting the Hadamard directional differentiability of $\Psi$ at $\overline v$ tangentially to $U$. Finally, note that the range of $D_{\overline v}^H \Psi$ is indeed contained in $C(\Theta)$. 
\end{proof}

\section{Proofs for Section \ref{subsec:Assumptions}: Sufficient Criteria for Assumptions}\label{app:SufficientCriteriaProofs}

\subsection{Proof of Proposition \ref{prop:DonskerProperty}} 
By \Cref{lem:RelFCandConcave} it follows that $\FC^{c}\subseteq \HC_c^c + [-2B, 2B]$ and $\FC^{cc}\subseteq \HC_c + [-2B, 2B]$ with $\HC_c$ defined in \eqref{eq:C-concaveFunctionsDef}.
Invoking \citet[Lemma 2.1]{hundrieser2022empirical} and \citet[Proposition~1.34]{santambrogio2015optimal} we obtain for any $\eps>0$ that 
\begin{align*}
  \NC(\eps, \FC^{c}, \norm{\cdot}_\infty) =   \NC(\eps, \FC^{cc}, \norm{\cdot}_\infty) \leq \left\lceil\frac{2B}{\eps}\right\rceil\NC(\eps/2, \HC^{c}_c, \norm{\cdot}_\infty) =  \left\lceil\frac{2B}{\eps}\right\rceil \NC(\eps/2, \HC_{c}, \norm{\cdot}_\infty).
\end{align*}
For the function class $\HC_c$, the asserted uniform metric entropy bounds are available in Section 3.1 and Appendix A of \citet{hundrieser2022empirical}. Note by uniform boundedness of the cost function that $\HC_c$ and $\HC_c^c$ are uniformly bounded. The assertion on the universal Donsker property then follows from \citet[Theorem 2.5.6]{van1996weak}. 
\hfill\qed

\subsection{Proof of Proposition \ref{prop: A3StarEx_NonPar}} 
By assumption the functional 
\begin{align*}
  \overline \Phi_c \colon \PC(\XC)\times \PC(\YC) &\rightarrow \ell^\infty(\FC^{cc})\times \ell^\infty(\FC^c)\times C(\XC\times \YC)\\
  (\mu, \nu) &\mapsto (\mu, \nu, \Phi_c(\mu, \nu)), 
\end{align*}
where the domain is viewed as a subset of $\ell^{\infty}(\FC_\XC\cup \FC^{cc})\times \ell^{\infty}(\FC_\YC\cup \FC^{c})$, 
is Hadamard differentiable at $(\mu, \nu)$ tangentially to $\PC(\XC)\times \PC(\YC)$. Moreover, since $\FC_\XC\cup \FC^{cc}$ is $\mu$-Donsker it follows that $\sqrt{n/2}(\mu_n - \mu) \dto \Gproc^\mu$  in $\ell^\infty(\FC_\XC\cup \FC^{cc})$. Likewise, since $\FC_\YC\cup \FC^{c}$ is $\nu$-Donsker it follows that $\sqrt{n/2}(\nu_n - \nu) \dto \Gproc^\nu$  in $\ell^\infty(\FC_\YC\cup \FC^{c})$. Further, by independence of the random variables $\{X_i\}_{i= 1}^{n}$ and $\{Y_i\}_{i = 1}^{n}$ it follows from \cite[Theorem 1.4.6]{van1996weak} that the joint empirical processes $\sqrt{n/2}(\mu_n - \mu, \nu_n - \nu)$ weakly converge in $\ell^{\infty}(\FC_\XC\cup \FC^{cc})\times \ell^{\infty}(\FC_\YC\cup \FC^{c})$ to $(\Gproc^\mu, \Gproc^\nu)$,  contained in $T_{\mu}\PC(\XC)\times T_{\nu}\PC(\YC)$ by \Cref{lem:SupportLimitingProcess}. 
We thus conclude by the functional delta method \citep{Roemisch04} for $\overline \Phi_c$ that \ref{ass:AThree} is fulfilled. 

Moreover, by the Donsker property and independence of the random variables, we also infer by \citet[Theorem 3.6.13]{van1996weak} in the space $\ell^{\infty}(\FC_\XC\cup \FC^{cc})\times \ell^{\infty}(\FC_\YC\cup \FC^{c})$ that 
\begin{align*}
  d_{BL}\left( \Law\left(\sqrt{k} \begin{pmatrix}
\muboot - \mu_n\\
\nuboot - \nu_n\\
\end{pmatrix} \middle| X_1, \dots, X_n, Y_1, \dots Y_n\right),
\Law\begin{pmatrix}
  \Gproc^\mu\\
  \Gproc^\nu
\end{pmatrix}\right)\ptoOut 0.
\end{align*}
Hence, by the functional delta method for conditionally weakly converging random variables \citet{dumbgen1993nondifferentiable}  for $\overline \Psi_c$ we infer  that  \begin{align*}\pushQED{\qed} 
  & d_{BL}\left( \Law\left(\sqrt{k} \begin{pmatrix}
\muboot - \mu_n\\
\nuboot - \nu_n\\
\cboot - c_n
\end{pmatrix} \middle| X_1, \dots, X_n, Y_1, \dots Y_n\right),\Law\left( \sqrt{n} \begin{pmatrix}
\mu_n - \mu\\
\nu_n - \nu\\
c_n - c
\end{pmatrix}\right) \right)\ptoOut 0. \qedhere
\popQED
\end{align*}

\subsection{Proof of Proposition \ref{prop:AbstractB2_CoveringNumbers}} 
Before, we start to prove \Cref{prop:AbstractB2_CoveringNumbers}, we establish an auxiliary lemma.
\begin{lem}\label{lem:auxiliary36}
	Let $\X$ and $\Y$ be compact Polish spaces and consider $c\in\C(\X\times\Y)$.
	\begin{itemize}
		\item[(i)] For any function $g:\X\to\RR$ and any constant $\kappa$, it holds that $(g+\kappa)^c=g^c-\kappa$.
		\item[(ii)]Let $B>0.$ Then, for any  $g:\X\to\RR$ and $\Delta^c\in\C(\X\times\Y)$ with $\norm{g}_\infty+2\norm{c+\Delta^c}_\infty\leq B$ it holds  that
		$g^{(c+\Delta^c)(c+\Delta^c)cc}\in\HC_c+[-B,B]$.
	\end{itemize}
\end{lem}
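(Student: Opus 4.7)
The plan is to handle the two parts independently, with part (i) being an immediate unpacking of the definition of the $c$-transform and part (ii) building on that identity together with the Lipschitz estimate for cost-transformations from \Cref{lem:CtrafoLipschitz}.

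For part (i), I would simply compute
\[(g+\kappa)^{c}(y) \;=\; \inf_{x\in\X}\bigl\{c(x,y) - (g(x)+\kappa)\bigr\} \;=\; \Bigl(\inf_{x\in\X} c(x,y) - g(x)\Bigr) - \kappa \;=\; g^{c}(y)-\kappa,\]
since an additive constant pulls out of the infimum. This is one line.

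For part (ii), the key idea is that after iterating the $(c+\Delta^c)$-transform twice one gets a function of controlled sup-norm, and one further $c$-transform produces a function $h\coloneqq g^{(c+\Delta^c)(c+\Delta^c)c}$ whose \emph{oscillation} is bounded by $2\norm{c}_\infty$; after absorbing a suitable constant shift into $h$, the final $c$-transform lands in $\HC_c$ plus a constant. Concretely, I would first apply \Cref{lem:CtrafoLipschitz} twice to obtain $\|g^{(c+\Delta^c)(c+\Delta^c)}\|_\infty \leq \|g\|_\infty + 2\|c+\Delta^c\|_\infty \leq B$. Then, writing $h(y) = \inf_{x\in\X}\{c(x,y) - g^{(c+\Delta^c)(c+\Delta^c)}(x)\}$, the standard inf-difference argument yields the oscillation bound
\[\sup_{y,y'\in\Y} \bigl(h(y)-h(y')\bigr) \;\leq\; \sup_{x\in\X,\,y,y'\in\Y}\bigl(c(x,y)-c(x,y')\bigr) \;\leq\; 2\norm{c}_\infty.\]

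Next I would show that there exists $\kappa\in[-B,B]$ such that $g_0\coloneqq h-\kappa$ satisfies $\|g_0\|_\infty \leq \|c\|_\infty$. Indeed, the set of admissible shifts is $[\sup h - \|c\|_\infty,\,\inf h + \|c\|_\infty]$, which is nonempty by the oscillation bound; combining with $\|h\|_\infty \leq B+\|c\|_\infty$ (another application of \Cref{lem:CtrafoLipschitz}) gives that this interval intersects $[-B,B]$. Fixing any such $\kappa$, part (i) yields
\[g^{(c+\Delta^c)(c+\Delta^c)cc}(x) \;=\; \inf_{y\in\Y}\bigl\{c(x,y)-(g_0(y)+\kappa)\bigr\} \;=\; g_0^{\,c}(x)-\kappa,\]
and since $g_0:\Y\to[-\|c\|_\infty,\|c\|_\infty]$, the function $g_0^{\,c}$ belongs to $\HC_c$ by definition \eqref{eq:C-concaveFunctionsDef}. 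The inclusion $g^{(c+\Delta^c)(c+\Delta^c)cc}\in \HC_c + [-B,B]$ then follows.

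The only mildly delicate point is the nonemptiness of $[\sup h - \|c\|_\infty,\,\inf h + \|c\|_\infty]\cap[-B,B]$; everything else reduces to repeated use of \Cref{lem:CtrafoLipschitz} and part (i). I expect the total length to be well under a page.
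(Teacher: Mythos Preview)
Your proposal is correct and follows essentially the same strategy as the paper. The only difference is in part~(ii): rather than arguing abstractly via the oscillation bound and an interval-intersection argument to find an admissible shift $\kappa$, the paper simply takes the explicit choice $\kappa=-\sup_{x\in\X} g^{(c+\Delta^c)(c+\Delta^c)}(x)$, which lies in $[-B,B]$ directly from $\|g^{(c+\Delta^c)(c+\Delta^c)}\|_\infty\leq B$ and for which the bound $\|h-\kappa\|_\infty\leq\|c\|_\infty$ is immediate from the two-sided estimate $-\|c\|_\infty-\sup_x f\leq f^c\leq\|c\|_\infty-\sup_x f$.
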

The proof of the above lemma can be found in \Cref{subsec:auxiliary36Proof}.
\begin{proof}[Proof of \Cref{prop:AbstractB2_CoveringNumbers}]
The proof is strongly inspired by \cite[Theorem 2.3]{wellner2007empirical} and employs standard empirical process arguments. In order to simplify the notation, we only consider the case $n = m$ and write $c_{n}$ instead of $c_{n,n}$. Note that the claim for $n\neq m$ follows by the analogous arguments.

To show $(i)$ first note by triangle inequality and using \Cref{lem:CtrafoLipschitz} that 
\begin{align}
  \sup_{f\in \FC}|\Gproc_n^\mu(f^{c_n c_n}- f^{c c})| &\leq \sup_{f\in \FC}|\Gproc_n^\mu(f^{c_n c_n}- f^{\tilde c_n \tilde c_n})| + \sup_{f\in \FC}|\Gproc_n^\mu(f^{\tilde c_n \tilde c_n}- f^{c c})|\notag\\
  &\leq 4\sqrt{n} \norm{c_n - \tilde c_n}_\infty + \sup_{f\in \FC}|\Gproc_n^\mu(f^{\tilde c_n \tilde c_n}- f^{c c})|.\label{eq:UpperBoundConvergenceInProb}
\end{align}
The first term converges by assumption for $n \rightarrow \infty$ in probability to zero. For the latter term note by \ref{ass:AThree} and the assumption on $\tilde c_n$ that $\sqrt{n/2}(\tilde c_n - c)\dto \Gproc^c$. By tightness of the law of $\Gproc^c$ there exists for any $\eps>0$ a compact set $K\subseteq C(\XC\times \YC)$ such that $\prob(\Gproc^{c}\in K)>1-\eps$; thus for any $\delta>0$ the set $K^\delta$ of elements in $C(\XC\times \YC)$ with distance less than $\delta>0$ to $K$ fulfills \begin{align}\label{eq:WeakConvergenceBoundCn}
  \liminf_{n \rightarrow \infty} \prob\left(\sqrt{n/2}(\tilde c_n - c)\in K^\delta\right) \geq \prob(\Gproc^c\in K^\delta)>1-\eps.
\end{align}
By compactness of $K$ there exists a finite $\delta/2$-covering $\{h_1, \dots, h_{p}\}$ which implies that $K^{\delta/2}\subseteq \bigcup_{i = 1}^{p}B(h_i, \delta)$, where $B(h, \delta)$ denotes the open ball of radius $\delta$ around $h$ in the space $C(\XC\times \YC)$. 
We thus obtain \begin{align*}
   \left\{\sqrt{n/2}(\tilde c_n - c) \in K^{\delta/2}\right\} \subset \bigcup_{i =1}^{p}\left\{\tilde c_n \in B(c+2n^{-1/2}h_i,\delta)\right\}.
\end{align*}
Moreover, by \citet[Proposition 1.34]{santambrogio2015optimal} it follows for any  $f\in \FC$ and $\bar c\in C(\XC\times \YC)$ that $f^{\bar c \bar c} = f^{\bar c\bar c\bar c\bar c}$. Therefore, by triangle inequality, 
\begin{align}
  \sup_{f\in \FC}|\Gproc_n^\mu(f^{\tilde c_n \tilde c_n}- f^{c c})|&= \sup_{f\in \FC}|\Gproc_n^\mu(f^{\tilde c_n \tilde c_n\tilde c_n\tilde c_n}- f^{ccc c})|\notag\\
  &\leq \sup_{f\in \FC}|\Gproc_n^\mu(f^{\tilde c_n \tilde c_n\tilde c_n\tilde c_n}- f^{\tilde c_n\tilde c_nc c})| + \sup_{f\in \FC}|\Gproc_n^\mu(f^{\tilde c_n \tilde c_n cc}- f^{c c c c})|\notag\\
  &\leq \sup_{f\in \FC^{\tilde c_n \tilde c_n}}|\Gproc_n^\mu(f^{\tilde c_n\tilde c_n}- f^{c c})| + \sup_{f\in \FC}|\Gproc_n^\mu(f^{\tilde c_n \tilde c_n cc}- f^{c c c c})|.\label{eq:twoTermsToBeControlledForSup}
\end{align}
Assuming $\sqrt{n/2}(\tilde c_n - c) \in K^{\delta/2}$, it follows for the first term in  \eqref{eq:twoTermsToBeControlledForSup} that 
\begin{align}
	\sup_{f\in \FC^{\tilde c_n \tilde c_n}}|\Gproc_n^\mu(f^{\tilde c_n \tilde c_n}- f^{c c})|\notag%
	 &\leq \sup_{f\in \FC^{\tilde c_n \tilde c_n}}\max_{i = 1,\dots,p} \sup_{\norm{h-h_i}_\infty < \delta} |\Gproc_n^\mu(f^{(c + 2h/\sqrt{n}) (c + 2h/\sqrt{n})}- f^{c c})|\notag\\
	&\leq \sup_{f\in \FC^{\tilde c_n \tilde c_n}}\max_{i = 1,\dots,p} \sup_{\norm{h-h_i}_\infty < \delta} |\Gproc_n^\mu(f^{(c + 2h/\sqrt{n}) (c + 2h/\sqrt{n})}- f^{(c + 2h_i/\sqrt{n}) (c + 2h_i/\sqrt{n})})|\notag\\
	&+ \sup_{f\in \FC^{\tilde c_n \tilde c_n}}\max_{i = 1,\dots,p} |\Gproc_n^\mu(f^{(c + 2h_i/\sqrt{n}) (c + 2h_i/\sqrt{n})}- f^{c c})|\notag\\
	&\leq  8 \delta + \sup_{f\in \FC^{\tilde c_n \tilde c_n}}\max_{i = 1,\dots,p} |\Gproc_n^\mu(f^{(c + 2h_i/\sqrt{n}) (c +2 h_i/\sqrt{n})}- f^{c c})|.\label{eq:boundTerm1}
\end{align}
Here, we used in the last inequality \Cref{lem:CtrafoLipschitz} to infer $$\norm{f^{(c + 2h/\sqrt{n}) (c + 2h/\sqrt{n})}- f^{(c + 2h_i/\sqrt{n}) (c + 2h_i/\sqrt{n})}}_\infty \leq 4\norm{h_i-h}_\infty/\sqrt{n} \leq 4\delta/\sqrt{n}$$ 
in conjunction with $\Gproc^\mu_n(g) = \sqrt{n}(\mu_n - \mu)(g) \leq 2\sqrt{n} \norm{g}_\infty$ for any measurable function $g$ on $\XC$. Now, define for 1$\leq i\leq p$ the function class
\begin{align*}
\tilde \GC^i_n\coloneqq \tilde \GC^i_n(h_i) \coloneqq \left\{ f^{(c + 2h_i/\sqrt{n}) (c +2 h_i/\sqrt{n})}- f^{c c} \big| f\in \FC^{\tilde c_n\tilde c_n} \right\}.
\end{align*}
For each $1\leq i\leq p$ and any $\eps>0$  we then observe that
\begin{align*}
	\log N(\eps,\tilde \GC^i_n,\norm{\cdot}_\infty)\leq& \log N(\eps,\FC^{\tilde c_n\tilde c_n(c + 2h_i/\sqrt{n}) (c +2 h_i/\sqrt{n})},\norm{\cdot}_\infty)+\log N(\eps,\tilde \FC^{\tilde c_n\tilde c_n c c},\norm{\cdot}_\infty)\\
	\leq & 2 \log N(\eps,\FC^{\tilde c_n\tilde c_n},\norm{\cdot}_\infty),
\end{align*}
where the last step follows by Lemma 2.1 in \cite{hundrieser2022empirical}. In consequence, it follows by Dudley's entropy integral (see, e.g.,\cite[Chapter~5]{wainwright2019high}) that 
\begin{align}
  \EE\left[ \sup_{f\in \FC^{\tilde c_n\tilde c_n}}\max_{i = 1,\dots,p} \left|\Gproc_n^\mu(f^{(c +2h_i/\sqrt{n}) (c + 2h_i/\sqrt{n})}- f^{c c})\right|\right]&\leq \sum_{i = 1}^p \EE\left[\sup_{g\in \tilde \GC^i_n} \left|\Gproc_n^\mu(g)\right|\right]\notag\\
  &\lesssim \sum_{i = 1}^p \int_{0}^{4\norm{h_i}_\infty/\sqrt{n}} \sqrt{\log\left(\NC(\eps, \FC^{\tilde c_n\tilde c_n}, \norm{\cdot}_\infty)\right)} d\eps\notag\\
  &\lesssim \sum_{i = 1}^p \int_{0}^{4\norm{h_i}_\infty/\sqrt{n}} \!\!\eps^{-\alpha/2} d\eps\notag\\
  & \lesssim \sum_{i = 1}^p (\norm{h_i}_\infty/\sqrt{n})^{1-\alpha/2},\notag
\end{align}
where by assumption the hidden constants do not depend on $n$. 
We thus infer conditionally on the event $\sqrt{n/2}(\tilde c_n - c) \in K^{\delta/2}$ for $n\rightarrow\infty$ that 
\begin{align}
  \sup_{f\in \FC^{\tilde c_n \tilde c_n}}\max_{i = 1,\dots,p}  \Gproc_n^\mu(f^{(c + 2h_i/\sqrt{n}) (c + 2h_i/\sqrt{n})}- f^{c c})\pto 0.\label{eq:ConvP_Term1}
\end{align}

For the second term in \eqref{eq:twoTermsToBeControlledForSup} we assume  $\sqrt{n/2}(\tilde c_n - c) \in K^{\delta/2}$ and obtain by similar arguments,
\begin{align}
  \sup_{f\in \FC}|\Gproc_n^\mu(f^{\tilde c_n \tilde c_n cc}- f^{c c c c})| &\leq 8\delta + \sup_{f\in \FC}\max_{i = 1, \dots, p}|\Gproc_n^\mu(f^{(c + 2h_i/\sqrt{n}) (c +2 h_i/\sqrt{n}) cc}- f^{c c c c})|. \label{eq:boundTerm2}
\end{align}
Upon defining the function class 
\begin{align}\label{eq:prop36bootfc}
  \tilde \GC_n\coloneqq \tilde \GC_n(h_1, \dots, h_p) \coloneqq \left\{ f^{(c + 2h_i/\sqrt{n}) (c +2 h_i/\sqrt{n}) cc}- f^{c c c c} \big| f\in \FC \right\}
\end{align}
we note again by \Cref{lem:CtrafoLipschitz} that any $g\in\GC_n$ fulfills $\norm{g}_{\infty}\leq \max_{i = 1, \dots, p}4\norm{h_i}_\infty/\sqrt{n}$. Further, for $n$ sufficiently large there exists a constant $B>0$ such that \Cref{lem:auxiliary36} is applicable for any $f\in\FC$, and we obtain $$ f^{(c + 2h_i/\sqrt{n}) (c +2 h_i/\sqrt{n}) cc} \in \HC_{c}+[-B,B].$$ 
Hence, for sufficiently large $n$, it follows by \Cref{lem:RelFCandConcave} for any $\eps>0$ that 
\begin{align*}
  \NC(\eps, \tilde \GC_n(h_1, \dots, h_p), \norm{\cdot}_\infty)\leq \left(\NC(\eps, \FC^{cc}+[-B,B], \norm{\cdot}_\infty)\right)^2.
\end{align*} 
Again invoking, Dudley's entropy integral asserts such for $n$ that 
\begin{align}
   \EE\Bigg[\sup_{f\in \FC}\max_{i = 1, \dots, p}\bigg|\Gproc_n^\mu(f^{(c + 2h_i/\sqrt{n}) (c +2 h_i/\sqrt{n}) cc}&- f^{c c c c})\bigg|\Bigg] = \EE\left[\sup_{f\in \tilde\GC_n}\left|\Gproc_n^\mu(\tilde f)\right|\right]\notag\\
  \lesssim &\int_{0}^{\max_{i = 1, \dots, p}4\norm{h_i}_\infty/\sqrt{n}} \sqrt{\log\left(\NC(\eps, \tilde \GC_n, \norm{\cdot}_\infty)\right)} d\eps\notag\\
  \leq &\int_{0}^{\max_{i = 1, \dots, p}4\norm{h_i}_\infty/\sqrt{n}} \sqrt{\log\left(\NC(\eps, \FC^{c c}+[-B,B], \norm{\cdot}_\infty)\right)} d\eps\notag\\
  \lesssim &\int_{0}^{\max_{i = 1, \dots, p}4\norm{h_i}_\infty/\sqrt{n}} \!\!\eps^{-\alpha/2} d\eps\notag\\
  \lesssim &\left(\max_{i = 1, \dots, p}\norm{h_i}_\infty/\sqrt{n}\right)^{1-\alpha/2}\notag.
\end{align}
This implies conditionally on the event $\sqrt{n/2}(\tilde c_n - c) \in K^{\delta/2}$ for $n\rightarrow \infty$ that 
\begin{align}
  \sup_{f\in \FC}\max_{i = 1, \dots, p}|\Gproc_n^\mu(f^{(c + 2h_i/\sqrt{n}) (c +2 h_i/\sqrt{n}) cc}- f^{c c c c})| \pto 0.\label{eq:ConvP_Term2}
\end{align}

Concluding, for any $\eps>0$ it follows for $\delta\coloneqq \eps/32>0$ from \eqref{eq:WeakConvergenceBoundCn}--\eqref{eq:ConvP_Term2} that
\begin{align*}
  \;&\limsup_{n\rightarrow \infty}\mathbb{P}\left(\sup_{f\in \FC}|\Gproc_n^\mu(f^{\tilde c_n \tilde c_n}- f^{c c})|>\eps\right) \\%\leq \eps\\
   \leq \;&\limsup_{n\rightarrow \infty}\left(\mathbb{P}\left(\sup_{f\in \FC}|\Gproc_n^\mu(f^{\tilde c_n \tilde c_n}- f^{c c})|>\eps,\sqrt{n/2}(\tilde c_n - c) \in K^{\delta/2} \right) + \mathbb{P}\left(\sqrt{n/2}(\tilde c_n - c) \not\in K^{\delta/2}\right)\right)\\
   \leq \;& \limsup_{n\rightarrow \infty}\mathbb{P}\left(\sup_{f\in \FC}|\Gproc_n^\mu(f^{\tilde c_n \tilde c_n}- f^{c c})|>\eps,\sqrt{n/2}(\tilde c_n - c) \in K^{\delta/2} \right) + \eps\\
   \leq \;& \limsup_{n\rightarrow \infty}\mathbb{P}\left(\sup_{f\in \FC^{\tilde c_n \tilde c_n}}|\Gproc_n^\mu(f^{\tilde c_n\tilde c_n}- f^{c c})|>\eps/2,\sqrt{n/2}(\tilde c_n - c) \in K^{\delta/2} \right) \\
   +& \limsup_{n\rightarrow \infty}\mathbb{P}\left(\sup_{f\in \FC}|\Gproc_n^\mu(f^{\tilde c_n \tilde c_n cc}- f^{c c c c})|>\eps/2,\sqrt{n/2}(\tilde c_n - c) \in K^{\delta/2} \right) + \eps\\
   \leq \;& \limsup_{n\rightarrow \infty}\mathbb{P}\left(\sup_{f\in \FC^{\tilde c_n \tilde c_n}}\max_{i = 1,\dots,p} |\Gproc_n^\mu(f^{(c + 2h_i/\sqrt{n}) (c +2 h_i/\sqrt{n})}- f^{c c})|>\eps/4,\sqrt{n/2}(\tilde c_n - c) \in K^{\delta/2} \right) \\
   +& \limsup_{n\rightarrow \infty}\mathbb{P}\left(\sup_{f\in \FC}\max_{i = 1, \dots, p}|\Gproc_n^\mu(f^{(c + 2h_i/\sqrt{n}) (c +2 h_i/\sqrt{n}) cc}- f^{c c c c})|>\eps/4,\sqrt{n/2}(\tilde c_n - c) \in K^{\delta/2} \right) + \eps= \eps,
\end{align*}
which shows the convergence in probability of $\sup_{f\in \FC}|\Gproc_n^\mu(f^{\tilde c_n \tilde c_n}- f^{c c})|$ to zero. 
 We thus conclude the convergence in probability for both terms of \eqref{eq:UpperBoundConvergenceInProb}. 
An analogous argument yields the convergence $\sup_{f\in \FC}|\Gproc_n^\nu(f^{c_n}- f^{c})|\pto 0$ for $n\rightarrow \infty$, where we apply Lemma 2.1 of \cite{hundrieser2022empirical} to obtain
 \begin{align*}
  \sup_{n \in \NN} \log \NC(\eps, \FC^{\tilde c_n}\cup \FC^c, \norm{\cdot}_\infty)&\leq \sup_{n \in \NN}\left(\log \NC(\eps, \FC^{\tilde c_n}, \norm{\cdot}_\infty)+ \log\NC(\eps, \FC^{c}, \norm{\cdot}_\infty) \right)\\
  &= \sup_{n \in \NN}\left(\log\NC(\eps, \FC^{\tilde c_n\tilde c_n}, \norm{\cdot}_\infty) + \log\NC(\eps, \FC^{c c}, \norm{\cdot}_\infty)\right)\\
  & \leq \sup_{n \in \NN}2\log\NC(\eps, \FC^{\tilde c_n\tilde c_n}\cup \FC^{c c}, \norm{\cdot}_\infty)\lesssim \eps^{-\alpha} \quad \text{ for $\alpha<2$},
\end{align*}
which overall verifies \ref{ass:BTwo} of \Cref{thm:AbstractMainResult}. 

For $(ii)$ note by \citet{bucher2019note} and since $k= k(n) = \oh(n)$ for $n\rightarrow \infty$ that 
$$
	\sqrt{k}(\ctildeboot - c) =  \sqrt{k}(\ctildeboot - \cboot) + \sqrt{k}(\cboot - c_n) + \sqrt{\frac{k}{n}} \sqrt{n}(c_n- c) \dto \Gproc^c.
$$
Likewise, it follows for $n\rightarrow \infty$ that $\sqrt{k}(\muboot - \mu)\dto \Gproc^\mu$ in $\ell^\infty(\FC^{cc})$,  $\sqrt{k}(\nuboot - \nu)\dto \Gproc^\nu$ in $\ell^\infty(\FC^{c})$. This means that we can pursue a similar proof strategy as for $(i)$. Define $\Gproc_{n,k}^\mu \coloneqq \sqrt{k}(\muboot - \mu)$ and $\Gproc_{n,k}^\nu \coloneqq \sqrt{k}(\nuboot - \nu)$. %
 Then, we infer from \Cref{lem:CtrafoLipschitz} that
\begin{align}
   \sup_{f \in \FC}|\Gproc^\mu_{n,k}(f^{\cboot \cboot} - f^{cc})| &\leq   \sup_{f \in \FC}|\Gproc^\mu_{n,k}(f^{\cboot \cboot} - f^{\ctildeboot \ctildeboot})| +   \sup_{f \in \FC}|\Gproc^\mu_{n,k}(f^{\ctildeboot \ctildeboot} - f^{cc})|\notag\\
  &\leq 4\sqrt{k} \norm{\cboot - \ctildeboot}_\infty + \sup_{f\in \FC}|\Gproc_{n,k}^\mu(f^{\ctildeboot \ctildeboot}- f^{c c})|,\label{eq:UpperBoundConvergenceInProb2}
\end{align}
where the first term converges for $n\rightarrow \infty$ in probability to zero. By \citet[Proposition 1.34]{santambrogio2015optimal} we obtain that
\begin{align*}
\sup_{f\in \FC}|\Gproc_{n,k}^\mu(f^{\ctildeboot \ctildeboot}- f^{c c})|
&\leq \sup_{f\in \FC^{\ctildeboot \ctildeboot}}|\Gproc_{n,k}^\mu(f^{\ctildeboot \ctildeboot}- f^{c c})| + \sup_{f\in \FC}|\Gproc_{n,k}^\mu(f^{\ctildeboot  \ctildeboot  cc}- f^{c c c c})|.
\end{align*}
Moreover, by analogous arguments to those for $(i)$ we obtain with probability at least $1-\eps$ for $n$ sufficiently large that
\begin{align}\label{eq:prop36boot1}
\sup_{f\in \FC^{\ctildeboot \ctildeboot}}|\Gproc_{n,k}^\mu(f^{\ctildeboot \ctildeboot}- f^{c c})|
&\leq  8 \delta + \sup_{f\in \FC^{\ctildeboot \ctildeboot}}\max_{i = 1,\dots,p} |\Gproc_{n,k}^\mu(f^{(c + 2h_i/\sqrt{k}) (c +2 h_i/\sqrt{k})}- f^{c c})|
\end{align}
as well as
\begin{align}\label{eq:prop36boot2}
\sup_{f\in \FC}|\Gproc_{n,k}^\mu(f^{\ctildeboot \ctildeboot cc}- f^{c c c c})| &\leq 8\delta + \sup_{f\in \FC}\max_{i = 1, \dots, p}|\Gproc_{n,k}^\mu(f^{(c + 2h_i/\sqrt{k}) (c +2 h_i/\sqrt{k}) cc}- f^{c c c c})|. 
\end{align}
Next, we verify that the suprema on the right-hand sides of \eqref{eq:prop36boot1} and \eqref{eq:prop36boot2} converge (unconditionally with respect to the $\mu_n$ but conditionally on the set with probability at least $1-\eps$)  to zero. We note by Dudley's entropy integral for the bootstrap empirical process $\sqrt{k}(\muboot-\mu_n)$ and the empirical process $\sqrt{n}(\mu_n-\mu)$ as well as our previous considerations that
\begin{align*}
&\textcolor{white}{\lesssim} \EE\left[ \sup_{f\in \FC^{\ctildeboot \ctildeboot}}\max_{i = 1,\dots,p}  |\Gproc_{n,k}^\mu(f^{(c + 2h_i/\sqrt{k}) (c + 2h_i/\sqrt{k})}- f^{c c})|\right]\\
&= 
\sum_{i = 1}^p \EE_{\mu_n}\left[\EE_{\muboot}\left[\sup_{f\in \FC^{\ctildeboot \ctildeboot}} |\sqrt{k}(\muboot - \mu_{n})(f^{(c + 2h_i/\sqrt{k}) (c + 2h_i/\sqrt{k})}- f^{c c})|\Bigg|\ \mu_n\right]\right] \\
&\quad + \sqrt{\frac{k}{n}}\EE\left[\sup_{f\in \FC^{\ctildeboot \ctildeboot}}|\Gproc_n^\mu(f^{(c +2 h_i/\sqrt{k}) (c +2 h_i/\sqrt{k})}- f^{c c})|\right]\\
&\lesssim \sum_{i = 1}^p \EE_{\mu_n}\int_{0}^{4\norm{h_i}_\infty/\sqrt{k}}\! \! \!\!  \sqrt{\log\left(\NC(\eps,  \FC^{\ctildeboot \ctildeboot}, \norm{\cdot}_\infty)\right)} d\eps + \sqrt{\frac{k}{n}} \int_{0}^{4\norm{h_i}_\infty/\sqrt{k}}\! \! \! \! \sqrt{\log\left(\NC(\eps,  \FC^{\ctildeboot \ctildeboot}, \norm{\cdot}_\infty)\right)} d\eps \\
&\lesssim \sum_{i = 1}^p \left( 1 + \sqrt{\frac{k}{n}}\right)\int_{0}^{4\norm{h_i}_\infty/\sqrt{k}}\eps^{-\alpha/2} d\eps \lesssim \sum_{i = 1}^p \left( 1 + \sqrt{\frac{k}{n}}\right)\left(\norm{h_i}_\infty/\sqrt{k}\right)^{1-\alpha/2},
\end{align*}
which tends to zero for $n \rightarrow \infty$ with $k= k(n)= \oh(n)$   since the hidden constants do not depend on $n, k$. 
 Recalling the definition of the function class $\tilde{\GC}_k$ in \eqref{eq:prop36bootfc} with $n$ replaced by $k$, we obtain
\begin{align*}
&\textcolor{white}{\lesssim} \EE\left[ \sup_{f\in \FC}\max_{i = 1, \dots, p}|\Gproc_{n,k}^\mu(f^{(c + 2h_i/\sqrt{k}) (c +2 h_i/\sqrt{k}) cc}- f^{c c c c})| \right]=\EE\left[\sup_{f\in\tilde{\GC}_k}\left|\Gproc_{n,k}^\mu(f)\right|\right].
\end{align*}
Hence, Dudley's entropy integral in combination with our previous considerations yields
\begin{align*}
  \EE\left[\sup_{f\in\tilde{\GC}_k}\left|\Gproc_{n,k}^\mu(f)\right|\right]
\leq \;& \EE_{\mu_n}\left[\EE_{\muboot}\left[\sup_{f\in\tilde{\GC}_k} |\sqrt{k}(\muboot - \mu_{n})(f)|\Bigg|\ \mu_n\right] \right]
+\sqrt{\frac{k}{n}}\EE\left[\sup_{f\in\tilde{\GC}_k} |\Gproc_n^\mu(f)|\right]\\
\lesssim \;&\left(1+\sqrt{\frac{k}{n}}\right)\int_{0}^{\max_{i = 1, \dots, p}4\norm{h_i}_\infty/\sqrt{k}} \sqrt{\log\left(\NC(\eps, \FC^{c c}+[-B,B], \norm{\cdot}_\infty)\right)} d\eps\notag\\
\lesssim \;&\left(1+\sqrt{\frac{k}{n}}\right)\left(\max_{i = 1, \dots, p}\norm{h_i}_\infty/\sqrt{k}\right)^{1-\alpha/2},
\end{align*}
which goes to zero for $n,k(n) \rightarrow \infty$ with $k(n)= \oh(n)$ (the hidden constants are independent of $n, k$). 

Using the same arguments as in $(i)$, we conclude that $$\sup_{f\in \FC}|\Gproc_{n,k}^\mu(f^{\ctildeboot \ctildeboot}- f^{c c})|\pto 0.$$
Finally, analogous arguments yield that $ \sup_{f \in \FC}|\Gproc^\nu_{n,k}(f^{\cboot } - f^{c})|\pto 0$, thus showing $(ii)$.
\end{proof}

\subsection{Proof of Corollary \ref{cor:SufficientConditionsSupDeterministic}} 
Define the random variables $$\tilde c_n \coloneqq \begin{cases} c &\text{ if } n < N,\\ c_n & \text{ if } n \geq N,
\end{cases} \quad \text{ and } \quad \ctildeboot \coloneqq \begin{cases} c &\text{ if } n < N \text{ or } k<K,\\ \cboot & \text{ if } n \geq N \text{ and } k\geq K.
\end{cases}$$
  By \Cref{prop:DonskerProperty} the cost estimators $\tilde c_n$ and $\ctildeboot$ satisfy the entropy bounds in \eqref{eq:EntropyCondition} and   \eqref{eq:EntropyConditionBS}. Tightness of $N$ and $K$ implies that $\sqrt{n}\norm{\tilde c_n - c_n}_\infty\pto 0$  and $\sqrt{k}\|\ctildeboot - \cboot\|_\infty \pto 0$ for $n,k\rightarrow \infty$, which asserts the claim by \Cref{prop:AbstractB2_CoveringNumbers}. \qed

\subsection{Proof of Corollary \ref{cor:SufficientConditionsSupConditions}} 
  By Assumption \ref{ass:AThree} it follows that $\sqrt{nm/(n+m)}(c_{n,m} - c)\dto \Gproc^c$, whereas under \ref{ass:AThreeStar} we infer from \citet{bucher2019note} and $k = \oh(n)$ that $\sqrt{k}(\cboot - c)\dto \Gproc^c$ unconditionally. In what follows we state the arguments for \ref{ass:BTwo}; for \ref{ass:BTwoStar} a similar proof strategy applies by replacing the empirical costs process by the bootstrap cost process. 

  First, assume without loss of generality that the population cost function fulfills $\norm{c}_\infty\leq 1$. Then, for all three settings of \Cref{prop:DonskerProperty} it follows that $\log \NC(\eps, \FC^{cc}, \norm{\cdot}_\infty) \lesssim \eps^{-\alpha}$ with $\alpha<2$. 
  
  For setting $(i)$ we set $\tilde  c_{n,m}\coloneqq \Psi_{\textup{bdd}}(c_{n,m})$, for $\Psi_{\textup{bdd}}$ defined in \Cref{subsec:RegELBdd}. Since $\|\tilde c_{n,m}\|_\infty \leq 2$ and $\FC=\FC(2\norm{c}_\infty+1, 2w)$ is uniformly bounded by $6$, we obtain that $\FC^{\tilde c_{n,m}}$ is uniformly bounded by $8$. By \Cref{thm:RegElevationAbstract} and \ref{prop:RegEl:Bdd} both conditions of \Cref{prop:AbstractB2_CoveringNumbers}$(i)$ are met, asserting \ref{ass:BTwo}. 
  
  For setting $(ii)$ we take $\tilde c_{n,m}\coloneqq \Psi_{\textup{mod}}^{\tilde d_\XC}\circ \Psi_{\textup{bdd}}(c_{n,m})$ for $\Psi_{\textup{mod}}^{\tilde d_\XC}$ from \Cref{subsec:RegELMod}. Then, $\|\tilde c_{n,m}\|_\infty \leq 2$ and $\FC^{\tilde c_{n,m}}$ is uniformly bounded by $8$. Moreover, by Assumption $(ii)$' it follows with Proposition~\ref{thm:RegElevationAbstract} and \ref{prop:RegEl:Mod} that $\sqrt{nm/(n+m)}\|\tilde c_{n,m}-c_{n,m}\|_\infty\pto 0$ and that $$\sup_{n\in \NN}\log\NC(\eps, \FC^{\tilde c_{n,m} \tilde c_{n,m}}, \norm{\cdot}_\infty) \lesssim \NC(\eps/8, \XC, \tilde d_\XC)|\log(\eps)|\lesssim \eps^{-\beta}|\log(\eps)|\lesssim \eps^{-2 + (2-\beta)/2},$$
  where we used the covering number assumption on $\XC$. \ref{ass:BTwo} then follows from \Cref{prop:AbstractB2_CoveringNumbers}$(i)$.

  For setting $(iii)$ define $c_i \in C(\UC_i\times \YC)$ as  $c_i(u,y) \coloneqq c(\zeta_i(u),y)$. We consider $\tilde c_{n,m}\coloneqq \Psi_{\textup{com}}(c_{n,m})$ where $\Psi_{\textup{com}}$ denotes the combination (\Cref{subsec:RegElCom}) of regularity elevation functionals $\Psi_i\colon C(\UC_i\times \YC)\rightarrow C(\UC_i\times \YC)$ defined by $\Psi_i =  \Psi_{\textup{mod}}^{\norm{\cdot}^{\gamma_i}}\circ \Psi_{\textup{bdd}}$ from \Cref{subsec:RegELMod} if $\gamma_i \in (0,1]$, and $\Psi_i = \Psi_{\textup{Hol}}^{c_i, {\gamma_i}}\circ \Psi_{\textup{bdd}}$ from \Cref{subsec:RegElHol} if $\gamma_i \in (1,2]$, where we replace $\XC$ by $\UC_i$. 
  Then, by Propositions \ref{prop:RegEl:Mod}, \ref{prop:RegEl:Hol}, and \ref{prop:RegEl:Union} the functional $\Psi$ fulfills the assumptions of \Cref{thm:RegElevationAbstract} and therefore $\sqrt{nm/(n+m)}\|\tilde c_{n,m}-c_{n,m}\|_\infty\pto 0$. Moreover, since for any $\tilde c\in C(\XC\times \YC)$ it holds that  $\|\Psi(\tilde c)\|_{\infty}<C$ for a deterministic constant $C\geq 0$ that only depends on the functions $c_i$ and the spaces $\UC_i$, it follows that $\FC^{\Psi(\tilde c)}$ is uniformly bounded by $C+6$ and therefore %
  \begin{align*}
    \sup_{n\in \NN}\log\NC(\eps, \FC^{\tilde c_{n,m} \tilde c_{n,m}}, \norm{\cdot}_\infty) &\lesssim \sum_{i=1}^I \sup_{\tilde c_i\in C(\UC_i\times \YC)} \log \NC(\eps, \FC^{\tilde c_{n,m} \Psi_i(\tilde c_i)}, \norm{\cdot}_\infty)\lesssim \max_{i = 1, \dots, I} \eps^{-d_i/\gamma_i},
  \end{align*}
  where we use for the first inequality  \Cref{prop:RegEl:Union}, and for the second we employ the bounds from Proposition \ref{prop:RegEl:Mod} with $\NC(\eps, \UC_i, \norm{\cdot}^{\gamma_i})\lesssim \eps^{-d_i/\gamma_i}$ for $0<\gamma_i\leq 1$ and Proposition \ref{prop:RegEl:Hol} for $1<\gamma_i\leq 2$. The assertion then follows by an application of Proposition \ref{prop:AbstractB2_CoveringNumbers}$(i)$. \qed

\subsection{Proof of Lemma \ref{lem:ParameterBoundMetricEntropy}}\label{subsubsec:ParameterBoundMetricEntropyProof}
	For $\eps>0$ suppose that the right-hand side is finite since otherwise the claim is vacuous. 
	Set $k = \NC(\eps/4, \Theta, d_\Theta)$ and let $\{\theta_1, \dots, \theta_{k}\}$ be a minimal $\eps/4$-covering of $\Theta$. Further, for each $i= 1, \dots, k$ let $\{f^{i}_{1},\dots, f^{i}_{k_i}\}$ be a minimal $\eps/2$-covering of $\FC^{c_{\theta_i}c_{\theta_i}}$, i.e., $k_i = \NC\left(\eps/2, \FC^{c_{\theta_i} c_{\theta_i}}, \norm{\cdot}_\infty\right)$. Once we show that $\FC_\XC(\eps) \coloneqq \bigcup_{i=1}^{k}\{f^{i}_{1},\dots, f^{i}_{k_i}\}$ is an $\eps$-covering for $\bigcup_{\theta\in \Theta} \FC^{c_\theta c_\theta}$ and that $\FC_\YC(\eps) \coloneqq \bigcup_{i=1}^{k}\{(f^{i}_{1})^{c_{\theta_i}},\dots, (f^{i}_{k_i})^{c_{\theta_i}}\}$ is an $\eps$-covering for $\bigcup_{\theta\in \Theta} \FC^{c_\theta}$ the claim follows, since 
	\begin{align*}
	|\FC_\YC(\eps)|\leq   |\FC_\XC(\eps)| &= \sum_{i = 1}^{k}  \NC\left(\frac{\eps}{2},  \FC^{c_{\theta_i} c_{\theta_i}}, \norm{\cdot}_\infty\right) \leq \NC\left(\frac{\eps}{4}, \Theta, d_\Theta\right) \sup_{\theta\in \Theta}  \NC\left(\frac{\eps}{2}, \FC^{c_\theta c_\theta}, \norm{\cdot}_\infty\right).
	\end{align*}
	
	Hence, let $\theta\in \Theta$ and $f\in \FC^{c_\theta c_\theta}$, and choose $\tilde f\in \FC$ with $f = \tilde f^{c_\theta c_\theta}$. Select $\theta_i$ with $d_{\Theta}(\theta, \theta_i)\leq \eps/4$ and choose $f^{i}_{l_i}\in \FC_\XC(\eps)$ such that $\norm{f^{i}_{l_i} - \tilde f^{c_{\theta_i} c_{\theta_i}}}_\infty \leq \eps/2$. Now, by Lipschitzianity of the cost in $\theta$ and \Cref{lem:CtrafoLipschitz} we infer $\norm{\tilde f^{c_{\theta_i} c_{\theta_i}} - \tilde f^{c_{\theta} c_{\theta}}}_\infty\leq 2 d_\Theta(\theta, \theta_i) \leq \eps/2$, and it follows that \begin{align*}
	\norm{f^{i}_{l_i} - f}_\infty = \norm{f^{i}_{l_i} - \tilde f^{c_{\theta} c_{\theta}}}_\infty \leq \norm{f^{i}_{l_i} - \tilde f^{c_{\theta_i} c_{\theta_i}}}_\infty + \norm{\tilde f^{c_{\theta_i} c_{\theta_i}} - \tilde f^{c_{\theta} c_{\theta}}}_\infty \leq \eps,
	\end{align*}
	which verifies that $\FC_\XC(\eps)$ is an $\eps$-covering of $\cup_{\theta\in \Theta} \FC^{c_\theta c_\theta}$. 
	
	Moreover,  for any $f\in \FC^{c_\theta}$ there exists $\tilde f\in \FC$ with $f = \tilde f^{c_\theta}$ and by \citet[Proposition 1.34]{santambrogio2015optimal} it follows that $\tilde f^{c_\theta} = \tilde f^{c_\theta c_\theta c_\theta}$. Hence, upon selecting $f^{i}_{l_i}\in \FC_\XC(\eps)$ as above, we find by \Cref{lem:CtrafoLipschitz} that 
	$$\norm{(f^{i}_{l_i})^{c_{\theta_i}} - \tilde f^{c_{\theta_i}}}_\infty=\norm{(f^{i}_{l_i})^{c_{\theta_i}} - \tilde f^{c_{\theta_i}c_{\theta_i} c_{\theta_i}}}_\infty \leq \norm{f^{i}_{l_i} - \tilde f^{c_{\theta_i} c_{\theta_i}}}_\infty\leq \eps/2.$$
	Again invoking \Cref{lem:CtrafoLipschitz} yields $\norm{\tilde f^{c_{\theta_i}} - \tilde f^{c_{\theta}}}_\infty\leq d(\theta, \theta_i) \leq \eps/4$. 
	Consequently, we find that 
	\begin{align*}
	\norm{(f^{i}_{l_i})^{c_{\theta_i}} - f}_\infty = \norm{(f^{i}_{l_i})^{c_{\theta_i}} - \tilde f^{c_{\theta}}}_\infty \leq \norm{(f^{i}_{l_i})^{c_{\theta_i}}- \tilde f^{c_{\theta_i} }}_\infty + \norm{\tilde f^{c_{\theta_i}} - \tilde f^{c_{\theta}}}_\infty \leq \frac{3\eps}{4}\leq \eps,
	\end{align*}
	which proves that $\FC_\YC(\eps)$ is an $\eps$-covering of $\cup_{\theta\in \Theta} \FC^{c_\theta}$ and finishes the proof. \qed
\newpage

\section{Proofs for Section \ref{sec:Applications}: Applications}\label{sec:ProofsApplications}

\subsection{Proof of Lemma \ref{lem:PropRegularityOneSampleGOFCost}}
Select $U$ as the pre-image of $\{\tilde g\in C(\XC, \RR^d) \colon\!\|\tilde g -  g^{-1}_{\vartheta^o}\|_{\infty}< 1\}$ under $K_\Theta$, which is open (relative) in $\Theta$ due to continuity. Hence, by compactness of $\XC$, the collection $\{g^{-1}_{\vartheta}\}_{\vartheta\in U}$ is uniformly bounded on $\XC$. 
Invoking the Cauchy-Schwarz inequality and, due to compactness of $\YC$, we infer that $\{C_\Theta(\vartheta)(x,\cdot)\}_{\vartheta \in U, x\in \XC}$ is also uniformly bounded on $\YC$. Further, since $\nabla_y C_\Theta(\vartheta)(x,y) = 2 \big(g_\vartheta^{-1}(x) - y\big)$ for $y\in \interior{\YC}$ the collection $\{\nabla_y C_\Theta(\vartheta)(x,\cdot)\}$ is bounded on $\YC$ uniformly over $\vartheta\in U, x\in \XC$. Finally, note that  $\text{Hess}_y C_\Theta(\vartheta)(x,y)= -2\Id$, independent of $\vartheta\in U, x\in \XC$. Thus, by combining these observations, we conclude the existence of $\Lambda\geq 0$ such that the $(2, \Lambda)$-H\"older regularity is met. \qed

\subsection{Proof of Lemma \ref{prop: cstGoF}}
To establish the Hadamard differentiability of $C_\Theta$ at $\vartheta^o$ note that 
\begin{align*}
 &
 \left\lVert \frac{C_\Theta(\vartheta^o + t_n h_n)-C_\Theta(\vartheta^o)}{t_n} - D^H_{|\vartheta^o}{C_\Theta}(h) \right\rVert_{\infty}\\ 
 =  &\sup_{(x,y)\in \XC\times \YC}\left|\frac{1}{t_n }\left\langle g_{\vartheta^o+t_nh_n}^{-1}(x)-g_{\vartheta^o}^{-1}(x), g_{\vartheta^o+t_nh_n}^{-1}(x)+g_{\vartheta^o}^{-1}(x)-2y\right\rangle -2 \Big\langle D^H_{\vartheta^o}{K_\Theta}(h)(x),g_{\vartheta^o}^{-1}(x) - y\Big\rangle\right|\\
 \leq   &\sup_{(x,y)\in \XC\times \YC}\left(\left|2\left\langle \frac{1}{t_n} \left(g_{\vartheta^o+t_nh_n}^{-1}(x)-g_{\vartheta^o}^{-1}(x)\right) -  D^H_{\vartheta^o}{K_\Theta}(h)(x), g_{\vartheta^o}^{-1}(x)-y\right\rangle\right|+\frac{1}{t_n }\left\|  g_{\vartheta^o+t_nh_n}^{-1}(x)-g_{\vartheta^o}^{-1}(x)\right\|^2\right).
\end{align*}
For $n\rightarrow \infty$, the first term tends to zero by Hadamard differentiability of $K_\Theta$ whereas the second term tends to zero by \ref{assum: StabParamGroup}. Hence,  $C_\Theta$ is Hadamard differentiable at $\vartheta^o$. The second assertion follows from the functional delta method for Hadamard differentiable functionals \citep{Roemisch04}. \qed

\subsection{Proof of Proposition \ref{prop:gouplimit}}
First note that, in comparison to Sections \ref{sec:MainResults} and \ref{subsec:Assumptions}, the roles of $\XC$ and $\YC$ are interchanged. The universal Donsker property of $\FC^{C_{\Theta}(\vartheta^o)}$ follows from \Cref{prop:DonskerProperty}$(iii)$ since $d\leq 3$ and $C_\Theta(\vartheta^o)(x, \cdot)$ is $(2,\Lambda)$-H\"older for some $\Lambda\geq 0$ uniformly in $x\in \XC$ (\Cref{lem:PropRegularityOneSampleGOFCost}). Moreover, note by measurability of $\vartheta_n$ and continuity of $C_\Theta$ near $\vartheta^o$ that $c_n$ is also measurable. 
By joint weak convergence \eqref{eq: JointMeasParam} we infer from Hadamard differentiability of $C_\Theta$ at $\vartheta^o$ (\Cref{prop: cstGoF}) using the functional delta method that the one-sample version of \ref{ass:AThree} (recall \Cref{rmk:CommentsOTWeaklyConvergingCosts}\ref{rem:WeaklyConvergingCosts_OneSample}) is fulfilled. Further, since $\vartheta_n \pto \vartheta^o$, as $n$ tends to infinity,  we infer from \Cref{cor:SufficientConditionsSupConditions} and \Cref{lem:PropRegularityOneSampleGOFCost} that the one-sample version of \ref{ass:BTwo} is also met. The assertion now follows at once from \Cref{thm:AbstractMainResult}. \qed

\subsection{Proof of Proposition \ref{prop:OTInvariancesLimit}}
Note that by assumption, $(\mathcal{T},d_\mathcal{T})$ and $c$ fulfill the requirements of \Cref{thm:OTProcessInf}. Furthermore, note that Assumption \ref{ass:Don} can be established via \Cref{prop:DonskerProperty_ParameterClass} and Assumption \ref{ass:KPU} is implied by the assumptions on the support of $\mu$ and $\nu$ \citep[Corollary 2]{Staudt2022Unique}. Hence, the statement follows from \Cref{thm:OTProcessInf}. \qed

\subsection{Proof of Proposition \ref{prop:sliced OT}}
  Select  $\XC \subseteq \RR^d$ as a compact set which contains the supports of $\mu$ and $\nu$. 
  Note that  $\mathbb{S}^{d-1}$ is a compact Polish space and consider the Lipschitz map $c_{\mathbb{S}^{d-1}}:(\mathbb{S}^{d-1}, \norm{\cdot})\to C(\X\times\X)$, $\theta\mapsto c_\theta|_{\XC\times \XC}$ whose modulus  depends on $\XC$ and $p$. By compactness of $\XC$ and $\mathbb{S}^{d-1}$ it thus follows from the Theorem of Arzel\`a-Ascoli that $\{c_\theta|_{\XC\times \XC}\}_{\theta\in \mathbb{S}^{d-1}}$ is uniformly bounded and equicontinuous with a uniform modulus. Therefore, upon choosing the function class $\FC$ as in \Cref{thm:OTProcessCty},  Assertion $(i)$ follows by  \Cref{thm:OTProcessCty} once we verify that Assumption \ref{ass:Don} is fulfilled.
  To this end, note that $\log N(\eps,\mathbb{S}^{d-1},\norm{\cdot} )\lesssim |\log(\eps)|$. 
  Moreover, define for $\theta\in\mathbb{S}^{d-1}$ the pseudo metric $\tilde{d}_{\theta,\X}(x,y)=|\theta^Tx-\theta^Ty|$ on $\XC$ which fulfills  $\sup_{\theta\in\mathbb{S}^{d-1}}\NC(\eps,\XC,\tilde{d}_{\theta,\X})\lesssim\eps^{-1}$ and for any $x,x', y\in \XC$,
    \[\left|c_\theta(x,y)-c_\theta(x',y)\right|\leq p \, \diam(\mathfrak{p}_\theta(\XC))^{p-1}|\theta^Tx-\theta^Tx'|\leq  p \, \diam(\XC)^{p-1} \tilde{d}_{\theta,\X}(x,x').\] 
    Since the upper bound for the Lipschitz modulus does not depend on $\theta$, \Cref{prop:DonskerProperty_ParameterClass}$(ii)$ is applicable and we conclude that  $\bigcup_{\theta\in \mathbb{S}^{d-1}} \FC^{c_\theta c_\theta}$ and $\bigcup_{\theta\in \mathbb{S}^{d-1}} \FC^{c_\theta}$ are universal Donsker. By applying the continuous mapping theorem \citep[Theorem 1.11.1]{van1996weak} for the integration operator over $\mathbb{S}^{d-1}$ we obtain Assertion~$(ii)$. Finally, Assertion $(iii)$ follows from \Cref{thm:OTProcessSup}. \qed

    \subsection{Proof of Proposition \ref{prop:gateaux}}
    Since $\X\times \YC$ is compact and by continuity of $c$ and $\Delta^c$ there exists a common modulus of continuity $w$ for $\{c_t(\cdot, y)\}_{y\in \YC, t \in [0,1]}$. Hence, for any $t \in [0,1]$ we have $c,c_t\in C(\norm{c}_\infty + \norm{\Delta^c}_\infty+1,w)$ (see \Cref{lem:LowerUpperBound} for the definition of $C(\cdot,\cdot)$). Consequently, we infer by \Cref{lem:LowerUpperBound} the inequalities, %
    \begin{align*}
    &\frac{1}{t}\left( \inf_{\pi\in \Pi^\star_{c_t}(\mu_t, \nu_t)}\pi(t\Delta^{c}) + \sup_{f \in S\!_c(\mu,\nu)} t\Delta^\mu(f^{cc}) +  t\Delta^\nu(f^c)\right)\notag\\
    \leq&\frac{1}{t}(OT(\mu_t, \nu_t,c_{t})-OT( \mu, \nu,c))\\
    \leq&\frac{1}{t}\left(\inf_{\pi \in \Pi_c^\star(\mu,\nu)} \pi (t\Delta^c) +\sup_{f \in S_{\!c_t}(\mu_t, \nu_t)} t\Delta^\mu (f^{cc}) +   t\Delta^\nu (f^c)  + \sup_{f \in \FC} t\Delta^\mu(f^{ c_t c_t} - f^{cc}) +  t\Delta^\nu(f^{ c_t} - f^{c})\right).
    \end{align*}
    Next, we observe that $\Delta^\mu=\tilde{\mu}-\mu$ for some $\tilde{\mu}\in\PC(\X)$. This yields using Lipschitzianity under cost transformations with respect to the cost function (\Cref{lem:CtrafoLipschitz}) that
     \[\sup_{f\in \FC}\left|\Delta^\mu(f^{ c_tc_t} - f^{cc})\right|=\sup_{f\in \FC}\left|(\tilde{\mu}-\mu)(f^{ c_tc_t} - f^{cc})\right|\leq 4\norm{c_t-c}_\infty=4t\norm{\Delta^c}_\infty\xrightarrow{t\to 0}0.\]
    Likewise, it follows that  $|\sup_{f\in \FC}\Delta^\nu(f^{ c_t} - f^{c})|\to 0$ for $t\to0$. Finally, since the pair $(\mu_t, \nu_t)$ weakly converges for $t\searrow0$ to $(\mu, \nu)$ it follows by \Cref{lem:ContinuityResults} that
    \begin{align*}
      &\liminf_{t\searrow 0} \!\inf_{\pi \in \Pi_{c_t}^\star(\mu_t,\nu_t)} \pi (\Delta^c) +\sup_{f \in S_{\!c}(\mu, \nu)} \Delta^\mu(f^{cc}) +  \Delta^\nu (f^c)\\ %
   \geq & \inf_{\pi \in \Pi_c^\star(\mu,\nu)} \pi(\Delta^c) +\sup_{f \in S_{\!c}(\mu,\nu)} \Delta^\mu(f^{cc}) +  \Delta^\nu(f^c) \\ 
   \intertext{ as well as}
       &\limsup_{t\searrow 0} \!\inf_{\pi \in \Pi_c^\star(\mu,\nu)} \pi (\Delta^c) +\sup_{f \in S_{\!c_t}(\mu_t, \nu_t)} \Delta^\mu(f^{cc}) +  \Delta^\nu (f^c)\\ %
    \leq & \inf_{\pi \in \Pi_c^\star(\mu,\nu)} \pi(\Delta^c) +\sup_{f \in S_{\!c}(\mu,\nu)} \Delta^\mu(f^{cc}) +  \Delta^\nu(f^c), 
    \end{align*}
    which yields the claim.\qed
\newpage

\section{Proofs for Section \ref{subsec:RegEl}: Regularity Elevation Functionals}
\label{ap: RegElev}
% In this section, we collect the proof for the statements on regularity elevations from \Cref{subsec:RegEl}.
\subsection{Proof of Proposition \ref{thm:RegElevationAbstract}}
  By the functional delta method \citep{Roemisch04} and the assumptions on $\Psi$ and $\LC$ it follows that $$
  a_{n}
  \begin{pmatrix}
  (f_{n} - f)\\
  (\Psi(f_{n}) - f)
  \end{pmatrix}
  \dto 
  \begin{pmatrix}
  \LC\\
  D^H_{f}\Psi(\LC)
  \end{pmatrix}
  \eqd
  \begin{pmatrix}
  \LC\\
  \LC
  \end{pmatrix}
  \quad \text{for } n \rightarrow \infty.$$
  The continuous mapping theorem  \citep[Theorem 1.11.1]{van1996weak} in combination with measurability of the random elements $f_{n}$ and $\Psi(f_{n})$ (due to continuity $\Psi$ near $f$) thus asserts  
  \[
  \pushQED{\qed} 
    a_{n}\big(\Psi(f_{n}) - f_{n}\big)\pto 0 \quad \text{for } n\rightarrow \infty.\qedhere
  \popQED
\]

\subsection{Proof of Proposition \ref{prop:RegEl:Bdd}}
  First note that $\Psi(\tilde c) \in C(\XC\times \YC)$ for any $\tilde c\in C(\XC\times \YC)$ as a concatenation of continuous functions and under $\norm{\tilde c}_\infty<2$ that $\Psi(\tilde c) = \tilde c$, which yields $\Psi(c) = c$.
  In particular, this shows that $\Psi\colon C(\XC\times \YC)\rightarrow C(\XC\times \YC)$ is continuous near $c$. 
   For Hadamard differentiability at $c$ consider a positive sequence $t_n \searrow 0$ and take a converging sequence $(h_n)_{n\in \NN}\subseteq C(\XC\times \YC)$ with limit $h$. Since $h$ is bounded and $\norm{c}_\infty\leq 1$, for $n$ sufficiently large we have  $\norm{c + t_n h_n}_\infty < 2$ and therefore $\Psi(c + t_n h_n) = c +t_n h_n$. We then obtain  \begin{align*}
          & \norm{\frac{\Psi(c + t_n h_n) - \Psi(c)}{t_n}  - h}_\infty = \norm{h_n -h}_\infty  \rightarrow 0. 
    \end{align*}
Finally, since for any $\tilde c\in C(\XC\times \YC)$ it holds that $\|g^{\Psi(\tilde c)}\|_\infty\leq B+2$ where $B\coloneqq \sup_{g\in \GC}\norm{g}_\infty$  we find for a finite space  $\XC$ that 
\[
\pushQED{\qed} 
  \sup_{\tilde c\in C(\XC\times \YC)}\log\NC(\eps, \GC^{\Psi(\tilde c)}, \norm{\cdot}_\infty)\leq |\XC|(\log(B+2)+|\log(\eps)|)\lesssim |\log(\eps)|. \qedhere
  \popQED
\]
\subsection{Proof of Proposition \ref{prop:RegEl:Mod}}
  By condition \eqref{eq:modLipschitz} it follows for $x,x'\in \XC$ with $\tilde d_\XC(x,x')=0$ that $c(x,y)=c(x',y)$, whereas under $\tilde d_\XC(x, x')>0$ we have by $w(\delta)>0$ for  $\delta>0$ that 
\begin{align*}
  c(x,y) \leq c(x',y) + w(\tilde d_\XC(x,x'))<c(x',y) + 2w(\tilde d_\XC(x,x')).
\end{align*}
This asserts for any $(x,y) \in \XC\times \YC$ that
\begin{align*}
  S(c, (x,y)) &\coloneqq \argmin_{x'\in \XC}c(x',y) + 2w\left(\tilde d_\XC(x,x')\right)= \{x'' \in \XC \;|\; \tilde d(x,x'') = 0\},
\end{align*}
and overall yields by $\norm{c}_\infty\leq 1$ that $\Psi(c) = c$. 

For the second and third claim, recall from \Cref{prop:RegEl:Bdd} that $\Psi_{\textup{bdd}}\colon C(\XC\times \YC)\rightarrow C(\XC\times \YC)$ is continuous near $c$ and Hadamard differentiable at $c$ with derivative $\Id_{C(\XC\times \YC)}$. Hence, it  suffices to verify that $\Psi_{\textup{mod}}^{w\circ \tilde d_\XC}$ is continuous near $c$ and Hadamard directionally differentiable with $D^H_{|c}\Psi|_{C(\tilde \XC\times \YC)}= \Id_{C(\tilde \XC\times \YC)}$ for which we rely on \Cref{lem:CtyPsi} and \Cref{thm:DiffPsi}. 
Define the spaces $V\coloneqq C(\XC\times \YC)$, $\FC =  \XC$, $\Theta = \tilde \XC\times \YC$ and the functional $$E^{w\circ \tilde d_\XC}\colon V \times \FC\times \Theta = C(\XC\times \YC)\times   \XC\times ( \tilde \XC\times \YC)\mapsto \RR, \quad (\tilde c, x', (x,y)) \mapsto -\tilde c(x',y) - 2 w(\tilde d_\XC(x,x')).$$ 
For any $\tilde c\in C(\XC\times \YC)$ the function $E^{w\circ \tilde d_\XC}(\tilde c, \cdot, \cdot)\colon  \XC\times ( \tilde \XC\times \YC)\rightarrow \RR$ is continuous as a sum of continuous functions. Further, for any $(x', (x,y))\in \XC\times (\tilde \XC\times \YC)$ note that the function $E^{w\circ \tilde d_\XC}(\cdot,x', (x,y))\colon C(\XC\times \YC)\rightarrow \RR$ is $1$-Lipschitz under uniform norm and that 
$$\Delta_{c}E^{w\circ \tilde d_\XC}(\tilde c, x', (x,y)) \coloneqq  E^{w\circ \tilde d_\XC}(\tilde c  + c,x', (x,y)) - E^{w\circ \tilde d_\XC}(c,x', (x,y))= -\tilde c(x',y)$$ 
is linear in $\tilde c \in C(\XC\times \YC)$. Hence, by \Cref{lem:CtyPsi} we obtain continuity of the functional 
\begin{align*}
\Psi_{\textup{mod}}^{w\circ \tilde d_\XC} \colon& C(\XC\times \YC) \rightarrow C(\tilde \XC\times \YC), \\&\;\tilde c \mapsto \left( (x,y) \mapsto \inf_{x' \in \XC} \tilde c(x',y) + 2w(\tilde d_\XC(x,x')) = -\sup_{x' \in \XC} E^{w\circ \tilde d_\XC}(\tilde c,x',(x,y))\right).\end{align*} 
Consider the closed sub-vector space $U\coloneqq C(\tilde \XC\times \YC)\subseteq C(\XC\times \YC)$, cf. \Cref{lem:pseudoMetricSpaceCompact}. It remains to show Assumption \textbf{(DC)} of \Cref{thm:DiffPsi}. To this end, note for $h\in C(\tilde \XC\times \YC)$ that $$h(\overline x,y) + 2w(\tilde d_\XC(\overline x,x')) = h(\overline x',y) + 2w(\tilde d_\XC(\overline x',x')) \quad \text{ for any }\overline x, \overline x' \in S(c, (x,y))$$  since $\tilde d_\XC(\overline x, \overline x') = 0$. This implies by \Cref{lem:LSC_con} that \textbf{(DC)} is fulfilled. 
 \Cref{thm:DiffPsi} thus asserts that $\Psi_{\textup{mod}}^{w\circ \tilde d_\XC}$ is Hadamard directionally differentiable at $c$ with derivative given by 
\begin{align*}
 D^H_{| c}\Psi^{w\circ \tilde d_\XC}_{\textup{mod}} \colon& C(\XC\times \YC) \rightarrow C(\tilde \XC\times \YC),\\ &h\mapsto\left( (x,y) \mapsto \!\!\!\!\!\!\inf_{x'\colon \tilde d_\XC(x',x) = 0} \!\!\!\!\!\!h(x',y)  = -  \!\!\!\!\!\!\sup_{x'\colon \tilde d_\XC(x',x) = 0}\!\!\!\!\!\!-\Delta_{c}E^{w\circ \tilde d_\XC}(h, x', (x,y))  \right).\end{align*}
Hence, if $h\in C(\tilde \XC\times \YC)$, then $D^H_{| c}\Psi^{w\circ \tilde d_\XC}_{\textup{mod}}(h) = h$, which yields $D^H_{| c}\Psi^{w\circ \tilde d_\XC}_{\textup{mod}}|_{C(\tilde \XC\times \YC)} = \Id_{C(\tilde \XC\times \YC)}$.

For the last claim note that any $\tilde c\in C(\XC\times \YC)$ fulfills for $(x,y) \in \XC\times \YC$ that 
\begin{align*}
  -\norm{\tilde c}_\infty\leq\Psi^{w\circ \tilde d_\XC}_{\textup{mod}}(\tilde c)(x,y) \leq \tilde c(x,y)\leq \norm{\tilde c}_\infty,
\end{align*}
and hence $\|\Psi(\tilde c)\|_\infty = \|\Psi^{w\circ \tilde d_\XC}_{\textup{mod}}\circ \Psi_{\textup{bdd}}(\tilde c)\|_\infty \leq 2$.
Further, for any $x,x'\in\XC, y\in \YC$ we have
\begin{align}
  		\Psi^{w\circ \tilde d_\XC}_{\textup{mod}}(\tilde c)(x,y)  - \Psi^{w\circ \tilde d_\XC}_{\textup{mod}}(\tilde c)(x', y) &\leq \inf_{x''\in \XC} c(x'',y) + 2w(\tilde d_\XC(x'', x)) - c(x'',y) - 2w(\tilde d_\XC(x'', x')) \notag \\
 		 &\leq 2w(\tilde d_\XC(x,x')),\label{eq:RegElModBounded}
\end{align}
where we used the reverse triangle inequality since $w\circ \tilde d_\XC$ defines a (pseudo-)metric on $\XC$. We thus conclude for any $\tilde c\in C(\XC\times \YC)$ and a bounded function class $\GC$ with $B\coloneqq \sup_{g\in \GC}\norm{g}_\infty<\infty$ from $\norm{\Psi(\tilde c)}_\infty \leq 2$ and \eqref{eq:RegElModBounded} that the elements of $\GC^{\Psi(\tilde c)}$ are bounded by $B+2$ and $2$-Lipschitz under $w\circ \tilde d_\XC$ as an infimum over such $2$-Lipschitz functions. Hence, $\GC^{\Psi(\tilde c)}\subseteq \BL_{(B+2),2}(\XC, w\circ \tilde d_\XC)$ where for the latter class uniform metric entropy bounds are available by \citet[Section~9]{Kolmogorov1961}, asserting for any $\eps>0$
\begin{align*}\pushQED{\qed} 
  \NC(\eps,\BL_{(B+2),2}(\XC, w\circ \tilde d_\XC), \norm{\cdot}_\infty) &= \NC(\eps/2,\BL_{(B+2)/2,1}(\XC, w\circ \tilde d_\XC), \norm{\cdot}_\infty) \\&\lesssim \NC(\eps/8, \XC, w\circ \tilde d_\XC) |\log(\eps)|.\qedhere
  \popQED
\end{align*}

\subsection{Proof of Corollary \ref{thm:RegElModulus}}
  We infer from \Cref{prop:RegEl:Mod} that $\Psi \coloneqq B \cdot \Psi_{\textup{mod}}^{w\circ d_\XC/B}\circ \Psi_{\textup{bdd}}(\cdot/B)$ is continuous near $c$ and Hadamard differentiable at $c$ with derivative $D^H_{|c}\Psi = \Id_{C(\XC\times \YC)}$. Hence, invoking \Cref{thm:RegElevationAbstract} it follows that  $a_n(\overline c_n - c_n)\pto 0$ for $n\rightarrow\infty$. 
  Moreover, by definition of $\Psi_{\textup{bdd}}$ and $\Psi_{\textup{mod}}^{w/B, d_\XC}$ it follows that $\norm{\overline c_n}_\infty \leq 2B$ and that $\overline c_n$ fulfills \eqref{eq:modLipschitz} with $w$ replaced by $2w$. The inclusion now follows at once from \Cref{lem:RelFCandConcave}.\qed

	\subsection{Proof of Proposition \ref{prop:RegEl:Hol}}
  Since $c$ is $(\gamma,1)$-H\"older it follows for any $x,x'\in \XC$ and $y \in \YC$ as in Lemma A.4 of \citet{hundrieser2022empirical} by convexity of $\XC$ that 
  \begin{align*}
    c(x,y) = c(x',y) + \langle \nabla_x c(x',y), x-x'\rangle + R_{x'}(x) \quad \text{ with } \quad |R_{x'}(x)| \leq \sqrt{d}\norm{x-x'}^\gamma,
  \end{align*}
  and consequently, for $x\neq x'$ we obtain
  \begin{align*}
    c(x,y) &< c(x',y) + \langle \nabla_x c(x',y), x-x'\rangle + 2\sqrt{d}\norm{x-x'}^\gamma.
  \end{align*}
  This asserts for any $(x,y)\in \XC\times \YC$ that 
  \begin{align*}
   S(c,(x,y)) &\coloneqq \argmin_{x'\in \XC}c(x',y) + \langle \nabla_x c(x',y), x-x'\rangle + 2\sqrt{d}\norm{x-x'}^\gamma =\{x\}
  \end{align*}
  and yields by $\norm{c}_\infty\leq 1$ that $\Psi(c) = c$.

To show the claim on continuity and  Hadamard differentiability it suffices to verify that $\Psi_{\textup{Hol}}$ is continuous near $c$ and that is Hadamard differentiable at $c$ with derivative $D^H_{|c}\Psi_{\textup{Hol}} = \Id_{C(\XC\times \YC)}$ for which we rely on \Cref{lem:CtyPsi} and \Cref{thm:DiffPsi}. Set $V\coloneqq C(\XC\times \YC)$, $\FC \coloneqq\XC$ and $\Theta \coloneqq \XC\times \YC$ and define the functional 
\begin{align*}
  E_{\textup{Hol}}\colon &V\times \FC\times \Theta = C(\XC\times \YC)\times \XC\times (\XC\times \YC) \rightarrow \RR, \\
  &(\tilde c,x',(x,y))\mapsto -\left(\tilde c(x',y) +  \langle \nabla_x c(x',y), x-x'\rangle + 2\sqrt{d}\norm{x-x'}^\gamma\right).
\end{align*}
For any $\tilde c\in C(\XC\times \YC)$ the functional  $E_{\textup{Hol}}(\tilde c, \cdot, \cdot)\colon \XC\times (\XC\times \YC)\rightarrow \RR$ is continuous by continuity of $\nabla_{x}c(\cdot, \cdot)$ and for any $(x',(x,y))\in \XC\times (\XC\times \YC)$ the functional $E_{\textup{Hol}}(\cdot,x',(x,y))\colon C(\XC\times \YC)\rightarrow \RR$ is $1$-Lipschitz under uniform norm while $$\Delta_{c}E_{\textup{Hol}}(\tilde c, x',(x,y)) = E_{\textup{Hol}}(\tilde c + c,x',(x,y)) - E_{\textup{Hol}}(c,x',(x,y)) = -\tilde c(x',y)$$ is linear in $\tilde c \in C(\XC\times \YC)$. Finally, condition \textbf{(DC)} follows by \Cref{lem:LSC_con} since $S(c,(x,y))=\{x\}$ is a singleton. 
Hence, by \Cref{lem:CtyPsi} and \Cref{thm:DiffPsi} the functional 

$$\Psi_{\textup{Hol}} \colon C(\XC\times \YC) \rightarrow C(\XC\times \YC), \;\tilde c \mapsto \left( (x,y) \mapsto -\sup_{x' \in \XC}  E_{\textup{Hol}}(\tilde c,x',(x,y))\right)$$
is continuous near $c$ and Hadamard differentiable at $c$ with derivative 
$$ D^H_{| c}\Psi_{\textup{Hol}} \colon C(\XC\times \YC) \rightarrow C(\tilde \XC\times \YC), \quad h\mapsto\Big( (x,y) \mapsto h(x,y)  = - \Delta_{c}E_{\textup{Hol}}(h, x',(x,y)) \Big).$$

For the claim on the uniform metric entropy bound let $\tilde c\in C(\XC\times \YC)$, and assume (after application of $\Psi_{\textup{bdd}}$) that $\norm{\tilde c}_\infty \leq 2$. Define the collection of functions $(\tilde E_{x',y})_{x'\in \XC, y\in \YC}$ with 
\begin{align*}
  \tilde  E_{x',y}\colon \XC\rightarrow \RR, \quad x\mapsto \tilde c(x',y) +  \langle \nabla_x c(x',y), x-x'\rangle + 2\sqrt{d}\norm{x-x'}^\gamma,
\end{align*}
which is $(\gamma,2)$-H\"older on $\XC$. Hence, by \citet[Lemma A.5]{hundrieser2022empirical}  there exists another collection $(\tilde E_{x',y}^\sigma)_{x'\in \XC, y\in \YC, \sigma\in (0,1]}$ of smooth functions on $\XC$ such that 
\begin{align}\label{eq:HolderMoreRegularlyBehavingFunction}
  \sup_{\substack{x'\in \XC\\y\in \YC}} \norm{ \tilde  E_{x',y} -  \tilde  E_{x',y}^\sigma}_\infty \leq K \sigma^{\gamma} \quad \text{ and } \quad \sup_{\substack{x'\in \XC\\y\in \YC}} \norm{  \tilde  E_{x',y}^\sigma}_{C^2(\XC)} \leq K \sigma^{\gamma-2},
\end{align} for all $\sigma >0$ and some independent $K>0$. Here, the $C^2(\XC)$-norm of a twice continuously differentiable function $g : \XC\subset\RR^d \to \RR$ is defined as
\[\norm{g}_{C^2(\XC)}\coloneqq \max_{|\beta|\leq2}\norm{D^\beta g}_\infty,~~\text{ where }~~D^\beta g=\partial^{|\beta|} g/\partial x_1^{\beta_1}\cdots x_d^{\beta_d}~\text{ for }\beta\in\NN^d_0.\]
Note that a function with $\norm{g}_{C^2(\XC)}\leq \Gamma$ for $\Gamma>0$ is absolutely bounded by $\Gamma$, it is $\Gamma$-Lipschitz,
and $d\Gamma$-semi-concave (for a formal definition see \cite{albano2002some} or \cite{hundrieser2022empirical}), since the Eigenvalues of its Hessian are bounded by $d \cdot \Gamma$. 
Upon defining $\overline c(x,y)\coloneqq \Psi(\tilde c)(x,y) = \inf_{x'\in \XC}\tilde  E_{x',y}(x)$ and $\overline c^\sigma(x,y)\coloneqq \inf_{x'\in \XC}\tilde  E^\sigma_{x',y}(x)$ we thus obtain from \eqref{eq:HolderMoreRegularlyBehavingFunction} that $\norm{\overline c - \overline c^\sigma}_\infty \leq K \sigma^\gamma$ and that $\overline c^\sigma$ is semi-concave of order $\Gamma(\sigma)\coloneqq d K\sigma^{\gamma - 2}$.
Hence, following along the lines the proofs of Lemma A.4 in \cite{hundrieser2022empirical} we obtain for any $\eps>0$ and $\sigma(\eps) \coloneqq (\eps/4K)^{1/\gamma}$ that 
\begin{align*}
  \!\!\!\!\NC(\eps,\GC^{\overline c}, \norm{\cdot}_\infty)&\leq \NC(\eps/2,\GC^{\overline c^{\sigma(\eps)}}, \norm{\cdot}_\infty)= \NC\left(\frac{\eps}{2\Gamma(\sigma(\eps))},\frac{\GC^{ \overline c^{\sigma(\eps)}}}{2\Gamma(\sigma(\eps))}, \norm{\cdot}_\infty\right)\lesssim \left(\frac{\eps}{2\Gamma(\sigma(\eps))}\right)^{-d/2} \!\!\!\lesssim \eps^{-d/\gamma}.
\end{align*}
Here, we used in the second inequality that $\GC^{\overline c^{\sigma(\eps)}}/2\Gamma(\sigma(\eps))$ is contained in the collection of functions on $\XC$ which are absolutely bounded by $B\geq 0$, Lipschitz with modulus $L\geq 0$ and $1$-semi-concave, where $B$ depends on $\GC$ and $L$ depends on $\XC$, in conjunction with uniform metric entropy bounds by \citet{bronshtein1976varepsilon,guntuboyina2012covering} for convex functions. In particular, since the hidden constants do not depend on $\tilde c$,  the claim follows. \qed

\subsection{Proof of Proposition \ref{prop:RegEl:Union}}

  For the first claim note that $\Psi_i(c_i)= c_i$ for each $i \in \{1, \dots, I\}$ and consequently, it follows for $x\in \zeta_i(\UC_i)$ that $\Psi_i(c_i)(\zeta_i^{-1}(x),y) = c(x,y)$. Hence, since $\sum_{i = 1}^I \eta_i(x) \equiv 1$ it follows that $\Psi(c) = c$. 
  
 	The claim on continuity of $\Psi$ near $c$ follows by continuity of the functionals $\Psi_i\colon C(\UC_i\times \YC)\rightarrow C(\UC_i\times \YC)$ near $c_i$ for each $1 \leq i \leq I$. 
  
  For the claim on Hadamard differentiability of $\Psi$ define for each $i\in \{1, \dots, I\}$ the functionals 
  \begin{align*}
    \Psi_{\textup{com},i}^{1}&\colon C(\XC\times \YC)\rightarrow C(\UC_i\times \YC), \!\!\!\!\!\!&& \tilde c\mapsto \left((u,y)\mapsto \tilde c(\zeta_i(u),y)\right),\\
    \Psi_{\textup{com},i}^{2}&\colon C(\UC_i\times \YC)\rightarrow C(\zeta_i(\UC_i)\times \YC), \!\!\!\!\!\!&& \tilde c\mapsto \left((x,y)\mapsto \tilde c(\zeta_i^{-1}(x),y)\right),
  \end{align*} 
  where both maps assign to the respective spaces of continuous functions since $\zeta_i^{-1}$ and $\zeta_i$ are both continuous. Further, note for any $\tilde c\in C(\XC\times \YC)$ that $\Psi(\tilde c)=\sum_{i = 1}^{I}\eta_i\cdot \Psi_{\textup{com},i}^{2}\circ \Psi_i \circ \Psi_{\textup{com},i}^{1}(\tilde c)$. Both functionals $\Psi_{\textup{com},i}^{1}$ and $\Psi_{\textup{com},i}^{2}$ are Hadamard differentiable at $c_i$ with derivative 
  \begin{align*}
    D^H_{|c_i}\Psi_{\textup{com},i}^{1}&\colon C(\XC\times \YC)\rightarrow C(\UC_i\times \YC), \quad h\mapsto \left( (u,y)\mapsto h(\zeta_i(u),y) \right),\\
    D^H_{|c_i}\Psi_{\textup{com},i}^{2}&\colon C(\UC_i\times \YC)\rightarrow C(\zeta_i(\UC_i)\times \YC), \quad h\mapsto \left( (x,y)\mapsto h(\zeta_i^{-1}(x),y) \right).
  \end{align*}
  By assumption on $\Psi_i$ and chain rule we infer that $\Psi$ is Hadamard differentiable at $c$ with derivative 
  \begin{align*}
    \!\!\!D^H_{c}\Psi\colon C(\XC\times \YC)& \to C(\XC\times \YC), \\
   h& \mapsto \left((x,y) \mapsto \sum_{i = 1}^{I}\eta_i(x) h(\zeta_i^{-1}(\zeta_i(x)),y) = \sum_{i = 1}^{I}\eta_i(x)  h(x,y) = h(x,y)\right)
  \end{align*}
  and conclude that $D^H_{|c}\Psi = \Id_{C(\XC\times \YC)}$. 
  
  Finally, the bound on the covering numbers is a consequence of Lemma 3.1 and Lemma A.1 in \cite{hundrieser2022empirical} as they assert for arbitrary $\tilde c\in C(\XC\times \YC)$ that 
  \begin{align*}
    \pushQED{\qed}
    \log\NC(\eps, \GC^{\Psi(\tilde c)}, \norm{\cdot}_\infty) &
    \leq \sum_{i = 1}^{I} \log\NC(\eps, \GC^{ \Psi(\tilde c)}|_{\zeta_i(\UC_i)}, \norm{\cdot}_\infty)\\  
    & \leq \sum_{i = 1}^{I} \log\NC(\eps,\GC^{ \Psi_i(\tilde c)}\circ \zeta_i, \norm{\cdot}_\infty)\\
    &  = \sum_{i = 1}^{I} \log\NC(\eps, \GC^{ \Psi_i(\tilde c(\zeta_i(\cdot),\cdot))}, \norm{\cdot}_\infty).\qedhere
    \popQED
  \end{align*}

\section{Proofs for Section \ref{sec: ProofMain}: Lemmata of Distributional Limits}\label{app:ProofSection2}

\subsection{Proof of Lemma \ref{lem:CtrafoLipschitz}}
Assume $\|f- \tilde f\|_\infty + \norm{c- \tilde c}_\infty<\infty$ since otherwise the claim is vacuous. 
For $\tilde f$ and $\tilde c$ there exists for $y \in \YC$ and $\eps>0$ some $x'\in\XC$  such that $\tilde f^{\tilde c}(y) \geq  \tilde c(x',y) - \tilde f(x') - \eps$. Hence, 
\begin{align*}
f^c(y) - \tilde f^{\tilde c}(y) =&\; \left[\inf_{x \in \XC} c(x,y) - f(x)\right] - \left[\inf_{x \in \XC} \tilde c(x,y) - \tilde f(x)\right]\\
\le& \; c(x',y) - f(x') - \tilde c(x',y) + \tilde f(x') + \eps\\
\le& \; \norm{f- \tilde f}_\infty + \norm{c- \tilde c}_\infty + \eps. 
\end{align*}
As $\eps>0$ can be chosen arbitrarily small, we obtain for any $y\in \YC$ the inequality 
\begin{align*}  
f^c(y) - \tilde f^{\tilde c}(y) \leq \norm{f- \tilde f}_\infty + \norm{c- \tilde c}_\infty.
\end{align*} 
Repeating the argument for $f$ and $c$ asserts the converse inequality and proves the claim.\qed
\subsection{Proof of Lemma \ref{lem:LowerUpperBound}}
Let us start by splitting the problem in two different ways,  
\begin{align*}
OT( \tilde\mu, \tilde\nu, {\tilde c}) -OT ( \mu, \nu,c) 
    &= (OT( \tilde\mu, \tilde\nu, {\tilde c}) - OT (\tilde \mu, \tilde\nu,c ))+ (OT ( \tilde\mu, \tilde\nu, c )- OT ( \mu, \nu, c))\\
    &= (OT( \tilde\mu, \tilde\nu, {\tilde c}) - OT(\mu, \nu, {\tilde c}))+ (OT (\mu,\nu, {\tilde c})- OT ( \mu, \nu, c)).
\end{align*}
Since $c,\tilde c\in C(2\norm{c}_\infty+1, 2w)$, we can employ the dual representation of the OT value from \Cref{lem:RelFCandConcave} with $\FC= \FC(2\norm{c}_\infty+1, 2w)$. 
Hence, for each bracket in the display above, one can choose to plug-in a feasible plan in the primal formulation or a potential from $\FC$ in the dual formulation to obtain upper and lower bounds. 
Doing so, we obtain
\begin{align*}
    \inf_{\pi \in \Pi_{\tilde c}^\star(\tilde\mu,\tilde\nu)} \pi(\tilde c-c)\leq OT( \tilde\mu, \tilde\nu, {\tilde c}) - OT (\tilde \mu, \tilde\nu, c )&\leq \inf_{\pi \in \Pi_c^\star(\tilde\mu,\tilde\nu)} \pi (\tilde c-c),\\
    \sup_{f \in S\!_c(\mu,\nu)} (\tilde \mu-\mu ) f^{cc} +  (\tilde \nu-\nu ) f^c \leq OT ( \tilde\mu, \tilde\nu,c )- OT ( \mu, \nu,c) &\leq \sup_{f \in S\!_c(\tilde\mu,\tilde\nu)} (\tilde \mu-\mu ) f^{cc} +  (\tilde \nu-\nu ) f^c,\\
    OT(\mu,\nu, {\tilde c} )- OT ( \mu, \nu, c) &\leq \inf_{\pi \in \Pi_c^\star(\mu,\nu)} \pi (\tilde c-c),\\
    OT( \tilde\mu, \tilde\nu, {\tilde c}) - OT(\mu, \nu, {\tilde c}) &\leq \sup_{f \in S_{\!\tilde c}(\tilde \mu,\tilde \nu)} (\tilde \mu-\mu ) f^{\tilde c\tilde c} +  (\tilde \nu-\nu ) f^{\tilde c}.
\end{align*}
In particular, for the last upper bound we further note that $$  \sup_{f \in S_{\!\tilde c}(\tilde \mu,\tilde \nu)} (\tilde \mu-\mu ) f^{\tilde c\tilde c} +  (\tilde \nu-\nu ) f^{\tilde c} \leq  \sup_{f \in S_{\!\tilde c}(\tilde \mu,\tilde \nu)} (\tilde \mu-\mu ) f^{cc} +  (\tilde \nu-\nu ) f^c  + \sup_{f \in \FC}(\tilde \mu-\mu)(f^{\tilde c\tilde c} - f^{cc}) + (\tilde \nu-\nu)(f^{\tilde c} - f^{c}),$$
which overall yields the lower and upper bounds for the OT cost under varying measures and costs. 

Finally, the bound under fixed measures $\mu, \nu$ it follows by H\"older's inequality for any $\pi\in \Pi(\mu, \nu)$ that $|\pi(\tilde c -c)| \leq \norm{\tilde c-c}_\infty$, whereas under a fixed cost function $c$ we have    
\begin{align*}
  \pushQED{\qed}  \sup_{f \in S\!_c(\tilde \mu,\tilde \nu)\cup S\!_c(\mu,\nu)} \big|(\tilde \mu-\mu ) f^{cc} +  (\tilde \nu-\nu ) f^c\big| &\leq
\sup_{f \in \FC} \big|(\tilde \mu-\mu ) f^{cc}\big| +  \sup_{f \in \FC} \big|(\tilde \nu-\nu ) f^c\big| \\
 &= \sup_{f \in \FC^{cc}} \big|(\tilde \mu-\mu ) f\big| +  \sup_{f \in \FC^{c}} \big|(\tilde \nu-\nu ) f\big|.  \qedhere
 \popQED
\end{align*}

\subsection{Proof of Lemma \ref{lem:ContinuityResults}}
 The continuity of $T_1$ is a consequence of \citet[Theorem 5.20]{villani2008optimal}. Indeed, any converging sequence $(\mu_n, \nu_n, c_n)$ with limit $(\mu_\infty, \nu_\infty, c_\infty)$ admits a sequence of OT plans $\pi_n\in \Pi_{c_n}^\star(\mu_n, \nu_n)$ which  converges weakly along a subsequence, say $(\pi_{n_k})_{k\in \NN}$, to an OT plan $\pi_\infty\in \Pi_{c_\infty}^\star(\mu_\infty, \nu_\infty)$. Hence, 
 \begin{align*}
  \limsup_{k\rightarrow \infty}  |T_1(\mu_{n_k}, \nu_{n_k}, c_{n_k}) - T_1(\mu_\infty, \nu_\infty, c_\infty)| &= \limsup_{k\rightarrow \infty} |OT(\mu_{n_k}, \nu_{n_k}, {c_{n_k}}) - OT(\mu_\infty,\nu_\infty, {c_\infty})| \\
    &=\limsup_{k\rightarrow \infty} |\pi_{n_k}(c_{n_k}) - \pi_\infty(c_\infty)|\\
     &\leq  \limsup_{k\rightarrow \infty} |(\pi_{n_k} - \pi_\infty)(c_\infty)| + \norm{c_{n_k} - c_\infty}_\infty = 0.
 \end{align*}
Since this holds for any sequence of converging OT plans, continuity of $T_1$ follows from \Cref{lem:limsup}.
 
 For the lower semi-continuity of $T_2$ take a sequence $(h_{c,n})_{n\in\NN}$ with limit $h_{c,\infty}$ and consider OT plans $\pi_n\in \Pi_{c_n}^\star(\mu_n, \nu_n)$ such that $$\inf_{\pi \in \Pi_{ c_n}^\star(\mu_n,\nu_n)} \pi(h_{c,n}) \geq \pi_n(h_{c,n}) - 1/n.$$
 Then, by \citet[Theorem 5.20]{villani2008optimal} a converging subsequence $(\pi_{n_k})_{k\in \NN}$ with limit $\pi_\infty\in \Pi_{c_\infty}^\star(\mu_\infty, \nu_\infty)$ exists and it follows that
  \begin{align*}
  \liminf_{k \rightarrow \infty} T_2(\mu_{n_k}, \nu_{n_k}, c_{n_k},h_{c,{n_k}})&= \liminf_{k\rightarrow \infty} \inf_{\pi \in \Pi_{ c_{n_k}}^\star(\mu_{n_k},\nu_{n_k})} \pi(h_{c,{n_k}}) \\
    &\geq \liminf_{k\rightarrow \infty} \pi_{n_k}(h_{c,{n_k}}) - 1/{n_k}\\
    &\geq  \liminf_{k\rightarrow \infty} \pi_{n_k}(h_{c,{\infty}}) - \norm{h_{c,{\infty}} - h_{c,{n_k}}}_\infty - 1/{n_k}\\
    &=  \pi_{\infty}(h_{c,{\infty}}) \geq T_2(\mu_{\infty}, \nu_{\infty}, c_{\infty},h_{c,{\infty}}).
 \end{align*}
 Consequently, by \Cref{lem:limsup}, lower semi-continuity of $T_2$ follows. 
To infer upper semi-continuity of $T_2$, and thus continuity, at $(\mu_\infty, \nu_\infty, c_\infty, h_{c,\infty})$ under the assumption of a unique OT plan $\pi^\star \in \Pi^\star_{c_\infty}(\mu_\infty, \nu_\infty)$ note by \citet[Theorem 5.20]{villani2008optimal} that for any sequence of OT plans $\pi_n\in \Pi^\star_{c_n}(\mu_n, \nu_n)$ there exists a weakly converging subsequence $\pi_{n_k}$ which tends to $\pi^\star$ for $k\rightarrow \infty$. Hence, we conclude that 
\begin{align*}
    \limsup_{k \rightarrow \infty} T_2(\mu_{n_k}, \nu_{n_k}, c_{n_k},h_{c,{n_k}})&= \limsup_{k\rightarrow \infty} \inf_{\pi \in \Pi_{ c_{n_k}}^\star(\mu_{n_k},\nu_{n_k})} \pi(h_{c,{n_k}}) \\
    &\leq \limsup_{k\rightarrow \infty} \pi_{n_k}(h_{c,{n_k}})\\
    &\leq  \limsup_{k\rightarrow \infty} \pi_{n_k}(h_{c,{\infty}}) - \norm{h_{c,{\infty}} - h_{c,{n_k}}}_\infty\\
    &=  \pi_{\infty}(h_{c,{\infty}}) = T_2(\mu_{\infty}, \nu_{\infty}, c_{\infty},h_{c,{\infty}}).
\end{align*}
This implies by \Cref{lem:limsup} the upper semi-continuity of $T_2$.  Moreover, for fixed $(\mu', \nu', c')$ the map $T_2$ is continuous in $h_c$ since for any $\tilde h_{c}$ it holds that $$|T_2(\mu, \nu, c, h_c) - T_2(\mu, \nu, c, \tilde h_c)| \leq \norm{h_c - \tilde h_c}_\infty.$$
 
 To show upper semi-continuity of $T_3$ take a sequence $(h_{\mu,n}, h_{\nu,n})_{n\in\NN}$ with limit $(h_{\mu,\infty}, h_{\nu,\infty})$. 
 Further, by definition of $C$, it follows from \Cref{lem:RelFCandConcave} that any $c_n \in C$ fulfills $\HC_{c_n}\subseteq \FC^{c_nc_n}\subseteq \FC$. 
 Take a sequence $f_n\in S_{\!c_n}(\mu_n, \nu_n)\subseteq \FC$ such that $$T_3(\mu_n, \nu_n, c_n, h_{\mu,n}, h_{\nu,n}) \leq h_\mu(f_n) + h_{\nu}(f_n) + 1/n.$$ By compactness of $\FC$ there exists a uniformly converging subsequence, say $(f_{n_k})_{k \in \NN}$, with limit $f_\infty\in \FC$. %
Next, we demonstrate that $f_\infty\in S_{\!c_\infty}(\mu_\infty, \nu_\infty)$. To this end, we note  \begin{align*}
     OT(\mu_\infty, \nu_\infty, {c_\infty}) &\geq \mu_\infty(f_{\infty}^{c_\infty c_\infty}) + \nu_\infty(f_{\infty}^{c_\infty}) \\
     &= \lim_{k \rightarrow \infty} \mu_{n_k}(f_{\infty}^{c_\infty c_\infty}) +  \nu_{n_k}(f_{\infty}^{c_\infty}) \\
     &\geq \lim_{k \rightarrow \infty} \mu_{n_k}(f_{n_k}^{c_{n_k} c_{n_k}}) +  \nu_{n_k}(f_{n_k}^{ c_{n_k}}) - \norm{f_{\infty}^{c_\infty c_\infty} - f_{n_k}^{c_{n_k} c_{n_k}}}_\infty-\norm{f_{\infty}^{c_\infty} - f_{n_k}^{c_{n_k}}}_\infty\\
     &= \lim_{k \rightarrow \infty} OT(\mu_{n_k}, \nu_{n_k}, {c_{n_k}})- \norm{f_{\infty}^{c_\infty c_\infty} - f_{n_k}^{c_{n_k} c_{n_k}}}_\infty-\norm{f_{\infty}^{c_\infty} - f_{n_k}^{c_{n_k}}}_\infty\\
     &= OT(\mu_\infty, \nu_\infty, {c_\infty}),
 \end{align*}
 where the last equality follows by continuity of $T_1$. Hence, we get $f_\infty\in S_{\!c_\infty}(\mu_\infty, \nu_\infty)$. 

By continuity of $h_{\mu}$ and $h_{\nu}$ on $\FC$ and upon denoting the norm on $C_u(\FC)$ by $\norm{\cdot}_\FC$, we infer that 
 \begin{align*}
     \limsup_{k \rightarrow \infty} T_3(\mu_{n_k}, \nu_{n_k}, &c_{n_k},h_{\mu,n_k}, h_{\nu,n_k}) = \limsup_{k\rightarrow \infty} \sup_{f\in S_{\!c_{n_k}}(\mu_{n_k}, \nu_{n_k})} h_{\mu,n_k}(f) + h_{\nu,n_k}(f)\\
     &\leq \limsup_{k\rightarrow \infty} h_{\mu,n_k}(f_{n_k}) + h_{\nu,n_k}(f_{n_k}) + 1/n_k\\
     &\leq \limsup_{k\rightarrow \infty} h_{\mu,\infty}(f_{n_k}) + h_{\nu,\infty}(f_{n_k}) + \norm{h_{\mu,\infty} - h_{\mu,n_k}}_{\FC} + \norm{h_{\nu,\infty} - h_{\nu,n_k}}_{\FC} + 1/n_k\\
     &= h_{\mu,\infty}(f_{\infty}) + h_{\nu,\infty}(f_{\infty}) \leq T_3(\mu_{\infty}, \nu_{\infty}, c_{\infty},h_{\mu,{\infty}}, h_{\nu,\infty})
 \end{align*}
 and consequently, by \Cref{lem:limsup}, upper semi-continuity of $T_3$ follows. Further, for fixed $(\mu', \nu', c')$ the map $T_3$ is continuous in $(h_\mu, h_{\nu})$ since for another $(\tilde h_\mu,\tilde  h_{\nu})$ it holds that $$|T_3(\mu, \nu, c, h_\mu, h_{\nu}) - T_3(\mu, \nu, c, \tilde h_\mu,\tilde  h_{\nu})| \leq \norm{\tilde h_\mu-\tilde h_\mu}_{\FC^{cc}} + \norm{\tilde h_\nu-\tilde h_\nu}_{\FC^{c}}.$$
  
 Finally, for $T_4$ take $(h_{1,\mu}, \tilde h_{1,\mu},h_{1,\nu}, \tilde h_{1,\nu})$, $(h_{2,\mu}, \tilde h_{2,\mu},h_{2,\nu}, \tilde h_{2,\nu})\in  C_u(\FC)^4$ and note that \begin{align*}
&|T_4(h_{1,\mu}, \tilde h_{1,\mu},h_{1,\nu}, \tilde h_{1,\nu}) - T_4(h_{2,\mu}, \tilde h_{2,\mu},h_{2,\nu}, \tilde h_{2,\nu})|\\\leq &\norm{h_{1,\mu} - h_{2,\mu}}_{\FC} + \norm{\tilde h_{1,\mu} - \tilde h_{2,\mu}}_{\FC} +\norm{h_{1,\nu} - h_{2,\nu}}_{\FC} + \norm{\tilde h_{1,\nu} - \tilde h_{2,\nu}}_{\FC},
 \end{align*}
 which asserts continuity. \qed
\subsection{Proof of Lemma~\ref{lem:measurability}}
  For $(i)$ take $f,g\in \GC$, then $|\mu(f) - \mu(g)|\leq \norm{f-g}_\infty$ and hence $\mu\colon \GC\rightarrow \RR$ defines a Lipschitz 
  map under uniform norm which asserts $\mu \in C_u(\GC)$. 
  Assertion $(ii)$ follows from \citet[p.\ 17]{gine2016mathematical}.
  Finally,  $(iii)$ follows from $(ii)$ since for any $g\in \GC$ the evaluations $\mu_n(g) = n^{-1}\sum_{i = 1}^{n} g(X_i)$ and $\muboot(g) = k^{-1}\sum_{i = 1}^{k} g(\Xib)$ are Borel measurable.\qed %
\subsection{Proof of Lemma~\ref{lem:JointWeakConvergence}}
We first prove that Assumption \ref{ass:AThree} implies for $n,m\rightarrow \infty$ with $m/(n+m)\rightarrow \lambda\in(0,1)$ that   \begin{align}\label{eq:LargerSpaceConvergence}
\begin{pmatrix}
\sqrt{n}\Big((\mu_n - \mu)(f^{cc})\Big)_{f\in \FC}\\
\sqrt{m}\Big((\nu_m - \nu)(f^{c}) \Big)_{f\in \FC}\\
\sqrt{\frac{nm}{n+m}}(c_{n,m} - c)
\end{pmatrix} = 
 \begin{pmatrix}
\Big(\Gproc_n^\mu(f^{cc})\Big)_{f\in \FC}\\
\Big(\Gproc_m^\nu(f^{c}) \Big)_{f\in \FC}\\
\Gproc_{n,m}^c
\end{pmatrix}\dto
\begin{pmatrix}
\Big(\Gproc^\mu(f^{cc})\Big)_{f\in \FC}\\
\Big(\Gproc^\nu(f^{c})\Big)_{f\in \FC}\\
\Gproc^c
\end{pmatrix}
\end{align}
in the Polish space $C_u(\FC)\times C_u(\FC)\times C(\X\times \Y)$. To this end, consider the map
\begin{align*}
\Psi\colon C_u(\FC^{cc})\times  C_u(\FC^{c})&\times C(\XC\times \YC) \rightarrow C_u(\FC)\times C_u(\FC)\times C(\XC\times \YC),\\
(\alpha,  \beta, \gamma) &\mapsto \left( \big(\alpha(f^{cc})\big)_{f\in \FC}, \big(\beta(f^{c})\big)_{f\in \FC}, \gamma \right).
\end{align*}
This map is well-defined (i.e., its range is correct) since for any $(\alpha,  \beta)\in C_u(\FC^{cc})\times  C_u(\FC^{c})$ there exist moduli of continuity $w_\alpha, w_\beta\colon \RR_+\rightarrow \RR_+$ such that for $f,\tilde f\in \FC$ it follows by \Cref{lem:CtrafoLipschitz} that  \begin{align*}
    |\alpha(f^{cc}) - \alpha(\tilde f^{cc})| &\leq w_\alpha(\norm{f^{cc} - \tilde f^{cc}}_\infty) \leq  w_\alpha(\norm{f - \tilde f}_\infty),\\
    |\beta(f^c) - \beta(\tilde f^c)| &\leq w_\beta(\norm{f^c - \tilde f^c}_\infty) \leq  w_\beta(\norm{f - \tilde f}_\infty),
\end{align*}
which assert that $( (\alpha(f^{cc}))_{f\in \FC}, (\beta(f^{c}))_{f\in \FC}, \gamma )\in C_u(\FC)\times C_u(\FC)\times C(\XC\times \YC)$. 
Moreover, for any $(\alpha,  \beta), (\tilde \alpha, \tilde \beta)\in C_u(\FC^{cc})\times  C_u(\FC^{c})$ we have 
\begin{align*}
% \sup_{f\in \FC}|\alpha(f^{cc}) - \tilde \alpha(f^{cc})| =  \norm{\alpha - \tilde \alpha}_{\FC^{cc}} \quad \text{ and } \quad \sup_{f\in \FC}|\beta(f^{c}) - \tilde \beta(f^{c})| =  \norm{\beta - \tilde \beta}_{\FC^{c}},
\sup_{f\in \FC}|\alpha(f^{cc}) - \tilde \alpha(f^{cc})| =  \sup_{\tilde f\in \FC^{cc}}|\alpha(\tilde f) - \tilde \alpha(\tilde f)|  \quad \text{ and } \quad \sup_{f\in \FC}|\beta(f^{c}) - \tilde \beta(f^{c})| =  \sup_{\tilde f\in \FC^{c}}|\beta(\tilde f) - \tilde \beta(\tilde f)|,
\end{align*}
hence the map $\Psi$ is continuous. Consequently, Assumption \ref{ass:AThree} and the continuous mapping theorem \citep[Theorem 1.11.1]{van1996weak} assert weak convergence \eqref{eq:LargerSpaceConvergence}. 

Moreover, by \citet{varadarajan58} the empirical measures $( \mu_n, \nu_n)$ weakly converge a.s.\ in $\PC(\XC)\times\PC(\YC)$ to $(\mu, \nu)$. Note that $\PC(\XC)\times\PC(\YC)$ is by compactness of $\XC$ and $\YC$ a separable, complete metric space \citep{bolley2008separability}. Invoking Slutzky's lemma \citep[Example 1.4.7]{van1996weak} in conjunction with \eqref{eq:LargerSpaceConvergence} we thus obtain the first claim. In particular, by measurability of $c_{n,m}$ and \Cref{lem:measurability}, all involved quantities are Borel measurable.

For the second claim note by \Cref{lem:CtrafoLipschitz} that any realization of $\mu_n,\nu_m$ and $c_{n,m}$ leads the processes $\Gproc_n^\mu(f^{ c_{n,m} c_{n,m}})$ and $\Gproc_m^\nu(f^{ c_{n,m}})$ to be $2\!\sqrt{n}$-Lipschitz and $2\!\sqrt{m}$-Lipschitz in $f$, respectively. Thus, they are uniformly continuous in $f$. Moreover, for fixed $f\in \FC$ we can show %
 that the function 
\begin{align*}
    \tilde \Gproc_n^\mu\colon \PC(\XC)\times C(\XC\times\YC)\rightarrow \RR, \quad (\tilde\mu, \tilde c) \mapsto \sqrt{n}(\tilde\mu - \mu)(f^{\tilde c \tilde c})
\end{align*}
is upper semi-continuous (i.e., in particular measurable). Indeed, for $\tilde \mu_k\dto \tilde \mu$ in $\PC(\XC)$ and $\tilde c_k\rightarrow \tilde  c$ in $C(\XC\times\YC)$ it follows by \Cref{lem:CtrafoLipschitz}, upper semi-continuity of $f^{\tilde c\tilde c}$ and the Portmanteau  Theorem \citep[Theorem 1.3.4]{van1996weak} that 
\begin{align*}
    \limsup_{k\rightarrow \infty}\sqrt{n}(\tilde\mu_k - \mu)(f^{\tilde c_k \tilde c_k}) &\leq   \limsup_{k\rightarrow \infty} \sqrt{n}(\tilde\mu_k - \mu)(f^{\tilde c \tilde c}) + 2\sqrt{n}\norm{f^{\tilde c_k \tilde c_k} - f^{\tilde c \tilde c}}_\infty \leq  \sqrt{n}(\tilde\mu - \mu)(f^{\tilde c \tilde c}).
\end{align*}
Hence, by \Cref{lem:measurability}(ii)
 we conclude that $(\Gproc_n^\mu(f^{ c_{n,m} c_{n,m}}))_{f\in \FC}$ is Borel measurable. Likewise, we conclude  $(\Gproc_m^\nu(f^{ c_{n,m}}))_{f\in \FC}$ is Borel measurable. 

Consequently, by {\ref{ass:BTwo}} we infer, for $n,m\rightarrow \infty$, that
$$\Big(\Gproc_n^\mu(f^{cc}) -  \Gproc_n^\mu(f^{ c_{n,m} c_{n,m}}), \Gproc_m^\nu(f^{c}) - \Gproc_m^\nu(f^{ c_n})\Big)_{f\in \FC} \pto (0,0) \quad \text{in } C_u(\FC)^2.$$
The claim now follows by a combination of Slutzky's lemma and the continuous mapping theorem \citep[Example 1.4.7, Theorem 1.11.1]{van1996weak}. \qed

\subsection{Proof of Lemma \ref{lem:SupportLimitingProcess}}\label{subsec:SupportLimitingProcessProof}  
  The first claim follows by an observation in \citet{Roemisch04} since the set of probability measures $\PC(\XC)$ is convex. For additional insights see \citet[Proposition 4.2.1]{aubin2009setValued} 
  
  For the second claim consider a sequence $\Delta_n= (\tilde \mu_n - \mu)/t_n $ with $t_n>0$ and $\tilde \mu_n\in \PC(\XC)$ such that $\norm{\Delta_n - \Delta}_{\tilde \FC}= \sup_{f\in \tilde \FC}|\Delta_n(f) - \Delta(f)|\rightarrow 0.$ Then, it follows from triangle inequality that \begin{align*}
  	\left|\Delta(f) - \Delta(f')\right| &= \left|\Delta_n(f) - \Delta_n(f')  + (\Delta -\Delta_n)(f)  + (\Delta -\Delta_n)(f') \right|\\
		& \leq\left|\Delta_n(f-f')\right|  + 2\norm{\Delta -\Delta_n}_{\tilde \FC}
  	  \end{align*}
Herein, the first term vanishes since $\Delta_n(f-f') = (\tilde \mu_n- \mu)(\kappa)/t_n = 0$, whereas the second term converges for $n\rightarrow \infty$ to zero. Hence, $\Delta(f) = \Delta(f')$. 
  
  The third claim relies on Portemanteau's theorem \citep[Lemma 1.3.4]{van1996weak} which asserts using the notion of outer probabilities $\pOut$ that 
  \[
  \pushQED{\qed}
    \Prob\left( \Gproc^\mu\in T_{\mu}\PC(\XC)\right) \geq \limsup_{n\rightarrow \infty}  \pOut\left( \sqrt{n}(\mu_n - \mu) \in T_{\mu}\PC(\XC)\right) = 1. \qedhere\popQED
\]

\subsection{Proof of Lemma \ref{lem:auxiliary36}}\label{subsec:auxiliary36Proof}

	We start by proving $(i)$. Note for $\kappa \in \RR$ that
	\[(f+\kappa)^c(y)=\inf_{x\in\X}c(x,y)-f(x)-\kappa=f^c(y)-\kappa,\]
	which yields the claim. 
	To show assertion $(ii)$, observe by \Cref{lem:CtrafoLipschitz} that 
	\begin{equation}\label{eq:prop36trickone}\norm{g^{(c+\Delta^c)(c+\Delta^c)}}_\infty\leq \norm{g}_\infty+2\norm{c+\Delta^c}_\infty\leq B.\end{equation} 
	Further, we find that
	\begin{equation}\label{eq:prop36tricktwo}-\norm{c}_\infty-\sup_{x \in \X}g^{(c+\Delta^c)(c+\Delta^c)}(x)\ \leq g^{(c+\Delta^c)(c+\Delta^c)c}(y)\leq\norm{c}_\infty- \sup_{x \in \X}g^{(c+\Delta^c)(c+\Delta^c)}(x).\end{equation}
	Using part $(i)$ of this lemma, we obtain 
	\[g^{(c+\Delta^c)(c+\Delta^c)cc}=\left(\left(g^{(c+\Delta^c)(c+\Delta^c)}\right)^{c}+\sup_{x \in \X}g^{(c+\Delta^c)(c+\Delta^c)}(x)\right)^c+\sup_{x \in \X}g^{(c+\Delta^c)(c+\Delta^c)}(x).\]
	Combining \eqref{eq:prop36trickone} and \eqref{eq:prop36tricktwo} with the above equation demonstrates that  $g^{(c+\Delta^c)(c+\Delta^c)cc}\in\HC_c+[-B,B]$ and hence yields the claim.\qed

\section{Elementary Analytical Results}

\begin{lem}\label{lem:limsup}
	Consider a real-valued sequence $(a_n)_{n\in \NN}$ and let $K\in \RR$. 
	\begin{itemize}
		\item[(i)] If for any subsequence $(a_{n_k})_{k \in \NN}$ there exists a subsequence $(a_{n_{k_l}})_{l \in \NN}$ with $\limsup_{l\to\infty}a_{n_{k_l}}\leq K$, then it follows that  $\limsup_{n\to\infty}a_n\leq K$.
		\item[(ii)] If for any subsequence $(a_{n_k})_{k \in \NN}$ there exists a subsequence $(a_{n_{k_l}})_{l \in \NN}$ with $\liminf_{l\to\infty}a_{n_{k_l}}\geq K$, then it follows that $\liminf_{n\to\infty}a_n\geq K$.
		\item[(iii)] If for any subsequence $(a_{n_k})_{k \in \NN}$ there exists a subsequence $(a_{n_{k_l}})_{l \in \NN}$ with $\lim_{l\to\infty}a_{n_{k_l}}= K$, then it follows that $\lim_{n\to\infty}a_n= K$.
	\end{itemize}
\end{lem}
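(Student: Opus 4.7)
The plan is to prove (i) by contradiction, and then obtain (ii) and (iii) as immediate consequences. Since the statement is entirely elementary and purely about real sequences, no external machinery is needed; the only mild subtlety is to correctly manipulate subsequences of subsequences.

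For part (i), I would assume for contradiction that $\limsup_{n\to\infty} a_n > K$. Then there exists $\varepsilon > 0$ and a subsequence $(a_{n_k})_{k\in\NN}$ such that $a_{n_k} \geq K + \varepsilon$ for every $k\in \NN$ (picking indices where $a_n$ comes $\varepsilon/2$-close to the limsup, or is $+\infty$). By the hypothesis of (i), this subsequence admits a further subsequence $(a_{n_{k_l}})_{l\in \NN}$ with $\limsup_{l\to\infty} a_{n_{k_l}} \leq K$. However, since $(a_{n_{k_l}})_{l\in \NN}$ is itself a subsequence of the original $(a_{n_k})_{k\in \NN}$, every one of its terms satisfies $a_{n_{k_l}} \geq K + \varepsilon$, and therefore $\limsup_{l\to\infty} a_{n_{k_l}} \geq K + \varepsilon > K$, a contradiction.

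Part (ii) will follow by applying (i) to the sequence $(-a_n)_{n\in \NN}$ with the threshold $-K$, using the identity $\liminf_{n\to\infty} a_n = -\limsup_{n\to\infty}(-a_n)$, and observing that passing to subsequences commutes with negation. For part (iii), the assumption $\lim_{l\to\infty} a_{n_{k_l}} = K$ entails both $\limsup_{l\to\infty} a_{n_{k_l}} \leq K$ and $\liminf_{l\to\infty} a_{n_{k_l}} \geq K$, so parts (i) and (ii) jointly give $\limsup_{n\to\infty} a_n \leq K \leq \liminf_{n\to\infty} a_n$, forcing the limit to exist and equal $K$.

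The main (minor) obstacle is simply the bookkeeping with nested subsequences and the choice of $\varepsilon$ in the contradiction step, particularly to handle the case where $\limsup_{n\to\infty} a_n$ might equal $+\infty$; in that case one chooses the subsequence so that $a_{n_k}\to +\infty$, and the same contradiction with the hypothesised upper bound $K$ applies.
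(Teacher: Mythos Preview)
Your proof is correct and follows essentially the same contradiction argument as the paper for part (i): assume $\limsup_n a_n > K$, extract a subsequence bounded below by $K+\varepsilon$, and observe that no further subsequence can then have $\limsup \le K$. The only cosmetic difference is that the paper says (ii) and (iii) are proved ``analogously'' rather than deducing them from (i) via negation and combination as you do; both routes are standard and equally valid.
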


\begin{proof}
We only prove $(i)$ and note that $(ii)$ and $(iii)$ can be shown analogously. Assume that $\limsup_{n \rightarrow \infty} a_n = \inf_{n\in \NN} (\sup_{m \geq n} a_m) \geq K+\eps$ for some $\eps>0$. Since $(\sup_{m \geq n} a_m)_{n\in \NN}$ is decreasing in $n$, this would imply that $\sup_{m \geq n} a_m\geq K + \eps$ for all $n \in \NN$. Hence, there would exist a subsequence of $(a_n)_{n \in \NN}$, say $(a_{n_l})_{l \in \NN}$, with $a_{n_l} \geq K+\eps/2$ for all $l \in \NN$. However, this would assert $\liminf_{l \rightarrow \infty} a_{n_{l}} \geq K+\eps/2 > K$, contradicting the assumption. Thus, $\limsup_{n \rightarrow \infty} a_n \leq K$.
\end{proof}

\begin{lem}\label{lem:pseudoMetricSpaceCompact}
Let $(\XC,d_\XC)$ be a compact metric space and consider a continuous (pseudo-)metric $\tilde d_\XC$ on $\XC$. Then, $(\XC, \tilde d_\XC)$ is a compact (pseudo-)metric space. Moreover, given a Polish space $\YC$ it follows that $C((\XC, \tilde d_\XC)\times \YC)\subseteq C((\XC, d_\XC)\times \YC)$. 
\end{lem}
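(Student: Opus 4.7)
The plan is to deduce both assertions from a single observation: the identity map $\id\colon(\XC,d_\XC)\to (\XC,\tilde d_\XC)$ is continuous. This follows directly from the hypothesis, because $\tilde d_\XC\colon \XC\times \XC\to \RR_+$ is continuous with respect to the product topology induced by $d_\XC$, so for any $d_\XC$-convergent sequence $x_n\to x$ one has $\tilde d_\XC(x_n,x)\to \tilde d_\XC(x,x)=0$, i.e.\ $x_n\to x$ in $\tilde d_\XC$. Since $(\XC,d_\XC)$ is first countable, sequential continuity of $\id$ is equivalent to continuity.

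For compactness of $(\XC,\tilde d_\XC)$, I would invoke the general fact that the continuous image of a compact space is compact. Since the underlying sets coincide, $\id$ maps $(\XC,d_\XC)$ onto $(\XC,\tilde d_\XC)$, and continuity together with compactness of the source yields compactness of the target topology, irrespective of whether $\tilde d_\XC$ is only a pseudo-metric (compactness in the sense of every open cover admitting a finite subcover does not require the Hausdorff property).

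For the inclusion of function spaces, let $f\in C((\XC,\tilde d_\XC)\times \YC)$ and view $f$ as a function on the common underlying set $\XC\times \YC$. The product map $\id\times \mathrm{Id}_\YC\colon (\XC,d_\XC)\times \YC\to (\XC,\tilde d_\XC)\times \YC$ is continuous by the first step and continuity of the identity on $\YC$. Hence $f = f\circ(\id\times \mathrm{Id}_\YC)$, being a composition of continuous maps, is continuous as a map on $(\XC,d_\XC)\times \YC$, which yields the inclusion.

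There is essentially no serious obstacle to overcome; the only point requiring a little care is the possible absence of the Hausdorff property for $\tilde d_\XC$, which is handled simply by relying on the covering definition of compactness (or equivalently sequential compactness in the pseudo-metric setting) rather than any uniqueness-of-limits argument.
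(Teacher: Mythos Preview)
Your proposal is correct and follows essentially the same approach as the paper: both arguments hinge on the continuity of the identity map $(\XC,d_\XC)\to(\XC,\tilde d_\XC)$, deduce compactness as the continuous image of a compact space, and obtain the inclusion of function spaces by precomposing with $\id\times\mathrm{Id}_\YC$. Your write-up merely supplies a bit more detail (the sequential argument for continuity of $\id$ and the remark on the Hausdorff property), but the underlying method is identical.
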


\begin{proof}
  The (pseudo-)metric properties are clearly fulfilled for $(\XC, \tilde d_\XC)$. 
  By continuity of $\tilde d_\XC$ under $d_\XC$ the canonical inclusion $\iota\colon (\XC, d_\XC) \rightarrow (\XC, \tilde d_\XC), x\mapsto x$ is continuous.  As the image of a compactum under a continuous map is again compact the first claim follows. 
  For the second claim, take $h\in C((\XC, \tilde d_\XC)\times \YC)$. Then, the composition map $\XC\times \YC\rightarrow \RR, (x,y) \mapsto h(\iota(x),y)$  is continuous and therefore the canonical embedding $h\circ(\iota, \Id_\YC)$ of $h$ is included in $C((\XC, d_\XC)\times \YC)$. 
\end{proof}

\end{document}